\numberwithin{equation}{section} \DeclareMathSizes{2}{10}{12}{13}
\newtheorem{theorem}{Theorem}[section]
\newtheorem{lemma}[theorem]{Lemma}
\newtheorem{prop}[theorem]{Proposition}
\newtheorem{corollary}[theorem]{Corollary}
\newtheorem{remark}[theorem]{Remark}
\newtheorem{definition}[theorem]{Definition}
\numberwithin{equation}{section}
\title{Fredholm modules over categories, Connes periodicity and classes in cyclic cohomology}
\author{Mamta Balodi\footnote{Email: mamta.balodi@gmail.com} \footnote{MB was supported by 
SERB
Fellowship PDF/2017/000229}  $\qquad$ Abhishek Banerjee \footnote{Email: abhishekbanerjee1313@gmail.com} \footnote{AB was partially supported by SERB Matrics fellowship MTR/2017/000112}}
\date{}
\begin{document}

\maketitle

\centerline{\emph{Department of Mathematics, Indian Institute of Science, Bangalore - 560012, India.}}

\begin{abstract}
We replace a ring with a small $\mathbb C$-linear category $\mathcal{C}$, seen as a ring with several objects in the sense of Mitchell. We introduce Fredholm modules over this category and construct a Chern character taking values in the cyclic cohomology of $\mathcal C$. We show that this categorified Chern character is homotopy invariant and is well-behaved with respect to the periodicity operator in cyclic cohomology. For this, we also obtain a description of cocycles and coboundaries in the cyclic cohomology of $\mathcal C$ (and more generally, in the Hopf-cyclic cohomology of a Hopf module category) by means of  DG-semicategories equipped with a trace on endomorphism spaces. 

\end{abstract}

\medskip
{\bf MSC(2010) Subject Classification:}  18E05,   47A53, 53C99, 58B34 

\medskip
{\bf Keywords: } Hopf-module categories, Differential graded categories, Fredholm modules, cyclic cohomology

\section{Introduction}

In his celebrated work \cite{C2}, Connes extended differential calculus beyond the  framework of manifolds to include noncommutative spaces such as that of leaves of a foliation or the orbit space of the action of a group on a manifold. For this, he began by considering Fredholm modules over an algebra $A$ which could in general be noncommutative. When $A$ is commutative, such as the space of smooth functions on a manifold $M$, examples of Fredholm modules over $A$ may be obtained by considering elliptic operators on $M$. More generally, by considering Schatten classes inside the collection of bounded operators on a Hilbert space, Connes studied the notion of $p$-summable Fredholm modules over $A$ in \cite{C2}. The Fredholm modules over $A$ lead to Chern characters taking values in the cyclic cohomology of $A$. Moreover, these cohomology classes are related by means of Connes' periodicity operator.

\smallskip

In this paper, we study Fredholm modules over linear categories, along with their Chern characters taking values in cyclic cohomology. Our idea is to have a counterpart of the algebraic notion of modules over a category, a subject which has been highly developed in the literature (see, for instance,  \cite{AB}, \cite{EV}, \cite{LV2}, \cite{Low2}, \cite{Xu1}, \cite{Xu2}). A small preadditive category is treated as a ring with several objects, following an idea first advanced by Mitchell \cite{Mit1}. We note that there is also a well-developed study of spaces in algebraic geometry over categories (see, for instance, \cite{Del}, \cite{EV}, \cite{TV}).   It is also important to mention here the work of Baez \cite{Baez} with the category of Hilbert spaces as well as the  recent work of Henriques \cite{h}, Henriques and Penneys \cite{hp} with  fusion categories with potential applications to physics. 

\smallskip
Let $\mathcal C$ be a small linear category.  We consider pairs $(\mathscr H,\mathcal F)$, where $\mathscr H$ is a functor
\begin{equation}\label{1/1wo}
\mathscr H:\mathcal C\longrightarrow SHilb_{\mathbb Z_2}
\end{equation}  taking values in $\mathbb Z_2$-graded separable Hilbert spaces and $\mathcal F=\{\mathcal F_X:\mathscr H(X)
\longrightarrow \mathscr H(X)\}_{X\in Ob(\mathcal C)}$ is a family of bounded and involutive linear operators each of degree $1$. When the elements of $\mathcal F$ satisfy certain commutator conditions with respect to the operators $\{\mathscr H(f)\}_{f\in Mor(\mathcal C)}$, we say that the pair $(\mathscr H,\mathcal F)$ is a Fredholm module over the category $\mathcal C$. Following the methods of Connes \cite{C2}, we construct Chern characters of these Fredholm modules taking values in the cyclic cohomology of $\mathcal C$ and study how they are related by means of the periodicity operator. We hope this is the first step towards  a larger program which mixes together the techniques in categorical algebra with those in differential geometry. 

\smallskip
The paper consists of two parts. In the first part, we study cyclic cohomology. We work more generally with a small  linear category $\mathcal D_H$ whose morphism spaces carry a well-behaved action of a Hopf algebra $H$. In other words, $\mathcal D_H$ is a small Hopf-module category (or $H$-category) in the sense of Cibils and Solotar \cite{CiSo}. We recall that in \cite{CM0}, \cite{CM1}, \cite{CM2}, Connes and Moscovici introduced Hopf-cyclic cohomology as a generalization of Lie algebra
cohomology adapted to noncommutative geometry.   For an $H$-category $\mathcal D_H$, we  describe  the cocycles and coboundaries that determine
its Hopf cyclic cohomology groups by extending Connes' original construction of cyclic cohomology from
\cite{C1} and \cite{C2} in terms of cycles and closed graded traces on differential graded algebras. An important role
in our paper is played by ``semicategories,'' which are categories that may not contain identity maps. This
notion, introduced by Mitchell \cite{Mit2}, is precisely what we need in order to categorify non-unital algebras. We
work with the Hopf cyclic cohomology groups $HC^\bullet_H(\mathcal D_H,M)$ having coefficients in $M$, where $M$ is a stable
anti-Yetter Drinfeld module in the sense of \cite{hkrs}.

\smallskip
Let $k$ be a field. After collecting some preliminaries in Section 2, we begin in Section 3 by considering the universal differential graded Hopf module semicategory (or DGH-semicategory) associated to the $H$-category $\mathcal D_H$. For a DGH-semicategory $(\mathcal S_H,\hat\partial_H)$ and $n\geq 0$, we let an $n$-dimensional closed graded $(H,M)$-trace on $\mathcal S_H$ be a collection of maps 
\begin{equation}\label{1.2eo} \hat{\mathscr{T}}^H:=\{\hat{\mathscr{T}}_X^H:M \otimes Hom^n_{\mathcal S_H}(X,X) \longrightarrow k\}_{X \in Ob(\mathcal S_H)}
\end{equation}
satisfying certain conditions (see Definition \ref{gradedtrace}). A cycle over $\mathcal D_H$ consists of a triple $(\mathcal S_H,\hat\partial_H,\hat{\mathscr T}^H)$ along with an $H$-linear semifunctor $\rho:\mathcal D_H\longrightarrow \mathcal S_H^0$. In Theorem \ref{charcycl}, we provide a description of the cocycles $Z^\bullet_H(\mathcal D_H,M)$ in Hopf cyclic cohomology in terms of characters of cycles over $\mathcal D_H$. This result is an $H$-linear categorical
version of Connes' \cite[Proposition 1, p. 98]{C2}.  It also follows from Theorem \ref{charcycl} that there is a one-one correspondence between $Z^\bullet_H(\mathcal D_H,M)$ and the collection of $n$-dimensional closed graded $(H,M)$-traces
on the universal DGH-semicategory $\Omega(\mathcal D_H)$ associated to $\mathcal D_H$. 

\smallskip
In Sections 4 and 5, we provide a description of the space $B^\bullet_H(\mathcal D_H,M)$ of coboundaries. Throughout, we take $k=\mathbb C$. We consider families $\eta$ 
of automorphisms $\eta = \{\eta (X)\in  Aut_{\mathcal D_H}(X)\}_{X\in Ob(\mathcal D_H)}$ such that
$
h(\eta(X))=\epsilon(h)\eta(X)
$ for all $h\in H$ and $X\in Ob(\mathcal D_H)$. 
We show that these families form a group, which we denote by $\mathbb U_H(\mathcal D_H)$. Further, we show that the inner
automorphism of $\mathcal D_H$ induced by conjugating with an element $\eta\in \mathbb U_H(\mathcal D_H)$ induces the identity functor on
$HC^\bullet_H(\mathcal D_H,M)$. Using this, we obtain in Proposition \ref{vanishing} a set of sufficient
conditions for the Hopf cyclic cohomology of an $H$-category to be zero. 

\smallskip
We say that a cycle $(\mathcal S_H,\hat\partial_H,\hat{\mathscr T}^H)$ is vanishing if  $\mathcal S_H^0$ is an $H$-category and $\mathcal S_H^0$ satisfies the assumptions in
Proposition \ref{vanishing}. We describe the elements of $B^\bullet_H(\mathcal D_H,M)$  in Theorem \ref{cob1} as the characters of vanishing
cycles over $\mathcal D_H$. Finally, in Theorem \ref{Thmfin}, we use categorified cycles and vanishing cycles to construct a
product in Hopf-cyclic cohomologies
\begin{equation}\label{1.3ep} HC^p_H(\mathcal{D}_H,M) \otimes HC^q_H(\mathcal{D}'_H,M') \longrightarrow HC^{p+q}_H(\mathcal{D}_H \otimes \mathcal{D}'_H,M \square_H M')\qquad p,q\geq 0 
\end{equation}
where $\mathcal D_H$ and $\mathcal D'_H$ are $H$-linear categories and $M$ and $M'$ are stable anti-Yetter Drinfeld modules over $H$ satisfying certain conditions.

\smallskip
In the second part of the paper, we study Fredholm modules and Chern classes. For this, we assume $H=\mathbb C=M$ and consider a small $\mathbb C$-linear category $\mathcal C$. Let $p\geq 1$ be an integer. We will say that a  pair $(\mathscr H,
\mathcal F)$  over $\mathcal C$ as in \eqref{1/1wo} is a $p$-summable Fredholm module if it satisfies
\begin{equation}\label{1.4jh}
[\mathcal{F},f]:= \left(\mathcal{F}_Y \circ \mathscr{H}(f) - \mathscr{H}(f) \circ \mathcal{F}_X \right)     \in \mathcal{B}^p\left(\mathscr{H}(X),\mathscr{H}(Y)\right)
\end{equation} for any morphism $f:X\longrightarrow Y$ in $\mathcal C$ (see Definition \ref{dD3.1}). Here, $\mathcal{B}^p\left(\mathscr{H}(X),\mathscr{H}(Y)\right)$ is the $p$-th Schatten class inside the space of bounded linear operators from $\mathscr H(X)$
to $\mathscr H(Y)$. We mention here that in this paper, we will consider only even Fredholm modules. We hope to tackle the case of odd Fredholm modules over linear categories in a future paper \cite{AMf}. 

\smallskip Let $H^\bullet_\lambda(\mathcal C):=HC^\bullet_{\mathbb C}(\mathcal C,\mathbb C)$ denote the cyclic cohomology groups of $\mathcal C$. Corresponding to a $p$-summable Fredholm module $(\mathscr H,
\mathcal F)$  and any $2m\geq p-1$, we construct a DG-semicategory $(\Omega_{(\mathscr H,\mathcal F)}\mathcal C,\partial')$ along with a closed graded trace $\hat{Tr}_s=\{Tr_s:Hom^{2m}_{\Omega_{(\mathscr H,\mathcal F)}}(X,X) \longrightarrow \mathbb C\}_{X\in Ob(\mathcal C)}$ of dimension $2m$. Let $CN_\bullet(\mathcal C)$ denote the cyclic nerve of $\mathcal C$ and $CN^\bullet(\mathcal C)$ its linear dual. By taking the character of the cycle $(\Omega_{(\mathscr H,\mathcal F)}\mathcal C,\partial',\hat{Tr}_s)$ over $\mathcal C$, we obtain $\phi^{2m}\in CN^{2m}(\mathcal C)$ which is given by (see Theorem \ref{evencyc})
\begin{equation}
\phi^{2m}(f^0 \otimes f^1 \otimes \ldots \otimes f^{2m}):= Tr_s\left(\mathscr H({f}^0)[\mathcal F,f^1][\mathcal F,f^2] \ldots [\mathcal F,f^{2m}]\right)
\end{equation} for any $f^0 \otimes f^1 \otimes \ldots \otimes f^{2m} \in CN_{2m}(\mathcal C)$. Then, $\phi^{2m}$ lies in the space $Z^{2m}_\lambda(\mathcal C)$ of cocycles for the cyclic cohomology of $\mathcal C$. The Chern character $ch^{2m}(\mathscr H,\mathcal F)$ of the Fredholm module $(\mathscr H,\mathcal F)$ will be the class of $\phi^{2m}$ in the cyclic cohomology $H^{2m}_\lambda(\mathcal C)$ of $\mathcal C$.

\smallskip
We relate the Chern characters by means of the periodicity operator in Section 7. We know that the action of the periodicity operator $S:H^\bullet_\lambda(\mathcal C)\longrightarrow H^{\bullet+2}_\lambda(\mathcal C)$ is given by taking the product as in 
\eqref{1.3ep} with a certain class in the cohomology $H^2_{\lambda}(\mathbb C)$. If $(\mathscr H,\mathcal F)$ is a $p$-summable Fredholm module over $\mathcal C$ and $2m\geq p-1$, we show in Theorem \ref{Periodthmo} that
\begin{equation}
S(\phi^{2m})=-(m+1)\phi^{2m+2} \qquad \text{in}~ H^{2m+2}_\lambda(\mathcal{C})
\end{equation}
Finally, in Section 8, we describe the homotopy invariance of the Chern character. For this, we consider a family $\{(\rho_t,\mathcal F_t)\}_{t\in [0,1]}$ of $p$-summable Fredholm modules
\begin{equation}
\{\rho_t:\mathcal{C} \longrightarrow SHilb_{\mathbb Z_2}\}_{t \in [0,1]}\qquad \mathcal F_t(X):\rho_t(X)\longrightarrow  \rho_t(X)
\end{equation}
each having the same underlying Hilbert space and satisfying some conditions. Then, if the $\rho_t$ and $\mathcal F_t$ vary in a strongly continuous manner with respect to $t\in [0,1]$, we show in Theorem \ref{TfinalT} that the $(p+2)$-dimensional character $ch^{p+2}(\mathscr H_t,\mathcal F_t)\in H^{p+2}_\lambda(\mathcal C)$ is independent of $t\in [0,1]$.

\medskip

\textbf{Notations:}
Throughout the paper, $H$ is a Hopf algebra over the field  $k$ of characteristic zero, with comultiplication $\Delta$, counit $\varepsilon$ and bijective antipode $S$. We will use  Sweedler's notation for the coproduct $\Delta(h)= h_1 \otimes h_2$ and for a left $H$-coaction $\rho:M \longrightarrow H \otimes M$, $\rho(m)= m_{(-1)} \otimes m_{(0)}$ (with the summation sign suppressed). The small cyclic category of Connes \cite{C1} will be denoted by $\Lambda$. The Hochschild differential will always  be denoted by $b$ and the modified Hochschild differential (with the last face operator missing) will be denoted by $b'$. 

\smallskip
On any cocyclic module $\mathscr C$, we will denote by $\tau_n$ the unsigned cyclic operator on $C^n(\mathscr C)$ and by $\lambda_n$ the signed cyclic operator $(-1)^n\tau_n$ on $C^n(\mathscr C)$. The complex computing cyclic cohomology of $\mathscr C$ will be denoted by $C^\bullet_\lambda(\mathscr C) := Ker(1-\lambda)$. Accordingly, the cyclic cocycles and cyclic coboundaries will be denoted by $Z^\bullet_\lambda(\mathscr C)$ and $B^\bullet_\lambda(\mathscr C)$ respectively.

\medskip

\textbf{Acknowledgements:} We are grateful to Gadadhar Misra for several useful discussions.

\section{Preliminaries on $H$-categories and Hopf-cyclic cohomology}
A small Hopf module category may be treated as a ``Hopf algebra with several objects.''  In this section, we will collect some preliminaries on Hopf module categories and on Hopf cyclic cohomology. We note that the Hopf cyclic cohomology introduced by Connes and Moscovici (\cite{CM0}, \cite{CM1}, \cite{CM2}) has been developed extensively by a number of authors (see, for instance, \cite{AK}, \cite{MB}, 
\cite{hkrs2}, \cite{Hask}, \cite{Hask1}, \cite{kyg}, \cite{kyg1}, \cite{kayx}, \cite{kr}, \cite{KoSh}, \cite{Rangipu}). 

\begin{definition}\label{defH-cat} (see Cibils and Solotar \cite{CiSo}) 
Let $H$ be a Hopf algebra over a field $k$. A $k$-linear category $\mathcal{D}_H$ is said to be a left $H$-module category if
\begin{itemize}
\item[(i)] $Hom_{\mathcal{D}_H}(X,Y)$ is a left $H$-module for all $X,Y \in Ob(\mathcal{D}_H)$ 
\item[(ii)] $h(\text{id}_X)=\varepsilon(h)\text{id}_X$ for all $X \in Ob(\mathcal{D}_H)$ and $h \in H$
\item[(iii)] the composition map is a morphism of $H$-modules, i.e., 
$h(gf)=(h_1g)(h_2f)$
for any $h \in H$, $f \in Hom_{\mathcal{D}_H}(X,Y)$ and  $g \in Hom_{\mathcal{D}_H}(Y,Z)$. 
\end{itemize} A small left $H$-module category will be called a left $H$-category. We will denote by $Cat_H$ the category of all left $H$-categories with $H$-linear functors between them.
\end{definition}

 For more on Hopf-module categories, we refer the reader, for instance, to \cite{BBR1}, \cite{BBR2}, \cite{HS}, \cite{kk}. Let $\mathcal{D}_H$ be a left $H$-category. We set
\begin{equation}\label{cnerf}CN_n(\mathcal{D}_H):=\bigoplus Hom_{\mathcal{D}_H}(X_1,X_0) \otimes Hom_{\mathcal{D}_H}(X_2,X_1) \otimes \ldots \otimes Hom_{\mathcal{D}_H}(X_0,X_n)
\end{equation}
where the direct sum runs over all $(X_0,X_1,\ldots,X_n) \in Ob(\mathcal{D}_H)^{n+1}$. 

\begin{lemma}
Let $M$ be a right $H$-module. For each $n\geq 0$, $M \otimes CN_n(\mathcal{D}_H)$ is a right $H$-module with action determined by
$$(m \otimes f^0 \otimes \ldots \otimes f^{n})h:= mh_1 \otimes S(h_2)(f^0 \otimes \ldots \otimes f^{n})$$
for any $m \in M$, $f^0 \otimes \ldots \otimes f^{n} \in CN_n(\mathcal D_H)$ and $h \in H$.
\end{lemma}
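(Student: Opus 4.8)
The plan is to first equip $CN_n(\mathcal D_H)$ with its natural left $H$-module structure and then verify the two right $H$-module axioms for the stated action, the only substantive point being the associativity axiom $(x\cdot h)\cdot h'=x\cdot(hh')$. First I would observe that each summand $Hom_{\mathcal D_H}(X_1,X_0)\otimes\cdots\otimes Hom_{\mathcal D_H}(X_0,X_n)$ is a tensor product over $k$ of left $H$-modules, by condition (i) of Definition \ref{defH-cat}, and hence is itself a left $H$-module via the $(n{+}1)$-fold iterated comultiplication; explicitly $h(f^0\otimes\cdots\otimes f^n)=(h_1f^0)\otimes\cdots\otimes(h_{n+1}f^n)$. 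Since the $H$-action on a $Hom$-space does not change the objects, it preserves each summand, so taking the direct sum over tuples of objects makes $CN_n(\mathcal D_H)$ a left $H$-module. For well-definedness, I would note that for each fixed $h\in H$ the assignment $(m,f^0\otimes\cdots\otimes f^n)\mapsto mh_1\otimes S(h_2)(f^0\otimes\cdots\otimes f^n)$ is $k$-multilinear, being built from the $k$-bilinear right action of $H$ on $M$, the $k$-linear left action of $S(h_2)$ on $CN_n(\mathcal D_H)$, and the (finite) summation over the Sweedler components of $\Delta(h)$; hence by the universal property of the tensor product it descends to a well-defined $k$-linear endomorphism $\,\cdot\,h$ of $M\otimes CN_n(\mathcal D_H)$.

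The unit axiom is immediate: since $\Delta(1)=1\otimes 1$ and $S(1)=1$, writing $w=f^0\otimes\cdots\otimes f^n$ one gets $(m\otimes w)\cdot 1=m1\otimes S(1)w=m\otimes w$, using that $M$ and $CN_n(\mathcal D_H)$ are modules over $H$. The heart of the argument is the associativity axiom. With $x=m\otimes w$ and $h,h'\in H$ I would compute
\begin{equation*}
(x\cdot h)\cdot h'=\big(mh_1\otimes S(h_2)w\big)\cdot h'=(mh_1)h'_1\otimes S(h'_2)\big(S(h_2)w\big).
\end{equation*}
Applying associativity of the right action on $M$, associativity of the left action on $CN_n(\mathcal D_H)$, and the fact that $S$ is an algebra anti-homomorphism, this equals $m(h_1h'_1)\otimes S(h_2h'_2)w$. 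On the other hand, since $\Delta$ is an algebra homomorphism we have $\Delta(hh')=h_1h'_1\otimes h_2h'_2$, whence $x\cdot(hh')=m(h_1h'_1)\otimes S(h_2h'_2)w$, and the two expressions coincide.

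The computation is short, so there is no genuine obstacle; the only point requiring care is the bookkeeping of Sweedler indices, in particular the order reversal forced by the anti-homomorphism identity $S(h'_2)S(h_2)=S(h_2h'_2)$, which is exactly what is needed to reproduce the component $h_2h'_2$ of $\Delta(hh')$. I would also emphasize that coassociativity of $\Delta$ is not invoked directly here: once $CN_n(\mathcal D_H)$ is known to be a left $H$-module, the identity $S(h'_2)\big(S(h_2)w\big)=\big(S(h'_2)S(h_2)\big)w$ is simply the module-associativity law, so the iterated coproduct need never be expanded. Bijectivity of the antipode likewise plays no role in this lemma.
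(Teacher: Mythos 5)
Your proof is correct and is exactly the routine direct verification that the paper leaves to the reader (the lemma is stated there without proof): the well-definedness via multilinearity, the unit axiom, and the associativity check using multiplicativity of $\Delta$ together with the anti-homomorphism property $S(h_2h'_2)=S(h'_2)S(h_2)$ are precisely the points that need to be made, and your Sweedler bookkeeping is accurate.
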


We now recall the notion of a stable anti-Yetter-Drinfeld module (SAYD) module from \cite[Definition 2.1]{hkrs2}.
\begin{definition}
Let $H$ be a Hopf algebra with a bijective antipode $S$. A $k$-vector space $M$ is said to be a right-left anti-Yetter-Drinfeld module over $H$ if $M$ is a right $H$-module and a left $H$-comodule such that
\begin{equation}\label{SAYDcondi}
\rho(mh)=(mh)_{(-1)} \otimes (mh)_{(0)}= S(h_3)m_{(-1)}h_1 \otimes m_{(0)}h_2
\end{equation}
for all $m \in M$ and $h \in H$, where $\rho: M \longrightarrow H \otimes M,~ m \mapsto m_{(-1)} \otimes m_{(0)}$ is the coaction. Moreover, $M$ is said to be stable if $m_{(0)}m_{(-1)}=m$.
\end{definition}

\begin{comment}
For an SAYD module $M$ over $H$, we  note that
\begin{equation}\label{2obsv}m_{(0)}S^{-1}(m_{(-1)})=m_{(0)(0)}m_{(0)(-1)}S^{-1}(m_{(-1)})=m_{(0)}m_{(-1)2}S^{-1}(m_{(-1)1})=m_{(0)}\varepsilon(m_{(-1)})=m
\end{equation}
\end{comment}

We now take the Hopf-cyclic cohomology $HC^\bullet_H(\mathcal{D}_H,M)$ of an $H$-category $\mathcal{D}_H$ with coefficients in a SAYD module $M$ (see also \cite{kk}). This generalizes the construction of the Hopf-cyclic cohomology for $H$-module algebras with coefficients in an SAYD module (see \cite{hkrs} and also \cite{carthy}).  For each $n \geq 0$, we set 
\begin{equation*}
\begin{array}{ll}
C^n(\mathcal{D}_H,M):=Hom_k(M \otimes CN_n(\mathcal{D}_H),k) \qquad \qquad C^n_H(\mathcal{D}_H,M):=Hom_H(M \otimes CN_n(\mathcal{D}_H),k)
\end{array}
\end{equation*}
where $k$ is considered as a right $H$-module via the counit. It is clear from the definition that an element in $C^n_H(\mathcal{D}_H,M)$ is a $k$-linear map $\phi:M \otimes CN_n(\mathcal{D}_H) \longrightarrow k$ satisfying
\begin{equation}\label{new3.2}
\phi\left(mh_1 \otimes S(h_2)(f^0 \otimes \ldots \otimes f^{n})\right)=\varepsilon(h)\phi(m \otimes f^0 \otimes \ldots \otimes f^{n})
\end{equation}

We recall that a (co)simplicial module is said to be para-(co)cyclic if all the relations for a (co)cyclic module are satisfied except $\tau_n^{n+1}=id$ (see, for instance \cite{kk}). The following may be verified directly.

\begin{prop}\label{prop2.3} Let $\mathcal{D}_H$ be a left $H$-category and let $M$ be a right-left SAYD module over $H$. Then, 

\smallskip
(1) we have a para-cocyclic module $C^\bullet(\mathcal{D}_H,M):=\{C^n(\mathcal{D}_H,M)\}_{n \geq 0}$ with the following structure maps  
\begin{align*}
(\delta_i\phi)(m \otimes f^0 \otimes \ldots \otimes f^{n})&= \begin{cases}
\phi(m \otimes f^0 \otimes \ldots \otimes f^if^{i+1} \otimes \ldots \otimes f^{n}) \quad~~~~~~~~~~~~~~~~ 0 \leq i \leq n-1\\
\phi\big(m_{(0)} \otimes \big(S^{-1}(m_{(-1)})f^n\big)f^0 \otimes \ldots \otimes f^{n-1}\big) \quad~~~~~~~~~~~~i=n\\
\end{cases}\\
\vspace{.2cm}
(\sigma_i\psi)(m \otimes f^0 \otimes \ldots \otimes f^{n})&= \begin{cases} \psi(m \otimes f^0 \otimes \ldots \otimes f^i \otimes id_{X_{i+1}}\otimes f^{i+1} \otimes \ldots \otimes f^{n}) \quad~~~~~ 0 \leq i \leq n-1\\
\psi(m \otimes f^0 \otimes \ldots \otimes f^{n} \otimes id_{X_{0}}) \quad~~~~~~~~~~~~~~~~~~~~~~~~~~ i=n\\
\end{cases}\\ 
\vspace{.2cm}
(\tau_n\varphi)(m \otimes f^0 \otimes \ldots \otimes f^{n})&=\varphi\big(m_{(0)} \otimes S^{-1}(m_{(-1)})f^n \otimes f^0 \otimes \ldots \otimes f^{n-1}\big)
\end{align*}
for any $\phi \in C^{n-1}(\mathcal{D}_H,M)$, $\psi \in C^{n+1}(\mathcal{D}_H,M)$, $\varphi \in C^{n}(\mathcal{D}_H,M)$, $m \in M$ and $ f^0 \otimes \ldots \otimes f^n \in Hom_{\mathcal{D}_H}(X_1,X_0) \otimes Hom_{\mathcal{D}_H}(X_2,X_1) \otimes \ldots \otimes Hom_{\mathcal{D}_H}(X_0,X_n)$.

\smallskip
(2) by restricting to right $H$-linear morphisms $C^n_H(\mathcal{D}_H,M)=Hom_H(M \otimes CN_n(\mathcal{D}_H),k),$ we obtain a cocyclic module $C^\bullet_H(\mathcal{D}_H,M):=\{C^n_H(\mathcal{D}_H,M)\}_{n \geq 0}$. 
\end{prop}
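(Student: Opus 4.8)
The plan is to verify the stated relations directly, organizing them by the structural axiom that powers each one, and to keep in mind that part (1) should require only the module/comodule structure and the anti-Yetter--Drinfeld compatibility, whereas part (2) additionally needs stability. First I would check that every operator is well defined on $C^\bullet(\mathcal{D}_H,M)$. Since $CN_n(\mathcal{D}_H)$ is the direct sum over object tuples in \eqref{cnerf}, each of $\delta_i,\sigma_i,\tau_n$ is defined summand-wise and I only need the target objects to match up: for $0\le i\le n-1$ the coface composes the adjacent morphisms $f^if^{i+1}$, for $i=n$ it wraps $f^n$ around to compose with $f^0$, the codegeneracies insert identity morphisms, and $\tau_n$ rotates cyclically. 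In the wrap-around terms ($\delta_n$ and $\tau_n$) the twisted morphism $S^{-1}(m_{(-1)})f^n$ still lies in $Hom_{\mathcal{D}_H}(X_0,X_n)$, because by Definition \ref{defH-cat}(i) the action of $H$ is internal to each morphism space; hence the subsequent composition is legitimate and all three operators land in the asserted cochain groups.

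For part (1) it is cleanest to dualize. Writing $t,d_i,s_i$ for the corresponding cyclic, face and degeneracy operators on the chain object $M\otimes CN_\bullet(\mathcal{D}_H)$, one checks at once that $\tau_n\varphi=\varphi\circ t$, $\delta_i\varphi=\varphi\circ d_i$ and $\sigma_i\varphi=\varphi\circ s_i$, so that each cocyclic identity is the transpose of the corresponding paracyclic identity downstairs. The purely simplicial relations among the $\delta_i,\sigma_i$ that avoid the wrap-around face are immediate from associativity of composition and the unit axioms in $\mathcal{D}_H$, together with $h(\mathrm{id}_X)=\varepsilon(h)\mathrm{id}_X$ from Definition \ref{defH-cat}(ii). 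The relations that involve the wrap-around face or the cyclic rotation, such as $\tau_n\delta_i=\delta_{i-1}\tau_{n-1}$ for $1\le i\le n$, $\tau_n\delta_0=\delta_n$, their degeneracy counterparts, and $\tau_n\sigma_0=\sigma_n\tau_{n+1}^2$, are where the coalgebra structure enters: rotating the twist $S^{-1}(m_{(-1)})$ past a composition and re-expressing the coefficient $m_{(0)}$ uses left-comodule coassociativity, while commuting the twist past an $H$-acted morphism uses the anti-Yetter--Drinfeld condition \eqref{SAYDcondi}. In each case the verification reduces to a single application of coassociativity (together with \eqref{SAYDcondi} for the steps that move the twist past the $H$-action). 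Crucially, none of these uses $\tau_n^{\,n+1}=\mathrm{id}$, so $C^\bullet(\mathcal{D}_H,M)$ is para-cocyclic, which is exactly (1).

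For part (2) there are two points. First, each operator preserves the $H$-linearity condition \eqref{new3.2} and therefore restricts to $C^\bullet_H(\mathcal{D}_H,M)$; the only nontrivial checks are for $\delta_n$ and $\tau_n$, where preserving \eqref{new3.2} again comes down to \eqref{SAYDcondi}, the twist $S^{-1}(m_{(-1)})$ being precisely what renders the wrap-around compatible with the diagonal action $mh_1\otimes S(h_2)(f^0\otimes\cdots\otimes f^n)$ of the preceding Lemma. Granting this, $C^\bullet_H(\mathcal{D}_H,M)$ inherits all the para-cocyclic relations, and it remains only to upgrade para-cocyclic to cocyclic by establishing $\tau_n^{\,n+1}=\mathrm{id}$ on $C^n_H(\mathcal{D}_H,M)$.

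This last identity is the main obstacle, and it is the unique relation that genuinely needs stability. Using $\tau_n^{\,n+1}\varphi=\varphi\circ t^{\,n+1}$, I would iterate $t$ exactly $n+1$ times; each application brings one further factor $S^{-1}$ of a comodule leg to the front, and after collapsing the nested coactions by repeated coassociativity one arrives at
\begin{equation*}
(\tau_n^{\,n+1}\varphi)(m\otimes f^0\otimes\cdots\otimes f^n)=\varphi\big(m_{(0)}\otimes S^{-1}(m_{(-1)})(f^0\otimes\cdots\otimes f^n)\big),
\end{equation*}
where $S^{-1}(m_{(-1)})$ acts diagonally through its own coproduct. To see that the right-hand side equals $\varphi(m\otimes f^0\otimes\cdots\otimes f^n)$, I would apply \eqref{new3.2} with $h=m_{(-1)}$ to fold the diagonal $H$-action back onto the coefficient, use the anti-Yetter--Drinfeld identity \eqref{SAYDcondi} to reconcile the resulting coaction legs, and finally invoke stability in the form $m_{(0)}S^{-1}(m_{(-1)})=m$ (equivalently $\varepsilon(m_{(-1)})m_{(0)}=m$ once the twist has been absorbed) to collapse everything to $\varphi(m\otimes f^0\otimes\cdots\otimes f^n)$. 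The delicate part is the bookkeeping of the iterated antipode-and-coproduct legs; once it is organized, $\tau_n^{\,n+1}=\mathrm{id}$ follows, and $C^\bullet_H(\mathcal{D}_H,M)$ is a genuine cocyclic module, establishing (2).
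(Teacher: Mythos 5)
Your proposal is correct and is essentially the paper's own approach: the paper offers no written proof of Proposition \ref{prop2.3} beyond the remark that it ``may be verified directly,'' and your summand-wise direct verification --- checking the para-cocyclic relations dually on the para-cyclic object $M\otimes CN_\bullet(\mathcal{D}_H)$ and isolating $\tau_n^{\,n+1}=\mathrm{id}$ as the one identity where $H$-linearity \eqref{new3.2}, the anti-Yetter--Drinfeld condition \eqref{SAYDcondi} and stability must all combine --- is exactly the intended argument. For the one step you defer (the ``bookkeeping'' for $\tau_n^{\,n+1}=\mathrm{id}$), note that the substitution should be $h=S^{-1}(m_{(-1)})$ rather than $h=m_{(-1)}$ in \eqref{new3.2}, or, more cleanly, that an $H$-linear $\varphi$ factors through $M\otimes_H CN_n(\mathcal{D}_H)$, where $t_n^{\,n+1}(m\otimes_H f^0\otimes\cdots\otimes f^n)=m_{(0)}S^{-1}(m_{(-1)})\otimes_H f^0\otimes\cdots\otimes f^n$ collapses to $m\otimes_H f^0\otimes\cdots\otimes f^n$ by the consequence $m_{(0)}S^{-1}(m_{(-1)})=m$ of stability and coassociativity.
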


The cohomology of the cocyclic module $C^\bullet_H(\mathcal{D}_H,M)$ is referred to as the Hopf-cyclic cohomology of the $H$-category $\mathcal{D}_H$ with coefficients in the SAYD module $M$. The corresponding cohomology groups are denoted by $HC_H^\bullet(\mathcal{D}_H,M)$.

\begin{remark}
\emph{
As $k$ contains $\mathbb{Q}$, we recall that the cohomology of a cocyclic module $\mathscr{C}$ can be expresed alternatively as the cohomology of the following complex (see, for instance \cite[2.5.9]{Loday}):}
\begin{equation}\label{R2.5c}
\begin{CD}
C^0_\lambda(\mathscr{C}) @> b>> \dots @ > b>> C^n_\lambda(\mathscr{C}) @ > b>> C^{n+1}_\lambda(\mathscr{C})@> b>>\dots
\end{CD}
\end{equation}
\emph{where $C^n_\lambda(\mathscr{C})=Ker(1-\lambda)\subseteq C^n(\mathscr{C})$, $b=\sum_{i=0}^{n+1} (-1)^i \delta_i$ and 
$\lambda=(-1)^n \tau_n$. In particular, an element $\phi \in C^n_H(\mathcal{D}_H,M)$ is a cyclic cocycle if and only if}
\begin{equation}\label{3.3y}
b(\phi)=0 \quad  \text{and} \quad (1-\lambda)(\phi)=0
\end{equation}\emph{ In this paper, the cocycles and coboundaries of a cocyclic module will always refer to this complex. }
\end{remark}

\begin{prop}\label{P6.2wp} Let $\mathcal{D}_H$ be a left $H$-category and let $M$ be a right-left SAYD module. Then:

\smallskip
(1) We obtain a para-cyclic module $C_\bullet(\mathcal{D}_H,M):=\{C_n(\mathcal{D}_H,M):=M \otimes CN_n(\mathcal{D}_H)\}_{n \geq 0}$ with the following structure maps  
\begin{align*}
d_i(m \otimes f^0 \otimes \ldots \otimes f^{n})&= \begin{cases}
m \otimes f^0 \otimes f^1 \otimes \ldots \otimes f^if^{i+1} \otimes \ldots \otimes f^{n} \quad~~~~~~~~~~~~~~~~ 0 \leq i \leq n-1\\
m_{(0)} \otimes \big(S^{-1}(m_{(-1)})f^n\big)f^0 \otimes f^1 \otimes \ldots \otimes f^{n-1} \quad~~~~~~~~~~~~i=n\\
\end{cases}\\
\vspace{.2cm}
s_i(m \otimes f^0 \otimes \ldots \otimes f^{n})&= \begin{cases} m \otimes f^0 \otimes f^1 \otimes \ldots f^i \otimes id_{X_{i+1}}\otimes f^{i+1} \otimes \ldots \otimes f^{n} \quad~~~~~ 0 \leq i \leq n-1\\
m \otimes f^0 \otimes f^1 \otimes \ldots \otimes f^{n} \otimes id_{X_{0}} \quad~~~~~~~~~~~~~~~~~~~~~~~~~~ i=n\\
\end{cases}\\ 
\vspace{.2cm}
t_n(m \otimes f^0 \otimes \ldots \otimes f^{n})&=m_{(0)} \otimes S^{-1}(m_{(-1)})f^n \otimes f^0 \otimes \ldots \otimes f^{n-1}
\end{align*}
for any $m \in M$ and $ f^0 \otimes f^1 \otimes \ldots \otimes f^n \in Hom_{\mathcal{D}_H}(X_1,X_0) \otimes Hom_{\mathcal{D}_H}(X_2,X_1) \otimes \ldots \otimes Hom_{\mathcal{D}_H}(X_0,X_n)$.

\smallskip
(2) By passing to the tensor product over $H$,   we obtain a cyclic module $C_\bullet^H(\mathcal{D}_H,M):=\{C_n^H(\mathcal{D}_H,M)=M \otimes_H CN_n(\mathcal{D}_H)\}_{n \geq 0}$. 
\end{prop}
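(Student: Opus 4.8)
The plan is to verify that the proposed structure maps $d_i$, $s_i$, and $t_n$ satisfy the axioms of a para-cyclic module, and then check that passing to $\otimes_H$ upgrades the para-cyclic structure to a genuine cyclic module. The strategy mirrors the standard verification for the para-cyclic module underlying Hopf-cyclic homology of an $H$-module algebra (\cite{hkrs}, \cite{hkrs2}), but adapted to the ``several objects'' setting where composition $f^i f^{i+1}$ is only defined when the source and target objects match. First I would record the simplicial identities among the face maps $d_i d_j = d_{j-1} d_i$ for $i<j$; for the pairs not involving the index $n$ these follow immediately from associativity of composition in $\mathcal D_H$, so the only substantive checks involve $d_n$ and the appearance of the SAYD coaction. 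The degeneracy identities for $s_i$ are formal, relying only on property (ii) of Definition \ref{defH-cat}, namely $h(\mathrm{id}_X)=\varepsilon(h)\mathrm{id}_X$, which ensures that inserting identity morphisms is compatible with the $H$-action.

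Next I would verify the para-cyclic compatibilities $t_n d_i = d_{i-1} t_n$, $t_n s_i = s_{i-1} t_n$ (for appropriate ranges), and the boundary cases that relate $t_n$ to $d_0$, $d_n$, $s_0$, $s_n$. The main obstacle, as usual, is the single identity $d_i t_n = t_{n-1} d_{i-1}$ in the cases that force the antipode and the coaction to interact, together with confirming that the cyclic operator $t_n$ is well-defined as a map $C_n \to C_n$ respecting the $H$-module structure from the Lemma. Here the key computation is that applying $t_n$ repeatedly $n+1$ times and using the anti-Yetter-Drinfeld condition \eqref{SAYDcondi} together with the identity recorded in the commented-out equation \eqref{2obsv}, one obtains $t_n^{n+1} = \mathrm{id}$ only after descending to $\otimes_H$; on the nose (over $k$) one gets a twist by the coaction, which is exactly why part (1) yields only a \emph{para}-cyclic module. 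Concretely, iterating $t_n$ produces a cumulative factor $m_{(0)}\otimes S^{-1}(m_{(-1)})(\cdots)$, and the stability condition $m_{(0)}m_{(-1)}=m$ is what collapses this factor to the identity once the morphism tensors are identified under the $H$-balanced tensor product.

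For part (2), I would check that each of $d_i$, $s_i$, $t_n$ descends to the quotient $M\otimes_H CN_n(\mathcal D_H)$, i.e.\ that they respect the relation $mh\otimes\xi \sim m\otimes h\xi$ coming from the right $H$-module structure on $M\otimes CN_n(\mathcal D_H)$ described in the Lemma. The faces $d_0,\dots,d_{n-1}$ and all degeneracies descend trivially since they do not touch the coaction; for $d_n$ and $t_n$ this requires the anti-Yetter-Drinfeld identity \eqref{SAYDcondi}, which is precisely the compatibility between the action and coaction engineered to make the twisted face and cyclic operators $H$-balanced. The final point is that $t_n^{n+1}=\mathrm{id}$ on $C_n^H(\mathcal D_H,M)$, which follows from the stability hypothesis $m_{(0)}m_{(-1)}=m$ after the earlier computation; this is the step I expect to be the most delicate, since it is exactly where both the SAYD and stability conditions are simultaneously needed. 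I would remark that part (2) is formally dual to Proposition \ref{prop2.3}(2), where the analogous cocyclic statement was obtained by restricting to $H$-linear morphisms rather than passing to $\otimes_H$, so the two verifications are adjoint to one another and could be presented as a single computation read in two directions.
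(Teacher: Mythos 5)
The paper offers no proof of this proposition---it is asserted as a direct verification in the same spirit as Proposition \ref{prop2.3}---and your outline correctly isolates every point where something nontrivial happens: associativity of composition for the inner faces, the axiom $h(\mathrm{id}_X)=\varepsilon(h)\mathrm{id}_X$ for the degeneracies, the anti-Yetter--Drinfeld condition \eqref{SAYDcondi} for the descent of $d_n$ and $t_n$ to $M\otimes_H CN_\bullet(\mathcal D_H)$, and stability for $t_n^{n+1}=\mathrm{id}$ on the quotient. One small refinement: since $t_n^{n+1}(m\otimes x)=m_{(0)}\otimes S^{-1}(m_{(-1)})x$, the identity actually needed after passing to $\otimes_H$ is $m_{(0)}S^{-1}(m_{(-1)})=m$, which follows from stability together with the comodule axioms (and note that the displayed equation you cite for this is inside a \texttt{comment} block in the source, so it carries no number in the compiled paper).
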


The cyclic homology groups corresponding to the cyclic module $C_\bullet^H(\mathcal{D}_H,M)$ will be denoted by $HC^H_\bullet(\mathcal{D}_H,M)$. 

\section{Traces, cocycles and DGH-semicategories}\label{SectionDG}

We continue with $\mathcal D_H$ being a left $H$-category and $M$ a right-left SAYD module over $H$. 
Our purpose  is to develop a formalism analogous to that of Connes \cite{C2} in order to interpret the cocycles $Z^\bullet_H(\mathcal D_H,M)$ of the complex  $C^\bullet_H(\mathcal D_H,M)$  and its coboundaries $B^\bullet_H(\mathcal D_H,M)$  as characters of differential graded  semicategories. 
 In this section, we  will describe $Z^\bullet_H(\mathcal{D}_H,M)$, for which we will need  the framework of DG-semicategories. Let us first recall the notion of a semicategory introduced by Mitchell in \cite{Mit} (for more on semicategories, see, for instance, \cite{BoLS}). 

\begin{definition}(see \cite[Section 4]{Mit})
A semicategory $\mathcal C$ consists of a collection $Ob(\mathcal{C})$ of objects together with a set of morphisms $Hom_{\mathcal{C}}(X,Y)$ for each $X,Y \in Ob(\mathcal{C})$ and an associative composition. A semifunctor $F:\mathcal{C} \longrightarrow \mathcal{C}'$ between semicategories assigns an object $F(X) \in Ob(\mathcal{C}')$ to each $X \in Ob(\mathcal{C})$
and a morphism $F(f) \in Hom_{\mathcal C'}(F(X),F(Y))$ to each $f \in Hom_\mathcal{C}(X,Y)$ and preserves composition. 

\smallskip
A left $H$-semicategory is a small $k$-linear semicategory $\mathcal{S}_H$ such that
\begin{itemize}
\item[(i)] $Hom_{\mathcal{S}_H}(X,Y)$ is a left $H$-module for all $X,Y \in Ob(\mathcal{S}_H)$ 
\item[(ii)] $h(gf)=(h_1g)(h_2f)$
for any $h \in H$, $f \in Hom_{\mathcal{S}_H}(X,Y)$ and  $g \in Hom_{\mathcal{S}_H}(Y,Z)$. 
\end{itemize}
\end{definition}

It is clear that any ordinary category may be treated as a semicategory. Conversely, to any $k$-semicategory $\mathcal{C}$, we can associate an ordinary $k$-category
$\tilde{\mathcal{C}}$ by setting $Ob(\tilde{\mathcal{C}})=Ob(\mathcal{C})$ and adjoining  unit morphisms as follows:
\begin{align*}
Hom_{\tilde{\mathcal{C}}}(X,Y):&=\left\{\begin{array}{ll} Hom_\mathcal{C}(X,X) \bigoplus k &  \mbox{if $X=Y$}\\
Hom_\mathcal{C}(X,Y) & \mbox{if $X \neq Y$} \\ \end{array}\right.
\end{align*}

A  morphism in $Hom_{\tilde{\mathcal{C}}}(X,Y)$ will be denoted by $\tilde{f}=f+\mu$, where $f \in Hom_{\mathcal{C}}(X,Y)$ and $\mu \in k$. It is understood that $\mu=0$ whenever $X \neq Y$. Any semifunctor $F:\mathcal{C} \longrightarrow \mathcal{D}$ where $\mathcal D$ is an ordinary category may  be extended to an ordinary functor $\tilde{F}:\tilde{\mathcal{C}} \longrightarrow \mathcal{D}$. If $\mathcal S_H$ is a left $H$-semicategory, we note that $\tilde{\mathcal S}_H$ is a left $H$-category in the sense of Definition \ref{defH-cat}. 

\begin{definition}
A differential graded semicategory (DG-semicategory) $(\mathcal{S},\hat\partial)$ is a $k$-linear semicategory $\mathcal{S}$ such that
\begin{itemize}
\item[(i)] $Hom^\bullet_\mathcal{S}(X,Y)=\big(Hom^n_\mathcal{S}(X,Y),\hat\partial^n_{XY}\big)_{n \geq 0}$ is a cochain complex of $k$-spaces for each $X,Y \in Ob(\mathcal{S})$.

\item[(ii)] the composition map
$Hom^\bullet_\mathcal{S}(Y,Z) \otimes Hom^\bullet_\mathcal{S}(X,Y) \longrightarrow Hom^\bullet_\mathcal{S}(X,Z)$
is a morphism of complexes. Equivalently, we have $
\hat\partial^n_{XZ}(gf)=\hat\partial^{n-r}_{YZ}(g)f+(-1)^{n-r}g\hat\partial^{r}_{XY}(f) \label{comp}
$
for any $f \in Hom_\mathcal{S}(X,Y)^r$ and $g \in Hom_\mathcal{S}(Y,Z)^{n-r}$.
\end{itemize} Whenever the meaning is clear from context, we will drop the subscript and simply write $\hat\partial^\bullet$ for the differential
on any $Hom^\bullet_{\mathcal S}(X,Y)$. 
\end{definition}

A  small DG-semicategory may be treated as a differential 
graded (but  not necessarily unital) $k$-algebra with several objects. The DG-semicategories may be treated in a manner similar to DG-categories (see, for instance, \cite{Ke1}, \cite{Ke2}). For instance, there is an obvious notion of DG-semifunctor between DG-semicategories. We also note that if $\mathcal S$ is a DG-semicategory, the morphisms in degree $0$ determine a 
semicategory $\mathcal S^0$.

\begin{comment}
\begin{definition}
A DG-semifunctor $\alpha:(\mathcal{S},\hat\partial)\longrightarrow (\mathcal{S}',\hat\partial')$ between two DG-semicategories  is a $k$-linear semifunctor $\alpha:\mathcal{S} \longrightarrow \mathcal{S}'$ such that the induced map $Hom^\bullet_\mathcal{S}(X,Y) \longrightarrow Hom^\bullet_{\mathcal{S}'}(\alpha X,\alpha Y)$, $f \mapsto \alpha (f)$, is a morphism of complexes for each $X,Y \in Ob(\mathcal{S})$. 
\end{definition}

\begin{remark}\label{0dg}
We observe that corresponding to any DG-semicategory $\mathcal{S}$, there is a semicategory $\mathcal{S}^0$ defined as:
\begin{align*}
Ob(\mathcal{S}^0):&=Ob(\mathcal{S})\\
Hom_{\mathcal{S}^0}(X,Y):&=Hom^0_\mathcal{S}(X,Y)
\end{align*}
The composition   in $\mathcal{S}$ induces a well-defined composition $Hom_{\mathcal{S}^0}(Y,Z) \otimes Hom_{\mathcal{S}^0}(X,Y) \longrightarrow Hom_{\mathcal{S}^0}(X,Z)$.
\end{remark}
\end{comment}

We now construct a ``universal DG-semicategory'' associated to a given $k$-linear semicategory, similar to the construction of the universal differential graded  algebra
associated to a  (not necessarily unital) $k$-algebra (see, for instance, \cite[p. 315]{C2}). 

\smallskip
Let $\Omega\mathcal{C}$ be the semicategory with $Ob(\Omega\mathcal{C}):=Ob(\mathcal{C})$ and  $Hom_{\Omega \mathcal{C}}(X,Y)=\bigoplus\limits_{n \geq 0}Hom^n_{\Omega \mathcal{C}}(X,Y)$, where
\begin{equation}\label{xudga} Hom^n_{\Omega\mathcal{C}}(X,Y):=
\left\{
\begin{array}{ll}
Hom_{\mathcal C}(X,Y) & \mbox{if $n=0$} \\
\\ \underset{(X_1,...,X_n)\in Ob(\mathcal C)^n}{\mbox{\Large $\bigoplus$}}Hom_{\tilde{\mathcal C}}(X_1,Y)\otimes Hom_{\mathcal C}
(X_2,X_1)\otimes \dots \otimes Hom_{\mathcal C}(X,X_n) & \mbox{if $n\geq 1$} \\
\end{array} \right.
\end{equation}
Here the sum runs over the ordered tuples $(X_1,...,X_n)\in Ob(\mathcal C)^n$.
 In particular, $(\Omega\mathcal{C})^0={\mathcal{C}}$.  For $n\geq 1$, an element of the form $\tilde{f}^0\otimes f^1\otimes ...\otimes f^n$ in $Hom^n_{\Omega\mathcal{C}}(X,Y)$ will be denoted by  $\tilde{f}^0df^1 \ldots df^n=(f^0+\mu) df^1\dots df^n$   
and said to be homogeneous of degree $n$. By abuse of notation, we will continue to use $\tilde{f}^0df^1 \ldots df^n=(f^0+\mu) df^1\dots df^n$ to denote an element 
of $Hom^n_{\Omega\mathcal{C}}(X,Y)$ even when $n=0$. In that case, it will be understood that $\mu=0$. 

\smallskip
The composition in $\Omega\mathcal{C}$ is determined by
\begin{equation}\label{symb}
f^0\circ df^1\circ \dots \circ df^n= f^0df^1\dots df^n \qquad (df^0)\circ f^1=d(f^0f^1)-f^0(df^1) \qquad  df^1\circ \dots \circ df^n= df^1\dots df^n
\end{equation}
In particular, it follows that
\begin{equation}\label{compoDG}
 \begin{array}{l}
((f^0+\mu)df^1...df^i)\cdot ((g^0+\mu')dg^1...dg^j)\\
= (f^0+\mu)\left(df^1....df^{i-1}d(f^ig^0)dg^1...dg^j+\underset{l=1}{\overset{i-1}{\sum}}(-1)^{i-l}df^1...d(f^lf^{l+1})...df^idg^0dg^1...dg^j\right)\\
\quad + (-1)^i(f^0+\mu)f^1df^2...df^idg^0dg^1...dg^j+\mu' (f^0+\mu)df^1...df^idg^1...dg^j\\
\end{array}
\end{equation}
For each $X,Y \in Ob(\Omega\mathcal{C})$,  the differential $\partial^n_{XY}:Hom^n_{\Omega\mathcal{C}}(X,Y) \longrightarrow Hom^{n+1}_{\Omega\mathcal{C}}(X,Y)$ is determined by setting
$$\partial^n_{XY}((f^0+\mu)df^1 \ldots df^n):=df^0df^1 \ldots df^n$$ It follows from definition that $\partial^{n+1}_{XY}\circ \partial^n_{XY}=0$. Therefore, $Hom^\bullet_{\Omega\mathcal{C}}(X,Y):=\big(Hom^n_{\Omega\mathcal{C}}(X,Y),\partial^n_{XY}\big)_{n \geq 0}$ is a cochain complex for each $X, Y \in Ob(\Omega\mathcal{C})$. It may also be verified  that the composition in $\Omega\mathcal{C}$ is a morphism of complexes. Thus, $\Omega\mathcal{C}$ is a DG-semicategory.
 
\begin{prop}\label{construniv} Let $\mathcal C$ be a small $k$-linear semicategory. Then, the associated DG-semicategory $(\Omega\mathcal C,\partial)$  is universal in the following sense: given
\begin{itemize}
\item[(i)] any DG-semicategory $(\mathcal{S},\hat\partial)$ and 
\item[(ii)] a $k$-linear semifunctor $\rho:\mathcal{C} \longrightarrow \mathcal{S}^0$,
\end{itemize}
there exists a unique DG-semifunctor $\hat{\rho}:(\Omega\mathcal{C},\partial) \longrightarrow (\mathcal{S},\hat\partial)$ such that the restriction of $\hat{\rho}$ to  the semicategory $\mathcal{C}$ is identical to $\rho: \mathcal{C} \longrightarrow \mathcal{S}^0$.
\end{prop}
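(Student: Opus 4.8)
The plan is to pin $\hat\rho$ down by its forced behaviour on generators, which yields uniqueness immediately, and then to verify that the resulting explicit formula is genuinely a DG-semifunctor. On objects I set $\hat\rho(X):=\rho(X)$. Since any extension must agree with $\rho$ on $(\Omega\mathcal C)^0=\mathcal C$, must commute with the differentials, and must preserve composition, I observe that every homogeneous generator $(f^0+\mu)df^1\dots df^n$ is the composite of the degree-$0$ element $\tilde f^0$ with the elements $df^i=\partial(f^i)$. Hence any DG-semifunctor $\hat\rho$ extending $\rho$ is forced to satisfy
$$\hat\rho\big((f^0+\mu)df^1\dots df^n\big)=\rho(f^0)\,\hat\partial(\rho(f^1))\dots\hat\partial(\rho(f^n))+\mu\,\hat\partial(\rho(f^1))\dots\hat\partial(\rho(f^n)),$$
where the right-hand side is a composite of morphisms of $\mathcal S$. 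Note that the scalar term requires no unit in $\mathcal S$, so the expression makes sense even though $\mathcal S$ is only a semicategory. This already establishes uniqueness, and I take the displayed formula as the definition of $\hat\rho$ for the existence part.

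Well-definedness and $k$-linearity are then immediate: the right-hand side is $k$-multilinear in the tensor factors $\tilde f^0,f^1,\dots,f^n$ and splits additively over the decomposition $\tilde f^0=f^0+\mu$, so it descends to the direct sum \eqref{xudga}. It remains to check the two DG-semifunctor axioms. Compatibility with the differential reduces, via the graded Leibniz rule in $\mathcal S$ together with $\hat\partial^2=0$, to the observation that each factor $\hat\partial(\rho(f^i))$ is $\hat\partial$-closed; consequently $\hat\partial$ annihilates the product $\hat\partial(\rho(f^1))\dots\hat\partial(\rho(f^n))$, and one computes $\hat\partial\hat\rho\big((f^0+\mu)df^1\dots df^n\big)=\hat\partial(\rho(f^0))\hat\partial(\rho(f^1))\dots\hat\partial(\rho(f^n))$, which is exactly $\hat\rho(\partial((f^0+\mu)df^1\dots df^n))=\hat\rho(df^0df^1\dots df^n)$. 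That $\hat\rho$ restricts to $\rho$ in degree $0$ is built into the formula.

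The main work, and the step I expect to be the genuine obstacle, is showing that $\hat\rho$ preserves composition, i.e. $\hat\rho(\omega\cdot\omega')=\hat\rho(\omega)\cdot\hat\rho(\omega')$ for homogeneous $\omega=(f^0+\mu)df^1\dots df^i$ and $\omega'=(g^0+\mu')dg^1\dots dg^j$. I would expand the left-hand side using the composition rule \eqref{compoDG}, apply $\hat\rho$ term by term, and match the result against the product $\hat\rho(\omega)\cdot\hat\rho(\omega')$ computed in $\mathcal S$. The key point is that \eqref{compoDG} is engineered precisely so that $d$ behaves as a graded derivation; since $\hat\partial$ is also a graded derivation on $\mathcal S$ and $\rho$ is a semifunctor (so $\rho(f^lf^{l+1})=\rho(f^l)\rho(f^{l+1})$ and $\rho(f^ig^0)=\rho(f^i)\rho(g^0)$), the terms $d(f^lf^{l+1})$ and $d(f^ig^0)$ are sent under $\hat\rho$ to exactly the expressions produced by the Leibniz rule applied to $\hat\partial(\rho(f^l)\rho(f^{l+1}))$ and $\hat\partial(\rho(f^i)\rho(g^0))$. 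Careful bookkeeping of the signs $(-1)^{i-l}$ and $(-1)^i$ shows the two sides agree, while the scalar contributions coming from $\mu$ and $\mu'$ match directly. This verifies that $\hat\rho$ is a DG-semifunctor and completes the proof.
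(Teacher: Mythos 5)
Your proposal is correct and follows essentially the same route as the paper: the same explicit formula for $\hat\rho$ on $(f^0+\mu)df^1\dots df^n$, uniqueness forced by the generation of $\Omega\mathcal C$ under the rules \eqref{symb}, compatibility with the differential via closedness of each factor $\hat\partial(\rho(f^i))$, and composition-preservation reduced to matching \eqref{compoDG} against the graded Leibniz rule in $\mathcal S$. The paper likewise leaves that last verification as a routine check, so your level of detail matches its own.
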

\begin{proof}
We extend $\rho$ to obtain a DG-semifunctor $\hat{\rho}:(\Omega\mathcal{C},\partial) \longrightarrow (\mathcal{S},\hat{\partial})$ as follows:
\begin{equation}\label{cat1}
\begin{array}{c}
\hat{\rho}(X):=\rho(X)\\
\hat{\rho}((f^0+\mu)df^1\ldots df^n):=\rho(f^0)\circ\hat{\partial}^0(\rho(f^1))\circ \ldots\circ \hat{\partial}^0(\rho(f^n))+\mu \hat{\partial}^0(\rho(f^1)) \circ \ldots \circ \hat{\partial}^0(\rho(f^n))
\end{array}
\end{equation}
for all $X \in Ob(\Omega\mathcal{C})=Ob(\mathcal{C})$ and $(f^0+\mu)df^1\ldots df^n \in Hom^n_{\Omega\mathcal{C}}(X,Y)$, $n\geq 1$. Since each $\rho(f^i)$ is a morphism of degree $0$ in $\mathcal{S}$, it follows from \eqref{comp} and \eqref{compoDG} that 
\begin{equation} \hat{\rho}(((f^0+\mu)df^1...df^n)\circ ((f^{n+1}+\mu')df^{n+2}... df^m))=\hat{\rho}((f^0+\mu)df^1...df^n)\circ \hat{\rho}((f^{n+1}+\mu')df^{n+2}... df^m)
\end{equation} It is also clear by construction that $\hat{\rho}|_{\mathcal{C}}=\rho$. Moreover, we have
\begin{equation*}
\begin{array}{ll}
\hat{\partial}^n\left(\hat{\rho}((f^0+\mu)df^1\ldots df^n)\right)&= \hat{\partial}^n\left(\rho(f^0)\hat{\partial}^0(\rho(f^1)) \ldots \hat{\partial}^0(\rho(f^n))\right)+
\mu \hat{\partial}^n\left( \hat{\partial}^0(\rho(f^1)) \ldots \hat{\partial}^0(\rho(f^n))\right)\\
&= {\hat{\partial}}^0(\rho(f^0)) {\hat{\partial}}^0(\rho(f^1)) \ldots  {\hat{\partial}}^0(\rho(f^n))+\rho(f^0) {\hat{\partial}}^n\left({ \hat{\partial}}^0(\rho(f^1)) \ldots  {\hat{\partial}}^0(\rho(f^n))\right)\\
&= \hat{\partial}^0(\rho(f^0)) \hat{\partial}^0(\rho(f^1)) \ldots  \hat{\partial}^0(\rho(f^n))=\hat{\rho}\left(\partial^n((f^0+\mu)df^1\ldots df^n)\right)
\end{array}
\end{equation*}
The uniqueness of $\hat\rho$ is also clear from \eqref{symb} and \eqref{compoDG}. 
\end{proof}

\begin{definition}\label{DGH}
A left DGH-semicategory   is a left $H$-semicategory $\mathcal S_H$  equipped with a DG-semicategory $(\mathcal S_H,\hat\partial_H)$ structure such that for all $n\geq 0$:

\smallskip
(a)  $Hom^n_{\mathcal S_H}(X,Y)$ is a left $H$-module for $X,Y \in Ob(\mathcal S_H)$. 

(b) $\hat\partial^n_{H}:Hom^n_{\mathcal S_H}(X,Y) \longrightarrow Hom^{n+1}_{\mathcal S_H}(X,Y)$ is $H$-linear for $X,Y \in Ob(\mathcal S_H)$. 
\end{definition}

We can similarly define the notion of a DGH-semifunctor between DGH-semicategories. If $(\mathcal S_H,\hat\partial_H)$  is a left DGH-semicategory, we note that  $\mathcal S_H^0$  is a left $H$-semicategory.

\begin{prop}
Let $\mathcal{D}_H$ be a left $H$-category. Then, the universal DG-semicategory $(\Omega(\mathcal{D}_H),\partial_H)$ associated to $\mathcal{D}_H$ is a left DGH-semicategory with the $H$-action determined by
\begin{equation}\label{comp4.8}
h \cdot \left((f^0+\mu)df^1 \ldots df^n\right):=(h_1f^0+\mu \varepsilon(h_1))d(h_2f^1) \ldots d(h_{n+1}f^n) 
\end{equation}
for all $h \in H$ and $(f^0+\mu)df^1 \ldots df^n\in Hom_{\Omega(\mathcal D_H)}(X,Y)$.
\end{prop}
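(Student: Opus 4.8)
The plan is to check, in turn, the three requirements packaged into Definition \ref{DGH}: that the formula \eqref{comp4.8} makes each $Hom^n_{\Omega(\mathcal{D}_H)}(X,Y)$ a left $H$-module (condition (a)), that $\Omega(\mathcal{D}_H)$ is a left $H$-semicategory so that its composition \eqref{compoDG} satisfies $h(\omega\omega')=(h_1\omega)(h_2\omega')$, and that the differential $\partial_H$ is $H$-linear (condition (b)). The first two ingredients are formal consequences of coassociativity and the counit axiom; the real content is the composition compatibility.

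First, the module structure. For $n\geq 1$, \eqref{xudga} exhibits $Hom^n_{\Omega(\mathcal{D}_H)}(X,Y)$ as a direct sum of tensor products $Hom_{\tilde{\mathcal{D}}_H}(X_1,Y)\otimes Hom_{\mathcal{D}_H}(X_2,X_1)\otimes\cdots\otimes Hom_{\mathcal{D}_H}(X,X_n)$ of left $H$-modules, the leading factor carrying the $H$-module structure of the $H$-category $\tilde{\mathcal{D}}_H$, for which $h(f^0+\mu\,\mathrm{id})=hf^0+\mu\varepsilon(h)\,\mathrm{id}$ by axiom (ii) of Definition \ref{defH-cat}. I observe that \eqref{comp4.8} is exactly the $(n+1)$-fold diagonal action $h\cdot(a_0\otimes\cdots\otimes a_n)=h_1a_0\otimes\cdots\otimes h_{n+1}a_n$ read off from the iterated coproduct $\Delta^{(n)}(h)=h_1\otimes\cdots\otimes h_{n+1}$. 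Since an iterated tensor product of $H$-modules is again an $H$-module, well-definedness is automatic, while the unit and associativity axioms of the action follow respectively from the counit property and from coassociativity of $\Delta$. This settles condition (a).

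Next, the $H$-linearity of $\partial_H$. Writing $\partial_H(\tilde f^0df^1\cdots df^n)=df^0df^1\cdots df^n$ and reading the right-hand side as the degree-$(n+1)$ element with leading formal unit $1$ and zero leading morphism, I apply \eqref{comp4.8} through $\Delta^{(n+1)}$ to get $\varepsilon(h_1)\,d(h_2f^0)\cdots d(h_{n+2}f^n)$; the counit identity $\varepsilon(h_1)h_2=h$ collapses the first leg, yielding $d(h_1f^0)\cdots d(h_{n+1}f^n)$, which is precisely $\partial_H\big(h\cdot(\tilde f^0df^1\cdots df^n)\big)$. Hence $\partial_H$ commutes with the action, giving condition (b). The same counit collapse is what forces $h\cdot df=d(hf)$ and, more generally, the correct normalisation of the exact factors.

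The substantive step, and the one I expect to be the main obstacle, is that $\Omega(\mathcal{D}_H)$ is a left $H$-semicategory, i.e. $h(\omega\omega')=(h_1\omega)(h_2\omega')$ for the composition \eqref{compoDG}. I would apply $h$ to each summand of \eqref{compoDG} and compare it, leg by leg, with $(h_1\omega)(h_2\omega')$. If $\omega$ has degree $i$ and $\omega'$ degree $j$, then every term of $\omega\omega'$ has degree $i+j$, hence $i+j+1$ tensor legs, so $h$ acts through $\Delta^{(i+j)}$; on the other side $(h_1\omega)(h_2\omega')$ a priori spreads $i+j+2$ legs through $\Delta^{(i+j+1)}$, and the composition \eqref{compoDG} merges exactly one adjacent pair of morphisms of $\mathcal{D}_H$ — one of $f^ig^0$, $f^lf^{l+1}$ or $f^0f^1$ — except in the single summand carrying the formal unit $\mu'$ of $\omega'$, where no merge occurs. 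The key mechanism is that axiom (iii) of Definition \ref{defH-cat}, namely $h(ab)=(h_1a)(h_2b)$, recombines the two comultiplication legs carried by each merged pair into a single leg, this being precisely coassociativity read backwards; in the exceptional summand the counit axiom instead removes the leg attached to $\mu'$. The signs in \eqref{compoDG} are untouched, since the $H$-action reorders no tensor factors. Conceptually, the whole verification amounts to the statement that the action respects the three defining relations \eqref{symb}: on $f^0\circ df^1\cdots df^n$ and on $df^1\circ\cdots\circ df^n$ it reduces to the leg-splitting of $\Delta^{(n)}$ together with the counit normalisation of exact forms, while on the Leibniz relation $(df^0)\circ f^1=d(f^0f^1)-f^0df^1$ it combines the already-established $H$-linearity of $\partial_H$ with axiom (iii). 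Carrying out this leg-by-leg matching across all summands of \eqref{compoDG}, using associativity of composition and coassociativity of $\Delta$ to keep the bookkeeping consistent, completes the proof that $\Omega(\mathcal{D}_H)$ is a left DGH-semicategory.
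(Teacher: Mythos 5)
Your proposal is correct and follows the only natural route: a direct verification that the diagonal action \eqref{comp4.8} makes each hom-complex an $H$-module, commutes with $\partial_H$ via the counit collapse on the leading leg, and is compatible with the composition \eqref{compoDG} through axiom (iii) of Definition \ref{defH-cat} merging adjacent comultiplication legs. The paper itself offers no more than the assertion that this ``is immediate from the definitions in \eqref{compoDG} and \eqref{comp4.8}'', so your argument simply supplies, accurately, the verification the paper leaves implicit.
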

\begin{proof} This  is immediate from the definitions in \eqref{compoDG} and \eqref{comp4.8}.
\end{proof} 

\begin{definition}\label{gradedtrace}
Let $(\mathcal S_H,\hat\partial_H)$ be a left DGH-semicategory and $M$ be a right-left SAYD module over $H$. A closed graded $(H,M)$-trace of dimension $n$ on $\mathcal S_H$ is a collection of $k$-linear maps $$\hat{\mathscr{T}}^H:=\{\hat{\mathscr{T}}_X^H:M \otimes Hom^n_{\mathcal S_H}(X,X) \longrightarrow k\}_{X \in Ob(\mathcal S_H)}$$ such that
\begin{align}
&\hat{\mathscr{T}}_X^H\big(mh_1 \otimes S(h_2)f\big)=\varepsilon(h)\hat{\mathscr{T}}_X^H(m \otimes f)\label{gt0}\\
&\hat{\mathscr{T}}_X^H\big(m \otimes \hat\partial_{H}^{n-1}(f')\big)=0 \label{gt1}\\
&\hat{\mathscr{T}}_X^H\big(m \otimes g'g)=(-1)^{ij}~\hat{\mathscr{T}}_Y^{H}\big(m_{(0)} \otimes \left(S^{-1}(m_{(-1)})g\right)g'\big)\label{gt2}
\end{align}
for all $h \in H$, $m \in M$, $f \in Hom^n_{\mathcal S_H}(X,X)$, $f' \in Hom^{n-1}_{\mathcal S_H}(X,X)$, $g \in Hom^i_{\mathcal S_H}(X,Y)$, $g' \in Hom^j_{\mathcal S_H}(Y,X)$ and $i+j=n$.
\end{definition}

\begin{definition}\label{cycle} 
An $n$-dimensional $\mathcal S_H$-cycle with coefficients in a SAYD module $M$ is a triple 
$(\mathcal{S}_H,\hat{\partial}_H,\hat{\mathscr T}^H)$ such that
\begin{itemize}
\item[(i)] $(\mathcal S_H,\hat\partial_H)$ is a left DGH-semicategory.
\item[(ii)]  $\hat{\mathscr T}^H$ is a closed graded $(H,M)$-trace of dimension $n$ on $\mathcal{S}_H$.
\end{itemize} Let $\mathcal{D}_H$ be a left $H$-category. By an $n$-dimensional cycle over $\mathcal D_H$, we mean a tuple $(\mathcal{S}_H,\hat{\partial}_H, \hat{\mathscr T}^H,\rho)$ such that 
\begin{itemize}
\item[(i)] $(\mathcal{S}_H,\hat{\partial}_H, \hat{\mathscr T}^H)$ is an $n$-dimensional $\mathcal S_H$-cycle with coefficients in a SAYD module $M$.
\item[(ii)] $\rho:\mathcal{D}_H \longrightarrow \mathcal{S}_H^0$ is an $H$-linear semifunctor.
\end{itemize}
\end{definition}

 We fix a left $H$-category $\mathcal D_H$. Given an $n$-dimensional cycle $(\mathcal{S}_H,\hat{\partial}_H, \hat{\mathscr T}^H,\rho)$  over $\mathcal{D}_H$, we define its character $\phi\in C^n_H(\mathcal D_H,M)$ by setting
\begin{equation*}
\phi:M\otimes CN_n(\mathcal D_H)\longrightarrow k\qquad \phi(m \otimes f^0 \otimes \ldots \otimes f^n):=\hat{\mathscr T}^H_{X_0}\big(m \otimes \rho(f^0)\hat{\partial}^0_H\left(\rho(f^1)\right) \ldots \hat{\partial}^0_H\left(\rho(f^n)\right)\big)
\end{equation*} 
for $m \in M$ and $f^0 \otimes \ldots \otimes f^n \in Hom_{\mathcal{D}_H}(X_1,X_0) \otimes Hom_{\mathcal{D}_H}(X_2,X_1) \otimes \ldots \otimes Hom_{\mathcal{D}_H}(X_0,X_n)$. We will often suppress the semifunctor $\rho$ and refer to
$\phi$ simply as the character of the $n$-dimensional cycle   $(\mathcal{S}_H,\hat{\partial}_H, \hat{\mathscr T}^H)$. 

\smallskip
We now have a characterization of the space $Z^n_H(\mathcal{D}_H,M)$ of $n$-cocycles in the Hopf-cyclic cohomology of the category $\mathcal{D}_H$ with coefficients in the SAYD module $M$.

\begin{theorem}\label{charcycl}
Let $\mathcal{D}_H$ be a left $H$-category and $M$ be a right-left SAYD module over $H$. Let $\phi \in C^n_H(\mathcal{D}_H,M)$. Then, the following conditions are equivalent:
\begin{itemize}
\item[(1)] $\phi$ is the character of an $n$-dimensional cycle over $\mathcal D_H$, i.e., there is an $n$-dimensional cycle $(\mathcal{S}_H,\hat{\partial}_H, \hat{\mathscr T}^H)$ with coefficients in $M$ and an $H$-linear semifunctor $\rho:\mathcal{D}_H \longrightarrow \mathcal{S}_H^0$ such that
\begin{equation}\label{eq1}
\begin{array}{ll}
\phi(m \otimes f^0 \otimes \ldots \otimes f^n)
&=\hat{\mathscr T}^H_{X_0}((id_M \otimes \hat{\rho})(m \otimes f^0df^1\ldots df^n))\vspace{0.02in}\\ 
&=\hat{\mathscr T}^H_{X_0}\big(m \otimes \rho(f^0)\hat{\partial}_H^0\left(\rho(f^1)\right) \ldots \hat{\partial}_H^0\left(\rho(f^n)\right)\big)\\
\end{array}
\end{equation}
for any $m \in M$ and $f^0 \otimes \ldots \otimes f^n \in Hom_{\mathcal{D}_H}(X_1,X_0) \otimes Hom_{\mathcal{D}_H}(X_2,X_1) \otimes \ldots \otimes Hom_{\mathcal{D}_H}(X_0,X_n)$.
\item[(2)] There exists a closed graded $(H,M)$-trace $\mathscr{T}^H$ of dimension $n$ on $\left(\Omega(\mathcal{D}_H),\partial_H\right)$ such that
\begin{equation}\label{eq2}
\phi(m \otimes f^0 \otimes \ldots \otimes f^n)=\mathscr{T}^H_{X_0}(m \otimes f^0df^1 \ldots df^n)
\end{equation}
for any $m \in M$ and $f^0 \otimes \ldots \otimes f^n \in Hom_{\mathcal{D}_H}(X_1,X_0) \otimes Hom_{\mathcal{D}_H}(X_2,X_1) \otimes \ldots \otimes Hom_{\mathcal{D}_H}(X_0,X_n)$.
\item[(3)] $\phi \in Z^n_H(\mathcal{D}_H,M)$. 
\end{itemize}
\end{theorem}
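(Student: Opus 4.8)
The plan is to close the loop of implications $(1)\Rightarrow(2)\Rightarrow(3)\Rightarrow(1)$, the point being that $(2)\Rightarrow(1)$ is immediate: one takes $\mathcal S_H=\Omega(\mathcal D_H)$, $\hat\partial_H=\partial_H$, and $\rho$ the canonical inclusion $\mathcal D_H\hookrightarrow\Omega(\mathcal D_H)^0=\mathcal D_H$, whence $\hat\rho=\mathrm{id}$ and \eqref{eq1} collapses to \eqref{eq2}. The genuine content therefore divides into the ``formal'' equivalence $(1)\Leftrightarrow(2)$, handled by the universal property of $\Omega(\mathcal D_H)$, and the ``analytic'' equivalence $(2)\Leftrightarrow(3)$, which is the $H$-linear categorical version of Connes' dictionary between closed graded traces and cyclic cocycles.

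For $(1)\Rightarrow(2)$, I would start from a cycle $(\mathcal S_H,\hat\partial_H,\hat{\mathscr T}^H,\rho)$ over $\mathcal D_H$ and invoke Proposition \ref{construniv} to extend $\rho$ to the unique DG-semifunctor $\hat\rho:(\Omega(\mathcal D_H),\partial_H)\to(\mathcal S_H,\hat\partial_H)$ given by \eqref{cat1}. The first thing to check is that this $\hat\rho$ is $H$-linear, which is not part of Proposition \ref{construniv} but follows by comparing the explicit action \eqref{comp4.8} on $\Omega(\mathcal D_H)$ with the $H$-linearity of $\rho$: on a monomial one has $h\cdot(f^0df^1\ldots df^n)=(h_1f^0)d(h_2f^1)\ldots d(h_{n+1}f^n)$, and both sides of $\hat\rho(h\cdot\omega)=h\cdot\hat\rho(\omega)$ unwind to the same expression once $\rho$ and $\hat\partial_H^0$ are $H$-linear. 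One then sets $\mathscr T^H_X:=\hat{\mathscr T}^H_{\rho(X)}\circ(\mathrm{id}_M\otimes\hat\rho)$ and transports the three axioms along $\hat\rho$: \eqref{gt0} survives because $\hat\rho$ is $H$-linear, \eqref{gt1} because $\hat\rho\circ\partial_H=\hat\partial_H\circ\hat\rho$, and \eqref{gt2} because $\hat\rho$ preserves composition and degree. Formula \eqref{eq2} is then read off from \eqref{eq1} via \eqref{cat1}.

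The two remaining implications share the dictionary $\mathscr T^H_X(m\otimes(f^0+\mu)df^1\ldots df^n):=\phi(m\otimes f^0\otimes\ldots\otimes f^n)$, which in particular discards the adjoined-unit scalar $\mu$. For $(3)\Rightarrow(2)$ one takes $\phi\in Z^n_H(\mathcal D_H,M)$, \emph{defines} $\mathscr T^H$ on $\Omega(\mathcal D_H)$ by this formula, and verifies the axioms: \eqref{gt0} is precisely the $H$-linearity \eqref{new3.2} of $\phi$ after identifying the iterated diagonal action on $CN_n(\mathcal D_H)$ with \eqref{comp4.8}; \eqref{gt1} is automatic, since $\partial_H$ always returns a monomial $df^0df^1\ldots df^n$ whose leading function part vanishes, so $\mathscr T^H$ evaluates it as $\phi$ with a zero in the first tensor slot; and \eqref{gt2} is the clause that must be deduced from $b\phi=0$ and $(1-\lambda)\phi=0$. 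For $(2)\Rightarrow(3)$ I would obtain the two cocycle relations separately. To get $b\phi=0$, apply $\mathscr T^H$ to the algebraic ``juggling'' identity $(f^0df^1\ldots df^n)\cdot f^{n+1}=\sum_{i=1}^n(-1)^{n-i}f^0df^1\ldots d(f^if^{i+1})\ldots df^{n+1}+(-1)^nf^0f^1df^2\ldots df^{n+1}$ in $\Omega(\mathcal D_H)$: the right-hand side evaluates to $\sum_{i=0}^n(-1)^{n-i}\delta_i\phi$, whereas the left-hand side, being a product of the degree-$n$ form $f^0df^1\ldots df^n$ with the degree-$0$ morphism $f^{n+1}$, is rewritten by \eqref{gt2} as $\delta_{n+1}\phi$; comparing the two yields $\delta_{n+1}\phi=(-1)^nb'\phi$ and hence $b\phi=0$. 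To get $(1-\lambda)\phi=0$, write $f^0df^1\ldots df^n=(f^0df^1\ldots df^{n-1})\cdot df^n$ as a product of a degree-$(n-1)$ form with the degree-$1$ form $df^n$, apply \eqref{gt2}, rewrite $S^{-1}(m_{(-1)})df^n=d(S^{-1}(m_{(-1)})f^n)$ using \eqref{comp4.8}, and expand the composite by \eqref{compoDG}; the resulting exact term dies by closedness \eqref{gt1}, and what survives is $(-1)^n\tau_n\phi$, giving $\phi=\lambda\phi$.

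The main obstacle is the full verification of \eqref{gt2} in the direction $(3)\Rightarrow(2)$, for \emph{arbitrary} homogeneous $g=(a^0+\mu)da^1\ldots da^i$ and $g'=(b^0+\nu)db^1\ldots db^j$, rather than only for the single splitting used above to produce cyclicity. Here one must first reduce both $g'g$ and $(S^{-1}(m_{(-1)})g)g'$ to standard monomials through repeated use of \eqref{compoDG}, and then match the two sides term by term against $b\phi=0$ and $(1-\lambda)\phi=0$. The delicate bookkeeping is twofold: keeping the signs $(-1)^{ij}$ of \eqref{gt2} synchronized with the signed cyclic operator $\lambda=(-1)^n\tau_n$, and --- more importantly --- tracking the SAYD coefficient, since the factor $S^{-1}(m_{(-1)})$ carried by $\tau_n$ and by the last face $\delta_n$ must be reconciled with the $m_{(0)}\otimes S^{-1}(m_{(-1)})(\blank)$ that \eqref{gt2} introduces when a morphism is transported across the trace. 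It is precisely here that the anti-Yetter--Drinfeld identity \eqref{SAYDcondi} and the stability condition $m_{(0)}m_{(-1)}=m$ must be used, to guarantee that the twists produced by moving morphisms around the trace agree with those built into the (para-)cocyclic structure of Proposition \ref{prop2.3}.
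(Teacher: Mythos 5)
Your proposal is correct and follows essentially the same route as the paper: the equivalence $(1)\Leftrightarrow(2)$ via the universal property of $\Omega(\mathcal D_H)$ and the dictionary $\mathscr T^H_X(m\otimes(f^0+\mu)df^1\ldots df^n)=\phi(m\otimes f^0\otimes\ldots\otimes f^n)$ for $(3)\Rightarrow(2)$ are exactly the paper's constructions, and your $(2)\Rightarrow(3)$ computation (the Leibniz ``juggling'' identity for $b\phi=0$ and the splitting $f^0df^1\ldots df^{n-1}\cdot df^n$ for cyclicity) is the same calculation the paper performs as $(1)\Rightarrow(3)$ for a general cycle. The one piece you defer --- the full term-by-term verification of \eqref{gt2} for arbitrary homogeneous $g,g'$ using \eqref{compoDG}, the repeated application of $\phi=(-1)^n\tau_n\phi$, and the SAYD identities --- is indeed the longest computation in the paper's proof, and your description of how it goes is accurate.
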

\begin{proof}
(1) $\Rightarrow$ (2). By the universal property of $\Omega(\mathcal{D}_H)$, the $H$-linear semifunctor $\rho:\mathcal{D}_H \longrightarrow \mathcal{S}_H^0$ can be extended to a DGH-semifunctor $\hat{\rho}:\Omega(\mathcal{D}_H) \longrightarrow \mathcal{S}_H$ as in \eqref{cat1}.  We define a collection $\mathscr{T}^H:=\{\mathscr{T}^H_X:M \otimes Hom^n_{\Omega(\mathcal{D}_H)}(X,X) \longrightarrow k\}_{X \in Ob(\Omega(\mathcal{D}_H))}$  of $k$-linear maps  given by
\begin{equation}\label{ver1x}\mathscr{T}_X^H(m \otimes (f^0+\mu)df^1 \ldots df^n):=\hat{\mathscr T}^H_{X}\big(m \otimes \hat\rho((f^0+\mu)df^1 \ldots df^n)\big)
\end{equation} for any $m \in M$ and $f^0 \otimes \ldots \otimes f^n \in Hom_{\mathcal{D}_H}(X_1,X) \otimes Hom_{\mathcal{D}_H}(X_2,X_1) \otimes \ldots \otimes Hom_{\mathcal{D}_H}(X,X_n)$. In particular, it follows from \eqref{ver1x} that
\begin{equation}
\phi(m \otimes f^0 \otimes \ldots \otimes f^n)=\hat{\mathscr T}^H_{X}\big(m \otimes \rho(f^0)\hat{\partial}_H^0\left(\rho(f^1)\right) \ldots \hat{\partial}_H^0\left(\rho(f^n)\right)\big)=\mathscr{T}_X^H(m \otimes f^0df^1 \ldots df^n)
\end{equation} 
It may be verified that the collection $\mathscr{T}^H$ is an $n$-dimensional closed graded $(H,M)$-trace on $\Omega(\mathcal{D}_H)$. 

\smallskip
(2) $\Rightarrow$ (1). Suppose that we have a closed graded $(H,M)$-trace $\mathscr{T}^H$ of dimension $n$ on $\Omega(\mathcal{D}_H)$ satisfying \eqref{eq2}.  Then, the triple $(\Omega(\mathcal D_H), \partial_H, \mathscr{T}^H)$ forms an $n$-dimensional cycle over $\mathcal D_H$ with coefficients in $M$. Further, by observing that $\partial^0_{H}(f)=df$ for any $f \in Hom_{\mathcal{D}_H}(X,Y)$, we get \eqref{eq1}.

\smallskip
(1) $\Rightarrow$ (3). Let $(\mathcal{S}_H,\hat{\partial}_H, \hat{\mathscr T}^H)$ be an $n$-dimensional cycle over $\mathcal D_H$ with coefficients in $M$ and $\rho:\mathcal{D}_H \longrightarrow \mathcal{S}^0_H$ be an $H$-linear semifunctor satisfying 
$$\phi(m \otimes f^0 \otimes \ldots \otimes f^n)=\hat{\mathscr T}^H_{X_0}\big(m \otimes \rho(f^0)\hat{\partial}_H^0\left(\rho(f^1)\right) \ldots \hat{\partial}_H^0\left(\rho(f^n)\right)\big)$$
for any $m \in M$ and $f^0 \otimes \ldots \otimes f^n \in Hom_{\mathcal{D}_H}(X_1,X_0) \otimes Hom_{\mathcal{D}_H}(X_2,X_1) \otimes \ldots \otimes Hom_{\mathcal{D}_H}(X_0,X_n)$.
For simplicity of  notation, we will drop the functor $\rho$.
To show that $\phi$ is an $n$-cocycle, it suffices to check that (see \eqref{3.3y})
$$b(\phi)=0\quad \text{and} \quad (1-\lambda)(\phi)=0$$ where $b=\sum\limits_{i=0}^{n+1}(-1)^i\delta_i$ and $\lambda=(-1)^n\tau_n$. For any $p^0 \otimes \ldots \otimes p^{n+1} \in Hom_{\mathcal{D}_H}(X_1,X_0) \otimes Hom_{\mathcal{D}_H}(X_2,X_1) \otimes \ldots \otimes Hom_{\mathcal{D}_H}(X_0,X_{n+1})$, we have
\begin{align*}
&\sum\limits_{i=0}^{n+1}(-1)^i\delta_i(\phi)(m \otimes p^0 \otimes \ldots \otimes p^{n+1})\\
&= \sum\limits_{i=0}^{n}(-1)^i\phi(m \otimes p^0 \otimes \ldots \otimes p^ip^{i+1} \otimes \ldots \otimes p^{n+1})~ + (-1)^{n+1}\phi\big(m_{(0)} \otimes \big(S^{-1}(m_{(-1)})p^{n+1}\big)p^0 \otimes p^1 \otimes \ldots \otimes p^{n}\big)\\
&= \hat{\mathscr T}^H_{X_0}\big(m \otimes p^0p^1\hat{\partial}_H^0(p^2) \ldots \hat{\partial}_H^0(p^{n+1})\big) ~+ \sum\limits_{i=1}^{n}(-1)^i \hat{\mathscr T}^H_{X_0}\big(m \otimes p^0\hat{\partial}_H^0(p^1) \ldots \hat{\partial}_H^0(p^ip^{i+1})\ldots \hat{\partial}_H^0(p^{n+1})\big)~ +\\
& \quad (-1)^{n+1}\hat{\mathscr T}^H_{X_{n+1}}\big(m_{(0)} \otimes \big(S^{-1}(m_{(-1)})p^{n+1}\big)p^0 \hat{\partial}_H^0(p^1)\ldots \otimes \hat{\partial}_H^0(p^n)\big)
\end{align*}
Now using the equality $\hat{\partial}_H^0(fg)=\hat{\partial}_H^0(f)g+f\hat{\partial}_H^0(g)$ for any $f$ and $g$ of degree $0$, we have
 \begin{align*}
&\big(p^0\hat{\partial}_H^0(p^1) \ldots \hat{\partial}_H^0(p^n)\big)p^{n+1}\\
&=\sum\limits_{i=1}^n (-1)^{n-i} p^0\hat{\partial}_H^0(p^1) \ldots \hat{\partial}_H^0(p^ip^{i+1}) \ldots \hat{\partial}_H^0(p^{n+1}) + (-1)^n p^0p^1\hat{\partial}_H^0(p^2) \ldots \hat{\partial}_H^0(p^{n+1})
\end{align*}
Thus, using the condition in \eqref{gt2},  we obtain
\begin{align*}
&\sum\limits_{i=0}^{n+1}(-1)^i\delta_i(\phi)(m \otimes p^0 \otimes \ldots \otimes p^{n+1})\\
&= (-1)^n \hat{\mathscr T}^H_{X_0}\big(m \otimes \big( p^0\hat{\partial}_H^0(p^1) \ldots \hat{\partial}_H^0(p^n)\big)p^{n+1}\big) + (-1)^{n+1}\hat{\mathscr T}^H_{X_{n+1}}\big(m_{(0)} \otimes \big(S^{-1}(m_{(-1)})p^{n+1}\big)p^0 \hat{\partial}_H^0(p^1)\ldots  \hat{\partial}_H^0(p^n)\big)=0
\end{align*}

Next, using \eqref{gt1}, \eqref{gt2}, and the $H$-linearity of $\hat{\partial}_H$,  we have
\begin{align*}
&\big(\left(1-(-1)^n\tau_n\right)\phi\big)(m \otimes f^0 \otimes \ldots \otimes f^{n})\\
&=\phi(m \otimes f^0 \otimes \ldots \otimes f^{n})- (-1)^n\phi\big(m_{(0)} \otimes S^{-1}(m_{(-1)})f^n \otimes f^0 \otimes \ldots \otimes f^{n-1}\big)\\
&=\hat{\mathscr T}^H_{X_0}(m \otimes f^0\hat{\partial}_H^0(f^1) \ldots \hat{\partial}_H^0(f^n))-(-1)^n \hat{\mathscr T}^H_{X_n}\big(m_{(0)} \otimes \big(S^{-1}(m_{(-1)})f^n\big)\hat{\partial}_H^0(f^0)\hat{\partial}_H^0(f^1) \ldots \hat{\partial}_H^0(f^{n-1})\big)\\
&= (-1)^{n-1} \hat{\mathscr T}^H_{X_n}\big (m_{(0)} \otimes \big(S^{-1}(m_{(-1)})\hat{\partial}_H^0(f^n)\big)f^0\hat{\partial}_H^0(f^1) \ldots \hat{\partial}^0_H(f^{n-1}) \big)+\\
& \quad (-1)^{n-1} \hat{\mathscr T}^H_{X_n}\big(m_{(0)} \otimes \big(S^{-1}(m_{(-1)})f^n\big)\hat{\partial}^0_H(f^0) \hat{\partial}_H^0(f^1)\ldots \hat{\partial}^0_H(f^{n-1})\big)\\
&= (-1)^{n-1} \hat{\mathscr T}^H_{X_n}\big(m_{(0)} \otimes \hat{\partial}_H^{n-1}\big((S^{-1}(m_{(-1)})f^n)f^0\hat{\partial}_H^0(f^1) \ldots \hat{\partial}_H^0(f^{n-1})\big)\big)=0
\end{align*}

(3) $\Rightarrow$ (2). Let $\phi \in Z^n_H(\mathcal{D}_H,M)$. For each $X \in Ob(\Omega(\mathcal{D}_H))$, we define an $H$-linear map
$\mathscr{T}^H_X:M \otimes Hom^n_{\Omega(\mathcal{D}_H)}(X,X) \longrightarrow k$ given by
$$\mathscr{T}^H_X(m \otimes (f^0+\mu)df^1\ldots df^{n}):= \phi(m \otimes f^0 \otimes \ldots \otimes f^{n})$$
for $f^0 \otimes \ldots \otimes f^{n} \in Hom_{\mathcal{D}_H}(X_1,X) \otimes Hom_{\mathcal{D}_H}(X_2,X_1) \otimes \ldots \otimes Hom_{\mathcal{D}_H}(X,X_{n})$. We now verify that the collection $\{\mathscr{T}^n_X:M \otimes Hom^n_{\Omega(\mathcal{D}_H)}(X,X) \longrightarrow k\}_{X \in Ob(\Omega(\mathcal{D}_H))}$ is a closed graded $(H,M)$-trace on $(\Omega(\mathcal{D}_H),\partial_H)$. For any $(p^0+\mu)dp^1\ldots dp^{n-1} \in Hom^{n-1}_{\Omega(\mathcal{D}_H)}(X,X)$, we have
\begin{align*}
\mathscr{T}_X^{H}\big(m \otimes \partial_H^{n-1}((p^0+\mu)dp^1\ldots dp^{n-1})\big)&=\mathscr{T}_X^{H}\big(m \otimes 1dp^0dp^1\ldots dp^{n-1}\big)=\phi(m \otimes 0 \otimes p^0 \otimes \ldots \otimes p^{n-1})=0
\end{align*}
This proves the condition in \eqref{gt1}. Using \eqref{new3.2}, it is also clear that $\{\mathscr{T}^n_X:M \otimes Hom^n_{\Omega(\mathcal{D}_H)}(X,X) \longrightarrow k\}_{X \in Ob(\Omega(\mathcal{D}_H))}$ satisfies condition \eqref{gt0}. Finally, for any $g'=(g^0+\mu')dg^1\ldots dg^{r} \in Hom^r_{\Omega(\mathcal{D}_H)}(Y,X)$ and $g=(g^{r+1}+\mu)dg^{r+2}\ldots dg^{n+1} \in Hom^{n-r}_{\Omega(\mathcal{D}_H)}(X,Y)$, we have
\begin{align*}
&\mathscr{T}_X^{H}\big(m \otimes g'g\big)\\&=\sum\limits_{j=1}^r (-1)^{r-j}~ \mathscr{T}_X^{H}\big(m \otimes (g^0+\mu')dg^1 \ldots d(g^jg^{j+1}) \ldots dg^{n+1}\big) + (-1)^r~ \mathscr{T}_X^{H}\big(m \otimes (g^0+\mu')g^1dg^2 \ldots dg^{n+1}\big)\\
&\textrm{ }+\mathscr{T}_X^{H}\big(m \otimes \mu(g^0+\mu')dg^1 \ldots dg^rdg^{r+2}\ldots dg^{n+1}\big) \\
&= \sum\limits_{j=1}^r (-1)^{r-j} \phi(m \otimes g^0 \otimes \ldots \otimes g^jg^{j+1} \otimes \ldots \otimes g^{n+1}) + (-1)^r~ \phi(m \otimes g^0g^1 \otimes g^2 \otimes \ldots \otimes g^{n+1})\\
& \textrm{ }+ (-1)^r~ \mu'\phi(m \otimes g^1 \otimes g^2 \otimes \ldots \otimes g^{n+1})+\mu \phi(m \otimes g^0\otimes g^1 \otimes...\otimes g^r\otimes g^{r+2} \otimes \ldots \otimes g^{n+1})\\
&= \sum\limits_{j=0}^r (-1)^{r+j} \phi(m \otimes g^0 \otimes \ldots \otimes g^jg^{j+1} \otimes \ldots \otimes g^{n+1})+ (-1)^r~ \mu'\phi(m \otimes g^1 \otimes g^2 \otimes \ldots \otimes g^{n+1})\\ &\textrm{ }+\mu \phi(m \otimes g^0\otimes g^1 \otimes...\otimes g^r\otimes g^{r+2} \otimes \ldots \otimes g^{n+1})
\end{align*}

On the other hand, we have
\begin{equation*}
\begin{array}{ll}
&(-1)^{r(n-r)}~\mathscr{T}_Y^{H}\Big(m_{(0)} \otimes \big(S^{-1}(m_{(-1)})g\big)g'\Big)\\
&=(-1)^{r(n-r)}~\mathscr{T}_Y^{H}\Big(m_{(0)} \otimes \left([S^{-1}\left((m_{(-1)})_{n-r+1}\right)(g^{r+1}+\mu)][d\left(S^{-1}\left((m_{(-1)})_{n-r}\right)g^{r+2}\right)] \ldots [d\left(S^{-1}\left((m_{(-1)})_{1}\right)g^{n+1}\right)]\right)\circ\\
&\qquad((g^0+\mu')dg^1\ldots dg^{r}) \Big)\\
&=(-1)^{r(n-r)} \sum\limits_{j=r+2}^{n} (-1)^{n-j+1} ~\mathscr{T}_Y^{H}\Big(m_{(0)} \otimes [S^{-1}\left((m_{(-1)})_{n-r}\right)(g^{r+1}+\mu)]\ldots d\big[\big(S^{-1}((m_{(-1)})_{n-j+1})(g^{j}g^{j+1})\big]\ldots dg^r)+\\
&\quad (-1)^{r(n-r)}~\mathscr{T}_Y^{H}\Big(m_{(0)} \otimes [S^{-1}\left((m_{(-1)})_{n-r+1}\right)(g^{r+1}+\mu)]\ldots d[\big(S^{-1}((m_{(-1)})_1)g^{n+1}\big)g^{0}] \ldots dg^r\Big)+\\
& \quad (-1)^{r(n-r)} (-1)^{n-r} \mathscr{T}_Y^{H}\Big(m_{(0)} \otimes \left([S^{-1}\left((m_{(-1)})_{n-r}\right)((g^{r+1}+\mu)g^{r+2})]\right) \ldots [d\left(S^{-1}\left((m_{(-1)})_{1}\right)g^{n+1}\right)]
(dg^0dg^1\ldots dg^{r}) \Big)\\
&\textrm{ }+(-1)^{r(n-r)}\mu'\mathscr{T}_Y^{H}\Big(m_{(0)} \otimes \left. [S^{-1}\left((m_{(-1)})_{n-r+1}\right)(g^{r+1}+\mu)][d\left(S^{-1}\left((m_{(-1)})_{n-r}\right)g^{r+2}\right)] \ldots [d\left(S^{-1}\left((m_{(-1)})_{1}\right)g^{n+1}\right)]\right.\\
&\qquad dg^1\ldots dg^{r} \Big)\\
&=(-1)^{r(n-r)} \sum\limits_{j=r+2}^{n}(-1)^{n-j+1} ~\phi\Big(m_{(0)} \otimes S^{-1}\left((m_{(-1)})_{n-r}\right)g^{r+1} \otimes \ldots \otimes \big(S^{-1}((m_{(-1)})_{n-j+1})(g^{j}g^{j+1}) \otimes \ldots \otimes g^r)+\\
&\quad (-1)^{r(n-r)}~\phi\Big(m_{(0)} \otimes S^{-1}\left((m_{(-1)})_{n-r+1}\right)g^{r+1} \otimes \ldots \otimes \big(S^{-1}((m_{(-1)})_1)g^{n+1}\big)g^{0} \otimes \ldots \otimes g^r\Big)+\\
& \quad (-1)^{r(n-r)} (-1)^{n-r} \phi\Big(m_{(0)} \otimes S^{-1}\left((m_{(-1)})_{n-r}\right)(g^{r+1}g^{r+2}) \otimes \ldots \otimes \left(S^{-1}\left((m_{(-1)})_{1}\right)g^{n+1}\right) \otimes  g^0 \otimes g^1 \otimes \ldots \otimes g^{r} \Big)\\
& \quad (-1)^{r(n-r)} (-1)^{n-r} \mu \phi\Big(m_{(0)} \otimes  S^{-1}\left((m_{(-1)})_{n-r}\right)g^{r+2} \otimes \ldots \otimes \left(S^{-1}\left((m_{(-1)})_{1}\right)g^{n+1}\right) \otimes  g^0 \otimes g^1 \otimes \ldots \otimes g^{r} \Big)\\
&\textrm{ }+(-1)^{r(n-r)}\mu'\phi\Big(m_{(0)} \otimes \left. S^{-1}\left((m_{(-1)})_{n-r+1}\right)g^{r+1}\otimes S^{-1}\left((m_{(-1)})_{n-r}\right)g^{r+2}\otimes \ldots \otimes (S^{-1}\left((m_{(-1)})_{1}\right)g^{n+1}\otimes \right. g^1\otimes \ldots \otimes g^{r} \Big) \\
\end{array}
\end{equation*}

Using repeatedly the fact that  $\phi=(-1)^n\tau_n\phi$, we get

\begin{align*}
&(-1)^{r(n-r)}~\mathscr T_Y^{n}\Big(m_{(0)} \otimes \big(S^{-1}(m_{(-1)})g\big)g'\Big)\\
&=-\sum\limits_{j=r+1}^{n} (-1)^{r+j} \phi(m \otimes g^0 \otimes \ldots \otimes g^jg^{j+1} \otimes \ldots \otimes g^{n+1})-(-1)^{n+r+1}\phi\Big(m_{(0)}  \otimes \big(S^{-1}(m_{(-1)})g^{n+1}\big)g^0 \otimes g^1 \otimes \ldots \otimes g^n\Big)\\
&\textrm{ } + (-1)^r~ \mu'\phi(m \otimes g^1 \otimes g^2 \otimes \ldots \otimes g^{n+1}) +\mu \phi(m \otimes g^0\otimes g^1 \otimes...\otimes g^r\otimes g^{r+2} \otimes \ldots \otimes g^{n+1})
\end{align*}

\smallskip
The condition \eqref{gt2} now follows using the fact that $b(\phi)=0$. This proves the result.
\end{proof}

\begin{remark}\label{rem4.11}
From the statement and proof of Theorem \ref{charcycl}, it is clear that there is a one to one correspondence between $n$-dimensional closed graded $(H,M)$-traces on $\Omega(\mathcal{D}_H)$ and $Z^n_H(\mathcal{D}_H,M)$.
\end{remark}

\section{Linearization by matrices and Hopf-cyclic cohomology}\label{Morita}
 In the previous section, we described the spaces $Z^\bullet_H(\mathcal{D}_H,M)$. The next aim is to find a characterization of  $B^\bullet_H(\mathcal{D}_H,M)$  which will be done in several steps. For this, we will show in this section that the Hopf-cyclic cohomology of an $H$-category  $\mathcal D_H$ is the same as that of its linearization $\mathcal{D}_H \otimes M_r(k)$ by the algebra of $r\times r$-matrices. We observe that $\mathcal{D}_H \otimes M_r(k)$ is also a left $H$-category. We denote by $\overline{Cat}_H$ the category whose objects are left $H$-categories and whose morphisms are $H$-linear semifunctors.

\smallskip
We denote by $Vect_k$ the category of all $k$-vector spaces and by $H\text{-}Mod$ the category of all left $H$-modules. Let
$Hom_H(-,k):H\text{-}Mod \longrightarrow Vect_k$ be the functor that takes $N \mapsto Hom_H(N,k)$. 

\smallskip
We fix $r\geq 1$. For $1\leq i,j\leq r$ and $\alpha\in k$, we let $E_{ij}(\alpha)$ denote the elementary matrix in $M_r(k)$ having $\alpha$
at $(i,j)$-th position and $0$ everywhere else. We will often use $E_{ij}$ for $E_{ij}(1)$. For each $1\leq p\leq r$, we have an inclusion $inc_p:\mathcal{D}_H \longrightarrow \mathcal{D}_H \otimes M_r(k)$ in $\overline{Cat}_H$ which fixes the objects and 
 $inc_p(f)=f\otimes E_{pp}=f \otimes E_{pp}(1)$ for any morphism $f \in \mathcal{D}_H$. 

\smallskip
For any right-left SAYD-module $M$, the inclusion $inc_p:\mathcal{D}_H \longrightarrow \mathcal{D}_H  \otimes M_r(k)$ induces an inclusion map 
$({inc}_p,M):M \otimes  CN_n(\mathcal{D}_H)  \longrightarrow M \otimes CN_n\left(\mathcal{D}_H  \otimes M_r(k)\right) $ which takes 
$
m \otimes f^0 \otimes \ldots \otimes f^n \mapsto m \otimes (f^0 \otimes E_{pp}) \otimes \ldots \otimes (f^n \otimes E_{pp})
$. This induces a morphism of Hochschild complexes
$C_\bullet(inc_p,M)^{hoc}:C_\bullet(\mathcal{D}_H,M)^{hoc} \longrightarrow C_\bullet\left(\mathcal{D}_H \otimes M_r(k),M\right)^{hoc}$.
Applying the functor $Hom_H(-,k)$, we obtain morphisms of Hochschild complexes
$
C^{\bullet}_H(inc_1,M)^{hoc}:C^{\bullet}_H\left(\mathcal{D}_H \otimes M_r(k),M\right)^{hoc} \longrightarrow C^{\bullet}_H(\mathcal{D}_H,M)^{hoc}
$.

\smallskip
We also have the induced morphism of double complexes computing cyclic homology
$C_{\bullet\bullet}(inc_p,M)^{cy}:C_{\bullet\bullet}(\mathcal{D}_H,M)^{cy} \longrightarrow C_{\bullet\bullet}\left(\mathcal{D}_H \otimes M_r(k),M\right)^{cy}$.
Applying the functor $Hom_H(-,k)$, we obtain a morphism of double complexes computing cyclic cohomology
$
C^{\bullet\bullet}_H(inc_1,M)^{cy}:C^{\bullet\bullet}_H(\mathcal{D}_H\otimes M_r(k),M)^{cy} \longrightarrow C_H^{\bullet\bullet}\left(\mathcal{D}_H ,M\right)^{cy}
$.

\smallskip
For each $n \geq 0$, there is  an $H$-linear trace map 
$tr^M:M \otimes CN_n\left(\mathcal{D}_H \otimes M_r(k)\right) \longrightarrow M \otimes CN_n(\mathcal{D}_H) $ given by 
\begin{equation}\label{tracemap}
tr^M\left(m \otimes (f^0 \otimes B^0) \otimes \ldots \otimes (f^n \otimes B^n)\right):=(m \otimes f^0 \otimes \ldots \otimes f^n)\text{trace}(B^0\ldots B^n)
\end{equation}
for any $m \in M$ and $(f^0 \otimes B^0) \otimes \ldots \otimes (f^n \otimes B^n) \in CN_n\left(\mathcal{D}_H \otimes M_r(k)\right)$.
It may be verified easily that the trace map as in \eqref{tracemap} defines a morphism $C_\bullet(tr^M):C_\bullet\left(\mathcal{D}_H \otimes M_r(k),M\right) \longrightarrow C_\bullet(\mathcal{D}_H,M)$ of para-cyclic modules.
In particular, we have an induced morphism between underlying Hochschild complexes
\begin{equation*}
C_\bullet({tr^M})^{hoc}:C_\bullet\left(\mathcal{D}_H \otimes M_r(k),M\right)^{hoc} \longrightarrow  C_\bullet(\mathcal{D}_H,M)^{hoc}
\end{equation*}

\begin{prop}\label{homotopy}
The maps $C_\bullet({inc}_1,M)^{hoc}$ and $C_\bullet(tr^M)^{hoc}$ are homotopy inverses of each other.
\end{prop}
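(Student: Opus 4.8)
The plan is to verify the two composites separately: one turns out to be the identity already at the chain level, while the other requires an explicit contracting homotopy adapted from the classical matrix-Morita homotopy. Throughout I write $\iota:=C_\bullet(inc_1,M)^{hoc}$ and $\tau:=C_\bullet(tr^M)^{hoc}$.

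First I would dispose of the composite $\tau\circ\iota$. By definition $\iota(m\otimes f^0\otimes\cdots\otimes f^n)=m\otimes(f^0\otimes E_{11})\otimes\cdots\otimes(f^n\otimes E_{11})$, so by \eqref{tracemap} we get $\tau\iota(m\otimes f^0\otimes\cdots\otimes f^n)=(m\otimes f^0\otimes\cdots\otimes f^n)\,\text{trace}(E_{11}\cdots E_{11})=m\otimes f^0\otimes\cdots\otimes f^n$, since $E_{11}\cdots E_{11}=E_{11}$ and $\text{trace}(E_{11})=1$. Thus $\tau\iota=\mathrm{id}$ on $C_\bullet(\mathcal{D}_H,M)^{hoc}$ on the nose, hence certainly as chain maps.

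The substance is the reverse composite $\iota\tau$, for which I would construct a chain homotopy $h=\{h_n\colon C_n(\mathcal{D}_H\otimes M_r(k),M)\to C_{n+1}(\mathcal{D}_H\otimes M_r(k),M)\}$ with $bh+hb=\mathrm{id}-\iota\tau$. By $k$-linearity it suffices to define $h$ on generators $x=m\otimes(f^0\otimes E_{p_0q_0})\otimes\cdots\otimes(f^n\otimes E_{p_nq_n})$, following the classical matrix homotopy (see, e.g., \cite{Loday}, Ch.\ 1), which splices the cyclic closure of the matrix indices through the corner $E_{11}$ by inserting factors $\mathrm{id}_X\otimes E_{1\ast}$ built from the identity morphisms of $\mathcal{D}_H$. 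The mechanism is already transparent in degree $0$: on $x=m\otimes(f^0\otimes E_{p_0q_0})$ with $f^0\in Hom_{\mathcal{D}_H}(X_0,X_0)$ one sets $h_0(x)=m\otimes(f^0\otimes E_{p_01})\otimes(\mathrm{id}_{X_0}\otimes E_{1q_0})$. Applying $b=d_0-d_1$ then gives $d_0h_0(x)=m\otimes(f^0\otimes E_{p_01}E_{1q_0})=x$, while $d_1h_0(x)=\delta_{p_0q_0}\,m\otimes(f^0\otimes E_{11})=\iota\tau(x)$, so that $bh_0(x)=x-\iota\tau(x)$ exactly. In general $h_n$ is an alternating sum of such bridge-insertions, one at each tensor position, and the verification of $bh+hb=\mathrm{id}-\iota\tau$ reduces to the standard signed telescoping of the face maps acting on the matrix units.

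The hard part, and the only genuinely new feature beyond the untwisted case, will be the interaction of these insertions with the twisted last face operator $d_n$ of Proposition \ref{P6.2wp}, which carries the SAYD coaction $m\mapsto m_{(0)}\otimes m_{(-1)}$ and the antipode factor $S^{-1}(m_{(-1)})$. This is precisely where the hypotheses on $M$ and on the $H$-action are used: because $H$ acts on identity morphisms through the counit, $S^{-1}(m_{(-1)})\,\mathrm{id}_X=\varepsilon(S^{-1}(m_{(-1)}))\,\mathrm{id}_X=\varepsilon(m_{(-1)})\,\mathrm{id}_X$, and together with the counit identity $m_{(0)}\varepsilon(m_{(-1)})=m$ the twist collapses exactly as in the degree-$0$ computation of $d_1h_0(x)$ above. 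Hence the inserted $\mathrm{id}_X\otimes E_{1\ast}$ factors are transparent to the twist, and the argument away from the wrap-around slot is identical to the classical one. Once $h$ is checked to satisfy $bh+hb=\mathrm{id}-\iota\tau$, this together with $\tau\iota=\mathrm{id}$ exhibits $\iota$ and $\tau$ as mutually inverse homotopy equivalences, proving the proposition.
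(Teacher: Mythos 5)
Your proposal is correct and follows essentially the same route as the paper: the composite $tr^M\circ inc_1$ is the identity on the nose, and the reverse composite is handled by the classical matrix-Morita (pre-)simplicial homotopy that routes the matrix indices through the corner $E_{11}$ and inserts bridges $id_X\otimes E_{1\ast}$, with the only new point being that the inserted identity morphisms are transparent to the twisted last face because $H$ acts on identities via the counit and $m_{(0)}\varepsilon(m_{(-1)})=m$. Your degree-zero computation and your identification of where the SAYD/$H$-category axioms enter match the paper's verification of the relation $d_{n+1}\hbar_n=C_\bullet(inc_1,M)^{hoc}\circ C_\bullet(tr^M)^{hoc}$.
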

\begin{proof}
It may be easily verified that $C_\bullet(tr^M)^{hoc} \circ C_\bullet(inc_1,M)^{hoc}=id$. 
To show that $C_\bullet(inc_1,M)^{hoc} \circ C_\bullet(tr^M)^{hoc} \sim id$, we  define  $k$-linear maps
$\{\hbar_i: C_{n}\left(\mathcal{D}_H \otimes M_r(k),M\right) \longrightarrow C_{n+1}\left(\mathcal{D}_H \otimes M_r(k),M\right)\}_{0\leq i\leq n}$  by setting:
\begin{equation*}
\begin{array}{ll}
\hbar_i\left(m \otimes (f^0 \otimes B^0) \otimes \ldots \otimes (f^n \otimes B^n)\right):=& m \otimes \sum_{1 \leq j,k,l,\ldots,p,q\leq r}  (f^0 \otimes E_{j1}(B^0_{jk})) \otimes (f^1 \otimes E_{11}(B^1_{kl})) \otimes \ldots\\ & \quad  \otimes (f^i \otimes E_{11}(B^i_{pq})) \otimes (id_{X_{i+1}} \otimes E_{1q}(1)) \otimes (f^{i+1} \otimes B^{i+1}) \otimes \ldots\\
& \quad \ldots \otimes (f^n \otimes B^n)
\end{array}
\end{equation*}
for $0\leq i<n$ and 
\begin{equation*}
\begin{array}{ll}
\hbar_n\left(m \otimes (f^0 \otimes B^0) \otimes \ldots \otimes (f^n \otimes B^n)\right):= & m \otimes \sum_{1 \leq j,k,m,\ldots,p,q \leq r}  (f^0 \otimes E_{j1}(B^0_{jk})) \otimes (f^1 \otimes E_{11}(B^1_{km})) \otimes \ldots\\ & \quad \ldots \otimes (f^n \otimes E_{11}(B^n_{pq})) \otimes (id_{X_0} \otimes E_{1q}(1))
\end{array}
\end{equation*}
We now verify that $\hbar^n:=\sum_{i=0}^n (-1)^i\hbar_i$ is a pre-simplicial homotopy (see, for instance, \cite[$\S$ 1.0.8]{Loday}) between $C_\bullet(inc_1,M)^{hoc} \circ C_\bullet(tr^M)^{hoc}$ and $id_{C_\bullet\left(\mathcal{D}_H \otimes M_r(k),M\right)}$. For this, we need to verify the following identities:
\begin{equation}\label{relations}
\begin{array}{lll}
d_i\hbar_{i'}=\hbar_{i'-1}d_i & \text{for}~ i<i'\\
d_i\hbar_i=d_i\hbar_{i-1} & \text{for}~ 0< i \leq n\\
d_i\hbar_{i'}= \hbar_{i'}d_{i-1} & \text{for}~ i>i'+1\\
d_0\hbar_0= id_{C_\bullet\left(\mathcal{D}_H \otimes M_r(k),M\right)^{hoc}}& \text{and}~ d_{n+1}\hbar_n=C_\bullet(inc_1,M)^{hoc} \circ C_\bullet(tr^M)^{hoc}
\end{array}
\end{equation}
where $d_i:C_{n+1}\left(\mathcal{D}_H \otimes M_r(k),M\right) \longrightarrow C_{n}\left(\mathcal{D}_H \otimes M_r(k),M\right)$, $0 \leq i \leq n+1$ are the face maps.  We only verify the last one in \eqref{relations} because the others follow similarly. 
Using the fact that $E_{1q}(1)E_{j1}(B_{jk})=0$ unless $q=j$, we have 
\begin{equation*}
\begin{array}{ll}
&d_{n+1}\hbar_n\left(m \otimes (f^0 \otimes B^0) \otimes \ldots \otimes (f^n \otimes B^n)\right)\\
& \quad =d_{n+1}\big(m \otimes \sum_{1 \leq j,k,l,\ldots,p,q \leq r}  (f^0 \otimes E_{j1}(B^0_{jk})) \otimes (f^1 \otimes E_{11}(B^1_{kl})) \otimes \ldots \otimes (f^n \otimes E_{11}(B^n_{pq})) \otimes 
 (id_{X_{0}} \otimes E_{1q}(1))\big)\\
& \quad =m_{(0)} \otimes \sum_{1 \leq j,k,l,\ldots,p,q \leq r} \left(S^{-1}(m_{(-1)})(id_{X_{0}} \otimes E_{1q}(1))\right)(f^0 \otimes E_{j1}(B^0_{jk}))\otimes (f^1 \otimes E_{11}(B^1_{kl})) \otimes \ldots \\
&\qquad \ldots \otimes (f^n \otimes E_{11}(B^n_{pq}))\\
& \quad= m \otimes \sum_{1 \leq j,k,l,\ldots,p,q \leq r}\left(f^0 \otimes E_{1q}(1)E_{j1}(B^0_{jk})\right) \otimes (f^1 \otimes E_{11}(B^1_{kl})) \otimes \ldots \otimes (f^n \otimes E_{11}(B^n_{pq}))\\
& \quad= m \otimes \sum_{1 \leq j,k,l,\ldots,p\leq r}\left(f^0 \otimes E_{1j}(1)E_{j1}(B^0_{jk})\right) \otimes (f^1 \otimes E_{11}(B^1_{kl})) \otimes \ldots \otimes (f^n \otimes E_{11}(B^n_{pj}))\\
& \quad= m \otimes \sum_{1 \leq j,k,l,\ldots,p\leq r}(f^0 \otimes E_{11}(B^0_{jk})) \otimes (f^1 \otimes E_{11}(B^1_{kl})) \otimes \ldots \otimes (f^n \otimes E_{11}(B^n_{pj}))\\
& \quad = \left(m \otimes (f^0 \otimes E_{11}) \otimes \ldots \otimes (f^n \otimes E_{11})\right) \sum_{1 \leq j,k,l,\ldots,p\leq r}(B^0_{jk}B^1_{kl}\ldots B^n_{pj})\\
& \quad = \left(m \otimes (f^0 \otimes E_{11}) \otimes \ldots \otimes (f^n \otimes E_{11})\right) \sum_{1 \leq j \leq r} 
(B^0B^1 \ldots B^n)_{jj}\\
& \quad = \left(m \otimes (f^0 \otimes E_{11}) \otimes \ldots \otimes (f^n \otimes E_{11})\right)trace(B^0B^1 \ldots B^n)\\
& \quad = \left(C_\bullet(inc_1,M)^{hoc} \circ C_\bullet(tr^M)^{hoc}\right)\left(m \otimes (f^0 \otimes B^0) \otimes \ldots \otimes (f^n \otimes B^n)\right)
\end{array}
\end{equation*} This proves the result.
\end{proof}

\begin{prop}\label{moritainvcyc} Let $\mathcal{D}_H$ be a left $H$-category and $M$ be a right-left SAYD module. Then,

\smallskip
(1) The morphisms
\begin{equation*}
\begin{array}{ll}
HC^{\bullet}_H(inc_1,M)^{hoc}:HC^{\bullet}_H\left(\mathcal{D}_H \otimes M_r(k),M\right)^{hoc} \longrightarrow HC^{\bullet}_H(\mathcal{D}_H,M)^{hoc}\\
HC^{\bullet}_H(tr^M)^{hoc}:HC^{\bullet}_H(\mathcal{D}_H,M)^{hoc} \longrightarrow HC^{\bullet}_H\left(\mathcal{D}_H \otimes M_r(k),M\right)^{hoc}
\end{array}
\end{equation*}
induced by $C^{\bullet}_H(inc_1,M)^{hoc}$ and $C^{\bullet}_H(tr^M)^{hoc}$ are mutually inverse isomorphisms of Hochschild cohomologies.

\smallskip
(2)  We have  isomorphisms 
 \begin{equation*} \xy (10,0)*{HC^\bullet_H(\mathcal{D}_H,M)}; (40,3)*{HC^{\bullet}_H(tr^M)}; (40,-3.3)*{HC^{\bullet}_H(inc_1,M)};  {\ar
(25,1)*{}; (60,1)*{}}; {\ar (60,-1)*{};(25,-1)*{}};
\endxy \quad HC^\bullet_H\left(\mathcal{D}_H \otimes M_r(k),M\right)
\end{equation*}
\end{prop}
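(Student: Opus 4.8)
The plan is to deduce part (1) by dualizing the chain-level homotopy equivalence of Proposition \ref{homotopy}, and then to propagate the resulting Hochschild isomorphism up to cyclic cohomology in part (2) via the naturality of Connes' periodicity sequence. For part (1), I would use the two facts established in Proposition \ref{homotopy}: the identity $C_\bullet(tr^M)^{hoc} \circ C_\bullet(inc_1,M)^{hoc} = id$ holds strictly, while $C_\bullet(inc_1,M)^{hoc} \circ C_\bullet(tr^M)^{hoc}$ is chain homotopic to the identity through the explicit pre-simplicial homotopy $\hbar^\bullet = \sum_i (-1)^i \hbar_i$. Since $Hom_H(-,k)$ is an additive contravariant functor, and additive functors carry chain homotopies to chain homotopies and preserve identities, applying it yields cochain maps $C^\bullet_H(inc_1,M)^{hoc}$ and $C^\bullet_H(tr^M)^{hoc}$ with $C^\bullet_H(inc_1,M)^{hoc} \circ C^\bullet_H(tr^M)^{hoc} = id$ and $C^\bullet_H(tr^M)^{hoc} \circ C^\bullet_H(inc_1,M)^{hoc} \simeq id$, the latter homotopy being $Hom_H(\hbar^\bullet,k)$. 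Passing to cohomology then gives the mutually inverse isomorphisms of Hochschild cohomology claimed in part (1).

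For part (2), the decisive extra input is that $C_\bullet(inc_1,M)$ and $C_\bullet(tr^M)$ are morphisms of the entire (para-)cyclic module, not merely of the underlying Hochschild complex; after passing to $\otimes_H$ and applying $Hom_H(-,k)$ they become morphisms of the cocyclic modules of Proposition \ref{prop2.3}(2), and hence are compatible with the full structure of the Connes periodicity ($SBI$) exact sequence attached to each. Writing $HH^\bullet$ for the Hochschild cohomology $HC^\bullet_H(-,M)^{hoc}$ and $HC^\bullet$ for the cyclic cohomology $HC^\bullet_H(-,M)$, this sequence reads
\begin{equation*}
\cdots \longrightarrow HH^{n-1} \overset{B}{\longrightarrow} HC^{n-2} \overset{S}{\longrightarrow} HC^n \overset{I}{\longrightarrow} HH^n \overset{B}{\longrightarrow} HC^{n-1} \longrightarrow \cdots
\end{equation*}
and the induced maps commute with $S$, $B$ and $I$. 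By part (1) the maps on every Hochschild term $HH^\bullet$ are isomorphisms. I would then induct on the cohomological degree $n$: the cyclic groups vanish for $n < 0$, which anchors the induction, and the five lemma applied to the displayed five-term segment — in which the two Hochschild terms are isomorphisms by part (1) and the two lower cyclic terms $HC^{n-2}, HC^{n-1}$ are isomorphisms by the inductive hypothesis — forces the map on $HC^n$ to be an isomorphism. Finally, since $C_\bullet(tr^M) \circ C_\bullet(inc_1,M) = id$ holds strictly as a morphism of cyclic modules (the trace of $E_{11}$ being $1$), dualizing gives $HC^\bullet_H(inc_1,M) \circ HC^\bullet_H(tr^M) = id$ on cyclic cohomology; together with either map being an isomorphism this makes the two mutually inverse, proving part (2).

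I expect the main obstacle to be verifying that the periodicity sequence is genuinely natural in this $H$-linear, SAYD-coefficient setting, so that the squares entering the five lemma commute; this rests on $C_\bullet(inc_1,M)$ and $C_\bullet(tr^M)$ being maps of cocyclic (not just Hochschild) modules, which is already recorded in the text. An equivalent route, closer to the double complexes $C^{\bullet\bullet}_H(inc_1,M)^{cy}$ and $C^{\bullet\bullet}_H(tr^M)^{cy}$ introduced above, is to filter by columns and invoke the comparison theorem for the associated convergent spectral sequences, whose $E_1$-pages are the Hochschild cohomology groups; here one would additionally need the auxiliary $b'$-columns to be acyclic, which holds because $\mathcal D_H$ and $\mathcal D_H \otimes M_r(k)$ are honest categories with identities. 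The dualization in part (1) and the identity $tr^M \circ inc_1 = id$ are routine.
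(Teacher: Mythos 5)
Your proposal is correct, and part (1) is essentially the paper's own argument verbatim: the paper likewise applies the additive functor $Hom_H(-,k)$ to the strict identity $C_\bullet(tr^M)^{hoc}\circ C_\bullet(inc_1,M)^{hoc}=id$ and to the pre-simplicial homotopy of Proposition \ref{homotopy}, then passes to cohomology. For part (2) the paper simply says it ``follows from (1) and the Hochschild to cyclic spectral sequence,'' i.e.\ it takes the second of your two routes (filter the morphism of double complexes $C^{\bullet\bullet}_H(inc_1,M)^{cy}$, $C^{\bullet\bullet}_H(tr^M)^{cy}$ by columns and compare $E_1$-pages), whereas your primary route runs the $SBI$ exact sequence and the five lemma with an induction on degree. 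The two are equivalent and rest on the same input, namely that $inc_1$ and $tr^M$ induce morphisms of the full cyclic bicomplexes, which the paper records explicitly; your observation that the odd ($b'$-)columns must be handled — they are acyclic because both categories have identities — is exactly the point that makes the spectral-sequence comparison work, and the $SBI$ route absorbs the same fact into the existence of the exact sequence. One small correction of phrasing: $C_\bullet(inc_1,M)$ is \emph{not} a morphism of (para)cyclic or cocyclic modules in the strict sense, since $inc_1$ is only a semifunctor ($inc_1(id_X)=id_X\otimes E_{11}$ is not the identity of $X$ in $\mathcal D_H\otimes M_r(k)$), so it fails to commute with the degeneracies; it does commute with all faces and with the cyclic operator, which is all that the $(b,b',1-\lambda,N)$ bicomplex, and hence either your $SBI$ argument or the paper's spectral sequence, actually requires. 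With that caveat your argument goes through, and the final step — deducing mutual inverseness from the strict identity $tr^M\circ inc_1=id$ plus either map being an isomorphism — is sound.
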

\begin{proof}
(1) By Proposition \ref{homotopy}, we know that $C_\bullet(tr^M)^{hoc} \circ C_\bullet(inc_1,M)^{hoc}=id_{C_\bullet(\mathcal{D}_H,M)^{hoc}}$ and
$C_\bullet(inc_1,M)^{hoc} \circ C_\bullet(tr^M)^{hoc} \sim id_{C_\bullet\left(\mathcal{D}_H \otimes M_r(k),M\right)^{hoc}}$. Thus, applying the functor $Hom_H(-,k)$, we obtain 
\begin{equation*}
C^{\bullet}_H(inc_1,M)^{hoc} \circ C^{\bullet}_H(tr^M)^{hoc}=id_{C^{\bullet}_H(\mathcal{D}_H,M)^{hoc}} \qquad 
C^{\bullet }_H(tr^M)^{hoc} \circ C^{\bullet}_H(inc_1,M)^{hoc} \sim id_{C^{\bullet}_H(\mathcal{D}_H \otimes M_r(k),M)^{hoc}}
\end{equation*}
Therefore, $C^{\bullet}_H(inc_1,M)^{hoc}$ and $C^{\bullet}_H(tr^M)^{hoc}$ are homotopy inverses of each other. 

\smallskip
(2) This follows immediately from (1) and the Hochschild to cyclic spectral sequence.
\end{proof}

\begin{corollary}\label{5.8a}
For an  $n$-cocycle $\phi \in Z^n_H(\mathcal{D}_H,M)$, the $n$-cocycle $\tilde{\phi}=Hom_H(tr^M,k)(\phi) =\phi\circ tr^M\in Z^n_H(\mathcal{D}_H \otimes M_r(k),M)$  may be described as follows
\begin{equation*}
\tilde{\phi}\left(m \otimes (f^0 \otimes B^0) \otimes \ldots \otimes (f^n \otimes B^n)\right)=\phi(m \otimes f^0 \otimes \ldots \otimes f^n)trace(B^0\ldots B^n)
\end{equation*}
\end{corollary}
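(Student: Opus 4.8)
The plan is to read this off as an immediate consequence of the material already assembled in Section~\ref{Morita}: the content is split between checking that $\tilde\phi$ is again a cocycle (which is formal) and writing down its explicit value (which is pure bookkeeping). So I would not attempt any fresh computation; I would instead invoke the para-cyclic structure of $tr^M$ and the characterization of cocycles in \eqref{3.3y}.

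First I would settle membership $\tilde\phi\in Z^n_H(\mathcal{D}_H\otimes M_r(k),M)$. Recall from Section~\ref{Morita} that $C_\bullet(tr^M)$ is a morphism of para-cyclic modules, and in particular it is $H$-linear, so that applying the contravariant functor $Hom_H(-,k)$ produces a morphism of \emph{cocyclic} modules
\begin{equation*}
C^\bullet_H(tr^M):C^\bullet_H(\mathcal{D}_H,M)\longrightarrow C^\bullet_H(\mathcal{D}_H\otimes M_r(k),M),\qquad \psi\longmapsto \psi\circ tr^M.
\end{equation*}
Being a morphism of cocyclic modules, this map commutes with the Hochschild coboundary $b$ and with the signed cyclic operator $\lambda$. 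By the criterion \eqref{3.3y}, the hypothesis $\phi\in Z^n_H(\mathcal{D}_H,M)$ means precisely $b(\phi)=0$ and $(1-\lambda)(\phi)=0$; applying $C^\bullet_H(tr^M)$ transports these to $b(\tilde\phi)=0$ and $(1-\lambda)(\tilde\phi)=0$, whence $\tilde\phi=\phi\circ tr^M$ is an $n$-cocycle, as claimed.

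Next I would derive the explicit formula by simply unwinding the two definitions involved. Evaluating $\tilde\phi=\phi\circ tr^M$ on a generator $m\otimes(f^0\otimes B^0)\otimes\ldots\otimes(f^n\otimes B^n)$ and substituting the definition \eqref{tracemap} of the trace map gives
\begin{equation*}
\tilde\phi\big(m\otimes(f^0\otimes B^0)\otimes\ldots\otimes(f^n\otimes B^n)\big)=\phi\big((m\otimes f^0\otimes\ldots\otimes f^n)\,\mathrm{trace}(B^0\ldots B^n)\big),
\end{equation*}
and the scalar $\mathrm{trace}(B^0\ldots B^n)$ pulls out by $k$-linearity of $\phi$, yielding exactly the asserted expression.

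There is no real obstacle here; the statement is a corollary in the literal sense. The only points one must not skip over are the ones that are genuinely doing the work but are already in hand from Section~\ref{Morita}: that $tr^M$ is $H$-linear, so that the composite $\phi\circ tr^M$ remains $H$-linear (i.e.\ lands in $C^n_H$, not merely $C^n$), and that $tr^M$ is compatible with the full para-cyclic structure, including the twisted last face $d_n$ and the cyclic operator $t_n$ governed by the SAYD coaction of $M$. Granting these, the corollary follows with no further computation.
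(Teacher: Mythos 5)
Your argument is correct and matches what the paper intends: the corollary is stated without proof precisely because it follows immediately from the definition \eqref{tracemap} of $tr^M$ and the fact, recorded just before Proposition \ref{homotopy}, that $C_\bullet(tr^M)$ is an $H$-linear morphism of para-cyclic modules, so that dualizing via $Hom_H(-,k)$ preserves the conditions $b(\phi)=0$ and $(1-\lambda)(\phi)=0$ of \eqref{3.3y}. Your unwinding of the explicit formula and your remark that $H$-linearity of $tr^M$ is what keeps $\tilde\phi$ in $C^n_H$ rather than merely $C^n$ are exactly the points that do the work.
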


\section{Vanishing cycles on an $H$-category and coboundaries}
From now onwards, we will always assume that $k=\mathbb C$. In this section, we will describe the spaces $B^\bullet_H(\mathcal{D}_H,M)$. 
We will then use the formalism of categorified cycles and vanishing cycles developed in this paper to obtain a product on Hopf cyclic cohomologies of $H$-categories. We begin by recalling the notion of an inner automorphism of a category.

\begin{definition}\label{indefr} [see \cite{Sch}, p 24]
Let $\mathcal{D}_H$ be a left $H$-category. An automorphism $\Phi \in Hom_{Cat_H}(\mathcal{D}_H,\mathcal{D}_H)$ is said to be inner if $\Phi$ is isomorphic to the identity functor $id_{\mathcal{D}_H}$. In particular, there exist isomorphisms $\{\eta(X):X\longrightarrow \Phi(X)\}_{X\in Ob(\mathcal 
D_H)}$ such that   $\Phi(f)=\eta(Y)\circ f \circ (\eta(X))^{-1}$ for any $f \in Hom_{\mathcal{D}_H}(X,Y)$.
\end{definition}

We now set
\begin{equation}
\mathbb G(\mathcal D_H):=\underset{X\in Ob(\mathcal D_H)}{\prod}\textrm{ }Aut_{\mathcal D_H}(X)
\end{equation}
By definition, an element $\eta\in \mathbb G(\mathcal D_H)$ corresponds to a family of automorphisms $\{\eta(X):X\longrightarrow X\}_{X\in Ob(\mathcal 
D_H)}$. We now set
\begin{equation}
\mathbb U_H(\mathcal D_H):=\{\mbox{$\eta\in \mathbb G(\mathcal D_H)$ $\vert$ $h(\eta(X))=\varepsilon(h)\eta(X)$ for every $h\in H$ and $X\in Ob(\mathcal D_H)$}\}
\end{equation}

\begin{lemma} $\mathbb U_H(\mathcal D_H)$ is a subgroup of $\mathbb G(\mathcal D_H)$. 
\end{lemma}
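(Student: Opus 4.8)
The plan is to verify the three subgroup axioms for $\mathbb U_H(\mathcal D_H)$ inside $\mathbb G(\mathcal D_H)$, whose operation is componentwise composition, whose identity is the family $\{\text{id}_X\}_{X}$, and in which the inverse of $\eta$ is $\{\eta(X)^{-1}\}_{X}$. Throughout I would use the two module-category axioms of Definition \ref{defH-cat}: condition (ii), $h(\text{id}_X)=\varepsilon(h)\text{id}_X$, and condition (iii), $h(gf)=(h_1 g)(h_2 f)$, together with the counit identities $\varepsilon(h_1)h_2=h=h_1\varepsilon(h_2)$.

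First I would check that the identity family $\{\text{id}_X\}_X$ lies in $\mathbb U_H(\mathcal D_H)$, which is immediate from condition (ii). Next, for closure under multiplication, I take $\eta,\zeta\in\mathbb U_H(\mathcal D_H)$ and compute, using (iii),
$$h(\eta(X)\zeta(X)) = (h_1\eta(X))(h_2\zeta(X)) = \varepsilon(h_1)\varepsilon(h_2)\,\eta(X)\zeta(X) = \varepsilon(h)\,\eta(X)\zeta(X),$$
the last equality because $\varepsilon(h_1)\varepsilon(h_2)=\varepsilon(h)$. Since $\eta(X)\zeta(X)$ is again an automorphism, this gives $\eta\zeta\in\mathbb U_H(\mathcal D_H)$.

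The only step requiring care — and the one I expect to be the main obstacle — is closure under inverses, because the defining condition is not visibly self-dual and the $H$-action on $\eta(X)^{-1}$ is not controlled a priori. The idea is to apply the action to the relation $\eta(X)^{-1}\eta(X)=\text{id}_X$. Using (iii) and the fact that $\eta\in\mathbb U_H(\mathcal D_H)$ trivializes the action on the second factor,
$$\varepsilon(h)\text{id}_X = h(\text{id}_X) = h\big(\eta(X)^{-1}\eta(X)\big) = (h_1\eta(X)^{-1})(h_2\eta(X)) = \varepsilon(h_2)\,(h_1\eta(X)^{-1})\eta(X).$$
Summing over the coproduct and applying $\varepsilon(h_2)h_1=h$ yields $(h\,\eta(X)^{-1})\eta(X)=\varepsilon(h)\text{id}_X$. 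Finally, composing both sides on the right with the isomorphism $\eta(X)^{-1}$ cancels the trailing $\eta(X)$ and gives $h\,\eta(X)^{-1}=\varepsilon(h)\eta(X)^{-1}$, so $\eta^{-1}\in\mathbb U_H(\mathcal D_H)$. It is worth noting that this argument uses only the counit and the invertibility of $\eta(X)$, and not the antipode. Combining the three steps establishes that $\mathbb U_H(\mathcal D_H)$ is a subgroup of $\mathbb G(\mathcal D_H)$.
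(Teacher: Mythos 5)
Your proof is correct and follows essentially the same route as the paper: both verify the three subgroup axioms directly from the counit identities and axioms (ii)--(iii), and both handle inverses by applying $h$ to the composite of $\eta(X)$ with its inverse and then cancelling. The only (immaterial) difference is that the paper applies $h$ to $\eta(X)\circ\eta(X)^{-1}$ and cancels on the left, whereas you use $\eta(X)^{-1}\circ\eta(X)$ and cancel on the right.
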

\begin{proof}
The element $\mathbf{e}=\underset{X\in Ob(\mathcal D_H)}{\prod} id_X$ is the identity of the group $\mathbb G(\mathcal D_H)$.
By definition of an $H$-category, we know that $h\cdot id_X=\varepsilon(h)\cdot id_X$ for each $X \in Ob(\mathcal{D}_H)$ and $h \in H$. Thus, $\mathbf{e} \in \mathbb U_H(\mathcal D_H)$. Now, suppose that $\eta, \eta' \in \mathbb U_H(\mathcal D_H)$. Then, for each $X \in Ob(\mathcal{D}_H)$ and $h \in H$,
\begin{equation*}
h\left((\eta \circ \eta')(X)\right)=h(\eta(X) \circ \eta'(X))=(h_1\eta(X)) \circ (h_2\eta'(X))=(\varepsilon(h_1)\eta(X)) \circ (\varepsilon(h_2)\eta'(X))=\varepsilon(h)(\eta(X) \circ \eta'(X))
\end{equation*}
Hence, $\eta \circ \eta' \in \mathbb U_H(\mathcal D_H)$.  Also, $\eta^{-1} \in \mathbb G(\mathcal D_H)$ corresponds to a family of morphisms $\{\eta^{-1}(X):=\eta(X)^{-1}:X \longrightarrow X\}_{X \in Ob(\mathcal{D}_H)}$. Then, for each $h \in H$ and $X \in Ob(\mathcal{D}_H)$,
\begin{equation*}
\varepsilon(h)id_X=h(\eta(X) \circ \eta^{-1}(X))=(\varepsilon(h_1)\eta(X)) \circ (h_2 \eta^{-1}(X))=\eta(X) \circ (h \eta^{-1}(X))
\end{equation*}
which gives $\varepsilon(h)\eta^{-1}(X)=h \eta^{-1}(X)$. Therefore, $\eta^{-1} \in  \mathbb U_H(\mathcal D_H)$.
\end{proof}

\begin{lemma}\label{innerautonew}
 Let $\mathcal{D}_H$ be a left $H$-category and let $\eta\in \mathbb U_H(\mathcal D_H)$.

\smallskip
(1) Consider $\Phi_\eta:\mathcal D_H\longrightarrow \mathcal D_H$ defined by 
\begin{equation*}\Phi_\eta(X)=X\qquad \Phi_\eta(f):=\eta(Y)\circ f\circ\eta(X)^{-1}
\end{equation*} for every $X\in Ob(\mathcal D_H)$ and $f\in Hom_{\mathcal D_H}(X,Y)$. Then, $\Phi_\eta: \mathcal D_H\longrightarrow \mathcal D_H$
is an  inner automorphism of $\mathcal{D}_H$.

\smallskip
(2) Consider $\tilde\Phi_\eta: \mathcal D_H\otimes M_2(k)\longrightarrow \mathcal D_H\otimes M_2(k)$ defined by
\begin{equation*}
\tilde\Phi_\eta(X)=X \qquad \tilde\Phi_\eta(f\otimes B)=(id_Y\otimes E_{11}+  \eta(Y)\otimes E_{22})\circ (f\otimes B)\circ (id_X\otimes E_{11}+  \eta(X)^{-1}\otimes E_{22})
\end{equation*}  for every $X\in Ob(\mathcal D_H\otimes M_2(k))=Ob(\mathcal D_H)$ and $f\otimes B \in Hom_{\mathcal D_H\otimes M_2(k)}(X,Y)$. Then,
$\tilde\Phi_\eta: \mathcal D_H\otimes M_2(k)\longrightarrow \mathcal D_H\otimes M_2(k)$ is an  inner automorphism.
\end{lemma}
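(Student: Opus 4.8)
The plan is to prove both parts by matching Definition \ref{indefr} directly: in each case the candidate functor fixes objects and acts on morphisms as conjugation by an $H$-invariant family of isomorphisms, so the content is to check that the functor is a well-defined $H$-linear automorphism and that the conjugating family realizes it as isomorphic to the identity. Part (2) will then be reduced to part (1).

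For part (1), I would first verify that $\Phi_\eta$ is a $k$-linear functor: it is linear on each $Hom$-space, it sends $\mathrm{id}_X$ to $\eta(X)\circ \mathrm{id}_X\circ\eta(X)^{-1}=\mathrm{id}_X$, and for composable $f,g$ the inner factor $\eta(Y)^{-1}\circ\eta(Y)$ cancels so that $\Phi_\eta(g\circ f)=\Phi_\eta(g)\circ\Phi_\eta(f)$. The crucial step is $H$-linearity. Viewing $\eta(Y)\circ f\circ\eta(X)^{-1}$ as a triple composite and iterating the composition rule $h(gf)=(h_1 g)(h_2 f)$ via coassociativity, I get $h\cdot\Phi_\eta(f)=(h_1\eta(Y))\circ(h_2 f)\circ(h_3\eta(X)^{-1})$. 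Since $\eta\in\mathbb U_H(\mathcal D_H)$ gives $h_1\eta(Y)=\varepsilon(h_1)\eta(Y)$, and the lemma showing $\mathbb U_H(\mathcal D_H)$ is a subgroup gives $h_3\eta(X)^{-1}=\varepsilon(h_3)\eta(X)^{-1}$, the counit identity $\varepsilon(h_1)h_2\varepsilon(h_3)=h$ collapses this to $\eta(Y)\circ(hf)\circ\eta(X)^{-1}=\Phi_\eta(hf)$, so $\Phi_\eta$ is $H$-linear. Invertibility follows because $\eta^{-1}\in\mathbb U_H(\mathcal D_H)$, and the associated functor $\Phi_{\eta^{-1}}$ is a two-sided inverse by the same cancellations. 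Finally, the family $\{\eta(X)\colon X\to X=\Phi_\eta(X)\}$ consists of isomorphisms satisfying $\Phi_\eta(f)=\eta(Y)\circ f\circ\eta(X)^{-1}$ by construction, which is exactly the condition of Definition \ref{indefr} identifying $\Phi_\eta$ with $\mathrm{id}_{\mathcal D_H}$; hence $\Phi_\eta$ is inner.

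For part (2), I would exhibit $\tilde\Phi_\eta$ as conjugation by the family $u=\{u(X)\}$, where $u(X):=\mathrm{id}_X\otimes E_{11}+\eta(X)\otimes E_{22}\in Hom_{\mathcal D_H\otimes M_2(k)}(X,X)$. Using the componentwise composition $(f\otimes B)\circ(g\otimes C)=(f\circ g)\otimes(BC)$ together with $E_{ii}E_{jj}=\delta_{ij}E_{ii}$, the cross terms vanish and one checks that $u(X)$ is invertible with inverse $\mathrm{id}_X\otimes E_{11}+\eta(X)^{-1}\otimes E_{22}$, so the displayed formula reads $\tilde\Phi_\eta(f\otimes B)=u(Y)\circ(f\otimes B)\circ u(X)^{-1}$. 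It then remains to check $u\in\mathbb U_H(\mathcal D_H\otimes M_2(k))$: under the natural $H$-action $h(f\otimes B)=(hf)\otimes B$ on $\mathcal D_H\otimes M_2(k)$, the relations $h\,\mathrm{id}_X=\varepsilon(h)\mathrm{id}_X$ and $h\,\eta(X)=\varepsilon(h)\eta(X)$ give $h(u(X))=\varepsilon(h)u(X)$. With $u$ an $H$-invariant family of automorphisms, part (1) applied to the $H$-category $\mathcal D_H\otimes M_2(k)$ shows $\tilde\Phi_\eta=\Phi_u$ is an inner automorphism. The only genuinely delicate point is the $H$-linearity computation in part (1), namely correctly iterating the $H$-module structure of composition across a triple composite and confirming that the two $\varepsilon$-factors telescope via coassociativity and the counit axiom back to $h$; once this is settled, part (1) is a direct instance of Definition \ref{indefr}, and part (2) is a formal consequence obtained by producing the invertible $H$-invariant family $u$.
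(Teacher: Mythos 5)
Your proof is correct and follows essentially the same route as the paper: realize each functor as conjugation by an $H$-invariant family of automorphisms, establish $H$-linearity via the triple-composite computation $(h_1\eta(Y))\circ(h_2f)\circ(h_3\eta(X)^{-1})$ together with the counit axiom, and invoke Definition \ref{indefr}. The only (harmless) deviation is that you deduce part (2) from part (1) by checking that $u(X)=id_X\otimes E_{11}+\eta(X)\otimes E_{22}$ lies in $\mathbb U_H(\mathcal D_H\otimes M_2(k))$, whereas the paper re-runs the $H$-linearity computation for $\tilde\Phi_\eta$ directly; the two verifications are the same in substance.
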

\begin{proof}
{\it (1)}  Using the fact that $\eta, \eta^{-1} \in \mathbb U_H(\mathcal D_H)$, we have
\begin{equation*}
\begin{array}{ll}
h(\Phi_\eta(f))=(h_1\eta(Y)) \circ (h_2f) \circ (h_3\eta(X)^{-1})&=(\varepsilon(h_1)\eta(Y)) \circ (h_2f) \circ (\varepsilon(h_3)\eta(X)^{-1})\\
&=\eta(Y)  \circ (h_1f) \circ (\varepsilon(h_2)\eta(X)^{-1})\\
&=\eta(Y)\circ (hf)\circ\eta(X)^{-1}
\end{array}
\end{equation*}
for any $h \in H$ and $f\in Hom_{\mathcal D_H}(X,Y)$. By Definition \ref{indefr}, we now see that $\Phi_\eta$ is an inner automorphism.

\smallskip
{\it (2)}  Setting $\tilde{\eta}(X):X \longrightarrow X$ in $\mathcal D_H\otimes M_2(k)$ as $\tilde{\eta}(X)=id_X\otimes E_{11}+  \eta(X)\otimes E_{22}$, we see that
\begin{equation*}
\begin{array}{ll}
\tilde\Phi_\eta(f\otimes B)&=(id_Y\otimes E_{11}+  \eta(Y)\otimes E_{22})\circ (f\otimes B)\circ (id_X\otimes E_{11}+  \eta(X)^{-1}\otimes E_{22})\\
&=\tilde{\eta}(Y) \circ (f\otimes B)\circ \tilde{\eta}(X)^{-1}
\end{array}
\end{equation*}
for any  $f\otimes B \in Hom_{\mathcal D_H\otimes M_2(k)}(X,Y)$.
Considering the $H$-action on the category $\mathcal D_H\otimes M_2(k)$, we have
\begin{equation*}
\begin{array}{ll}
h\left(\tilde{\Phi}_\eta((f \otimes B)\right)&=h_1(id_Y\otimes E_{11}+  \eta(Y)\otimes E_{22})\circ h_2(f\otimes B)\circ h_3(id_X\otimes E_{11}+  \eta(X)^{-1}\otimes E_{22})\\
&=(h_1id_Y\otimes E_{11}+  h_1\eta(Y)\otimes E_{22})\circ h_2(f\otimes B)\circ (h_3id_X\otimes E_{11}+  h_3\eta(X)^{-1}\otimes E_{22})\\
&=\varepsilon(h_1)(id_Y\otimes E_{11}+  \eta(Y)\otimes E_{22})\circ h_2(f\otimes B)\circ \varepsilon(h_3)(id_X\otimes E_{11}+  \eta(X)^{-1}\otimes E_{22})\\
&=(id_Y\otimes E_{11}+  \eta(Y)\otimes E_{22})\circ h(f\otimes B)\circ (id_X\otimes E_{11}+  \eta(X)^{-1}\otimes E_{22})\\
&=\tilde{\Phi}_\eta(h(f \otimes B))
\end{array}
\end{equation*}
for any $h \in H$ and $f\otimes B \in Hom_{\mathcal D_H\otimes M_2(k)}(X,Y)$. By Definition \ref{indefr}, we now see that $\Phi_{\tilde\eta}$ is an inner automorphism.
\end{proof}

For  any $\eta\in \mathbb U_H(\mathcal D_H)$, we will always denote by $\Phi_\eta$ and $\tilde{\Phi}_\eta$ the inner automorphisms defined in Lemma \ref{innerautonew}.

\begin{lemma}\label{6.3a} Let $M$ be a right-left SAYD module over $H$. Then, 

\smallskip
(1) A semifunctor $\alpha \in Hom_{\overline{Cat}_H}(\mathcal{D}_H,\mathcal{D}_H')$ induces a morphism (for all $n\geq 0$)
\begin{equation*}
C^n_H(\alpha,M):C^n_H(\mathcal{D}_H',M)=Hom_H(M \otimes CN_n(\mathcal{D}_H'),k)\longrightarrow C^n_H(\mathcal{D}_H,M)=Hom_H(M \otimes CN_n(\mathcal{D}_H),k)
\end{equation*}  determined by
\begin{equation*}
C^{n}_H(\alpha,M)(\phi)(m \otimes f^0 \otimes \ldots \otimes f^n)=\phi\left(m \otimes \alpha(f^0) \otimes \ldots \otimes \alpha(f^n)\right)
\end{equation*}
for any $\phi \in C^n_H(\mathcal{D}_H',M)$, $m \in M$ and $f^0 \otimes \ldots \otimes f^n \in CN_n(\mathcal{D}_H)$. This leads to a morphism  $C^{\bullet\bullet}_H(\alpha,M)^{cy}:C^{\bullet\bullet}_H(\mathcal{D}_H',M)^{cy} \longrightarrow C^{\bullet\bullet}_H(\mathcal{D}_H,M)^{cy}$ of double complexes and induces  a functor $HC^\bullet_H(-,M):\overline{Cat}_H^{op} \longrightarrow Vect_k$.

\smallskip
(2) Let $\eta\in \mathbb U_H(\mathcal D_H)$. Then, $\Phi_\eta$  induces the identity map on $HC^\bullet_H(\mathcal{D}_H,M)$.
\end{lemma}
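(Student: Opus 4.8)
The plan is to prove this by the classical ``$2\times 2$ matrix'' argument, which is exactly what the matrix linearization of Section \ref{Morita} (Proposition \ref{moritainvcyc}) together with the auxiliary inner automorphism $\tilde\Phi_\eta$ of Lemma \ref{innerautonew}(2) have been arranged to enable. The idea is to realize $\Phi_\eta$ and the identity functor as the two diagonal corner embeddings of $\mathcal D_H$ into $\mathcal D_H\otimes M_2(k)$, to intertwine them by $\tilde\Phi_\eta$, and then to cancel the resulting cohomology isomorphisms so that no direct chain homotopy for $\Phi_\eta$ itself is ever needed.

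First I would introduce the two $H$-linear corner inclusions $inc_1,inc_2:\mathcal D_H\longrightarrow \mathcal D_H\otimes M_2(k)$ given by $inc_p(f)=f\otimes E_{pp}$; each fixes objects and preserves composition because $E_{pp}E_{pp}=E_{pp}$. Using the composition rule $(g\otimes B)\circ(f\otimes A)=(g\circ f)\otimes BA$ and the orthogonality relations $E_{11}E_{22}=E_{22}E_{11}=0$, a direct computation with $\tilde\eta(X)=id_X\otimes E_{11}+\eta(X)\otimes E_{22}$ yields the two key identities
\begin{equation}\label{cornerrel}
\tilde\Phi_\eta\circ inc_1=inc_1\qquad\text{and}\qquad \tilde\Phi_\eta\circ inc_2=inc_2\circ\Phi_\eta .
\end{equation}
Indeed $\tilde\Phi_\eta(f\otimes E_{11})=\tilde\eta(Y)\circ(f\otimes E_{11})\circ\tilde\eta(X)^{-1}=f\otimes E_{11}$, since the $E_{22}$-components are annihilated, whereas $\tilde\Phi_\eta(f\otimes E_{22})=(\eta(Y)\circ f\circ\eta(X)^{-1})\otimes E_{22}=\Phi_\eta(f)\otimes E_{22}$. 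Thus $\tilde\Phi_\eta$ fixes the first corner while acting on the second corner exactly as $\Phi_\eta$.

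Next I would pass to Hopf-cyclic cohomology via the contravariant functor $HC^\bullet_H(-,M)$ of Lemma \ref{6.3a}(1). Applying it to \eqref{cornerrel} (and keeping track of the reversal of composition order) gives, on $HC^\bullet_H$,
\begin{equation}\label{cohorel}
\begin{gathered}
HC^\bullet_H(inc_1,M)\circ HC^\bullet_H(\tilde\Phi_\eta,M)=HC^\bullet_H(inc_1,M),\\
HC^\bullet_H(inc_2,M)\circ HC^\bullet_H(\tilde\Phi_\eta,M)=HC^\bullet_H(\Phi_\eta,M)\circ HC^\bullet_H(inc_2,M).
\end{gathered}
\end{equation}
The crucial point is that \emph{both} corner inclusions induce isomorphisms: since $\mathrm{trace}(E_{pp}\cdots E_{pp})=1$, the trace map of \eqref{tracemap} satisfies $C_\bullet(tr^M)\circ C_\bullet(inc_p,M)=id$ at the level of (para-)cyclic modules for $p=1,2$, so after applying $Hom_H(-,k)$ and passing to cohomology we get $HC^\bullet_H(inc_p,M)\circ HC^\bullet_H(tr^M)=id$; as $HC^\bullet_H(tr^M)$ is an isomorphism by Proposition \ref{moritainvcyc}, each $HC^\bullet_H(inc_p,M)$ is a one-sided, hence two-sided, inverse of it and is therefore an isomorphism. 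Cancelling the isomorphism $HC^\bullet_H(inc_1,M)$ on the left of the first identity in \eqref{cohorel} forces $HC^\bullet_H(\tilde\Phi_\eta,M)=id$; substituting this into the second identity and cancelling the isomorphism $HC^\bullet_H(inc_2,M)$ on the right then yields $HC^\bullet_H(\Phi_\eta,M)=id$, which is the assertion.

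The matrix bookkeeping behind \eqref{cornerrel} is routine. The step that requires the most care is the cohomological cancellation: one must be sure that the Morita-type invariance of Proposition \ref{moritainvcyc} applies to the \emph{second} corner $inc_2$ and not merely to $inc_1$ as literally stated there. This is precisely what makes $HC^\bullet_H(inc_2,M)$ cancellable, and it is supplied by the observation above that $HC^\bullet_H(tr^M)$ is invertible and $HC^\bullet_H(inc_2,M)$ is a one-sided inverse of it, so no new presimplicial homotopy (beyond the one in Proposition \ref{homotopy}) has to be constructed. I would also be careful to respect the contravariance of Lemma \ref{6.3a}(1) throughout, so that the order of composition in \eqref{cohorel} is the correct one.
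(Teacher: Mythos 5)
Your argument for part (2) is correct and is essentially identical to the paper's own proof: the same corner inclusions $inc_1,inc_2$, the same intertwining identities $\tilde\Phi_\eta\circ inc_1=inc_1$ and $\tilde\Phi_\eta\circ inc_2=inc_2\circ\Phi_\eta$ (the paper's commutative diagram \eqref{d6.1}), and the same cancellation using that $HC^\bullet_H(inc_2,M)$ is invertible because it is a one-sided inverse of the isomorphism $HC^\bullet_H(tr^M)$ (the paper's \eqref{dd6.1}). You do not separately address part (1), but that is only the routine check that $H$-linearity of $\alpha$ and $\phi$ makes $C^n_H(\alpha,M)$ well defined and compatible with the (co)cyclic structure maps, which is all the paper says as well.
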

\begin{proof}
(1) Since $\phi$ and $\alpha$ are $H$-linear, the morphisms $C^n_H(\alpha,M)$ are well-defined and well behaved with respect to the maps appearing in the Hochschild
and cyclic complexes. The result follows.

\smallskip
(2) Let $\eta\in \mathbb U_H(\mathcal D_H)$ and $\Phi_\eta \in Hom_{Cat_H}(\mathcal{D}_H,\mathcal{D}_H)$ be the corresponding inner automorphism. By Proposition \ref{moritainvcyc}, the maps $HC^\bullet_H(inc_1,M)$ and $HC^\bullet_H(tr^M)$ are mutually inverse isomorphisms of Hopf-cyclic cohomology groups. Thus, we have
\begin{equation}\label{dd6.1}
HC^\bullet_H(inc_2,M) \circ \left(HC^\bullet_H(inc_1,M)\right)^{-1} =HC^\bullet_H(inc_2,M)  \circ HC^\bullet_H(tr^M)= HC^\bullet_H\left(tr^M \circ (inc_2,M)\right)=id
\end{equation} 
Further, we have the following commutative diagram in the category $\overline{Cat}_H$:
\begin{equation}\label{d6.1}
\begin{CD}
\mathcal{D}_H @>inc_1>>  \mathcal{D}_H \otimes M_2(k) @<inc_2<< \mathcal{D}_H\\
@Vid_{\mathcal{D}_H}VV        @VV \tilde\Phi_\eta V @VV \Phi_\eta V\\
\mathcal{D}_H    @> inc_1>>\mathcal{D}_H \otimes M_2(k)@<inc_2<< \mathcal{D}_H\\
\end{CD}
\end{equation}
Thus, by applying the functor $HC^\bullet_H(-,M)$ to the commutative diagram \eqref{d6.1} and using \eqref{dd6.1}, we obtain
\begin{align*}
HC^\bullet_H(\Phi_\eta,M)&=  \left(HC^\bullet_H(inc_2,M)\right)\circ HC^\bullet_H(inc_1,M)^{-1} \circ HC^\bullet_H(id_{\mathcal{D}_H},M)\circ \left(HC^\bullet_H(inc_1,M)\right) \circ HC^\bullet_H(inc_2,M)^{-1}\\
&=id_{HC^\bullet_H(\mathcal{D}_H,M)}
\end{align*} 
\end{proof}

\begin{prop}\label{vanishing}
Let $\mathcal{D}_H$ be a left $H$-category. Suppose that there is a semifunctor $\upsilon \in Hom_{\overline{Cat}_H}(\mathcal{D}_H,\mathcal{D}_H)$ and an $\eta \in \mathbb U_H\left(\mathcal{D}_H \otimes M_2(k)\right)$ such that 
\begin{itemize}
\item[(1)] $\upsilon(X)=X \qquad \forall X \in Ob(\mathcal{D}_H)$
\item[(2)] $\Phi_\eta(f \otimes E_{11}+\upsilon(f) \otimes E_{22})=\upsilon(f) \otimes E_{22}$
\end{itemize}
for all $f \in Hom_{\mathcal{D}_H}(X,Y)$ and  $X,Y \in Ob(\mathcal{D}_H)$. Then, $HC^\bullet_H(\mathcal{D}_H,M)=0$.
\end{prop}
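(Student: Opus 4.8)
The plan is to run an argument in the spirit of the Eilenberg swindle, using the Morita invariance of Proposition \ref{moritainvcyc} together with the triviality of inner automorphisms from Lemma \ref{6.3a}(2). For a semifunctor $\alpha$ I abbreviate $\alpha^\ast:=HC^\bullet_H(\alpha,M)$. First I would repackage the hypotheses into two morphisms $\mathcal D_H\longrightarrow \mathcal D_H\otimes M_2(k)$ in $\overline{Cat}_H$, namely
$$\beta(f):=f\otimes E_{11}+\upsilon(f)\otimes E_{22},\qquad \gamma(f):=\upsilon(f)\otimes E_{22}=(inc_2\circ\upsilon)(f),$$
both fixing objects. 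One checks directly that $\beta$ is a well-defined $H$-linear semifunctor: composition is preserved because $E_{11},E_{22}$ are orthogonal idempotents and $\upsilon$ is a semifunctor, while $H$-linearity follows from that of $\upsilon$ and the triviality of the $H$-action on the matrix factor. Hypothesis (2) then says precisely that $\Phi_\eta\circ\beta=\gamma$, where $\Phi_\eta$ is the inner automorphism of $\mathcal D_H\otimes M_2(k)$ attached to $\eta\in\mathbb U_H(\mathcal D_H\otimes M_2(k))$.

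Next I would pass to $HC^\bullet_H(-,M)$. Writing $\Theta:=inc_1^\ast$, Proposition \ref{moritainvcyc} makes $\Theta$ an isomorphism with inverse $HC^\bullet_H(tr^M)$; moreover the identity \eqref{dd6.1}, which reads $inc_2^\ast\circ HC^\bullet_H(tr^M)=\mathrm{id}$, combined with $HC^\bullet_H(tr^M)=\Theta^{-1}$, gives $inc_2^\ast=\Theta$ as well. Since $HC^\bullet_H(tr^M)$ is an isomorphism, every class in $HC^\bullet_H(\mathcal D_H\otimes M_2(k),M)$ is represented by a cocycle $\tilde\phi=\phi\circ tr^M$ with $\phi=\Theta(\tilde\phi)$, described explicitly in Corollary \ref{5.8a}.

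The heart of the computation is evaluating $\beta^\ast(\tilde\phi)$. Expanding $\beta(f^0)\otimes\cdots\otimes\beta(f^n)$ multilinearly produces $2^{n+1}$ terms, each carrying a factor $\mathrm{trace}(B^0\cdots B^n)$ with $B^j\in\{E_{11},E_{22}\}$, and orthogonality of the diagonal idempotents kills every term except the all-$E_{11}$ and all-$E_{22}$ ones, each contributing trace $1$. This yields the additivity formula
$$\beta^\ast(\tilde\phi)=\phi+\upsilon^\ast(\phi),\qquad\text{so that}\qquad \beta^\ast=(\mathrm{id}+\upsilon^\ast)\circ\Theta,$$
while contravariance gives directly $\gamma^\ast=\upsilon^\ast\circ inc_2^\ast=\upsilon^\ast\circ\Theta$.

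Finally I would combine these. Since $\gamma=\Phi_\eta\circ\beta$, and since Lemma \ref{6.3a}(2) applied to the $H$-category $\mathcal D_H\otimes M_2(k)$ shows $\Phi_\eta^\ast=\mathrm{id}$, contravariance gives $\gamma^\ast=\beta^\ast\circ\Phi_\eta^\ast=\beta^\ast$. Equating the two expressions forces $(\mathrm{id}+\upsilon^\ast)\circ\Theta=\upsilon^\ast\circ\Theta$, hence $\Theta=0$; as $\Theta$ is an isomorphism, both $HC^\bullet_H(\mathcal D_H\otimes M_2(k),M)$ and $HC^\bullet_H(\mathcal D_H,M)$ must vanish. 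The main obstacle is the additivity step: verifying that $\beta^\ast$ splits as $\mathrm{id}+\upsilon^\ast$. Everything hinges on reducing to trace-type cocycles via Morita invariance and on the vanishing of mixed products of orthogonal idempotents under the trace. Outside this matrix model the assignment $f\mapsto f\otimes E_{11}+\upsilon(f)\otimes E_{22}$ would not induce a sum on cohomology, and it is precisely the $2\times 2$ linearization that legitimizes the swindle.
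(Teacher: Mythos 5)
Your argument is correct and follows essentially the same route as the paper's proof: you introduce the same two semifunctors $\alpha(f)=f\otimes E_{11}+\upsilon(f)\otimes E_{22}$ and $\alpha'(f)=\upsilon(f)\otimes E_{22}$, use Lemma \ref{6.3a}(2) to identify their induced maps on cohomology, and exploit the additivity of $\tilde\phi=\phi\circ tr^M$ coming from the orthogonality of $E_{11}$ and $E_{22}$. The only (cosmetic) difference is that you package the conclusion as the operator identity $(\mathrm{id}+\upsilon^\ast)\circ\Theta=\upsilon^\ast\circ\Theta$ forcing the isomorphism $\Theta$ to vanish, whereas the paper exhibits each cocycle $\phi$ directly as the coboundary $\tilde\phi\circ CN_n(\alpha)-\tilde\phi\circ CN_n(\alpha')$.
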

\begin{proof}
Let $\alpha, \alpha' \in Hom_{\overline{Cat}_H}\left(\mathcal{D}_H, \mathcal{D}_H \otimes M_2(k)\right)$ be the semifunctors defined by
\begin{equation*}
\begin{array}{c}
\alpha(X):=X \qquad \alpha(f):=f \otimes E_{11} + \upsilon(f) \otimes E_{22}\\
\alpha'(X):=X \qquad \alpha'(f):=\upsilon(f) \otimes E_{22}
\end{array}
\end{equation*}
for all $X \in Ob(\mathcal{D}_H)$ and $f \in Hom_ {\mathcal{D}_H}(X,Y)$. Then, by assumption, $\alpha'=\Phi_\eta \circ \alpha$. Therefore, applying the functor $HC^\bullet_H(-,M)$ and using Lemma \ref{6.3a} (2), we get
\begin{equation}\label{cohomclass}
HC^\bullet_H(\alpha',M)=HC^\bullet_H(\alpha,M) \circ HC^\bullet_H(\Phi_\eta,M)=HC^\bullet_H(\alpha,M):HC^\bullet_H(\mathcal D_H\otimes M_2(k),M)\longrightarrow HC^\bullet_H(\mathcal D_H,M)
\end{equation}
Let $\phi \in Z^n_H(\mathcal{D}_H,M)$ and $\tilde{\phi}=Hom_H(tr^M,k)(\phi) =\phi\circ tr^M\in Z^n_H(\mathcal{D}_H \otimes M_2(k),M)$ as in Corollary \ref{5.8a}. Let $[\tilde{\phi}]$ denote the cohomology class of $\tilde{\phi}$. Then, by \eqref{cohomclass}, we have $HC^\bullet_H(\alpha,M)([\tilde{\phi}])=HC^\bullet_H(\alpha',M)([\tilde{\phi}])$, i.e.,
\begin{equation}
\tilde{\phi}\circ (id_M\otimes CN_n(\alpha)) + B^n_H(\mathcal{D}_H,M)=\tilde{\phi}\circ  (id_M\otimes CN_n(\alpha'))  + B^n_H(\mathcal{D}_H,M)
\end{equation}
so that $\tilde{\phi}\circ (id_M\otimes CN_n(\alpha)) - \tilde{\phi}\circ (id_M\otimes CN_n(\alpha')) \in B^n_H(\mathcal{D}_H,M)$.
Applying the definition of $\tilde{\phi}$, we now have
\begin{equation*}
\begin{array}{ll}
&(\tilde{\phi}\circ (id_M\otimes CN_n(\alpha))) (m \otimes f^0 \otimes \ldots \otimes f^n)\\
&\quad =\tilde{\phi}\left(m \otimes \alpha(f^0) \otimes \ldots \otimes \alpha(f^n)\right)\\
& \quad = \tilde{\phi}\left(m \otimes (f^0 \otimes E_{11} + \upsilon(f^0) \otimes E_{22}) \otimes \ldots \otimes (f^n \otimes E_{11} + \upsilon(f^n) \otimes E_{22})\right)\\
& \quad = \phi(m \otimes f^0 \otimes \ldots \otimes f^n)+\phi(m \otimes \upsilon(f^0) \otimes \ldots \otimes \upsilon(f^n))
\end{array}
\end{equation*}
Similarly, $(\tilde{\phi}\circ (id_M\otimes CN_n(\alpha'))) (m \otimes f^0 \otimes \ldots \otimes f^n)=\phi(m \otimes \upsilon(f^0) \otimes \ldots \otimes \upsilon(f^n))$. Thus, $\phi=\tilde{\phi}\circ (id_M\otimes CN_n(\alpha)) - \tilde{\phi}\circ (id_M\otimes CN_n(\alpha'))\in B^n_H(\mathcal{D}_H,M)$. This proves the result.
\end{proof}

\begin{definition}\label{def6.5}
Let $(\mathcal{S}_H,\hat{\partial}_H, \hat{\mathscr{T}}^H)$ be an $n$-dimensional $\mathcal{S}_H$-cycle with coefficients in a SAYD module  $M$ over $H$ (see, Definition \ref{cycle}). Then, we say that the cycle $(\mathcal{S}_H,\hat{\partial}_H, \hat{\mathscr{T}}^H)$ is vanishing if  $\mathcal{S}^0_H$ is a left $H$-category and $\mathcal S^0_H$ satisfies the assumptions in Proposition \ref{vanishing}.
\end{definition}

\smallskip
We now recall from \cite[p103]{C2}  the algebra $\mathbf C$ of infinite matrices $(a_{ij})_{i,j\in \mathbb N}$ with entries from $\mathbb C$ satisfying the following conditions
 (see also \cite{kv})
\begin{itemize}
\item[(i)] the set $\{\mbox{$a_{ij}$ $\vert$ $i,j\in \mathbb N$}\}$ is finite,
\item[(ii)] the number of non-zero entries in each row or each column is bounded.
\end{itemize}

Identifying $M_2(\mathbf{C})=\mathbf{C} \otimes M_2(\mathbb C)$, we recall the following result from \cite[p104]{C2}:
\begin{lemma}\label{6.6r}
There exists an algebra homomorphism $\omega:\mathbf{C} \longrightarrow \mathbf{C}$ and an invertible element $\tilde U \in M_2(\mathbf{C})$ such that the corresponding inner automorphism $\Xi:M_2(\mathbf{C}) \longrightarrow M_2(\mathbf{C})$ satisfies
\begin{equation}\label{6.6pqe}
\Xi(B \otimes E_{11}+\omega(B) \otimes E_{22})=\omega(B) \otimes E_{22} \qquad \forall B \in \mathbf{C}
\end{equation}
Then, $HC^\bullet(\mathbf{C})=0$.
\end{lemma}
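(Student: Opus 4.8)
The statement really has two independent halves: the \emph{existence} of a triple $(\omega,\tilde U,\Xi)$ realizing an Eilenberg swindle on $\mathbf C$, and the \emph{vanishing} $HC^\bullet(\mathbf C)=0$. For the existence I would simply invoke Connes \cite{C2}: one takes $\omega$ to be the (non-unital) ``infinite amplification'' endomorphism $B\mapsto\bigoplus B$ and assembles $\tilde U$ from the shift-type partial isometries available inside $\mathbf C$, arranged so that $B\otimes E_{11}+\omega(B)\otimes E_{22}$ and $\omega(B)\otimes E_{22}$ become conjugate in $M_2(\mathbf C)$ by the single invertible $\tilde U$. The entire purpose of Sections \ref{Morita} and the construction around Proposition \ref{vanishing} is that, once this swindle is granted, the vanishing becomes a formal consequence; so my plan for the second half is to present $\mathbf C$ as a direct instance of Proposition \ref{vanishing}.

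To do this I would regard $\mathbf C$ as a one-object $\mathbb C$-linear category $B\mathbf C$ with $Hom(*,*)=\mathbf C$ (legitimate since $\mathrm{id}=\sum E_{ii}\in\mathbf C$), and specialize $H=\mathbb C=M$ with $M$ the trivial SAYD module. For $H=\mathbb C$ every $H$-linearity and $H$-invariance condition is vacuous, the cocyclic module of Proposition \ref{prop2.3} collapses to Connes' cyclic complex of the algebra $\mathbf C$, so that $HC^\bullet_{\mathbb C}(B\mathbf C,\mathbb C)=HC^\bullet(\mathbf C)$; moreover $\mathbb U_{\mathbb C}\big(B\mathbf C\otimes M_2(\mathbb C)\big)$ is just the group of invertibles of $M_2(\mathbf C)$. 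I would then set $\upsilon=\omega$ and $\eta=\tilde U$. By Lemma \ref{innerautonew}(1) the induced inner automorphism $\Phi_\eta$ is exactly conjugation by $\tilde U$, namely $\Xi$; condition (1) of Proposition \ref{vanishing} holds trivially because there is a single object, and condition (2) reads $\Phi_\eta\big(B\otimes E_{11}+\omega(B)\otimes E_{22}\big)=\omega(B)\otimes E_{22}$, which is precisely \eqref{6.6pqe}. Proposition \ref{vanishing} then yields $HC^\bullet(\mathbf C)=HC^\bullet_{\mathbb C}(B\mathbf C,\mathbb C)=0$.

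If instead one wants an argument that does not route through Proposition \ref{vanishing}, I would reprove that proposition in this special case. Define algebra homomorphisms $\alpha,\alpha':\mathbf C\longrightarrow M_2(\mathbf C)$ by $\alpha(B)=B\otimes E_{11}+\omega(B)\otimes E_{22}$ and $\alpha'(B)=\omega(B)\otimes E_{22}$, so that \eqref{6.6pqe} says $\alpha'=\Xi\circ\alpha$. Since $\Xi$ is inner it acts as the identity on $HC^\bullet(M_2(\mathbf C))$ (the algebra analogue of Lemma \ref{6.3a}(2)), whence $\alpha^\ast=\alpha'^\ast$ on cohomology. Under the Morita isomorphism $HC^\bullet(M_2(\mathbf C))\cong HC^\bullet(\mathbf C)$ of Proposition \ref{moritainvcyc}, the block-diagonality of $\alpha(B)$ and $\alpha'(B)$ together with the trace formula of Corollary \ref{5.8a} give $\alpha^\ast[\phi]=[\phi]+\omega^\ast[\phi]$ and $\alpha'^\ast[\phi]=\omega^\ast[\phi]$; comparing the two forces $[\phi]=0$ for every class, so $HC^\bullet(\mathbf C)=0$.

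The one genuinely substantial input is the existence of $\omega$ and $\tilde U$, and the subtle point I would emphasize is that $\omega$ must be \emph{non-unital}: if $\omega(\mathrm{id})=\mathrm{id}$ then $\alpha$ and $\alpha'$ already disagree on units, since $\alpha(\mathrm{id})=\mathrm{diag}(\mathrm{id},\mathrm{id})$ is injective whereas $\alpha'(\mathrm{id})=\mathrm{diag}(0,\mathrm{id})$ has a kernel, so no invertible $\tilde U$ could conjugate one to the other. The resolution, which is the real content of Connes' construction, is that $\mathbf C$ is properly infinite: there exist $S,T\in\mathbf C$ with $TS=\mathrm{id}$ and $ST$ a proper idempotent, so a whole copy of the standard module can be absorbed into the infinitely many copies produced by $\omega$, and this absorption is implemented by an honest invertible $\tilde U$ rather than a merely one-sided shift. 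Everything else, namely the identification of the two cyclic complexes, the bookkeeping in the Morita and trace comparison, and the verification that the hypotheses of Proposition \ref{vanishing} are met, is routine once this swindle is in hand.
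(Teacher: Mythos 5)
Your proposal is correct, but note that the paper does not actually prove this lemma: both the existence of $(\omega,\tilde U,\Xi)$ and the vanishing $HC^\bullet(\mathbf C)=0$ are simply recalled from \cite[p.~104]{C2}, so there is no in-paper argument to compare against. Your treatment of the existence half coincides with the paper's (deferral to Connes), and your derivation of the vanishing is a legitimate addition: viewing $\mathbf C$ as a one-object $\mathbb C$-linear category, taking $H=\mathbb C=M$ so that all $H$-conditions are vacuous and $\mathbb U_{\mathbb C}$ of the one-object category $M_2(\mathbf C)$ is exactly its group of units, and then feeding $\upsilon=\omega$ (a semifunctor, so non-unitality is harmless) and $\eta=\tilde U$ into Proposition \ref{vanishing} is precisely the specialization of the argument the paper itself runs for $\mathbf C\otimes\mathcal D_H$ in Lemma \ref{lem6.7x}; your second, unpackaged version of the argument via Lemma \ref{6.3a}, Proposition \ref{moritainvcyc} and Corollary \ref{5.8a} reproduces the proof of Proposition \ref{vanishing} in this special case. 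Your observation that $\omega$ cannot be unital also matches the remark the paper places immediately after the lemma. The only caveat is that the genuinely nontrivial content --- the explicit construction of $\omega$ and of the invertible $\tilde U$ from the shift isometries in $\mathbf C$ --- remains outsourced to \cite{C2} in both your write-up and the paper's, so neither is self-contained on that point.
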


\begin{remark}
We note that the condition in \eqref{6.6pqe} ensures that $\omega(\mathbf 1)\neq \mathbf 1$, where $\mathbf 1$ is the unit element of $\mathbf C$.
\end{remark}

For any $k$-algebra $\mathcal A$, we  may define a $k$-linear category $\mathcal{A} \otimes \mathcal{D}_H$ by setting 
$ 
Ob(\mathcal{A} \otimes \mathcal{D}_H)=Ob(\mathcal{D}_H)$ and $
Hom_{\mathcal{A} \otimes \mathcal{D}_H}(X,Y)=\mathcal{A} \otimes Hom_{\mathcal{D}_H}(X,Y)
$.  The category $\mathcal{A} \otimes \mathcal{D}_H$ is a left $H$-category via the action $h(a \otimes f):=a \otimes hf$ for any $h \in H$, $a \otimes f \in \mathcal{A} \otimes Hom_{\mathcal{D}_H}(X,Y)$.

\begin{lemma}\label{lem6.7x}
We have  $HC^\bullet_H(\mathbf{C} \otimes \mathcal{D}_H,M)=0$.
\end{lemma}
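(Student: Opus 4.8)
The plan is to deduce the vanishing directly from Proposition \ref{vanishing} applied to the $H$-category $\mathbf C\otimes\mathcal D_H$. Concretely, I must produce an $H$-linear semifunctor $\upsilon\in Hom_{\overline{Cat}_H}(\mathbf C\otimes\mathcal D_H,\mathbf C\otimes\mathcal D_H)$ fixing objects, together with an element $\eta\in \mathbb U_H\big((\mathbf C\otimes\mathcal D_H)\otimes M_2(\mathbb C)\big)$, such that conditions (1) and (2) of Proposition \ref{vanishing} hold. For $\upsilon$ I would take $\upsilon:=\omega\otimes id_{\mathcal D_H}$, that is $\upsilon(a\otimes f):=\omega(a)\otimes f$ for $a\in\mathbf C$ and $f\in Hom_{\mathcal D_H}(X,Y)$, where $\omega$ is the algebra homomorphism of Lemma \ref{6.6r}. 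Since $\omega$ is multiplicative, $\upsilon$ preserves composition, and since the $H$-action on $\mathbf C\otimes\mathcal D_H$ touches only the $\mathcal D_H$-factor, $\upsilon$ is $H$-linear; note that $\upsilon$ is genuinely a semifunctor and not a functor, as $\omega(\mathbf 1)\neq \mathbf 1$ in general. This immediately gives condition (1), since $\upsilon$ fixes objects.

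For $\eta$, I would exploit the identification $(\mathbf C\otimes\mathcal D_H)\otimes M_2(\mathbb C)\cong M_2(\mathbf C)\otimes\mathcal D_H$, under which a morphism $(a\otimes f)\otimes B$ corresponds to $(a\otimes B)\otimes f$, and composition becomes matrix multiplication in $M_2(\mathbf C)$ tensored with composition in $\mathcal D_H$. Using the invertible $\tilde U\in M_2(\mathbf C)$ of Lemma \ref{6.6r} together with the identity morphisms of $\mathcal D_H$, I set $\eta(X):=\tilde U\otimes id_X$ for each $X$. Since $\mathbf C$ is unital, the identity of $X$ in this category is $1_{M_2(\mathbf C)}\otimes id_X$, so $\eta(X)$ is invertible with inverse $\tilde U^{-1}\otimes id_X$, giving $\eta\in\mathbb G\big((\mathbf C\otimes\mathcal D_H)\otimes M_2(\mathbb C)\big)$. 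To see $\eta\in\mathbb U_H$, I would use that the $H$-action ignores the matrix factor and that $h(id_X)=\varepsilon(h)id_X$, whence $h(\eta(X))=\tilde U\otimes\varepsilon(h)id_X=\varepsilon(h)\eta(X)$.

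It then remains to verify condition (2), namely $\Phi_\eta\big(f\otimes E_{11}+\upsilon(f)\otimes E_{22}\big)=\upsilon(f)\otimes E_{22}$ for $f=a\otimes\phi\in Hom_{\mathbf C\otimes\mathcal D_H}(X,Y)$. Under the identification above, the argument becomes $\big(a\otimes E_{11}+\omega(a)\otimes E_{22}\big)\otimes\phi$, and $\Phi_\eta$ acts as conjugation by $\eta$. Since composition is matrix multiplication tensored with morphism composition and the $\phi$-factor is left untouched by $\eta(Y)\circ(\blank)\circ\eta(X)^{-1}$, I compute
\begin{equation*}
\Phi_\eta\big((a\otimes E_{11}+\omega(a)\otimes E_{22})\otimes\phi\big)=\Xi\big(a\otimes E_{11}+\omega(a)\otimes E_{22}\big)\otimes\phi=\big(\omega(a)\otimes E_{22}\big)\otimes\phi,
\end{equation*}
where the last equality is exactly \eqref{6.6pqe} of Lemma \ref{6.6r} with $B=a$. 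Transporting back through the identification yields $\upsilon(f)\otimes E_{22}$, as required.

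With $\upsilon$ and $\eta$ satisfying (1) and (2), Proposition \ref{vanishing} applied to $\mathbf C\otimes\mathcal D_H$ yields $HC^\bullet_H(\mathbf C\otimes\mathcal D_H,M)=0$. I expect the only real subtlety to be the bookkeeping of the tensor-factor identification $(\mathbf C\otimes\mathcal D_H)\otimes M_2(\mathbb C)\cong M_2(\mathbf C)\otimes\mathcal D_H$ and the check that conjugation by $\eta=\tilde U\otimes id$ transports precisely to the inner automorphism $\Xi$ of Lemma \ref{6.6r}; once this identification is set up carefully, the verification of (2) is immediate from \eqref{6.6pqe}.
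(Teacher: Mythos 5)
Your proposal is correct and coincides essentially with the paper's own proof: both take $\upsilon=\omega\otimes id_{\mathcal D_H}$, define $\eta(X)=\tilde U\otimes id_X$ under the identification $(\mathbf C\otimes\mathcal D_H)\otimes M_2(\mathbb C)\cong M_2(\mathbf C)\otimes\mathcal D_H$, check that conjugation by $\eta$ acts as $\Xi$ on the matrix factor, and conclude condition (2) of Proposition \ref{vanishing} from \eqref{6.6pqe}. No gaps; the bookkeeping you flag is exactly the computation the paper carries out.
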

\begin{proof}
We will verify that the category $\mathbf{C} \otimes \mathcal{D}_H$ satisfies the assumptions of Proposition \ref{vanishing}. Let $\omega$ and $\tilde{U}$ be as in Lemma \ref{6.6r}. We now define $\upsilon:\mathbf{C} \otimes \mathcal{D}_H \longrightarrow \mathbf{C} \otimes \mathcal{D}_H$ given by
\begin{equation*}
\upsilon(X):=X \qquad \upsilon(B \otimes f):=\omega(B) \otimes f
\end{equation*}
for any $X \in Ob(\mathbf{C} \otimes \mathcal{D}_H)$ and $B \otimes f \in Hom_{\mathbf{C} \otimes \mathcal{D}_H}(X,Y)$. Since $\omega:\mathbf{C} \longrightarrow \mathbf{C}$ is an algebra homomorphism, it follows that $\upsilon$ is a semifunctor. By the definition of the $H$-action on $\mathbf{C} \otimes \mathcal{D}_H$, it is also clear that $\upsilon$ is $H$-linear.

\smallskip
Using the identification $\mathbf{C} \otimes \mathcal{D}_H \otimes M_2(\mathbb C) = M_2(\mathbf{C}) \otimes \mathcal{D}_H$, we now define an element $\eta \in \mathbb{G}(\mathbf{C} \otimes \mathcal{D}_H \otimes M_2(\mathbb C))=\mathbb{G}( M_2(\mathbf{C}) \otimes \mathcal{D}_H)$ given by the family of morphims
\begin{equation}\label{tqp}
\{\eta(X):=\tilde U \otimes id_X \in Hom_{M_2(\mathbf{C}) \otimes \mathcal{D}_H}(X,X)=M_2(\mathbf C) \otimes Hom_{\mathcal{D}_H}(X,X)\}_{X \in Ob(\mathcal{D}_H)}
\end{equation}
Since $\tilde U$ is a unit in $M_2(\mathbf C)$, it follows that each $\eta(X)$ in \eqref{tqp} is an automorphism. 
Since $H$ acts trivially on $M_2(\mathbf{C})$, we see that $\eta \in \mathbb{U}_H(\mathbf{C} \otimes \mathcal{D}_H \otimes M_2(\mathbb C))$. Moreover, for any $\tilde{B} \otimes f \in Hom_{M_2(\mathbf{C}) \otimes \mathcal{D}_H}(X,Y)=M_2(\mathbf{C}) \otimes Hom_{\mathcal{D}_H}(X,Y)$, we have
\begin{equation*}
\Phi_\eta(\tilde{B} \otimes f)=\eta(Y) \circ (\tilde{B} \otimes f) \circ \eta(X)^{-1}=(\tilde{U} \otimes id_Y) \circ (\tilde{B} \otimes f) \circ (\tilde{U}^{-1} \otimes id_X)=\tilde{U}\tilde{B}\tilde{U}^{-1} \otimes f=\Xi(\tilde{B}) \otimes f
\end{equation*}

\smallskip
Therefore,  for any $B \otimes f \in \mathbf{C} \otimes Hom_{\mathcal{D}_H}(X,Y)$, we have
\begin{equation*}
\begin{array}{ll}
\Phi_\eta((B \otimes f) \otimes E_{11} + \upsilon(B \otimes f) \otimes E_{22})&=\Phi_\eta(B \otimes f \otimes E_{11} + \omega(B) \otimes f \otimes E_{22})\\
&= \Phi_\eta(B \otimes E_{11} \otimes f + \omega(B) \otimes E_{22} \otimes f)\\
&= \Xi(B \otimes E_{11} + \omega(B) \otimes E_{22}) \otimes f\\
&=\omega(B) \otimes E_{22} \otimes f=\upsilon(B \otimes f) \otimes E_{22}
\end{array}
\end{equation*}
This proves the result.
\end{proof}

We are now ready to describe elements in the space $B^n_H(\mathcal{D}_H,M)$.
\begin{theorem}\label{cob1}
An element $\phi \in C^n_H(\mathcal{D}_H,M)$ is a coboundary iff $\phi$ is the character of an $n$-dimensional vanishing $\mathcal{S}_H$-cycle $(\mathcal{S}_H,\hat{\partial}_H, \hat{\mathscr{T}}^H,\rho)$ over $\mathcal{D}_H$. 
\end{theorem}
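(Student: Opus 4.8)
The plan is to prove both implications through a single bookkeeping device: the character of a cycle over $\mathcal{D}_H$ is a \emph{pullback} of a cocycle living on the degree-zero part $\mathcal{S}_H^0$. Precisely, given an $n$-dimensional cycle $(\mathcal{S}_H,\hat\partial_H,\hat{\mathscr T}^H,\rho)$ over $\mathcal{D}_H$ with character $\phi$, I would regard the same $\mathcal{S}_H$-cycle as a cycle over its own degree-zero $H$-category $\mathcal{S}_H^0$ via the identity semifunctor, obtaining a cochain $\psi$ on $\mathcal{S}_H^0$ with $\psi(m\otimes g^0\otimes\cdots\otimes g^n)=\hat{\mathscr T}^H_{X_0}\big(m\otimes g^0\hat\partial^0_H(g^1)\cdots\hat\partial^0_H(g^n)\big)$. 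By construction $\phi=C^n_H(\rho,M)(\psi)$, abbreviated $\phi=\rho^\ast\psi$; and whenever $\mathcal{S}_H^0$ is an $H$-category, Theorem \ref{charcycl} gives $\psi\in Z^n_H(\mathcal{S}_H^0,M)$, while Lemma \ref{6.3a}(1) guarantees that $\rho^\ast=C^\bullet_H(\rho,M)$ is a morphism of complexes. This identity is the common engine for both directions.

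For the forward implication, suppose $\phi$ is the character of a vanishing cycle. By Definition \ref{def6.5}, $\mathcal{S}_H^0$ is an $H$-category satisfying the hypotheses of Proposition \ref{vanishing}, so $HC^n_H(\mathcal{S}_H^0,M)=0$. The cocycle $\psi$ above is therefore a coboundary, say $\psi=b\chi$ with $\chi\in C^{n-1}_H(\mathcal{S}_H^0,M)$ in the cyclic complex. Applying the chain map $\rho^\ast$ yields $\phi=\rho^\ast(b\chi)=b\big(\rho^\ast\chi\big)\in B^n_H(\mathcal{D}_H,M)$. This direction is essentially formal once Theorem \ref{charcycl} and Lemma \ref{6.3a} are available.

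The converse is where I expect the real work, since it requires manufacturing an explicit vanishing cycle realizing a prescribed coboundary $\phi=b\psi$, with $\psi\in C^{n-1}_H(\mathcal{D}_H,M)$ cyclic and $H$-linear. The plan is to take the vanishing $H$-category $\mathbf{C}\otimes\mathcal{D}_H$ of Lemma \ref{lem6.7x} together with the idempotent $E_{11}\in\mathbf{C}$, and to set $\rho:\mathcal{D}_H\longrightarrow\mathbf{C}\otimes\mathcal{D}_H$, $f\mapsto E_{11}\otimes f$; this is an $H$-linear semifunctor because $E_{11}$ is idempotent and $H$ acts trivially on $\mathbf{C}$. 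The crucial step is to lift $\psi$ to a cyclic $H$-linear $(n-1)$-cochain $\Psi$ on $\mathbf{C}\otimes\mathcal{D}_H$ by the matrix-coefficient formula
$$\Psi\big(m\otimes(B^0\otimes f^0)\otimes\cdots\otimes(B^{n-1}\otimes f^{n-1})\big):=(B^0)_{11}\cdots(B^{n-1})_{11}\,\psi(m\otimes f^0\otimes\cdots\otimes f^{n-1}).$$
I expect verifying $\lambda$-invariance of $\Psi$ to be the delicate point, and it is exactly here that the construction is forced: the scalar factors $(B^i)_{11}$ commute and are thus insensitive to cyclic rotation, while the SAYD coaction only touches the $\mathcal{D}_H$-components, on which $\psi$ is already cyclic, so $\lambda\Psi=\Psi$. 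One checks $H$-linearity directly from \eqref{new3.2}, and $\rho^\ast\Psi=\psi$ follows from $(E_{11})_{11}=1$.

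To finish, I would set $\tau:=b\Psi\in Z^n_H(\mathbf{C}\otimes\mathcal{D}_H,M)$, which is a cyclic cocycle since $b$ preserves $\ker(1-\lambda)$ and $b^2=0$. By Theorem \ref{charcycl} (implication $(3)\Rightarrow(2)$, cf.\ Remark \ref{rem4.11}), $\tau$ corresponds to a closed graded $(H,M)$-trace $\mathscr T^H$ of dimension $n$ on $\Omega(\mathbf{C}\otimes\mathcal{D}_H)$. The triple $(\Omega(\mathbf{C}\otimes\mathcal{D}_H),\partial_H,\mathscr T^H,\rho)$ is then a cycle over $\mathcal{D}_H$ whose degree-zero part $\mathbf{C}\otimes\mathcal{D}_H$ is vanishing by Lemma \ref{lem6.7x}, and its character is $\rho^\ast\tau=b(\rho^\ast\Psi)=b\psi=\phi$. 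This exhibits $\phi$ as the character of a vanishing cycle and completes the argument; the only genuinely non-routine ingredient is the cyclic lift $\Psi$ of the bounding cochain together with the choice of the corner embedding $\rho$.
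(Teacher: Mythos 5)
Your proposal is correct and follows essentially the same route as the paper's proof: the forward direction via the cocycle $\psi$ on $\mathcal{S}_H^0$ killed by $HC^n_H(\mathcal{S}_H^0,M)=0$ and pulled back along $\rho$, and the converse via the matrix-coefficient lift of the bounding cochain to $\mathbf{C}\otimes\mathcal{D}_H$ followed by Theorem \ref{charcycl} (3)$\Rightarrow$(2) and Lemma \ref{lem6.7x}. The only (immaterial) deviation is that you embed $\mathcal{D}_H$ into $\mathbf{C}\otimes\mathcal{D}_H$ via $f\mapsto E_{11}\otimes f$ where the paper uses $f\mapsto \mathbf{1}\otimes f$; both are $H$-linear semifunctors and both satisfy $\rho^\ast\Psi=\psi$, so the argument goes through unchanged.
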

\begin{proof}
Let $\phi$ be the character of an $n$-dimensional vanishing $\mathcal{S}_H$-cycle $(\mathcal{S}_H,\hat{\partial}_H, \hat{\mathscr{T}}^H, \rho)$. By definition, $\hat{\mathscr{T}}^H$ is an $n$-dimensional closed graded $(H,M)$-trace on the $H$-semicategory $\mathcal{S}_H$ and that
$\mathcal S_H^0$ is an ordinary $H$-category. We now define $\psi \in C^n_H(\mathcal{S}_H^0,M)$ by setting
\begin{equation*}
\psi(m \otimes g^0 \otimes \ldots \otimes g^n):=\hat{\mathscr{T}}^H_{X_0}\big(m \otimes g^0\hat{\partial}_H^0(g^1) \ldots \hat{\partial}_H^0(g^n)\big)
\end{equation*}
for $m \in M$ and $g^0 \otimes \ldots \otimes g^n \in Hom_{\mathcal{S}^0_H}(X_1,X_0) \otimes Hom_{\mathcal{S}^0_H}(X_2,X_1) \otimes \ldots \otimes Hom_{\mathcal{S}^0_H}(X_0,X_n)$. Then, by the implication (1) $\Rightarrow$ (3) in Theorem \ref{charcycl}, we have that $\psi \in Z^n_H(\mathcal{S}_H^0,M)$. Since $HC^n_H(\mathcal{S}_H^0,M)=0$, we have that $\psi =b\psi'$ for some $\psi' \in C^{n-1}_H(\mathcal{S}_H^0,M)$.

\smallskip
By Lemma \ref{6.3a}, the semifunctor $\rho \in Hom_{\overline{Cat}_H}(\mathcal{D}_H,\mathcal{S}_H^0)$ induces a map $C^{n-1}_H(\rho,M):C^{n-1}_H(\mathcal{S}_H^0,M)
\longrightarrow C_H^{n-1}(\mathcal D_H,M)$. Setting $\psi'':=C^{n-1}_H(\rho,M)(\psi')$,  we have
 \begin{equation*}
\left(\psi''\right)(m \otimes p^0 \otimes \ldots \otimes p^{n-1})=\psi'\left(m \otimes \rho(p^0) \otimes \ldots \otimes \rho(p^{n-1})\right)
\end{equation*}
for any $m \in M$ and $p^0 \otimes \ldots \otimes p^{n-1} \in CN_{n-1}(\mathcal{D}_H)$. Therefore,
\begin{equation*}
\begin{array}{ll}
\phi(m \otimes f^0 \otimes \ldots \otimes f^n)&=\hat{\mathscr{T}}^H_{X_0}\big(m \otimes \rho(f^0)\hat{\partial}_H^0\left(\rho(f^1)\right) \ldots \hat{\partial}_H^0\left(\rho(f^n)\right)\big)=\psi\left(m \otimes \rho(f^0) \otimes \ldots \otimes \rho(f^n)\right)\\
&= (b\psi')\left(m \otimes \rho(f^0) \otimes \ldots \otimes \rho(f^n)\right)=(b\psi'')(m \otimes f^0 \otimes \ldots \otimes f^n)\\
\end{array}
\end{equation*}
for any $m \in M$ and $f^0 \otimes \ldots \otimes f^n \in Hom_{\mathcal{D}_H}(X_1,X_0) \otimes Hom_{\mathcal{D}_H}(X_2,X_1) \otimes \ldots \otimes Hom_{\mathcal{D}_H}(X_0,X_n)$. Thus, $\phi \in B^n_H(\mathcal{D}_H,M)$.

\smallskip
Conversely, suppose that $\phi \in B^n_H(\mathcal{D}_H,M)$. Then, $\phi=b\psi$ for some $\psi \in C^{n-1}_H(\mathcal{D}_H,M)$. We now extend $\psi$ to get an element $\psi' \in C^{n-1}_H(\mathbf{C} \otimes \mathcal{D}_H,M)$ as follows:
\begin{equation*}
\psi'\left(m \otimes (B^0 \otimes f^0) \otimes \ldots \otimes (B^{n-1} \otimes f^{n-1})\right)=\psi(m \otimes B^0_{11}f^0 \otimes \ldots \otimes B^{n-1}_{11}f^{n-1})
\end{equation*}
We now set $\phi'=b\psi' \in Z^n_H(\mathbf{C} \otimes \mathcal{D}_H,M)$. We now consider the $H$-linear semifunctor  $
\rho:\mathcal{D}_H \longrightarrow \mathbf{C} \otimes \mathcal{D}_H$ which fixes objects and takes any morphism $
 f$ to $\mathbf 1 \otimes f$. Then, we have
\begin{equation*}
\begin{array}{ll}
\left(C^n_H(\rho,M)(\phi')\right)(m \otimes f^0 \otimes \ldots \otimes f^n)&=\phi'\left(m \otimes \rho(f^0) \otimes \ldots \otimes \rho(f^n)\right)=(b\psi')\left(m \otimes \rho(f^0) \otimes \ldots \otimes \rho(f^n)\right)\\
&=(b\psi)(m \otimes f^0 \otimes \ldots \otimes f^n)=\phi(m \otimes f^0 \otimes \ldots \otimes f^n)\\
\end{array}
\end{equation*}
Since $\phi'\in Z^n_H(\mathbf{C} \otimes \mathcal{D}_H,M)$, the implication (3) $\Rightarrow$ (2) in Theorem \ref{charcycl} gives us a  closed graded $(H,M)$-trace $\mathscr{T}^H$ of dimension $n$ on the DGH-semicategory $\left(\Omega(\mathbf{C} \otimes \mathcal{D}_H),\partial_H\right)$ such that
\begin{equation}\label{7.6tre}
\mathscr{T}^H_{X_0}\left(m \otimes \rho(f^0)\partial_H^0\left(\rho(f^1)\right) \ldots \partial_H^0\left(\rho(f^n)\right)\right)=
\phi'\left(m \otimes \rho(f^0) \otimes \ldots \otimes \rho(f^n)\right)
=\phi(m \otimes f^0 \otimes \ldots \otimes f^n)
\end{equation}
Since $\left(\Omega\left(\mathbf{C} \otimes \mathcal{D}_H\right)\right)^0=\mathbf C\otimes \mathcal D_H$ is a left $H$-category, we see that $\phi$ is the character associated to the cycle $\left(\Omega\left(\mathbf{C} \otimes \mathcal{D}_H\right),\partial_H,\mathscr{T}^H, {\rho} \right)$ over $\mathcal D_H$.

\smallskip 
 From the proof of Lemma \ref{lem6.7x}, we know that  $\mathbf{C} \otimes \mathcal{D}_H$ satisfies the assumptions in Proposition \ref{vanishing}. Hence, $\left(\Omega\left(\mathbf{C} \otimes \mathcal{D}_H\right),\partial_H,\mathscr{T}^H, \rho \right)$ is a vanishing cycle over $\mathcal D_H$. From this, the result follows.
\end{proof}

For the remaining part of this section, we shall suppose that $H$ is cocommutative. If $\mathcal{D}_H$, $\mathcal{D}_H'$ are left $H$-categories, we observe that  $\mathcal{D}_H \otimes \mathcal{D}_H'$ then becomes a  left $H$-category under the diagonal action of $H$.

\smallskip
Let  $M$, $M'$ be left $H$-comodules equipped respectively with coactions $\rho_M:M\longrightarrow H\otimes M$ and $\rho_{M'}:M'\longrightarrow H\otimes M'$. Since $H$ is cocommutative, $M$ may be treated as a right $H$-comodule and we can form the cotensor product $M \square_H M'$ defined by the kernel
\begin{equation*}
\begin{CD}
M \square_H M' :=Ker\left(M \otimes M'\right. @>{\rho_M\otimes id_{M'}-id_M\otimes \rho_{M'}}>> \left. M \otimes H \otimes M'\right)\\
\end{CD}
\end{equation*}
in $Vect_k$. It follows by \cite[Proposition 7.2.2]{Caen} that the map $\rho_M \otimes id_{M'}$ gives $M \square_H M'$ a left $H$-comodule structure. We also note that $M\otimes M'$ carries a right $H$-module structure via the diagonal action. 

\begin{lemma}
Let $H$ be a cocommutative Hopf algebra and $M$, $M'$ be right-left SAYD modules  over $H$ such that $M \square_H M'$ is a right $H$-submodule of $M \otimes M'$. Then, $M \square_H M'$ is also an SAYD module over $H$.
\end{lemma}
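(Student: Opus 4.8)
The plan is to establish the two missing ingredients of a right-left SAYD structure on $N:=M\square_H M'$. The right $H$-module structure is given by hypothesis (the diagonal action $(m\otimes m')\cdot h=\sum mh_1\otimes m'h_2$ restricts to $N$), and the left $H$-comodule structure, with coaction $\rho_N:=(\rho_M\otimes \mathrm{id}_{M'})|_N$, is exactly the one furnished by \cite[Proposition 7.2.2]{Caen}; in particular $\rho_N$ takes values in $H\otimes N$, so every expression below stays inside $N$. Writing a general element as $w=\sum m\otimes m'\in N$, I would record the defining cotensor relation
\[
\sum m_{(0)}\otimes m_{(-1)}\otimes m'=\sum m\otimes m'_{(-1)}\otimes m'_{(0)}
\]
in $M\otimes H\otimes M'$, together with $w_{(-1)}=m_{(-1)}$ and $w_{(0)}=m_{(0)}\otimes m'$. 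Swapping the first two tensor legs of this relation also gives $\sum m_{(-1)}\otimes m_{(0)}\otimes m'=\sum m'_{(-1)}\otimes m\otimes m'_{(0)}$, so the coaction may be rewritten as $\rho_N(w)=\sum m'_{(-1)}\otimes(m\otimes m'_{(0)})$, a form that will be convenient for stability.

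For the anti-Yetter--Drinfeld compatibility \eqref{SAYDcondi}, I would compute $\rho_N(w\cdot h)$ directly. Since $w\cdot h=\sum mh_1\otimes m'h_2$ and $\rho_N=\rho_M\otimes\mathrm{id}$, applying the AYD condition of $M$ to $\rho_M(mh_1)$ and expanding the iterated coproduct of $h$ yields
\[
\rho_N(w\cdot h)=\sum S(h_3)\,m_{(-1)}h_1\otimes m_{(0)}h_2\otimes m'h_4 ,
\]
whereas the right-hand side of \eqref{SAYDcondi} for $w$, upon expanding the diagonal action of $h_2$ on $w_{(0)}$, expands to $\sum S(h_4)\,m_{(-1)}h_1\otimes m_{(0)}h_2\otimes m'h_3$. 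These two expressions coincide after interchanging the third and fourth legs of the iterated coproduct of $h$, which is precisely where I use that $H$ is cocommutative.

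The hard part will be stability, namely $w_{(0)}\cdot w_{(-1)}=w$. Using the rewritten coaction $\rho_N(w)=\sum m'_{(-1)}\otimes(m\otimes m'_{(0)})$, the diagonal action gives $w_{(0)}\cdot w_{(-1)}=\sum m(m'_{(-1)})_1\otimes m'_{(0)}(m'_{(-1)})_2$. The obstruction here is structural: the diagonal action splits the single element $w_{(-1)}$ across the two factors, while the stabilities $m_{(0)}m_{(-1)}=m$ and $m'_{(0)}m'_{(-1)}=m'$ each need the \emph{whole} coaction element on a single factor. The step that dissolves this difficulty is to apply $\mathrm{id}_M\otimes\mathrm{id}_H\otimes\rho_{M'}$ to the cotensor relation and use coassociativity of $\rho_{M'}$, producing the identity
\[
\sum m_{(0)}\otimes m_{(-1)}\otimes m'_{(-1)}\otimes m'_{(0)}=\sum m\otimes (m'_{(-1)})_1\otimes (m'_{(-1)})_2\otimes m'_{(0)}
\]
in $M\otimes H\otimes H\otimes M'$. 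Applying to both sides the linear map $a\otimes p\otimes q\otimes b\mapsto a p\otimes b q$ sends the right-hand side exactly to $\sum m(m'_{(-1)})_1\otimes m'_{(0)}(m'_{(-1)})_2=w_{(0)}\cdot w_{(-1)}$, while the left-hand side becomes $\sum m_{(0)}m_{(-1)}\otimes m'_{(0)}m'_{(-1)}$, which collapses to $\sum m\otimes m'=w$ by the separate stabilities of $M$ and $M'$. This gives $w_{(0)}\cdot w_{(-1)}=w$ and, combined with the comodule and module structures and the AYD identity above, shows that $M\square_H M'$ is a stable anti-Yetter--Drinfeld module.
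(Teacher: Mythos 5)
Your proof is correct and follows essentially the same route as the paper: the anti-Yetter--Drinfeld compatibility is checked by expanding both sides into $M\otimes H\otimes M'$ and matching Sweedler legs via cocommutativity, and stability is obtained from the cotensor relation together with comodule coassociativity and the separate stabilities of $M$ and $M'$. The only (cosmetic) difference is that you run the stability computation from the $M'$-side of the cotensor relation while the paper runs the same chain of identities from the $M$-side.
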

\begin{proof}
For any $m \otimes m' \in M \square_H M'$, we have
\begin{eqnarray*}
&\left((m \otimes m')h\right)_{(-1)} \otimes \left((m \otimes m')h\right)_{(0)}=(mh_1 \otimes m'h_2)_{(-1)} \otimes (mh_1 \otimes m'h_2)_{(0)}= (mh_1)_{(-1)} \otimes (mh_1)_{(0)} \otimes m'h_2\\
&=S(h_{13})m_{(-1)}h_{11} \otimes m_{(0)}h_{12} \otimes m'h_2=S(h_{3})m_{(-1)}h_{1} \otimes m_{(0)}h_{2} \otimes m'h_4
\end{eqnarray*}
On the other hand, we have
\begin{eqnarray*}
&S(h_3)(m \otimes m')_{(-1)}h_1 \otimes (m \otimes m')_{(0)}h_2=S(h_3)m_{(-1)}h_1 \otimes (m_{(0)} \otimes m')h_2=S(h_3)m_{(-1)}h_1 \otimes m_{(0)}h_{21} \otimes m'h_{22}\\
&=S(h_4)m_{(-1)}h_1 \otimes m_{(0)}h_{2} \otimes m'h_{3}
\end{eqnarray*}
Since $H$ is cocommutative, we see that the two expressions are the same. This proves that  $M \square_H M'$ is an anti-Yetter-Drinfeld module. We now check that it is also stable. Using the cocommutativity of $H$ and the stability of $M$, $M'$,  we have
\begin{equation*}
(m \otimes m')_{(0)}(m \otimes m')_{(-1)}=m_0m_1\otimes m'm_2= m_{00}m_{01}\otimes m'm_1=m_0\otimes m'm_1=m\otimes m'_0m'_{-1}=m\otimes m'
\end{equation*}
for any $m \otimes m' \in M \square_H M'$.
\end{proof}

\begin{comment}
Our final aim is to use the method of categorified cycles and categorified vanishing cycles to obtain a pairing \begin{equation*}
HC^p_H(\mathcal{D}_H,M) \otimes HC^q(\mathcal{D}'_H,M) \longrightarrow HC^{p+q}(\mathcal{D}_H \otimes \mathcal{D}'_H,M)
\end{equation*} for $k$-linear categories $\mathcal C$ and $\mathcal C'$. 
\end{comment}

Let $(\mathcal S_H,\hat\partial_H)$ and $(\mathcal S'_H,\hat\partial'_H)$ be DGH-semicategories. Then, their tensor product $\mathcal S_H \otimes \mathcal S'_H$ is the DG-semicategory defined by setting $Ob( \mathcal S_H \otimes \mathcal S'_H)=Ob(\mathcal{S}_H) \times Ob(\mathcal{S}'_H)$ and
$$
Hom_{\mathcal S_H \otimes \mathcal S'_H}^n\left((X,X'),(Y,Y')\right)=\bigoplus\limits_{i+j=n}Hom^i_{\mathcal{S}_H}(X,Y) \otimes_k Hom^j_{\mathcal{S}'_H}(X',Y')$$
The composition in $ \mathcal S_H \otimes  \mathcal S'_H$ is given by the rule:
\begin{equation*}
(g \otimes g')\circ (f \otimes f')=(-1)^{deg(g') deg(f)} (gf \otimes g'f')
\end{equation*}
for homogeneous $f:X \longrightarrow Y$, $g: Y \longrightarrow Z$ in $\mathcal{S}_H$ and $f':X' \longrightarrow Y'$, $g': Y' \longrightarrow Z'$ in $\mathcal{S}'_H$. The differential $(\hat\partial_H\otimes \hat\partial'_H)^n :Hom_{\mathcal S_H \otimes  \mathcal S'_H}^n\left((X,X'),(Y,Y')\right) \longrightarrow Hom_{\mathcal S_H \otimes  \mathcal S'_H}^{n+1}\left((X,X'),(Y,Y')\right)$ is determined by
\begin{equation*}
(\hat\partial_H\otimes \hat\partial'_H)^n(f_i \otimes g_j)=\hat\partial^i_{H}(f_i) \otimes g_j + (-1)^i f_i \otimes {\hat{\partial'}}^{j}_{H}(g_j)
\end{equation*}
for any $f_i \in Hom^i_{\mathcal{S}_H}(X,Y)$ and $g_j \in Hom^j_{\mathcal{S}'_H}(X',Y')$ such that $i+j=n$. Clearly, $(\mathcal{S}_H \otimes \mathcal{S}'_H)^0=\mathcal{S}^0_H \otimes \mathcal{S}'^0_H$.

\begin{theorem}\label{Thmfin}
Let $H$ be a cocommutative Hopf algebra and $M$, $M'$ be right-left SAYD modules over $H$ such that $M \square_H M'$ is a right $H$-submodule of $M \otimes M'$. Let $\mathcal{D}_H$, $\mathcal{D}_H'$ be left $H$-categories. Then, we have a pairing
\begin{equation*}
HC^p_H(\mathcal{D}_H,M) \otimes HC^q_H(\mathcal{D}'_H,M') \longrightarrow HC^{p+q}_H(\mathcal{D}_H \otimes \mathcal{D}'_H,M \square_H M')
\end{equation*} for $p$, $q\geq 0$. 
\end{theorem}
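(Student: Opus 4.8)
The plan is to construct the pairing by forming the \emph{tensor product of cycles}, following the template of Connes' cup product in cyclic cohomology \cite{C2} and the categorical machinery assembled in Sections \ref{SectionDG}--\ref{Morita}. First, given classes in $HC^p_H(\mathcal{D}_H,M)$ and $HC^q_H(\mathcal{D}'_H,M')$, I would choose cocycle representatives $\phi\in Z^p_H(\mathcal{D}_H,M)$ and $\psi\in Z^q_H(\mathcal{D}'_H,M')$. By Theorem \ref{charcycl}, $\phi$ is the character of a $p$-dimensional cycle $(\mathcal{S}_H,\hat\partial_H,\hat{\mathscr T}^H,\rho)$ over $\mathcal{D}_H$ (for instance the universal one on $\Omega(\mathcal{D}_H)$), and $\psi$ is the character of a $q$-dimensional cycle $(\mathcal{S}'_H,\hat\partial'_H,\hat{\mathscr U}^H,\rho')$ over $\mathcal{D}'_H$. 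On the tensor product DGH-semicategory $(\mathcal{S}_H\otimes\mathcal{S}'_H,\hat\partial_H\otimes\hat\partial'_H)$ defined just before the statement, I would introduce a candidate trace of dimension $p+q$ with coefficients in $M\square_H M'$ by
\begin{equation*}
(\hat{\mathscr T}^H\#\hat{\mathscr U}^H)_{(X,X')}\big((m\otimes m')\otimes(g\otimes g')\big):=\hat{\mathscr T}^H_X(m\otimes g)\cdot\hat{\mathscr U}^H_{X'}(m'\otimes g')
\end{equation*}
for $g\in Hom^p_{\mathcal{S}_H}(X,X)$ and $g'\in Hom^q_{\mathcal{S}'_H}(X',X')$, declared to vanish on every homogeneous component of $Hom^{p+q}_{\mathcal{S}_H\otimes\mathcal{S}'_H}((X,X'),(X,X'))$ of bidegree different from $(p,q)$. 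Since $H$ is cocommutative, the diagonal action makes $\rho\otimes\rho':\mathcal{D}_H\otimes\mathcal{D}'_H\longrightarrow(\mathcal{S}_H\otimes\mathcal{S}'_H)^0=\mathcal{S}^0_H\otimes(\mathcal{S}'_H)^0$ an $H$-linear semifunctor, so that the tuple $(\mathcal{S}_H\otimes\mathcal{S}'_H,\hat\partial_H\otimes\hat\partial'_H,\hat{\mathscr T}^H\#\hat{\mathscr U}^H,\rho\otimes\rho')$ is a candidate $(p+q)$-dimensional cycle over $\mathcal{D}_H\otimes\mathcal{D}'_H$, whose character I would take as the product $\phi\#\psi$.

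The second step is to verify that $\hat{\mathscr T}^H\#\hat{\mathscr U}^H$ is a closed graded $(H,M\square_H M')$-trace in the sense of Definition \ref{gradedtrace}. The equivariance condition \eqref{gt0} follows from the corresponding property of each factor together with the coaction $\rho_M\otimes id_{M'}$ on $M\square_H M'$ and the cocommutativity of $H$. The closedness \eqref{gt1} is the classical Leibniz computation: for a chain $\Theta$ of total degree $p+q-1$, only the bidegree $(p-1,q)$ and $(p,q-1)$ components of $\Theta$ can contribute to the $(p,q)$ component of $(\hat\partial_H\otimes\hat\partial'_H)\Theta$, and the first contribution vanishes by \eqref{gt1} for $\hat{\mathscr T}^H$ while the second vanishes by \eqref{gt1} for $\hat{\mathscr U}^H$. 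The graded trace identity \eqref{gt2} is where the Koszul signs of the tensor composition rule must be reconciled with the SAYD twists $S^{-1}(m_{(-1)})$ and $S^{-1}(m'_{(-1)})$, and I expect this to be the main obstacle: one must track the interaction of these signs with the coaction on $M\square_H M'$, relying once more on cocommutativity to commute the two $H$-coactions past one another. Once this is checked, the implication (2)$\Rightarrow$(3) of Theorem \ref{charcycl} guarantees $\phi\#\psi\in Z^{p+q}_H(\mathcal{D}_H\otimes\mathcal{D}'_H,M\square_H M')$.

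Finally, I would show the construction descends to cohomology, for which the crucial observation is that \emph{the tensor product of a vanishing cycle with any cycle is again vanishing}. If $(\mathcal{S}_H,\hat\partial_H,\hat{\mathscr T}^H,\rho)$ is vanishing, so that $\mathcal{S}^0_H$ admits $\upsilon$ and $\eta\in\mathbb U_H(\mathcal{S}^0_H\otimes M_2(k))$ as in Proposition \ref{vanishing}, then on $\mathcal{S}^0_H\otimes(\mathcal{S}'_H)^0$ I would set $\tilde\upsilon:=\upsilon\otimes id_{(\mathcal{S}'_H)^0}$ and, using the identification $(\mathcal{S}^0_H\otimes(\mathcal{S}'_H)^0)\otimes M_2(k)=(\mathcal{S}^0_H\otimes M_2(k))\otimes(\mathcal{S}'_H)^0$, define $\tilde\eta$ by $\tilde\eta(X,X')=\eta(X)\otimes id_{X'}$. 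Because $H$ acts trivially on the $M_2(k)$ factor and $\eta$ is $\varepsilon$-invariant, $\tilde\eta\in\mathbb U_H((\mathcal{S}^0_H\otimes(\mathcal{S}'_H)^0)\otimes M_2(k))$, and tensoring the relation $\Phi_\eta(g\otimes E_{11}+\upsilon(g)\otimes E_{22})=\upsilon(g)\otimes E_{22}$ with $g'$ yields exactly condition (2) of Proposition \ref{vanishing} for $\tilde\upsilon,\tilde\eta$. Hence $\mathcal{S}^0_H\otimes(\mathcal{S}'_H)^0$ satisfies the hypotheses of Proposition \ref{vanishing} and the product cycle is vanishing. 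By Theorem \ref{cob1}, it follows that if $\phi\in B^p_H(\mathcal{D}_H,M)$ then $\phi\#\psi\in B^{p+q}_H(\mathcal{D}_H\otimes\mathcal{D}'_H,M\square_H M')$, and the symmetric argument, tensoring a vanishing cycle on the right, handles a coboundary in the second variable. Together with bilinearity, this shows that $\#$ descends to the asserted pairing on Hopf-cyclic cohomology.
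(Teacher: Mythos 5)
Your proposal follows essentially the same route as the paper: representing the cocycles as characters of cycles via Theorem \ref{charcycl}, defining the product trace on the tensor product DGH-semicategory by projecting onto the bidegree $(p,q)$ component, verifying \eqref{gt0}--\eqref{gt2}, and showing the product of a vanishing cycle with a cycle is vanishing via $\upsilon\otimes id$ and $\tilde\eta(X,X')=\eta(X)\otimes id_{X'}$ to see that the pairing descends to cohomology. The only detail the paper makes explicit that you leave implicit is that one should take $\mathcal{S}'_H=\Omega(\mathcal{D}'_H)$ (or at least arrange $(\mathcal{S}'_H)^0$ to be an ordinary $H$-category with identities) so that $(\mathcal{S}_H\otimes\mathcal{S}'_H)^0$ is a genuine $H$-category, as required by the definition of a vanishing cycle.
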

\begin{proof}
Let $\phi \in Z^p_H(\mathcal{D}_H,M)$ and $\phi' \in Z^q_H(\mathcal{D}'_H,M)$. We may express $\phi$ and $\phi'$ respectively as the characters of $p$ and $q$-dimensional cycles $(\mathcal{S}_H,\hat\partial_H, \hat{\mathscr T}^H,\rho)$ and $(\mathcal{S}'_H,\hat\partial'_H,\hat{\mathscr T}'^H,\rho')$ over $\mathcal{D}_H$ and $\mathcal{D}'_H$ with coefficients in $M$ and $M'$ respectively.  
We now consider the collection $\hat{\mathscr T^H} \# \hat{\mathscr T}'^H:=\{ {(\hat{\mathscr T}^H \# \hat{\mathscr T}'^H)}_{(X,X')}:M \square_H M' \otimes Hom^{p+q}_{\mathcal{S}_H\otimes \mathcal{S}'_H}\left((X,X'),(X,X')\right) \longrightarrow \mathbb{C}\}_{(X,X') \in Ob\left(\mathcal{S}_H\otimes  \mathcal{S}'_H\right)}$ of $\mathbb{C}$-linear maps defined by
\begin{equation*}
{(\hat{\mathscr T}^H \# \hat{\mathscr T}'^H)}_{(X,X')}(m \otimes m' \otimes f \otimes f'):=\hat{\mathscr T}^H_X(m \otimes f_p)\hat{\mathscr T}'^H_{X'}(m' \otimes f'_q)
\end{equation*}
for any $m \otimes m' \in M \square_H M'$ and $f \otimes f'= (f_i \otimes f'_j)_{i+j=p+q} \in Hom^{p+q}_{\mathcal{S}_H \otimes \mathcal{S}'_H}\left((X,X'),(X,X')\right)$.
We will now prove that $\hat{\mathscr T}^H \# \hat{\mathscr T}'^H$ is a $(p+q)$-dimensional closed graded trace on the DGH-semicategory $\mathcal{S}_H\otimes \mathcal{S}'_H$ with coefficients in $M \square_H M'$. For any  $m \otimes m' \in M \square_H M'$ and $g \otimes g'= (g_i \otimes g'_j)_{i+j=p+q-1} \in Hom^{p+q-1}_{\mathcal{S}_H \otimes \mathcal{S}'_H}\left((X,X'),(X,Y)\right)$, we have
$$\begin{array}{ll}
&(\hat{\mathscr T}^H \# \hat{\mathscr T}'^H)_{(X,X')}\left(m \otimes m' \otimes (\hat\partial _H \otimes \hat\partial'_H)^{p+q-1}(g \otimes g')\right)\\
&=\underset{i+j=p+q-1}{\sum}(\hat{\mathscr T}^H \# \hat{\mathscr T}'^H)_{(X,X')}\left(m \otimes m' \otimes \hat\partial^i_{H}(g_i) \otimes g'_j + (-1)^{i} m \otimes m' \otimes g_i \otimes \hat{\partial'}^j_H(g'_j) \right)\\
&=\hat{\mathscr T}^H_X(m \otimes \hat\partial^{p-1}_{H}(g_{p-1}))\hat{\mathscr T}'^H_{X'}(m' \otimes g'_q)+(-1)^{p}\hat{\mathscr T}^H_X(m \otimes g_p)\hat{\mathscr T}'^H_{X'}(m' \otimes \hat{\partial'}^{q-1}_{H}(g'_{q-1}))=0
\end{array}$$

This proves the condition in \eqref{gt1}. Next for any homogeneous $f:X \longrightarrow Y$, $g: Y \longrightarrow X$ in $\mathcal{S}_H$ and $f':X' \longrightarrow Y'$, $g': Y' \longrightarrow X'$ in $\mathcal{S}'_H$, we have
$$\begin{array}{ll}
&(\hat{\mathscr T}^H \# \hat{\mathscr T}'^H)_{(X,X')}\left(m \otimes m' \otimes (g \otimes g')(f \otimes f')\right)\\
&=(-1)^{deg(g')deg(f)}(\hat{\mathscr T}^H \# \hat{\mathscr T}'^H)_{(X,X')}(m \otimes m' \otimes gf \otimes g'f')\\
&=(-1)^{deg(g')deg(f)}\hat{\mathscr T}^H_X(m \otimes (gf)_p)\hat{\mathscr T}'^H_{X'}(m' \otimes (g'f')_q)\\
&=(-1)^{deg(g')deg(f)}(-1)^{deg(g)deg(f)}(-1)^{deg(g')deg(f')}\hat{\mathscr T}^H_Y(m \otimes (fg)_p)\hat{\mathscr T}'^H_{Y'}(m' \otimes (f'g')_q)\\
&=(-1)^{deg(g')deg(f)}(-1)^{deg(g)deg(f)}(-1)^{deg(g')deg(f')}(-1)^{deg(g)deg(f')}(\hat{\mathscr T}^H \#\hat{\mathscr T}'^H)_{(Y,Y')}\left(m \otimes m' \otimes (f \otimes f')(g \otimes g')\right)\\
&=(-1)^{deg(g \otimes g')deg(f \otimes f')}(\hat{\mathscr T}^H \# \hat{\mathscr T}'^H)_{(Y,Y')}\left(m \otimes m' \otimes (f \otimes f')(g \otimes g')\right)
\end{array}$$
This proves the condition in \eqref{gt2}. We may similarly verify the condition in \eqref{gt0}. Thus, we get a $(p+q)$-dimensional cycle $\left(\mathcal{S}_H \otimes \mathcal{S}'_H, \hat{\partial}_H\otimes \hat{\partial}'_H, \hat{\mathscr T}^H\#\hat{\mathscr T}'^H, \rho \otimes \rho'\right)$ with coefficients in $M \square_H M'$ over the category $\mathcal D_H \otimes \mathcal{D}'_H$. Then,  the character of this cycle, denoted by $\phi \# \phi' \in Z^{p+q}_H(\mathcal{D}_H \otimes \mathcal{D}'_H, M \square_H M')$,
gives  a well defined map $\gamma:Z^p_H(\mathcal{D}_H,M) \otimes Z^q_H(\mathcal{D}'_H,M') \longrightarrow Z^{p+q}_H(\mathcal{D}_H \otimes \mathcal{D}'_H, M \square_H M')$. 

\smallskip
We now verify that the map $\gamma$ restricts to a pairing
$$\begin{array}{ll}
B^p_H(\mathcal{D}_H,M) \otimes Z^q_H(\mathcal{D}'_H,M') \longrightarrow B^{p+q}_H(\mathcal{D}_H \otimes \mathcal{D}'_H, M \square_H M')
\end{array}$$
For this, we let $\phi \in Z^p_H(\mathcal{D}_H,M)$ be the character of a $p$-dimensional vanishing cycle $\left(\mathcal{S}_H,\hat{\partial}_H,\hat{\mathscr T}^H, \rho \right)$ over $\mathcal D_H$. In particular, it follows from Definition \ref{def6.5} that $\mathcal S^0_H$ is an ordinary left $H$-category. From the implication (1) $\Rightarrow$ (2) in Theorem \ref{charcycl}, it follows that we might as well take $\mathcal S'^0_H$ to be an ordinary left $H$-category. In fact, we could assume that $\mathcal S'_H=\Omega\mathcal D'_H$. Then, $\mathcal S^0_H\otimes \mathcal S'^0_H$ is an ordinary left $H$-category. It suffices to show that the tuple $\left(\mathcal{S}_H \otimes \mathcal{S}'_H, {\hat\partial}_H \otimes {\hat\partial'}_H, \hat{\mathscr T}^H\#\hat{\mathscr T}'^H, \rho \otimes \rho'\right)$ is a vanishing cycle.

\smallskip
Since $\left(\mathcal{S}_H,{\hat\partial}_H,\hat{\mathscr T}^H\right)$ is a vanishing cycle, we have an $H$-linear semifunctor $\upsilon: \mathcal S^0_H \longrightarrow \mathcal{S}^0_H$ and an $\eta \in \mathbb{U}(\mathcal S^0_H \otimes M_2(\mathbb{C}))$ satisfying the conditions in Proposition \ref{vanishing}. Extending  $\upsilon$, we get the the $H$-linear semifunctor $\upsilon \otimes id: \mathcal{S}^0_H \otimes \mathcal S'^0_H \longrightarrow \mathcal{S}^0_H \otimes \mathcal S'^0_H$. Identifying, $\mathcal{S}^0_H \otimes \mathcal S'^0_H \otimes M_2(\mathbb{C})  \cong \mathcal{S}^0_H  \otimes M_2(\mathbb{C}) \otimes \mathcal S'^0_H$, we obtain $\tilde{\eta} \in \mathbb{U}(\mathcal{S}^0_H \otimes M_2(\mathbb{C}) \otimes \mathcal S'^0_H)$ given by
\begin{equation*}
\{\tilde{\eta}(X,X')=\eta(X) \otimes id_{X'} \in Hom_{\mathcal{S}^0_H  \otimes M_2(\mathbb{C}) \otimes \mathcal S'^0_H}((X,X'),(X,X'))=Hom_{\mathcal{S}^0_H  \otimes M_2(\mathbb{C})}(X,X) \otimes Hom_{\mathcal{S}'^0_H}(X',X')\}
\end{equation*}
It may also be easily verified that
\begin{equation*}
\Phi_{\tilde{\eta}}(f \otimes f' \otimes E_{11} + (\upsilon \otimes id)(f \otimes f') \otimes E_{22})=(\upsilon \otimes id)(f \otimes f') \otimes E_{22}
\end{equation*}
Thus, we see that the category $(\mathcal S_H \otimes \mathcal S'_H)^0=\mathcal{S}^0_H \otimes \mathcal{S}'^0_H$   satisfies the conditions in  Proposition \ref{vanishing}. Therefore, the tuple $\left(\mathcal{S}_H \otimes \mathcal{S}'_H, {\hat\partial}_H \otimes {\hat\partial'}_H, \hat{\mathscr T}^H\#\hat{\mathscr T}'^H, \rho \otimes \rho'\right)$ is a vanishing cycle.
This proves the result.
\end{proof}

\section{Characters of Fredholm modules over categories}\label{evencycl}

In the rest of this paper, we will study Fredholm modules and Chern characters. We  fix a small $\mathbb C$-linear category $\mathcal C$. Our categorified Fredholm modules will consist of functors from $\mathcal C$  taking values in 
separable Hilbert spaces.  Let $SHilb$ be the category whose objects are separable Hilbert spaces and  whose morphisms are bounded linear maps.  

\smallskip

 Given separable Hilbert spaces $\mathcal H_1$ and $\mathcal H_2$, let $\mathcal{B}(\mathcal H_1,\mathcal H_2)$ denote the space of all bounded linear operators from $\mathcal H_1$ to $\mathcal H_2$  and $\mathcal{B}^\infty(\mathcal H_1,\mathcal H_2) \subseteq \mathcal{B}(\mathcal H_1,\mathcal H_2)$ be the space of all  compact operators. For any bounded operator  $T \in \mathcal{B}(\mathcal H_1,\mathcal H_2)$, let $\mu_n(T)$ denote the $n$-th singular value of $T$. In other words, $\mu_n(T)$ is the $n$-th (arranged
 in decreasing order) eigenvalue of the positive operator  $|T|:=(T^*T)^{\frac{1}{2}}$. 
For $1 \leq p < \infty$, the $p$-th Schatten class is defined to be the space
\begin{equation*}
\mathcal{B}^p(\mathcal H_1,\mathcal H_2):=\{T \in \mathcal{B}(\mathcal H_1,\mathcal H_2)~|~ \sum \mu_n(T)^p < \infty\}
\end{equation*}
Clearly, $\mathcal{B}^p(\mathcal H_1,\mathcal H_2) \subseteq  \mathcal{B}^q(\mathcal H_1,\mathcal H_2)$  for $p \leq q$. For $p=1$, the space $\mathcal{B}^1(\mathcal H_1,\mathcal H_2)$ is the collection of all trace class operators from $\mathcal H_1$ to $\mathcal H_2$. For  $T \in \mathcal{B}^1(\mathcal H_1,\mathcal H_2)$, we write $Tr(T):=\sum \mu_n(T)$. It is well known that
\begin{equation}\label{commutr}
Tr(T_1T_2)=Tr(T_2T_1) \quad \forall T_1 \in \mathcal{B}^{n_1}(\mathcal{H}, \mathcal{H}'),~ T_2 \in \mathcal{B}^{n_2}(\mathcal{H}', \mathcal{H}), \frac{1}{n_1}+\frac{1}{n_2}=1
\end{equation}

We note that $\mathcal{B}^p(\mathcal H_1,\mathcal H_2)$ is an ``ideal"  in the following sense: consider the functor
\begin{equation*}
\begin{array}{c} \mathcal{B}(-,-):SHilb^{op} \otimes SHilb \longrightarrow Vect_\mathbb{C} \qquad \mathcal B(-,-)(\mathcal H_1,\mathcal H_2):=\mathcal{B}(\mathcal H_1,\mathcal H_2)
\\ \mathcal{B}(-,-)(\phi_1,\phi_2):\mathcal{B}(\mathcal H_1, \mathcal H_2) \longrightarrow \mathcal{B}(\mathcal H_1',\mathcal H_2')\qquad T \mapsto \phi_2T\phi_1\\
\end{array}
\end{equation*}  taking values in the category $Vect_{\mathbb C}$ of $\mathbb C$-vector spaces. Then, $\mathcal{B}^p(-,-)$ is a subfunctor of $\mathcal{B}(-,-)$. In other words, for  morphisms $\phi_1:\mathcal H_1'\longrightarrow \mathcal H_1$, $\phi_2:\mathcal H_2\longrightarrow \mathcal H_2'$  and any $T \in \mathcal{B}^p(\mathcal H_1,\mathcal H_2)$, we have $\phi_2T\phi_1 \in \mathcal{B}^p(\mathcal H_1',\mathcal H_2')$.

\smallskip
We fix the following convention for the commutator notation:  Let $\mathscr{H}:\mathcal{C}  \longrightarrow SHilb$ be a functor and $\mathcal{G}:=\{\mathcal{G}_X:\mathscr{H}(X) \longrightarrow \mathscr{H}(X)\}_{X \in Ob(\mathcal{C})}$ be a collection of bounded linear operators. Then, we set 
\begin{equation*}
[\mathcal G,-]:\mathcal B(\mathscr H(X),\mathscr H(Y))\longrightarrow \mathcal B(\mathscr H(X),\mathscr H(Y)) \qquad [\mathcal G,T]:= \mathcal{G}_Y \circ T - T\circ \mathcal{G}_X\in \mathcal B(\mathscr H(X),\mathscr H(Y))
\end{equation*} 
We now let $SHilb_{\mathbb{Z}_2}$ be the category whose objects are $\mathbb{Z}_2$-graded separable Hilbert spaces and whose morphims are bounded linear maps.   Let $\mathscr{H}:\mathcal{C}  \longrightarrow SHilb_{\mathbb{Z}_2}$ be a functor and $\mathcal{G}:=\{\mathcal{G}_X:\mathscr{H}(X) \longrightarrow \mathscr{H}(X)\}_{X \in Ob(\mathcal{C})}$ be a collection of bounded linear operators of the same degree $|\mathcal G|$. Then, we set 
\begin{equation}\label{gradcomm}
\begin{array}{c}
[\mathcal G,-]:\mathcal B(\mathscr H(X),\mathscr H(Y))\longrightarrow \mathcal B(\mathscr H(X),\mathscr H(Y)) \\  
\mbox{$[\mathcal G,T]:= \mathcal{G}_Y \circ T -(-1)^{|\mathcal G||T|} T\circ \mathcal{G}_X\in \mathcal B(\mathscr H(X),\mathscr H(Y))$}\\
\end{array}
\end{equation}  for each $X$, $Y\in \mathcal C$.

\begin{definition}\label{dD3.1}
Let $\mathcal{C}$ be a small $\mathbb{C}$-category and let $p \in [1,\infty)$. We consider a pair $(\mathscr H,\mathcal F)$ as follows.

\smallskip
\begin{itemize}
\item[(1)] A functor  $\mathscr{H}:\mathcal{C}  \longrightarrow SHilb_{\mathbb{Z}_2}$ such that $\mathscr H(f):\mathscr H(X)\longrightarrow \mathscr H(Y)$ is a linear operator
of degree $0$ for each  $f \in Hom_{\mathcal{C}}(X,Y)$.

\item[(2)] A collection $\mathcal{F}:=\{\mathcal{F}_X:\mathscr{H}(X) \longrightarrow \mathscr{H}(X)\}_{X \in Ob(\mathcal{C})}$  of bounded linear operators of degree $1$ such that $\mathcal{F}_X^2=id_{\mathscr{H}(X)}$ for each $X \in Ob(\mathcal{C})$.
\end{itemize}

The pair $(\mathscr{H},\mathcal F)$ is said to be a $p$-summable even Fredholm module over the category $\mathcal{C}$ if
every $f \in Hom_{\mathcal{C}}(X,Y)$ satisfies
\begin{equation}\label{stneve}
[\mathcal{F},f]:= \left(\mathcal{F}_Y \circ \mathscr{H}(f) - \mathscr{H}(f) \circ \mathcal{F}_X \right)     \in \mathcal{B}^p\left(\mathscr{H}(X),\mathscr{H}(Y)\right)
\end{equation}
\end{definition} 

Taking $H=\mathbb C=M$ in Definition \ref{gradedtrace}, we note that a closed graded trace of dimension $n$ on a DG-semicategory $(\mathcal S,\hat\partial)$ is a collection of $\mathbb{C}$-linear maps $\hat T:=\{\hat T_X: Hom^n_\mathcal{S}(X,X) \longrightarrow \mathbb{C}\}_{X \in Ob(\mathcal{S})}$ satisfying the following two conditions
\begin{equation}\label{clgrfr}
\hat T_X\left(\hat\partial^{n-1}(f)\right)=0\qquad 
\hat T_X(gg')=(-1)^{ij}~ \hat T_Y(g'g)
\end{equation} for all $f \in Hom^{n-1}_{\mathcal S}(X,X)$, $g \in Hom^i_{\mathcal S}(Y,X)$, $g' \in Hom^j_{\mathcal S}(X,Y)$ and $i+j=n$. Accordingly, we will consider cycles  $(\mathcal{S},\hat{\partial},\hat{T},\rho)$  over $\mathcal C$ by setting $H=\mathbb C=M$ in Definition \ref{cycle}.

\smallskip

Let $(\mathscr{H},\mathcal F)$ be a pair that satisfies conditions (1) and (2) in Definition \ref{dD3.1}. We define a graded-semicategory
$\Omega'\mathcal C=\Omega_{(\mathscr H,\mathcal F)}\mathcal C$ as follows: we put $Ob(\Omega'\mathcal C):=Ob(\mathcal C)$ and for any $X$, $Y\in \mathcal C$, $j\geq 0$, we set
$Hom^j_{\Omega'{\mathcal C}}(X,Y)$ to be the linear span in $\mathcal B(\mathscr H(X),\mathscr H(Y))$ of the operators
\begin{equation}
\mathscr H(\tilde{f}^0)[\mathcal F,f^1][\mathcal F,f^2] \ldots [\mathcal F,f^{j}]
\end{equation} where  $\tilde{f}^0\otimes f^1\otimes ...\otimes f^j$ is a homogeneous element of degree $j$ in $Hom_{\Omega\mathcal{C}}(X,Y)$. Here, we write
$\mathscr H(\tilde{f}^0)=\mathscr H({f}^0)+\mu \cdot id$, where $\tilde{f}^0=f^0+\mu$.  Using the fact that
\begin{equation*}
[\mathcal F,f]\mathscr H(f')=[\mathcal F,f\circ f']-\mathscr H(f)[\mathcal F,f']
\end{equation*} for composable morphisms $f$, $f'$ in $\mathcal C$, we observe that $\Omega'\mathcal C$ is closed under composition.
We set 
\begin{equation*}
\begin{array}{c}
\partial':=[\mathcal{F},-]:   \mathcal{B}\left(\mathscr{H}(X),\mathscr{H}(Y)\right) \longrightarrow \mathcal{B}\left(\mathscr{H}(X),\mathscr{H}(Y)\right)\\
\partial' T=[\mathcal{F},T]=\mathcal{F}_Y \circ T -(-1)^{|T|} T\circ \mathcal{F}_X\\
\end{array}
\end{equation*} We now have the following Lemma. 
\begin{comment}
For each $X,Y \in Ob(\Omega'\mathcal{C})$,  the differential $\partial'^j_{XY}:Hom^j_{\Omega'\mathcal{C}}(X,Y) \longrightarrow Hom^{j+1}_{\Omega'\mathcal{C}}(X,Y)$ is determined by setting
\begin{equation}\label{part'}
\begin{array}{ll}
\partial'^j_{XY}(\mathscr H(\tilde{f}^0)[\mathcal F,f^1][\mathcal F,f^2] \ldots [\mathcal F,f^{j}])&=[\mathcal F,\textrm{ }\mathscr H(\tilde{f}^0)[\mathcal F,f^1][\mathcal F,f^2] \ldots [\mathcal F,f^{j}]]\\
&=\mathcal F_Y\circ \left(\mathscr H(\tilde{f}^0)[\mathcal F,f^1][\mathcal F,f^2] \ldots [\mathcal F,f^{j}]\right)\\ &\textrm{ }-(-1)^j \left(\mathscr H(\tilde{f}^0)[\mathcal F,f^1][\mathcal F,f^2] \ldots [\mathcal F,f^{j}]\right)\circ\mathcal F_X\\
\end{array}
\end{equation}
\end{comment}
%%%%%%%%%%%%%%%%%%%%%%%%

\begin{lemma} \label{srLe1} Let $(\mathscr{H},\mathcal F)$ be a pair that satisfies conditions (1) and (2) in Definition \ref{dD3.1}. Then, 

\smallskip
(a) $(\Omega'\mathcal C,\partial')$  is a DG-semicategory and $\Omega'^0\mathcal C$ is an ordinary category. 

\smallskip
(b) There is a canonical semifunctor $\rho'=\rho_{\mathscr H}:\mathcal C\longrightarrow \Omega'^0\mathcal C$ which is identity on objects and takes any $f\in Hom_{\mathcal C}(X,Y)$
to $\mathscr H(f)\in \mathcal B(\mathscr H(X),\mathscr H(Y))$. This extends to  a unique DG-semifunctor  $\hat{\rho}'=\hat{\rho}_{\mathscr H}:(\Omega\mathcal C,\partial)\longrightarrow (\Omega'\mathcal C,\partial')$  such that the restriction of  $\hat{\rho}'$  to $\mathcal C$ is identical to $\rho'$.

\smallskip
(c) Suppose that $(\mathscr{H},\mathcal F)$ is  a $p$-summable  Fredholm module. Choose $n\geq p-1$. Then, for $X$, $Y\in Ob(\mathcal C)$ and $k\geq 0$, we have $Hom^k_{\Omega'{\mathcal C}}(X,Y)
\subseteq \mathcal B^{(n+1)/k}(\mathscr H(X),\mathscr H(Y))$. 
\end{lemma}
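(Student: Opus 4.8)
The plan is to handle the three parts in order, since (b) rests on (a) while (c) is logically independent. For part (a), I would first note that closure of $\Omega'\mathcal C$ under composition is precisely the identity $[\mathcal F,f]\mathscr H(f')=[\mathcal F,f\circ f']-\mathscr H(f)[\mathcal F,f']$ recorded just before the statement, so the substantive work is the differential. I would verify $\partial'\circ\partial'=0$ by the short computation $[\mathcal F,[\mathcal F,T]]=\mathcal F_Y^2T-T\mathcal F_X^2=T-T=0$, which uses exactly the hypothesis $\mathcal F_X^2=id_{\mathscr H(X)}$ of Definition \ref{dD3.1}(2), together with the graded Leibniz rule $\partial'(gf)=\partial'(g)f+(-1)^{|g|}g\,\partial'(f)$, obtained by expanding both sides of the graded commutator in \eqref{gradcomm}. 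To see that $\partial'$ raises degree by one, I would observe that $\partial'(\mu\cdot id)=0$ and $\partial'(\mathscr H(f^i))=[\mathcal F,f^i]$, so that $\partial'([\mathcal F,f^i])=\partial'^2(\mathscr H(f^i))=0$ and Leibniz collapse $\partial'$ of a generator to $[\mathcal F,f^0][\mathcal F,f^1]\cdots[\mathcal F,f^j]$, which is again a generator (take the leading factor $\tilde g^0=1$). Finally $\Omega'^0\mathcal C$ is ordinary because $\mathscr H(id_X)=id_{\mathscr H(X)}$ is a two-sided unit and $Hom^0$ is closed under composition since $\mathscr H$ is a functor.

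For part (b), the map $\rho'$ is a $\mathbb C$-linear semifunctor simply because $\mathscr H$ preserves composition, and its image lands in $\Omega'^0\mathcal C$ by construction. I would then apply the universal property of $\Omega\mathcal C$ (Proposition \ref{construniv}) to the DG-semicategory $(\Omega'\mathcal C,\partial')$ and the semifunctor $\rho':\mathcal C\to(\Omega'\mathcal C)^0$, obtaining a unique DG-semifunctor $\hat\rho'$ that restricts to $\rho'$. To make it explicit, I would evaluate the defining formula \eqref{cat1}, using $\partial'^0(\rho'(f^i))=[\mathcal F,f^i]$, to get $\hat\rho'((f^0+\mu)df^1\cdots df^n)=\mathscr H(\tilde f^0)[\mathcal F,f^1]\cdots[\mathcal F,f^n]$, which is exactly the assignment generating $\Omega'\mathcal C$.

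For part (c), the key analytic input is the generalized H\"older inequality for Schatten classes together with the ideal (subfunctor) property of $\mathcal B^p(-,-)$ recorded in the excerpt. Fixing a generator $\mathscr H(\tilde f^0)[\mathcal F,f^1]\cdots[\mathcal F,f^k]$ of $Hom^k_{\Omega'\mathcal C}(X,Y)$ with $k\geq 1$, each factor $[\mathcal F,f^i]$ lies in the $p$-th Schatten class by $p$-summability \eqref{stneve}; since $n\geq p-1$ forces $n+1\geq p$, the nesting $\mathcal B^p\subseteq\mathcal B^{n+1}$ places each commutator in $\mathcal B^{n+1}$. H\"older then puts the product of the $k$ commutators in $\mathcal B^{(n+1)/k}$, and left multiplication by the bounded operator $\mathscr H(\tilde f^0)$ keeps it there by the ideal property. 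Since $Hom^k$ is the linear span of such generators and $\mathcal B^{(n+1)/k}$ is a linear subspace, the inclusion follows (with $k=0$ giving the trivial bound into bounded operators). The one point requiring care, and the main though routine obstacle, is that the commutators form a chain $\mathscr H(X)\to\mathscr H(X_k)\to\cdots\to\mathscr H(X_1)\to\mathscr H(Y)$ between distinct Hilbert spaces, so I would apply H\"older in its form for operators between different spaces, matching the source and target bookkeeping of \eqref{xudga} to the operator product.
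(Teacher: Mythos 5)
Your proposal is correct and follows essentially the same route as the paper: a direct computation with $\mathcal F_X^2 = id$ and the graded Leibniz rule for part (a), the universal property of $\Omega\mathcal C$ from Proposition \ref{construniv} for part (b), and H\"older's inequality together with the ideal property of the Schatten classes for part (c). Your version is somewhat more explicit than the paper's (which states $\partial'^2=0$ by expanding on generators and dispatches (b) and (c) in one line each), but the underlying arguments are identical.
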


\begin{proof}(a) Since each $\mathcal{F}_X$ is a degree 1 operator and $\mathcal F_Y[\mathcal F,f]=-[\mathcal F,f]\mathcal F_X$ for any 
$f\in Hom_{\mathcal C}(X,Y)$, we have $\partial'\left(Hom^j_{\Omega' \mathcal{C}}(X,Y)\right) \subseteq Hom^{j+1}_{\Omega' \mathcal{C}}(X,Y)$.
We now check that $\partial'^2=0$. For any homogeneous element $\mathscr H(\tilde{f}^0)[\mathcal F,f^1][\mathcal F,f^2] \ldots [\mathcal F,f^{j}]$ of degree $j$, we have
\begin{equation*}
\begin{array}{ll}
\partial'^2\left(\mathscr H(\tilde{f}^0)[\mathcal F,f^1][\mathcal F,f^2] \ldots [\mathcal F,f^{j}]\right)&=\partial'\left(\mathcal F_Y\circ \left(\mathscr H(\tilde{f}^0)[\mathcal F,f^1][\mathcal F,f^2] \ldots [\mathcal F,f^{j}]\right)\right)\\ 
&\textrm{ }-(-1)^j \partial' \left(\left(\mathscr H(\tilde{f}^0)[\mathcal F,f^1][\mathcal F,f^2] \ldots [\mathcal F,f^{j}]\right)\circ\mathcal F_X\right)\\
& =\mathcal F_Y^2 \circ \mathscr H(\tilde{f}^0)[\mathcal F,f^1][\mathcal F,f^2] \ldots [\mathcal F,f^{j}]\\
& \quad -(-1)^{j+1} \mathcal{F}_Y \circ \mathscr H(\tilde{f}^0)[\mathcal F,f^1][\mathcal F,f^2] \ldots [\mathcal F,f^{j}]\circ\mathcal F_X \\
& \quad - (-1)^j \big( \mathcal{F}_Y \circ \mathscr H(\tilde{f}^0)[\mathcal F,f^1][\mathcal F,f^2] \ldots [\mathcal F,f^{j}]\circ\mathcal F_X \\
&\quad - (-1)^{j+1}  \mathscr H(\tilde{f}^0)[\mathcal F,f^1][\mathcal F,f^2] \ldots [\mathcal F,f^{j}]\circ\mathcal F_X^2      \big)\\
& =0
\end{array}
\end{equation*}
The fact that $\partial'$ is compatible with composition follows by direct computation. It is also easy to see that $\Omega'^0\mathcal C$ is an ordinary category. 

\smallskip
(b) This is immediate using the universal property in Proposition \ref{construniv}.

\smallskip
(c) This is a consequence of H\"older's inequality and the condition \eqref{stneve} in Definition \ref{dD3.1}.
\end{proof}

For any $\mathbb Z_2$-graded Hilbert space $\mathcal H$, the grading operator on it will be denoted by $\epsilon_{\mathcal H}$ or simply $\epsilon$. 
For any $T \in \mathcal{B}(\mathscr H(X), \mathscr H(Y))$ such that $[\mathcal{F},T] \in \mathcal{B}^1(\mathscr H(X), \mathscr H(Y))$, we define
\begin{equation*}
Tr_s(T):=\frac{1}{2}~Tr\left(\epsilon \mathcal F_Y[\mathcal F,T]\right)=\frac{1}{2}~Tr\left(\epsilon \mathcal F_Y\partial'(T)\right)=\frac{1}{2}~ Tr\left(\epsilon\mathcal{F}_Y(\mathcal{F}_Y \circ T - (-1)^{|T|}~ T \circ \mathcal{F}_X)\right)
\end{equation*}

\begin{prop}\label{srLe2} Let $(\mathscr{H},\mathcal F)$ be  a $p$-summable Fredholm module over $\mathcal C$. Take $ 2m \geq p-1$. Then, the collection
\begin{equation}
\hat{Tr}_s=\{Tr_s:Hom^{2m}_{\Omega'\mathcal{C}}(X,X) \longrightarrow \mathbb C\}_{X\in Ob(\mathcal C)}
\end{equation} defines a closed graded trace of dimension $2m$ on $(\Omega'\mathcal C,\partial')$. 
\end{prop}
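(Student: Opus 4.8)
The plan is to verify the two defining conditions of a closed graded trace recorded in \eqref{clgrfr}, after first checking that $Tr_s$ is well defined on $Hom^{2m}_{\Omega'\mathcal C}(X,X)$. For well-definedness, observe that if $T\in Hom^{2m}_{\Omega'\mathcal C}(X,X)$ then $\partial'T=[\mathcal F,T]\in Hom^{2m+1}_{\Omega'\mathcal C}(X,X)$, and since $2m\geq p-1$, Lemma \ref{srLe1}(c) (applied with $n=2m$ and $k=2m+1$) gives $[\mathcal F,T]\in\mathcal B^{(2m+1)/(2m+1)}(\mathscr H(X),\mathscr H(X))=\mathcal B^1(\mathscr H(X),\mathscr H(X))$. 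Hence $\epsilon\,\mathcal F_X[\mathcal F,T]$ is trace class, $Tr_s(T)=\tfrac12 Tr(\epsilon\,\mathcal F_X[\mathcal F,T])$ is finite, and as it depends only on $T$ as an operator it descends to the linear span. The closedness condition is then immediate: for $f\in Hom^{2m-1}_{\Omega'\mathcal C}(X,X)$ one has $[\mathcal F,\partial'f]=\partial'^{2} f=0$ by Lemma \ref{srLe1}(a), so $Tr_s(\partial'f)=\tfrac12 Tr(\epsilon\,\mathcal F_X\,\partial'^{2} f)=0$.

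The substance is the graded trace identity $Tr_s(gg')=(-1)^{ij}Tr_s(g'g)$ for homogeneous $g\in Hom^i_{\Omega'\mathcal C}(Y,X)$ and $g'\in Hom^j_{\Omega'\mathcal C}(X,Y)$ with $i+j=2m$. First I would expand $2\,Tr_s(gg')=Tr(\epsilon\,\mathcal F_X[\mathcal F,gg'])$ by the graded Leibniz rule $[\mathcal F,gg']=[\mathcal F,g]\,g'+(-1)^i g\,[\mathcal F,g']$, and likewise write out $2\,Tr_s(g'g)$. The reason to work through $\partial'=[\mathcal F,-]$ rather than with $gg'$ directly is that, although $gg'$ only lies in $\mathcal B^{(2m+1)/2m}$ and is \emph{not} trace class, each summand $[\mathcal F,g]\,g'$ and $g\,[\mathcal F,g']$ \emph{is} trace class: by Lemma \ref{srLe1}(c), $[\mathcal F,g]\in\mathcal B^{(2m+1)/(i+1)}$, $g'\in\mathcal B^{(2m+1)/j}$, $g\in\mathcal B^{(2m+1)/i}$, $[\mathcal F,g']\in\mathcal B^{(2m+1)/(j+1)}$, and in both products the Hölder exponents add to $1$, so \eqref{commutr} applies. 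This is what makes every trace I then write legitimate.

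Next I would match the four summands pairwise using cyclicity \eqref{commutr}, the grading relation $\epsilon\,\theta=(-1)^{|\theta|}\theta\,\epsilon$ for homogeneous $\theta$, and the definition of the commutator $[\mathcal F,g']=\mathcal F_Y g'-(-1)^{j}g'\mathcal F_X$. A short computation of this type should give $Tr(\epsilon\,\mathcal F_X[\mathcal F,g]\,g')=Tr(\epsilon\,\mathcal F_Y\,g'[\mathcal F,g])-Tr(\epsilon\,[\mathcal F,g'][\mathcal F,g])$ together with the symmetric identity for the other pair, so that, after collecting signs (here $i+j$ even gives $(-1)^{ij}=(-1)^i=(-1)^j$), the difference $Tr_s(gg')-(-1)^{ij}Tr_s(g'g)$ collapses to a single multiple of the supertrace correction $Tr(\epsilon\,[\mathcal F,g'][\mathcal F,g])$. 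The crux is then to show this correction vanishes. Set $W:=[\mathcal F,g'][\mathcal F,g]\colon \mathscr H(Y)\to\mathscr H(Y)$; using $\mathcal F[\mathcal F,\theta]=(-1)^{|\theta|+1}[\mathcal F,\theta]\mathcal F$ (valid since $\mathcal F^2=\mathrm{id}$, with $\mathcal F$ acting on the appropriate boundary space) twice, one finds $\mathcal F_Y W=(-1)^{(i+1)+(j+1)}W\mathcal F_Y=W\mathcal F_Y$ because $i+j=2m$ is even, i.e.\ $W$ commutes with $\mathcal F_Y$. Consequently $Tr(\epsilon\,W)=Tr(\epsilon\,\mathcal F_Y W\mathcal F_Y)=Tr(\mathcal F_Y\epsilon\,\mathcal F_Y W)=-Tr(\epsilon\,W)$, using $\mathcal F_Y^2=\mathrm{id}$, cyclicity, and $\mathcal F_Y\epsilon=-\epsilon\,\mathcal F_Y$; hence $Tr(\epsilon\,W)=0$. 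This is the familiar vanishing of the supertrace of an operator commuting with the odd involution $\mathcal F$. Combining the three steps yields the graded trace property, which with closedness establishes that $\hat{Tr}_s$ is a closed graded trace of dimension $2m$ on $(\Omega'\mathcal C,\partial')$.

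\textbf{Main obstacle.} The difficulty is precisely the non-trace-class nature of $gg'$: one cannot simply invoke the heuristic $Tr_s=Tr(\epsilon\,{\cdot})$ together with graded cyclicity of the supertrace, since $Tr(\epsilon\,gg')$ is ill defined in the regime $2m<p$ permitted by $2m\geq p-1$. All the genuine work lies in organizing the computation so that every trace actually appearing is a trace of a trace-class operator (via the Leibniz splitting and Hölder's inequality), and in the final cancellation that disposes of the leftover supertrace term $Tr(\epsilon\,[\mathcal F,g'][\mathcal F,g])$.
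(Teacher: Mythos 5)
Your proof is correct, and it is assembled from the same ingredients as the paper's: well-definedness and closedness are handled identically, and the graded trace identity is obtained by splitting $\partial'(gg')$ via the Leibniz rule so that every operator whose trace is taken is genuinely trace class by the H\"older estimates of Lemma \ref{srLe1}(c) --- which, as you rightly emphasize, is the whole point, since $gg'$ itself need not be trace class when $2m<p$. The one place where your execution diverges from the paper's is in how the two Leibniz terms are matched. The paper commutes the prefactor $\epsilon\mathcal F$ past the \emph{commutator} factors, using the exact ungraded identity $\epsilon \mathcal F_Y\partial'(T_1)=\partial'(T_1)\epsilon\mathcal F_X$ of \eqref{sre3.4}; combined with cyclicity of the trace this matches $Tr(\epsilon\mathcal F_Y\partial'(T_1)T_2)$ with $Tr(\epsilon\mathcal F_XT_2\partial'(T_1))$ on the nose, with no remainder. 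You instead cycle the bare factor $g'$ around the trace, which forces you to trade $g'\mathcal F_X$ for $(-1)^j\left(\mathcal F_Yg'-[\mathcal F,g']\right)$ and leaves the correction term $Tr(\epsilon[\mathcal F,g'][\mathcal F,g])$; you then need the separate (standard, and correctly argued) observation that the supertrace of an operator commuting with the odd involution $\mathcal F$ vanishes. I checked your sign bookkeeping: with $i+j=2m$ even one has $(-1)^{ij}=(-1)^i=(-1)^j$, the difference $2Tr_s(gg')-(-1)^{ij}\,2Tr_s(g'g)$ collapses to $-2\,Tr(\epsilon[\mathcal F,g'][\mathcal F,g])$, and your commutation argument does kill this term. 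So both routes are valid and rest on the same analytic foundation; the paper's choice of which operator to cycle simply prevents the correction term from ever appearing, at the cost of isolating the identity \eqref{sre3.4} in advance.
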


\begin{proof} From Lemma \ref{srLe1}(a), it is clear that for any $T\in Hom^{2m}_{\Omega'\mathcal{C}}(X,X)$, we have $[\mathcal F,T]\in Hom^{2m+1}_{\Omega'\mathcal{C}}(X,X)$. Applying Lemma \ref{srLe1}(c), it follows that $[\mathcal F,T]\in  \mathcal B^1(\mathscr H(X),\mathscr H(X))$. Accordingly, each of the maps $Tr_s:Hom^{2m}_{\Omega'\mathcal{C}}(X,X) \longrightarrow \mathbb C$ is well-defined.

\smallskip
For $T'\in Hom^{2m-1}_{\Omega'\mathcal{C}}(X,X)$, we notice that
\begin{equation*}
Tr_s(\partial'T')=\frac{1}{2}~Tr\left(\epsilon \mathcal F_X(\partial'^2T')\right)=0
\end{equation*} We now consider $T_1\in Hom^{i}_{\Omega'\mathcal{C}}(X,Y)$, $T_2\in Hom^{j}_{\Omega'\mathcal{C}}(Y,X)$ such that $i+j=2m$. We notice that
\begin{equation}\label{sre3.4}
\begin{array}{c}
\epsilon \mathcal F_Y\partial'(T_1)=\partial'(T_1)\epsilon \mathcal F_X \qquad \epsilon \mathcal F_X\partial'(T_2)=\partial'(T_2)\epsilon \mathcal F_Y
\end{array}
\end{equation} We note that $i\equiv j \mbox{(mod $2$)}$. Using \eqref{sre3.4} and \eqref{commutr}, we now have
\begin{equation*}
\begin{array}{ll}
2\cdot Tr_s(T_1T_2)=Tr\left(\epsilon \mathcal F_Y\partial'(T_1T_2)\right) & = Tr\left(\epsilon \mathcal F_Y\partial'(T_1)T_2\right)+(-1)^iTr\left(\epsilon \mathcal F_YT_1\partial'(T_2)\right)\\
& = Tr\left(\partial'(T_1)\epsilon \mathcal F_XT_2\right)+(-1)^iTr\left(\partial'(T_2)\epsilon \mathcal F_YT_1\right)\\
& =Tr\left(\epsilon \mathcal F_XT_2\partial'(T_1)\right)+(-1)^iTr\left(\epsilon \mathcal F_X\partial'(T_2)T_1\right)\\
& =  Tr\left(\epsilon \mathcal F_XT_2\partial'(T_1)\right)+(-1)^jTr\left(\epsilon \mathcal F_X\partial'(T_2)T_1\right)\\
&= (-1)^{ij}2\cdot Tr_s(T_2T_1)
\end{array}
\end{equation*}
\end{proof}

\begin{theorem}\label{evencyc}
Let $(\mathscr{H},\mathcal F)$ be  a $p$-summable  Fredholm module over $\mathcal C$. Take $2m \geq p-1$.  Then, the tuple
$(\Omega'\mathcal C,\partial',\hat{Tr}_s,\rho')$ defines a $2m$-dimensional cycle over $\mathcal C$. Then, $\phi^{2m} \in CN^{2m}(\mathcal{C})=C^{2m}_{\mathbb C}(\mathcal C,\mathbb C)=Hom(CN_{2m}(\mathcal C),\mathbb C)$ defined by
\begin{equation*}
\phi^{2m}(f^0 \otimes f^1 \otimes \ldots \otimes f^{2m}):= Tr_s\left(\mathscr H({f}^0)[\mathcal F,f^1][\mathcal F,f^2] \ldots [\mathcal F,f^{2m}]\right)
\end{equation*}
for any $f^0 \otimes f^1 \otimes \ldots \otimes f^{2m} \in Hom_{\mathcal{C}}(X_1,X) \otimes Hom_{\mathcal{C}}(X_2,X_1) \otimes \ldots \otimes Hom_{\mathcal{C}}(X,X_{2m})$ is a cyclic cocycle over $\mathcal C$.
\end{theorem}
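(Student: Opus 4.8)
The plan is to recognize that this theorem is essentially the assembly of the structural results already established in Lemma \ref{srLe1} and Proposition \ref{srLe2}, followed by a single application of Theorem \ref{charcycl} specialized to $H=\mathbb C=M$. First I would verify that the tuple $(\Omega'\mathcal C,\partial',\hat{Tr}_s,\rho')$ satisfies the conditions of Definition \ref{cycle} in this special case. Lemma \ref{srLe1}(a) guarantees that $(\Omega'\mathcal C,\partial')$ is a DG-semicategory whose degree-zero part $\Omega'^0\mathcal C$ is an ordinary category, hence a DGH-semicategory in the trivial case $H=\mathbb C$. Proposition \ref{srLe2}—which uses precisely the hypothesis $2m\geq p-1$, so that $[\mathcal F,T]$ lands in the trace-class ideal $\mathcal B^1$—provides the closed graded trace $\hat{Tr}_s$ of dimension $2m$ on $(\Omega'\mathcal C,\partial')$, satisfying \eqref{clgrfr}. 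Finally, Lemma \ref{srLe1}(b) supplies the $\mathbb C$-linear semifunctor $\rho'=\rho_{\mathscr H}:\mathcal C\longrightarrow \Omega'^0\mathcal C$. These three pieces together show that $(\Omega'\mathcal C,\partial',\hat{Tr}_s,\rho')$ is a $2m$-dimensional cycle over $\mathcal C$.

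Next I would identify $\phi^{2m}$ with the character of this cycle. By the definition of the character of a cycle (given just after Definition \ref{cycle}), specialized to $H=\mathbb C=M$, the character sends $f^0\otimes\cdots\otimes f^{2m}$ to $\hat{Tr}_{s,X_0}\big(\rho'(f^0)\,\partial'^0(\rho'(f^1))\cdots\partial'^0(\rho'(f^{2m}))\big)$. Since $\rho'(f^i)=\mathscr H(f^i)$ is of degree $0$, and $\partial'^0(\mathscr H(f^i))=[\mathcal F,\mathscr H(f^i)]=[\mathcal F,f^i]$ by the definitions of $\partial'$ and of the graded commutator \eqref{gradcomm}, this expression equals $Tr_s\big(\mathscr H(f^0)[\mathcal F,f^1]\cdots[\mathcal F,f^{2m}]\big)$, which is exactly the stated formula for $\phi^{2m}$. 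Thus $\phi^{2m}$ is the character of the cycle built in the first step, and it lies in $CN^{2m}(\mathcal C)=C^{2m}_{\mathbb C}(\mathcal C,\mathbb C)$ because the $H$-linearity constraint \eqref{new3.2} is vacuous for $H=\mathbb C$.

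The conclusion then follows from the implication $(1)\Rightarrow(3)$ of Theorem \ref{charcycl}: the character of any $n$-dimensional cycle over $\mathcal C$ lies in $Z^n_{\mathbb C}(\mathcal C,\mathbb C)=Z^{2m}_\lambda(\mathcal C)$, i.e.\ it satisfies $b\phi^{2m}=0$ and $(1-\lambda)\phi^{2m}=0$. All of the genuinely substantive content has been pushed into the earlier results—the closedness and graded-trace identity for $\hat{Tr}_s$, which rest on the cyclicity of the operator trace \eqref{commutr} and on the parity relation $i\equiv j\pmod 2$ forced by $i+j=2m$ being even—so the argument here is purely a matter of matching definitions. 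I expect the only point requiring care to be the bookkeeping confirming that the character formula of Definition \ref{cycle} reproduces the stated $\phi^{2m}$ verbatim, in particular that the universal symbol $df^i$ is carried by $\hat\rho'$ to the honest commutator $[\mathcal F,f^i]$, as recorded in Lemma \ref{srLe1}(b).
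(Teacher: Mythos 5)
Your proposal is correct and follows exactly the paper's own argument: the paper likewise cites Lemma \ref{srLe1} and Proposition \ref{srLe2} to establish that $(\Omega'\mathcal C,\partial',\hat{Tr}_s,\rho')$ is a $2m$-dimensional cycle and then invokes Theorem \ref{charcycl} with $H=\mathbb C=M$. Your additional bookkeeping identifying $\phi^{2m}$ with the character of that cycle (via $\hat\rho'$ sending $df^i$ to $[\mathcal F,f^i]$) is the same step the paper leaves implicit.
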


\begin{proof} It  follows directly from Lemma \ref{srLe1} and Proposition \ref{srLe2} that  $(\Omega'\mathcal C,\partial',\hat{Tr}_s,\rho')$ is a $2m$-dimensional cycle over $\mathcal C$. 
The rest follows by applying Theorem \ref{charcycl} with $H=\mathbb C=M$.
\end{proof}

We will refer to $\phi^{2m}$ as the $2m$-dimensional character associated with the $p$-summable  even Fredholm module $(\mathscr{H},\mathcal F)$ over the category $\mathcal{C}$.

\begin{remark} 
The appearance of only even cyclic cocycles in Theorem \ref{evencyc} is due to the following fact from \cite[Lemma 2 a)]{C2}: if $T \in \mathcal{B}(\mathscr H(X), \mathscr H(X))$ is homogeneous of odd degree, then $Tr_s(T)=0$.
\end{remark}

\section{Periodicity of Chern character for Fredholm modules}
We continue with $\mathcal{C}$ being a small $\mathbb{C}$-category. Taking $H=\mathbb{C}=M$, we denote the cyclic cohomology groups of $\mathcal C$ by $H^\bullet_\lambda(\mathcal C):=HC^\bullet_{\mathbb C}(\mathcal C,\mathbb C)$. The cyclic complex corresponding to the cocyclic module $\{CN^n(\mathcal C)=Hom_{\mathbb C}(CN_n(\mathcal C),\mathbb C)\}_{n\geq 0}$ as in \eqref{R2.5c} will be denoted by $C^\bullet_\lambda(\mathcal C)$. The  cocycles  of this complex will be denoted by $Z^\bullet_\lambda(\mathcal C):=Z^\bullet_{\mathbb C}(\mathcal C,\mathbb C)$ and the coboundaries by $B^\bullet_\lambda(\mathcal C):=B^\bullet_{\mathbb C}(\mathcal C,\mathbb C)$. 

\smallskip Let $(\mathscr{H},\mathcal{F})$ be a $p$-summable Fredholm module over $\mathcal{C}$. We take  $2m \geq p-1$. Let $\phi^{2m}$ be the $2m$-dimensional character associated to the Fredholm module $(\mathscr{H},\mathcal F)$. We denote by $ch^{2m}(\mathscr H,\mathcal F)\in H^{2m}_\lambda(\mathcal C)$ the cohomology class of $\phi^{2m}$. 
Since $\mathcal{B}^p(\mathscr H(X), \mathscr H(Y)) \subseteq \mathcal{B}^q(\mathscr H(X), \mathscr H(Y))$ for any $p \leq q$, the Fredholm module $(\mathscr{H},\mathcal{F})$ is also $(p+2)$-summable. Using Theorem \ref{evencyc}, we then have the $(2m+2)$-dimensional character $\phi^{2m+2}$ associated to $(\mathscr{H},\mathcal{F})$.
We will show that the cyclic cocycles $\phi^{2m}$ and $\phi^{2m+2}$ are related to each other via the periodicity operator. 

\smallskip
 If  $\mathcal C$ and $\mathcal C'$ are small $\mathbb{C}$-categories, from the proof of Theorem \ref{Thmfin} it follows that there is a pairing on cyclic cocycles
\begin{equation}\label{pairty}
Z^r_\lambda(\mathcal{C}) \otimes Z^s_\lambda(\mathcal{C}') \longrightarrow Z^{r+s}_\lambda(\mathcal{C} \otimes \mathcal{C}') \qquad \phi \otimes \phi' \mapsto \phi \# \phi'
\end{equation} which descends to a pairing on cyclic cohomologies:
\begin{equation}\label{pair}
H^r_\lambda(\mathcal{C}) \otimes H^s_\lambda(\mathcal{C}') \longrightarrow H^{r+s}_\lambda(\mathcal{C} \otimes \mathcal{C}') 
\end{equation}
given by
\begin{equation}
{(\hat{T}^\phi \# \hat{T}^{\phi'})}_{(X,X')}(f \otimes f'):=\hat{T}^\phi_X( f_r)\hat{T}^{\phi'}_{X'}(f'_s)
\end{equation}
for any $f \otimes f'= \underset{i+j=r+s}{\sum}(f_i \otimes f'_j)\in Hom^{r+s}_{\mathcal S \otimes  \mathcal S'}\left((X,X'),(X,X')\right)$. Here $\phi$ and $\phi'$ are expressed respectively as the characters of $r$ and $s$-dimensional cycles $(\mathcal{S},\hat\partial, \hat{T}^\phi,\rho)$ and $(\mathcal{S}',\hat\partial',\hat{T}^{\phi'},\rho')$ over $\mathcal{C}$ and $\mathcal{C}'$. In particular, $\phi \# \phi'$ is the character of  the $(r+s)$-dimensional cycle $\left(\mathcal{S} \otimes \mathcal{S}', \hat{\partial} \otimes \hat{\partial'}, \hat{T}^\phi\# \hat{T}^{\phi'}, \rho \otimes \rho' \right)$ over $\mathcal{C} \otimes \mathcal{C}'$. For a morphism $f$ in $\mathcal C$, we will often suppress the functor $\rho$ and write
the morphism $\rho(f)$ in $\mathcal S^0$ simply as $f$.  Similarly, when there is no danger of confusion, we will often write the morphism $\mathscr H(f)$ simply as $f$.

\smallskip
Now setting $\mathcal{C}'=\mathbb{C}$ (the category with one object) and considering the cyclic cocycle $\psi \in H^2_\lambda(\mathbb{C})$ determined by $\psi(1,1,1)=1$, we obtain the periodicity operator:
\begin{equation*} 
S:Z^r_\lambda(\mathcal{C}) \longrightarrow  Z^{r+2}_\lambda(\mathcal{C}) \qquad S(\phi):=\phi \# \psi 
\end{equation*}
for any $r \geq 0$ and $\phi \in Z^r_\lambda(\mathcal{C})$. 

\begin{lemma}\label{Sform} Let $\phi \in Z^r_\lambda(\mathcal{C})$.
For any $f^0 \otimes f^1 \otimes \ldots \otimes f^{r+2} \in CN_{r+2}(\mathcal{C})$, we have

\smallskip
$\begin{array}{ll}
(S(\phi))(f^0 \otimes f^1 \otimes \ldots \otimes f^{r+2})
&=\hat{T}^\phi_X(f^0f^1f^2\hat\partial f^3 \ldots \hat\partial f^{r+2})+\hat{T}^\phi_X(f^0\hat\partial f^1(f^2f^3) \ldots \hat\partial f^{r+2})+ \ldots \\
&\quad +\hat{T}^\phi_X(f^0\hat\partial f^1 \ldots \hat\partial f^{i-1} (f^if^{i+1})\hat\partial f^{i+2} \ldots \hat\partial f^{r+2}) + \ldots \\
& \quad +\hat{T}^\phi_X(f^0\hat\partial f^1 \ldots \hat\partial f^r (f^{r+1}f^{r+2}))
\end{array}$
\end{lemma}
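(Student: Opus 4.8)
The plan is to realize $S(\phi)$ as the cup product $\phi\#\psi$ with the generator $\psi\in Z^2_\lambda(\mathbb C)$ normalized by $\psi(1\otimes1\otimes1)=1$, and to compute this product through the categorified cycle machinery behind Theorem \ref{Thmfin}. First I would fix a $2$-dimensional cycle representing $\psi$: the universal choice is $(\Omega\mathbb C,\partial,T^\psi,\mathrm{id})$, where $\mathbb C$ is the one-object category (with endomorphism algebra $\mathbb C$) and $T^\psi$ is the closed graded trace on $\Omega\mathbb C$ produced by the implication $(3)\Rightarrow(2)$ of Theorem \ref{charcycl}. It satisfies $T^\psi(1\,d1\,d1)=\psi(1\otimes1\otimes1)=1$ and, crucially, $T^\psi(df^0\,df^1\,df^2)=\psi(0\otimes f^1\otimes f^2)=0$, so the trace only sees the genuine (non-adjoined) part of the leading factor. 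Writing $\phi$ as the character of $(\mathcal S,\hat\partial,\hat T^\phi,\rho)$, the pairing \eqref{pairty} presents $S(\phi)=\phi\#\psi$ as the character of the product cycle $(\mathcal S\otimes\Omega\mathbb C,\hat\partial\otimes\partial,\hat T^\phi\#T^\psi,\rho\otimes\mathrm{id})$ over $\mathcal C\cong\mathcal C\otimes\mathbb C$.

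Next I would evaluate this character on $f^0\otimes\cdots\otimes f^{r+2}$, i.e. on the morphisms $f^i\otimes 1$. Expanding each factor $(\hat\partial\otimes\partial)^0(\rho(f^i)\otimes 1)=\hat\partial^0\rho(f^i)\otimes 1+\rho(f^i)\otimes d1$ and invoking the defining formula for $\hat T^\phi\#T^\psi$, which extracts the component of bidegree $(r,2)$, forces exactly two of the indices $i\in\{1,\dots,r+2\}$, say $a<b$, to contribute the $\Omega\mathbb C$-differential $d1$ while the remaining $r$ contribute $\hat\partial^0\rho(f^i)$. Collecting the two tensor legs introduces the Koszul sign $(-1)^{\sum_{i<j}\deg v_i\deg u_j}$ coming from the composition rule in $\mathcal S\otimes\Omega\mathbb C$, and splits the contribution as $\hat T^\phi(\text{$\mathcal S$-leg})\cdot T^\psi(\text{$\Omega\mathbb C$-leg})$.

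The heart of the computation is the $\Omega\mathbb C$-leg, which for the choice $\{a,b\}$ is the composite of $1$ with the string having $d1$ at positions $a,b$ and $1$ elsewhere. Using the composition rules \eqref{symb} and \eqref{compoDG} I would record the two identities $(1\,d1)\circ 1=0$ and $(1\,d1\,d1)\circ 1=1\,d1\,d1$, together with the fact that $1\circ(-)$ collapses all leading factors $1$. The first identity propagates to annihilate the whole string as soon as a factor $1$ sits immediately after the first $d1$; hence the leg is $T^\psi$-null unless $b=a+1$, and for adjacent positions it equals $1\,d1\,d1$ with $T^\psi=1$. This is the step I expect to be the main obstacle, since it requires careful bookkeeping of the adjoined-unit terms of $\Omega\mathbb C$ and is precisely what collapses the a priori $\binom{r+2}{2}$ contributions down to the $r+1$ adjacent ones.

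Finally, for $\{a,a+1\}$ the Koszul exponent is $\sum_{i\in\{a,a+1\}}\#\{\,j>i:\ j\notin\{a,a+1\}\,\}=2(r+1-a)$, which is even, so the sign is $+1$; and, using $\rho(f^a)\rho(f^{a+1})=\rho(f^af^{a+1})$, the $\mathcal S$-leg is $\rho(f^0)\hat\partial^0\rho(f^1)\cdots\hat\partial^0\rho(f^{a-1})\,\rho(f^af^{a+1})\,\hat\partial^0\rho(f^{a+2})\cdots\hat\partial^0\rho(f^{r+2})$. Applying $\hat T^\phi$ gives exactly the $i=a$ summand of the asserted formula (with $\rho$ suppressed), and summing over $a=1,\dots,r+1$ completes the proof.
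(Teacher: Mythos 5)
Your proposal is correct and follows essentially the same route as the paper's proof: both realize $S(\phi)=\phi\#\psi$ as the character of the product cycle with $(\Omega\mathbb C,\partial,T^{\psi})$, expand each factor $(\hat\partial\otimes\partial)(f^i\otimes 1)=\hat\partial f^i\otimes 1+f^i\otimes\partial 1$, and use the identities $1(\partial 1)1=0$ and $T^{\psi}\left(1(\partial 1)^2\right)=\psi(1,1,1)=1$ to kill all but the $r+1$ adjacent-pair contributions. The paper only writes out the case $r=2$ and asserts the general case is similar, whereas you carry out the general bookkeeping explicitly (including the even Koszul exponent $2(r+1-a)$), which completes the same argument.
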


\begin{proof} We consider the $2$-dimensional trace $\hat{T}^\psi$ on the DG-semicategory $(\Omega\mathbb C,\partial )$ such that $\psi\in Z^2_\lambda(\mathbb C)$ is the character of the corresponding cycle over $\mathbb C$.  We first observe that we have the following equalities in $\Omega \mathbb{C}$:
\begin{equation*}
\partial 1= (\partial1)1+1 (\partial 1), \qquad 1(\partial1)1=0, \qquad 1(\partial 1)^2=(\partial1)^21
\end{equation*}

We illustrate the proof for $r=2$. The general case will follow similarly.  By definition, we have

\smallskip
$\begin{array}{ll}
&(S(\phi))(f^0 \otimes f^1 \otimes f^2 \otimes f^3 \otimes f^{4})\\
& \quad =(\phi \# \psi)(f^0 \otimes f^1 \otimes f^2 \otimes f^3 \otimes f^{4})\\
& \quad ={(\hat{T}^\phi \# \hat{T}^\psi)}\left( (f^0 \otimes 1)(\hat{\partial} \otimes \partial)(f^1 \otimes 1) (\hat{\partial} \otimes \partial)(f^2 \otimes 1)   (\hat{\partial} \otimes \partial)(f^3 \otimes 1) (\hat{\partial} \otimes \partial)(f^{4} \otimes 1) \right)\\
& \quad ={(\hat{T}^\phi \# \hat{T}^\psi)}\left( (f^0 \otimes 1) (\hat{\partial}f^1 \otimes 1 +f^1 \otimes \partial 1) (\hat \partial f^2 \otimes 1 +f^2 \otimes \partial1)(\hat \partial f^3 \otimes 1 +f^3 \otimes \partial1) (\hat \partial f^{4} \otimes 1 +f^{4} \otimes \partial1) \right)\\
& \quad ={(\hat{T}^\phi \# \hat{T}^\psi)} \Big(f^0 \hat \partial f^1 \hat \partial f^2 \hat \partial f^3 \hat \partial f^4 \otimes 1+f^0 \hat \partial f^1 \hat \partial f^2 \hat\partial f^3f^4  \otimes 1 \partial 1 + f^0\hat \partial f^1 \hat \partial f^2f^3f^4 \otimes 1(\partial 1)^2 +\\
&\qquad f^0 \hat \partial f^1f^2f^3 \hat \partial f^4 \otimes 1(\partial 1)^21 + f^0 \hat \partial f^1f^2f^3f^4 \otimes 1(\partial 1)^3 + f^0f^1f^2 \hat \partial f^3\hat \partial f^4 \otimes 1 (\partial 1)^2 \\
&\quad + f^0f^1f^2\hat \partial f^3 f^4 \otimes 1(\partial 1)^3 -
 f^0f^1f^2f^3\hat \partial f^4 \otimes 1(\partial 1)^31 + f^0f^1f^2f^3f^4 \otimes 1(\partial 1)^4\Big)\\
& \quad =\hat{T}^\phi\left(f^0 \hat \partial f^1 \hat \partial f^2f^3f^4\right) \hat{T}^\psi\left(1(\partial 1)^2\right) + \hat{T}^\phi\left(f^0\hat \partial f^1f^2f^3\hat \partial f^4\right) \hat{T}^\psi\left(1(\partial 1)^21\right)\\
& \qquad + \hat{T}^\phi\left(f^0f^1f^2\hat \partial f^3 \hat \partial f^4\right) \hat{T}^\psi\left(1(\partial 1)^2\right)\\
& \quad= \hat{T}^\phi\left(f^0\hat \partial f^1 \hat \partial f^2f^3f^4\right) + \hat{T}^\phi\left(f^0 \hat \partial f^1f^2f^3\hat\partial f^4\right)+ \hat{T}^\phi\left(f^0f^1f^2\hat \partial f^3 \hat \partial f^4\right) 
\end{array}$

\smallskip
The last equality follows by using the fact that $\hat{T}^\psi\left(1(\partial 1)^2\right)=\psi(1,1,1)=1$.
\end{proof}

\begin{prop}\label{Sbound}
Let  $\phi$ be the character of an $r$-dimensional cycle $(\mathcal S,\hat{\partial},\hat{T}^{\phi},\rho)$ over $\mathcal{C}$. Then, $S(\phi)$ is a coboundary.  In particular, we have $S(\phi)=b\psi$, where $\psi \in CN^{r+1}(\mathcal{C})$ is given by
\begin{equation*}
\psi(f^0 \otimes f^1 \otimes \ldots \otimes f^{r+1})=\sum\limits_{j=1}^{r+1} (-1)^{j-1}~\hat{T}^{\phi}\left(f^0\hat{\partial}f^1\ldots  \hat{\partial}f^{j-1} f^j \hat{\partial}f^{j+1} \ldots  \hat{\partial}f^{r+1}\right)
\end{equation*}
\end{prop}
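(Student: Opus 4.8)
The plan is to establish the identity $S(\phi)=b\psi$ by a direct computation, feeding in the explicit formula for $S(\phi)$ already obtained in Lemma \ref{Sform}. Write $T:=\hat{T}^{\phi}$ and suppress $\rho$, so that $\phi(f^0\otimes\cdots\otimes f^r)=T(f^0\hat\partial f^1\cdots\hat\partial f^r)$ and, by Lemma \ref{Sform},
\begin{equation*}
(S\phi)(f^0\otimes\cdots\otimes f^{r+2})=\sum_{i=1}^{r+1}T\bigl(f^0\hat\partial f^1\cdots\hat\partial f^{i-1}(f^if^{i+1})\hat\partial f^{i+2}\cdots\hat\partial f^{r+2}\bigr).
\end{equation*}
Since $H=\mathbb C=M$, the Hochschild differential is $b=\sum_{i=0}^{r+2}(-1)^i\delta_i$, where for $0\le i\le r+1$ the face $\delta_i\psi$ merges $f^if^{i+1}$, and $\delta_{r+2}\psi(f^0\otimes\cdots\otimes f^{r+2})=\psi(f^{r+2}f^0\otimes f^1\otimes\cdots\otimes f^{r+1})$ is the cyclic face (the SAYD factors in Proposition \ref{prop2.3} trivialize). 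First I would substitute the definition of $\psi$ into each $\delta_i\psi$ and organize the resulting double sum according to the position of the distinguished undifferentiated factor of $\psi$ relative to the merged pair.

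The three algebraic inputs are: the Leibniz rule $\hat\partial(f^if^{i+1})=(\hat\partial f^i)f^{i+1}+f^i(\hat\partial f^{i+1})$ for the degree-zero morphisms $f^\ell$; the closedness condition \eqref{gt1}, namely $T(\hat\partial(-))=0$; and the graded-trace condition \eqref{gt2}, $T(gg')=(-1)^{ij}T(g'g)$, both specialized to $H=\mathbb C=M$. The terms of $b\psi$ split into two families. In the first, the merged product $f^if^{i+1}$ occupies the undifferentiated slot of $\psi$, producing exactly the summands of $S\phi$ above (up to a uniform sign coming from combining $(-1)^i$ with the internal sign $(-1)^{j-1}$). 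In the second family the merged product is differentiated, so $\hat\partial(f^if^{i+1})$ appears; expanding it by Leibniz splits each such term into two, and these split terms carry signs that make them telescope against the contributions of the neighbouring faces $\delta_{i\pm1}$ and against the first family. The boundary face $\delta_0$ and the cyclic face $\delta_{r+2}$ must be handled separately: there the closedness \eqref{gt1} discards the purely exact contributions, while the graded-trace property \eqref{gt2} is used to cyclically rotate the degree-zero factors (in particular to move $f^{r+2}$ from the front of $\delta_{r+2}\psi$ to the back), so that the surviving contributions recombine into the claimed summands of $S\phi$. As a sanity check I would first run the case $r=2$, mirroring the explicit expansion in the proof of Lemma \ref{Sform}, and then observe that the general pattern is identical.

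The main obstacle I anticipate is the sign- and index-bookkeeping in the cancellation of the second family: one must verify that every Leibniz-split term produced at a face $\delta_i$ is matched, with opposite sign, by a term from an adjacent face or from the first family, leaving no residue. The cleanest way to control this is to index all summands by the pair (merge position $i$, undifferentiated position $j$), track the parities, and note that because all $f^\ell$ have degree zero the Koszul signs collapse, so the genuine rearrangements (as opposed to formal cancellations) occur only at the two boundary faces and are governed precisely by \eqref{gt1} and \eqref{gt2}. Once all interior terms cancel, what survives is exactly $\sum_{i=1}^{r+1}T\bigl(f^0\hat\partial f^1\cdots(f^if^{i+1})\cdots\hat\partial f^{r+2}\bigr)$, which by Lemma \ref{Sform} is $S\phi$, yielding $S(\phi)=b\psi$ and exhibiting $S(\phi)$ as the required coboundary.
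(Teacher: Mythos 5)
Your proposal is correct and takes essentially the same route as the paper's proof, which verifies $S(\phi)=b\psi$ by expanding the faces of $b\psi$, applying the Leibniz rule to each $\hat\partial(f^if^{i+1})$ and the trace property \eqref{gt2} to rotate $f^{r+2}$ in the cyclic face, and cancelling down to the formula of Lemma \ref{Sform} (the paper only writes out the case $r=2$ explicitly). Two small corrections to your anticipated bookkeeping: the closedness condition \eqref{gt1} is never actually invoked in this computation, and the ``first family'' terms carry total sign $(-1)^i(-1)^{i-1}=-1$, so each surviving summand of $S\phi$ arises as a net $+1-1+1$ from one first-family term together with two Leibniz-split contributions from neighbouring faces, rather than from the first family alone with the second family merely telescoping away.
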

\begin{proof}
Again, we illustrate the case of $r=2$. The general computation is similar.
\begin{equation*}
\begin{array}{ll}
&(b\psi)(f^0\otimes f^1 \otimes f^2 \otimes f^3 \otimes f^4)\\
&=\psi(f^0f^1 \otimes f^2 \otimes f^3 \otimes f^4) - \psi(f^0 \otimes f^1f^2 \otimes f^3 \otimes f^4) + \psi(f^0 \otimes f^1 \otimes f^2f^3 \otimes f^4) - \psi(f^0 \otimes f^1 \otimes f^2 \otimes f^3f^4)\\
&\quad + \psi(f^4f^0 \otimes f^1 \otimes f^2 \otimes f^3)\\
&=\hat{T}^\phi(f^0f^1f^2\hat{\partial}f^3\hat{\partial}f^4)-\hat{T}^\phi(f^0f^1\hat{\partial}f^2f^3\hat{\partial}f^4)+ \hat{T}^\phi(f^0f^1\hat{\partial}f^2\hat{\partial}f^3f^4) \\
&\quad  -\hat{T}^\phi(f^0f^1f^2\hat{\partial}f^3\hat{\partial}f^4)+\hat{T}^\phi(f^0\hat{\partial}(f^1f^2)f^3\hat{\partial} f^4-\hat{T}^\phi(f^0\hat{\partial}(f^1f^2)\hat{\partial}f^3f^4)\\
&\quad +\hat{T}^\phi(f^0f^1\hat{\partial}(f^2f^3)\hat{\partial}f^4)-\hat{T}^\phi(f^0\hat{\partial}f^1f^2f^3\hat{\partial}f^4)+ \hat{T}^\phi(f^0\hat{\partial}f^1\hat{\partial}(f^2f^3)f^4)\\
&\quad -\hat{T}^\phi(f^0f^1\hat{\partial}f^2\hat{\partial}(f^3f^4))+ \hat{T}^\phi(f^0\hat{\partial}f^1f^2\hat{\partial}(f^3f^4))- \hat{T}^\phi(f^0\hat{\partial}f^1\hat{\partial}f^2f^3f^4)\\
&\quad +\hat{T}^\phi(f^4f^0f^1\hat{\partial}f^2\hat{\partial}f^3)- \hat{T}^\phi(f^4f^0\hat{\partial}f^1f^2\hat{\partial}f^3) + \hat{T}^\phi(f^4f^0\hat{\partial}f^1\hat{\partial}f^2f^3)\\
&= \hat{T}^\phi(f^0f^1f^2\hat{\partial}f^3\hat{\partial}f^4) +\hat{T}^\phi(f^0\hat{\partial}f^1(f^2f^3)\hat{\partial}f^4) + \hat{T}^\phi(f^0\hat{\partial}f^1\hat{\partial}f^2f^3f^4) \\
&= (S(\phi))(f^0\otimes f^1 \otimes f^2 \otimes f^3 \otimes f^4)
\end{array}
\end{equation*}
\end{proof}

\begin{theorem}\label{Periodthmo}
Let $\mathcal{C}$ be a small $\mathbb{C}$-category and let $(\mathscr{H},\mathcal{F})$ be a $p$-summable even Fredholm module over $\mathcal{C}$. Take $ 2m \geq p-1$. Then, 
\begin{equation*}
S(\phi^{2m})=-(m+1)\phi^{2m+2} \qquad \text{in}~ H^{2m+2}_\lambda(\mathcal{C})
\end{equation*}
\end{theorem}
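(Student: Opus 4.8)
The plan is to reduce everything to an operator trace computation inside the DG-semicategory $(\Omega'\mathcal C,\partial')$ and then follow Connes' strategy for Fredholm modules over algebras. Since $\phi^{2m}$ is by construction (Theorem \ref{evencyc}) the character of the cycle $(\Omega'\mathcal C,\partial',\hat{Tr}_s,\rho')$, with differential $\partial'=[\mathcal F,-]$ and $\rho'=\mathscr H$, I would first apply Lemma \ref{Sform} with $r=2m$. Abbreviating $a_i:=\mathscr H(f^i)$ and $g_i:=[\mathcal F,f^i]=\partial'(a_i)$, and using that composition in $\Omega'^{0}\mathcal C$ sends $\rho'(f^j)\rho'(f^{j+1})$ to $\mathscr H(f^jf^{j+1})=a_ja_{j+1}$, this yields
\begin{equation*}
S(\phi^{2m})(f^0\otimes\cdots\otimes f^{2m+2})=\sum_{j=1}^{2m+1} Tr_s\big(a_0\,g_1\cdots g_{j-1}\,(a_ja_{j+1})\,g_{j+2}\cdots g_{2m+2}\big),
\end{equation*}
a sum of $2m+1$ supertraces of degree-$2m$ operators, each carrying exactly one undifferentiated product $a_ja_{j+1}$ in place of the two commutators present in $\phi^{2m+2}$.

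Next I would unfold $Tr_s$. From the definition $Tr_s(T)=\tfrac12 Tr(\epsilon\mathcal F[\mathcal F,T])$ one checks, using $\mathcal F^2=1$, $\mathcal F\epsilon=-\epsilon\mathcal F$ and cyclicity of the trace, that $Tr_s(T)=\tfrac12\big(1+(-1)^{|T|}\big)Tr(\epsilon T)$; thus $Tr_s$ is the ordinary graded trace $Tr(\epsilon\,\cdot)$ on even operators and vanishes on odd ones (this also recovers the vanishing noted after Theorem \ref{evencyc}). The hypothesis $2m\geq p-1$ together with H\"older's inequality (Lemma \ref{srLe1}(c)) ensures that every operator met here, and each of its commutators with $\mathcal F$, lies in the trace ideal $\mathcal B^1$, which is exactly what makes these traces well-defined and their cyclic rearrangement legitimate. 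In particular, since $[\mathcal F,g_1\cdots g_{2m+2}]=0$, one obtains the clean form $\phi^{2m+2}=\tfrac12 Tr(\epsilon\mathcal F\,g_0g_1\cdots g_{2m+2})$. I would then trade the factors $a_j,a_{j+1}$ in each summand for commutators, by repeatedly inserting $a_i=\mathcal F g_i+\mathcal F a_i\mathcal F$ and $\mathcal F a_i=g_i+a_i\mathcal F$ and anticommuting $\mathcal F$ past each $g$ via $\mathcal F g_i=-g_i\mathcal F$.

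Summing over $j$ and using graded cyclicity of $Tr(\epsilon\,\cdot)$, I expect the terms to split into two families: those that reassemble into copies of $Tr(\epsilon\mathcal F\,g_0g_1\cdots g_{2m+2})=2\,\phi^{2m+2}$, and those retaining an $\mathcal F$-conjugated factor $\mathcal F a_i\mathcal F$, which should telescope across neighbouring values of $j$ and reorganize into $b\chi$ for an explicit \emph{cyclic} cochain $\chi\in C^{2m+1}_\lambda(\mathcal C)$ (a cyclic refinement of the Hochschild primitive produced in Proposition \ref{Sbound}). Collecting signs should leave the first family with total coefficient $-(m+1)=-(2m+2)/2$, giving $S(\phi^{2m})+(m+1)\phi^{2m+2}=b\chi$ and hence the stated identity in $H^{2m+2}_\lambda(\mathcal C)$.

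The hard part will be the final accounting: pinning the surviving coefficient to exactly $-(m+1)$ out of the $2m+1$ terms (the merged product splits into two commutator contributions under the reduction, so one is really tracking on the order of $2(2m+1)$ signed pieces that must combine to the correct multiple), and, crucially, checking that the remainder is a \emph{cyclic} coboundary and not merely a Hochschild one. This distinction cannot be skipped, since a naive reading of Proposition \ref{Sbound} would otherwise force $ch^{2m+2}(\mathscr H,\mathcal F)=0$; keeping every trace rearrangement inside $\mathcal B^1$ throughout is the accompanying technical constraint.
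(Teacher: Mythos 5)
Your strategy coincides with the paper's: apply Lemma \ref{Sform} to write $S(\phi^{2m})$ as a sum of supertraces each carrying one undifferentiated product $f^jf^{j+1}$, and then exhibit $S(\phi^{2m})+(m+1)\phi^{2m+2}$ as $b\chi$ for a $\chi$ lying in $C^{2m+1}_\lambda(\mathcal C)$. The difficulty is that everything you defer as ``the hard part'' --- producing $\chi$, verifying that it is cyclic rather than merely a Hochschild primitive, and extracting the coefficient $-(m+1)$ --- \emph{is} the proof; ``should telescope'' and ``should leave the first family with total coefficient $-(m+1)$'' are conjectures about a computation you have not set up in a form guaranteed to close. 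The paper avoids the telescoping problem entirely by writing the primitive down in a manifestly cyclic form: $\psi=\sum_{j}(-1)^{j-1}\psi^j$ with
\begin{equation*}
\psi^j(f^0\otimes\cdots\otimes f^{2m+1})=Tr\left(\epsilon\,\mathcal F\, f^j\,[\mathcal F,f^{j+1}]\cdots[\mathcal F,f^{2m+1}]\,[\mathcal F,f^0]\cdots[\mathcal F,f^{j-1}]\right),
\end{equation*}
i.e.\ the $j$-th cyclic rotation of a single expression, so that $\tau\psi^j=\psi^{j-1}$ and $(1-\lambda)\psi=0$ is immediate. Each $b\psi^j$ is then evaluated using the operator identity $\mathcal F[\mathcal F,f^jf^{j+1}]+[\mathcal F,f^j\mathcal F f^{j+1}]=[\mathcal F,f^j][\mathcal F,f^{j+1}]+2f^jf^{j+1}$, and summing the rotations gives $b\psi=2\,S(\phi^{2m})+(2m+2)\,\phi^{2m+2}$, whence your heuristic $-(m+1)=-(2m+2)/2$. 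If you want to complete your version, the reduction via $a_i=\mathcal F g_i+\mathcal F a_i\mathcal F$ must be aimed at exactly this $\psi$; without naming the target cochain the sign bookkeeping has no anchor.

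There is also a genuine technical error in your setup. You assert that $2m\geq p-1$ and H\"older force ``every operator met here'' into $\mathcal B^1$; this fails at the boundary case $2m=p-1$. A degree-$2m$ element of $\Omega'\mathcal C$ is (up to a bounded prefactor) a product of $2m$ factors from $\mathcal B^p$, hence lies only in $\mathcal B^{p/2m}=\mathcal B^{p/(p-1)}$, which is not contained in $\mathcal B^1$; only its commutator with $\mathcal F$, having $2m+1\geq p$ such factors, is trace class. Consequently the rewriting $Tr_s(T)=Tr(\epsilon T)$ and the cyclic rearrangements of $Tr(\epsilon\,\cdot)$ that your plan relies on are not available for the individual summands of $S(\phi^{2m})$ in that case: one must keep $Tr_s$ in its commutator form $\tfrac12 Tr(\epsilon\mathcal F[\mathcal F,T])$, or work only with expressions (such as the $\psi^j$ above) that already contain at least $p$ commutator factors. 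This is exactly why the paper's $\psi^j$ is defined with $2m+1$ commutators and one bare $\mathcal F$ in front.
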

\begin{proof}
We will show that $S(\phi^{2m})+(m+1)\phi^{2m+2}=b\psi$ for some $\psi \in Z^{2m+1}_\lambda(\mathcal{C})$.
By Theorem \ref{evencyc}, we know that $\phi^{2m}$ is the character of the $2m$-dimensional cycle $(\Omega' \mathcal{C}, \partial', \hat{Tr}_s,\rho')$ over the category $\mathcal{C}$. Applying Lemma \ref{Sform} and using the fact that $Tr_s(T)=0$ for any homogeneous $T$ of odd degree, we have
\begin{equation*}
\begin{array}{ll}
(S(\phi^{2m}))(f^0 \otimes f^1 \otimes \ldots \otimes f^{2m+2}) &= \sum\limits_{j=0}^{2m+1}  Tr_s\left(f^0[\mathcal F,f^1]\ldots [\mathcal F,f^{j-1}](f^jf^{j+1})[\mathcal F,f^{j+2}]\ldots [\mathcal F,f^{2m+2}] \right)
\end{array}
\end{equation*}
Further,
\begin{equation*}
\begin{array}{ll}
\phi^{2m+2}(f^0 \otimes f^1 \otimes \ldots \otimes f^{2m+2})=  Tr_s\left( f^0[\mathcal F,f^1]\ldots\ldots [\mathcal F,f^{2m+2}] \right)
\end{array}
\end{equation*}
so that
\begin{equation}\label{exp}
\begin{array}{lr}
&\left(S(\phi^{2m})+(m+1)\phi^{2m+2}\right)(f^0 \otimes f^1 \otimes \ldots \otimes f^{2m+2})\qquad \qquad \qquad \qquad \qquad \qquad \\
&= \sum\limits_{j=0}^{2m+1}  Tr_s\left( f^0[\mathcal F,f^1]\ldots [\mathcal F,f^{j-1}](f^jf^{j+1})[\mathcal F,f^{j+2}]\ldots [\mathcal F,f^{2m+2}] \right)\\
&  +(m+1)Tr_s\left(f^0[\mathcal F,f^1]\ldots\ldots [\mathcal F,f^{2m+2}] \right)\qquad \qquad \qquad \qquad \qquad
\end{array}
\end{equation}
We now consider $\psi=\sum\limits_{j=0}^{2m+1} (-1)^{j-1} \psi^j$, where
\begin{equation}\label{defbound}
\psi^j(f^0 \otimes f^1 \otimes \ldots \otimes f^{2m+1})=Tr\left(\epsilon \mathcal  F f^j [\mathcal F,f^{j+1}]\ldots [\mathcal F,f^{2m+1}][\mathcal F,f^0][\mathcal F,f^1]\ldots[\mathcal F,f^{j-1}] \right)
\end{equation}
Since $2m \geq p-1$ and $(\mathscr H,\mathcal F)$ is a $p$-summable even Fredholm module over $\mathcal C$,  it follows that the operator 
$\epsilon \mathcal  F f^j [\mathcal F,f^{j+1}]\ldots [\mathcal F,f^{2m+1}][\mathcal F,f^0][\mathcal F,f^1]\ldots[\mathcal F,f^{j-1}]$ is trace class. 

\smallskip
We observe that $\tau\psi^j=\psi^{j-1}$ for $1\leq j\leq 2m+1$ and $\tau \psi^0=\psi^{2m+1}$. It follows that  $(1-\lambda)(\psi)=0$. Hence, $\psi \in C^{2m+1}_\lambda(\mathcal{C})=Ker(1-\lambda)$. Using \eqref{defbound}, we have
\begin{equation*}
\begin{array}{ll}
&(b\psi^j)(f^0 \otimes f^1 \otimes \ldots \otimes f^{2m+2})\\
& \quad =\sum\limits_{i=0}^{2m+1} (-1)^i ~\psi^j(f^0 \otimes \ldots \otimes f^if^{i+1} \otimes \ldots \otimes f^{2m+2})+  \psi^j(f^{2m+2}f^0 \otimes f^1 \otimes \ldots \otimes f^{2m+1})\\
& \quad =Tr\left(\epsilon \mathcal Ff^{j+1}[\mathcal F,f^{j+2}]\ldots[\mathcal F,f^{2m+2}]f^0[\mathcal F,f^1]\ldots[\mathcal F,f^j]\right)~+\\& \qquad (-1)^{j-1}Tr\left(\epsilon \mathcal  Ff^{j+1}[\mathcal F,f^{j+2}]\ldots[\mathcal F,f^{2m+2}][\mathcal F,f^0][\mathcal F,f^1]\ldots f^j \right)+\\
& \qquad  Tr\left(\epsilon \mathcal Ff^{j}[\mathcal F,f^{j+1}]\ldots[\mathcal F,f^{2m+2}]f^0[\mathcal F,f^1]\ldots[\mathcal F,f^{j-1}]\right)
\end{array}
\end{equation*}
We now set $\beta^j=[\mathcal F,f^{j+2}]\ldots[\mathcal F,f^{2m+2}]f^0[\mathcal F,f^1]\ldots[\mathcal F,f^{j-1}]$.  Then,  we have
\begin{equation*}
\begin{array}{ll}
[\mathcal F,\beta^j]=\mathcal F\beta^j- (-1)^{2m}\beta^j \mathcal F&=\mathcal F[\mathcal F,f^{j+2}]\ldots[\mathcal F,f^{2m+2}]f^0[\mathcal F,f^1]\ldots[\mathcal F,f^{j-1}]-\\
&\qquad [\mathcal F,f^{j+2}]\ldots[\mathcal F,f^{2m+2}]f^0[\mathcal F,f^1]\ldots[\mathcal F,f^{j-1}]\mathcal F\\
&= (-1)^{j-1}[\mathcal F,f^{j+2}]\ldots [\mathcal F,f^{2m+2}][\mathcal F,f^0][\mathcal F,f^1]\ldots[\mathcal F,f^{j-1}]
\end{array}
\end{equation*}
With $\alpha^j=f^j \mathcal Ff^{j+1}$, we get
\begin{equation}
\begin{array}{ll}
&(-1)^{j-1}Tr\left(\epsilon \mathcal Ff^{j+1}[\mathcal F,f^{j+2}]\ldots[\mathcal F,f^{2m+2}][\mathcal F,f^0][\mathcal F,f^1]\ldots [\mathcal F,f^{j-1}]f^j \right)\\
& \quad =Tr\left(\epsilon \mathcal Ff^{j+1}[\mathcal F,\beta^j]f^j\right)=Tr_s\left(\alpha^j[\mathcal F,\beta^j]\right)=Tr_s([\mathcal F,\alpha^j]\beta^j)
\end{array}
\end{equation} where we have used the fact that $Tr_s$ is a closed graded trace and $Tr_s(T)=Tr(\epsilon T)$ for any operator that is trace class (see \cite[Lemma 2]{C2}).
Thus, we have
\begin{equation*}
\begin{array}{ll}
&(b\psi^j)(f^0 \otimes f^1 \otimes \ldots \otimes f^{2m+2})=-Tr_s\left([\mathcal F,f^j] \mathcal  Ff^{j+1}\beta^j\right)+Tr_s([\mathcal F,\alpha^j]\beta^j)+Tr_s\left( \mathcal Ff^j[\mathcal F,f^{j+1}]\beta^j\right)
\end{array}
\end{equation*}
Since
\begin{equation*}
\mathcal F[\mathcal F,f^jf^{j+1}]=\mathcal F[\mathcal F,f^j]f^{j+1}+\mathcal Ff^j[\mathcal F,f^{j+1}]=-[\mathcal F,f^j]\mathcal Ff^{j+1}+ \mathcal Ff^j[\mathcal F,f^{j+1}],
\end{equation*}
we obtain 
\begin{equation*}
\begin{array}{ll}
(b\psi^j)(f^0 \otimes f^1 \otimes \ldots \otimes f^{2m+2})&=Tr_s\big(\left(\mathcal F[\mathcal F,f^jf^{j+1}] + [\mathcal F,\alpha^j]\right)\beta^j\Big)
\end{array}
\end{equation*}

As 
\begin{equation*}
\mathcal F[\mathcal F,f^jf^{j+1}] +[\mathcal F,\alpha^j]=\mathcal F[\mathcal F,f^jf^{j+1}] +\mathcal F \alpha^j + \alpha^j \mathcal F =[\mathcal F,f^j][\mathcal F,f^{j+1}]+2f^jf^{j+1}
\end{equation*}
 we get
\begin{equation*}
\begin{array}{ll}
(b\psi)(f^0 \otimes f^1 \otimes \ldots \otimes f^{2m+2})&=\sum\limits_{j=0}^{2m+1} (-1)^{j-1} (b\psi^j) (f^0 \otimes f^1 \otimes \ldots \otimes f^{2m+2})\\
&=\sum\limits_{j=0}^{2m+1}(-1)^{j-1} \big( 2~Tr_s\left(f^jf^{j+1}\beta^j\right) + Tr_s\left([\mathcal F,f^j][\mathcal F,f^{j+1}]\beta^j\right) \big)\\
&=\sum\limits_{j=0}^{2m+1} 2~Tr_s \left(f^0[\mathcal F,f^1]\ldots [\mathcal F,f^{j-1}](f^jf^{j+1})[\mathcal F,f^{j+2}]\ldots [\mathcal F,f^{2m+2}] \right)\\
& \quad +\sum\limits_{j=0}^{2m+1}Tr_s\left(f^0[\mathcal F,f^1] \ldots  [\mathcal F,f^{2m+2}]\right)\\
&=\sum\limits_{j=0}^{2m+1}~2~Tr_s\left(f^0[\mathcal F,f^1]\ldots [\mathcal F,f^{j-1}](f^jf^{j+1})[\mathcal F,f^{j+2}]\ldots [\mathcal F,f^{2m+2}] \right)\\
& \quad +(2m+2)Tr\left(f^0[\mathcal F,f^1] \ldots  [\mathcal F,f^{2m+2}]\right)
\end{array}
\end{equation*}
The result now follows by \eqref{exp}.
\end{proof}

\section{Homotopy invariance of the Chern character}

Let $SHilb_2$ be the full subcategory of $SHilb_{\mathbb{Z}_2}$ whose objects are of the form $\mathcal D=\mathcal H\oplus \mathcal H$, for some separable Hilbert space $\mathcal H$. If $\mathcal D=\mathcal H\oplus \mathcal H\in SHilb_2$, we denote by $F(\mathcal D)$ the morphism  in $SHilb_2(\mathcal D,\mathcal D)= \mathcal{B}(\mathcal H \oplus \mathcal H, \mathcal H \oplus\mathcal  H)$ given by the matrix 
$\begin{pmatrix} 0 & 1\\ 1 & 0 \\ \end{pmatrix}$ swapping the two copies of $\mathcal H$.   

\begin{lemma}\label{5.2x}
Let $\mathcal C$ be a small $\mathbb{C}$-category and $\{\mathscr{H}_t:\mathcal{C} \longrightarrow SHilb_2\}_{t \in [0,1]}$ be a family of functors such that
for each $X\in Ob(\mathcal C)$, we have $\mathscr H_t(X)=\mathscr H_{t'}(X)$ for all $t$, $t'\in [0,1]$. We put $\mathscr H(X):=\mathscr H_t(X)$ for all $t \in [0,1]$.
For each $f:X \longrightarrow Y$ in $\mathcal{C}$, we assume that the function
\begin{equation*} p_f:[0,1] \longrightarrow SHilb_{\mathbb{Z}_2}({\mathscr H}_t(X),{\mathscr H}_t(Y))\qquad t \mapsto {\mathscr H}_t
(f)\end{equation*} is strongly $C^1$. Then if $\delta_t(f):=p_f'(t)$, we have
\begin{equation*}
\delta_t(fg)={\mathscr H}_t(f) \circ \delta_t(g) + \delta_t(f) \circ {\mathscr H}_t
(g)
\end{equation*} for composable morphisms $f$, $g$ in $\mathcal C$.
\end{lemma}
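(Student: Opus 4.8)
The plan is to reduce the claimed Leibniz rule to the elementary product rule for operator-valued derivatives, the only real content being the functoriality of each $\mathscr{H}_t$ together with some care in the strong operator topology. Fix composable morphisms $g:X\longrightarrow Y$ and $f:Y\longrightarrow Z$ in $\mathcal{C}$, so that $fg:X\longrightarrow Z$. Since each $\mathscr{H}_t$ is a functor, we have $\mathscr{H}_t(fg)=\mathscr{H}_t(f)\circ\mathscr{H}_t(g)$ for every $t\in[0,1]$, which in the notation of the statement reads $p_{fg}(t)=p_f(t)\circ p_g(t)$. Thus the entire problem collapses to the following: if $A(t):=p_f(t)$ and $B(t):=p_g(t)$ are strongly $C^1$ families of bounded operators, then $t\mapsto A(t)B(t)$ is strongly differentiable with strong derivative $A'(t)B(t)+A(t)B'(t)$. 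Granting this, substituting $p_f'(t)=\delta_t(f)$ and $p_g'(t)=\delta_t(g)$ produces precisely $\delta_t(fg)=\mathscr{H}_t(f)\circ\delta_t(g)+\delta_t(f)\circ\mathscr{H}_t(g)$.

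To establish the product rule, I would fix a vector $\xi$ in the underlying Hilbert space and write, for small $h\neq 0$,
\[
\frac{A(t+h)B(t+h)\xi-A(t)B(t)\xi}{h}=A(t+h)\Big(\tfrac{B(t+h)\xi-B(t)\xi}{h}\Big)+\Big(\tfrac{A(t+h)-A(t)}{h}\Big)B(t)\xi.
\]
The second summand converges in norm to $A'(t)B(t)\xi$, since $A$ is strongly differentiable and $B(t)\xi$ is a fixed vector. For the first summand I would invoke the uniform bound $M:=\sup_{s\in[0,1]}\|A(s)\|<\infty$: each map $s\mapsto A(s)\eta$ is continuous on the compact interval $[0,1]$, hence bounded, so by the uniform boundedness principle the operator norms are uniformly bounded. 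Writing $\zeta_h:=\tfrac{B(t+h)\xi-B(t)\xi}{h}\to\zeta:=B'(t)\xi$, the estimate
\[
\|A(t+h)\zeta_h-A(t)\zeta\|\leq \|A(t+h)\|\,\|\zeta_h-\zeta\|+\|(A(t+h)-A(t))\zeta\|\leq M\|\zeta_h-\zeta\|+\|(A(t+h)-A(t))\zeta\|
\]
shows that the first summand tends to $A(t)B'(t)\xi$, the final term vanishing by the strong continuity of $A$ at $t$. Adding the two limits yields the product rule, and hence the Lemma.

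The one genuinely non-formal point is the interchange of limits concealed in the first summand, which is exactly why I isolate the uniform bound $M$ coming from Banach--Steinhaus: without it, $A(t+h)\zeta_h$ need not converge to $A(t)\zeta$ even though $\zeta_h\to\zeta$ strongly and $A$ is strongly continuous. Every other step is purely formal, resting on functoriality and on the definition of strong $C^1$ dependence. I therefore expect no further obstacle, the identity being the operator-valued Leibniz rule transported through the functorial equality $p_{fg}=p_f\circ p_g$.
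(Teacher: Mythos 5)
Your proof is correct, but it organizes the limit differently from the paper. The paper subtracts the claimed answer from the difference quotient of $p_{fg}$ and observes that the numerator telescopes to the ``second-order'' product $\bigl(p_f(t+s)-p_f(t)\bigr)\bigl(p_g(t+s)-p_g(t)\bigr)$, so that after dividing by $s$ one factor converges to $p_f'(t)$ while the other tends to $0$, killing the whole expression. You instead use the standard first-order splitting
$A(t+h)B(t+h)-A(t)B(t)=A(t+h)\bigl(B(t+h)-B(t)\bigr)+\bigl(A(t+h)-A(t)\bigr)B(t)$
and prove the Leibniz rule head-on. The two decompositions are equivalent in substance, but yours has the merit of isolating, and justifying via Banach--Steinhaus, the one analytically non-trivial step: passing a strong limit through a product requires the uniform bound $\sup_s\|A(s)\|<\infty$. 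The paper's final line, $\lim_{s\to 0}\frac{1}{s}\bigl(p_f(t+s)-p_f(t)\bigr)\bigl(p_g(t+s)-p_g(t)\bigr)=p_f'(t)\cdot\lim_{s\to 0}\bigl(p_g(t+s)-p_g(t)\bigr)=0$, silently uses the same uniform boundedness of the difference quotients $\frac{1}{s}\bigl(p_f(t+s)-p_f(t)\bigr)$ near $s=0$; your write-up makes that point explicit, which is a genuine improvement in rigor at no extra cost.
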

\begin{proof} We have
\begin{equation*}
\begin{array}{ll}
&\delta_t(fg)-{\mathscr H}_t(f) \circ \delta_t(g) - \delta_t(f) \circ {\mathscr H}_t
(g)\\
& \quad =p'_{fg}(t)-{\mathscr H}_t(f) \circ p'_g(t) -p'_f(t)  \circ {\mathscr H}_t
(g)\\
& \quad =\lim\limits_{s \to 0} \frac{1}{s} \left(p_{fg}(t+s)-p_{fg}(t) - {\mathscr H}_t(f)~ \circ ~p_g(t+s)+  {\mathscr H}_t(f)~ \circ~ p_g(t)- p_f(t+s)~ \circ~ {\mathscr H}_t
(g)+p_f(t)~ \circ~ {\mathscr H}_t(g)\right)\\
& \quad =\lim\limits_{s \to 0} \frac{1}{s}\left({\mathscr H}_{t+s}(fg)   -{\mathscr H}_{t}(fg)  - {\mathscr H}_t(f){\mathscr H}_{t+s}(g) +  {\mathscr H}_t(f) {\mathscr H}_{t}(g) - {\mathscr H}_{t+s}(f) {\mathscr H}_t
(g)+{\mathscr H}_{t}(f) {\mathscr H}_t(g)\right)\\
& \quad =\lim\limits_{s \to 0} \frac{1}{s}\left({\mathscr H}_{t+s}(f)-{\mathscr H}_{t}(f)\right)\left({\mathscr H}_{t+s}(g)-{\mathscr H}_{t}(g)\right)\\
& \quad =\lim\limits_{s \to 0}\frac{1}{s} \left(p_f(t+s)-p_f(t)\right) \left(p_g(t+s)-p_g(t)\right)\\
& \quad = p_f'(t) \lim\limits_{s \to 0} \left(p_g(t+s)-p_g(t)\right)=0 
\end{array}
\end{equation*}
\end{proof}

For each $n \in \mathbb{Z}_{\geq 0}$, we now define an operator $A:CN^{n}(\mathcal{C}) \longrightarrow CN^n(\mathcal{C})$ given by
\begin{equation*}
A:=1+\lambda+\lambda^2+\ldots+\lambda^n
\end{equation*} where $\lambda$ is the (signed) cyclic operator. 
We observe that if $\psi \in C^n_\lambda(\mathcal{C})=Ker(1-\lambda)$, then $A\psi=(n+1)\psi$. From the relation
\begin{equation*}
(1-\lambda)(1+2\lambda+3\lambda^2+\cdots+(n+1)\lambda^n)=A-(n+1)\cdot 1
\end{equation*}
it is immediate that $Ker(A) \subseteq Im(1-\lambda)$.  Let $B_0: CN^{n+1}(\mathcal{C}) \longrightarrow CN^n(\mathcal{C})$ be the map defined as follows:
\begin{equation*}
(B_0\phi)(f^0 \otimes \ldots \otimes f^n):=\phi(id_{X_0} \otimes f^0 \otimes \ldots \otimes f^n)-(-1)^{n+1}\phi(f^0 \otimes \ldots \otimes f^n \otimes id_{X_0})
\end{equation*}
for any $f^0 \otimes f^1 \otimes \ldots \otimes f^{n} \in Hom_{\mathcal{C}}(X_1,X_0) \otimes Hom_{\mathcal{C}}(X_2,X_1) \otimes \ldots \otimes Hom_{\mathcal{C}}(X_0,X_{n})$.  We now set
\begin{equation*}
B:=AB_0:  CN^{n+1}(\mathcal{C}) \longrightarrow CN^n(\mathcal{C})
\end{equation*}

\begin{lemma}\label{5.3g} We have

(1) $bA=Ab'$.

\smallskip
(2) $bB+Bb=0$.
\end{lemma}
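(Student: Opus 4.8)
The plan is to treat both identities as instances of the standard Connes $(b,B)$-bicomplex relations, verified directly from the cocyclic structure on $\{CN^n(\mathcal C)\}$ recorded in Proposition~\ref{prop2.3} (specialized to $H=\mathbb C=M$). The only inputs are the interactions of the cofaces $\delta_i$ with the cyclic operator. Dualizing the cyclic relations (equivalently, checking them on generators $f^0\otimes\cdots\otimes f^{n+1}$ via the explicit faces) one obtains $\tau_{n+1}\delta_i=\delta_{i-1}\tau_n$ for $1\le i\le n+1$ and $\tau_{n+1}\delta_0=\delta_{n+1}$, together with $\tau_n^{n+1}=\mathrm{id}$. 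Passing to the signed operator $\lambda=(-1)^n\tau_n$ these read
\[
\lambda_{n+1}\delta_i=-\,\delta_{i-1}\lambda_n\quad(1\le i\le n+1),\qquad \lambda_{n+1}\delta_0=(-1)^{n+1}\delta_{n+1},\qquad \lambda_n^{\,n+1}=\mathrm{id}.
\]
I would record once and for all the consequence $A(1-\lambda)=(1+\lambda+\cdots+\lambda^n)(1-\lambda)=1-\lambda^{\,n+1}=0$ on every $C^n(\mathcal C)$, which is what makes the bicomplex close up.

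For part (1) I would expand both sides over cofaces and powers of $\lambda$, writing $A_{n+1}b'=\sum_{k=0}^{n+1}\sum_{i=0}^{n}(-1)^i\lambda_{n+1}^k\delta_i$ and $bA_n=\sum_{i=0}^{n+1}\sum_{j=0}^{n}(-1)^i\delta_i\lambda_n^j$. Iterating the first relation gives $\lambda_{n+1}^k\delta_i=(-1)^k\delta_{i-k}\lambda_n^k$ as long as the lower index stays $\ge 0$, and when the index reaches $\delta_0$ the second relation feeds it back to $\delta_{n+1}$ — a ``wraparound'' governed by $\lambda^{\,n+1}=\mathrm{id}$. Carrying this out turns every summand of $A_{n+1}b'$ into a term $\pm\,\delta_a\lambda_n^c$, and a reindexing shows that the resulting signed family coincides term by term with $bA_n$; this is exactly the cohomological dual of the classical identity $b'N=Nb$. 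The routine but delicate point is the sign bookkeeping and the control of the wraparound, which I would organize by the residue of the descending face index.

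For part (2) I write $B=AB_0$ and use part (1) to move $A$ across $b$:
\[
bB+Bb=(bA)B_0+A(B_0b)=A\,b'B_0+A\,B_0b=A\,(b'B_0+B_0b).
\]
It therefore suffices to prove the auxiliary homotopy relation $b'B_0+B_0b=1-\lambda$ on $C^{n+1}(\mathcal C)$; granting this, part (2) is immediate from $A(1-\lambda)=0$. To establish the auxiliary relation I would write $B_0$ as the difference of the two ``extra'' codegeneracies inserting an identity at the front and at the back (the back one is $\sigma_n$ of Proposition~\ref{prop2.3}, the front one its cyclic conjugate), evaluate $b'B_0+B_0b$ on a generator $\phi$ and an element $f^0\otimes\cdots\otimes f^{n+1}$, and expand using the explicit cofaces. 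All interior terms, in which an identity inserted by $B_0$ is immediately absorbed by a face multiplying it into a neighbour, cancel in pairs because composition with $\mathrm{id}$ is trivial; the only survivors are the identity term and the single cyclically shifted term, which assemble to $1-\lambda$.

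I expect this telescoping cancellation — and in particular pinning down the signs so that the surviving terms give precisely $1-\lambda$ — to be the main obstacle. Fortunately the conclusion is robust: since $A$ annihilates the image of $1-\lambda$, it would be enough to show that $b'B_0+B_0b$ lies in $\operatorname{Im}(1-\lambda)$, so even a mismatch in the overall constant or sign of the auxiliary relation does not affect the final vanishing $bB+Bb=0$.
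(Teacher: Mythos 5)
Your argument is correct and follows essentially the same route as the paper: part (1) is the standard cocyclic-module identity $bA=Ab'$ (which the paper simply cites from Loday), and part (2) rests on exactly the paper's auxiliary homotopy relation $B_0b+b'B_0=1-\lambda$ combined with $A(1-\lambda)=0$ and part (1). Your closing observation that it suffices for $b'B_0+B_0b$ to land in $\operatorname{Im}(1-\lambda)$ is a nice robustness remark, but otherwise the two proofs coincide.
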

\begin{proof}
(1) This follows from the general fact that the dual $CN^\bullet(\mathcal C)$  of the cyclic nerve of $\mathcal C$ is a cocyclic module (see, for instance, \cite[$\S$ 2.5]{Loday}).

\smallskip
(2) For any $f^0 \otimes f^1 \otimes \ldots \otimes f^{n} \in Hom_{\mathcal{C}}(X_1,X_0) \otimes Hom_{\mathcal{C}}(X_2,X_1) \otimes \ldots \otimes Hom_{\mathcal{C}}(X_0,X_{n})$ and $\phi \in CN^n{\mathcal{C}}$, we have 
\begin{equation*}
\begin{array}{lll}
&(B_0b\phi)(f^0\otimes \ldots \otimes f^{n})\\
&\quad =(b\phi)(id_{X_0} \otimes f^0 \otimes \ldots \otimes f^n)-(-1)^{n+1}(b\phi)(f^0 \otimes \ldots \otimes f^n \otimes id_{X_0})\\
& \quad= \phi(f^0 \otimes \ldots \otimes f^{n}) + \sum\limits_{i=0}^{n-1}(-1)^{i+1} \phi(id_{X_0} \otimes f^0 \otimes \ldots \otimes f^if^{i+1} \otimes \ldots \otimes f^{n}) +(-1)^{n+1}  \phi(f^n \otimes f^0 \otimes \ldots \otimes f^{n-1}) \\
& \qquad -(-1)^{n+1}\left(\sum\limits_{i=0}^{n-1}(-1)^{i} \phi(f^0 \otimes \ldots \otimes f^if^{i+1} \otimes \ldots \otimes f^{n} \otimes id_{X_0})  \right)
\end{array}
\end{equation*}
On the other hand,
\begin{equation*}
\begin{array}{lll}
&(b'B_0\phi)(f^0\otimes \ldots \otimes f^{n})\\
& \quad =\sum\limits_{i=0}^{n-1}(-1)^{i} \phi(id_{X_0} \otimes f^0 \otimes \ldots \otimes f^if^{i+1} \otimes \ldots \otimes f^{n})-(-1)^n\sum\limits_{i=0}^{n-1}(-1)^{i} \phi(f^0 \otimes \ldots \otimes f^if^{i+1} \otimes \ldots \otimes f^{n} \otimes id_{X_0})
\end{array}
\end{equation*}
Thus, we obtain
\begin{equation*}
(B_0b+b'B_0)(\phi)(f^0\otimes \ldots \otimes f^{n})=\phi(f^0 \otimes \ldots \otimes f^{n})+(-1)^{n+1} \phi(f^n \otimes f^0 \otimes \ldots \otimes f^{n-1})
\end{equation*}
Therefore,
\begin{equation}\label{5.1ln}
(B_0b+b'B_0)(\phi)=\phi-\lambda \phi
\end{equation}
Now, by applying the operator $A$ to both sides of \eqref{5.1ln}, we have
\begin{equation*}
AB_0b+Ab'B_0=0
\end{equation*}
The result now follows from part \it(1).
\end{proof}

\begin{prop}\label{Prop4.3gf}
The image of the map $B:CN^{n+1}(\mathcal{C}) \longrightarrow CN^{n}(\mathcal{C})$ is
$C^n_\lambda(\mathcal{C})$.
\end{prop}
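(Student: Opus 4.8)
The plan is to prove the two inclusions $Im(B)\subseteq C^n_\lambda(\mathcal C)$ and $C^n_\lambda(\mathcal C)\subseteq Im(B)$ separately, first extracting what is needed about the norm operator $A$. Since $CN^\bullet(\mathcal C)$ is a cocyclic module, $\tau_n^{n+1}=id$, hence $\lambda^{n+1}=\big((-1)^n\tau_n\big)^{n+1}=id$ on $CN^n(\mathcal C)$. Therefore $(1-\lambda)A=1-\lambda^{n+1}=0$ and $A(1-\lambda)=1-\lambda^{n+1}=0$, which give $Im(A)\subseteq Ker(1-\lambda)$ and $Im(1-\lambda)\subseteq Ker(A)$. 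Conversely, for $\psi\in Ker(1-\lambda)=C^n_\lambda(\mathcal C)$ we have $A\psi=\sum_{k=0}^n\lambda^k\psi=(n+1)\psi$, so $\psi=A\big(\tfrac{1}{n+1}\psi\big)\in Im(A)$; together with the relation displayed just before the statement (which gives $Ker(A)\subseteq Im(1-\lambda)$) we conclude
\[
Im(A)=C^n_\lambda(\mathcal C),\qquad Ker(A)=Im(1-\lambda),
\]
and that $P:=\tfrac{1}{n+1}A$ is the projection of $CN^n(\mathcal C)$ onto $C^n_\lambda(\mathcal C)$ along $Im(1-\lambda)$. Since $B=AB_0$, the first inclusion is immediate: $Im(B)=Im(AB_0)\subseteq Im(A)=C^n_\lambda(\mathcal C)$.

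For the reverse inclusion I would construct, for each $\psi\in C^n_\lambda(\mathcal C)$, an explicit preimage $\phi\in CN^{n+1}(\mathcal C)$ with $B\phi=\psi$. The building blocks are the merge sections $s_0,\dots,s_n\colon CN^n(\mathcal C)\to CN^{n+1}(\mathcal C)$, where $s_i\psi$ evaluates $\psi$ on the tuple obtained by composing the $i$-th and $(i+1)$-th arguments; the two extreme cases are $s_{\mathrm f}\psi(f^0\otimes\cdots\otimes f^{n+1})=\psi(f^0f^1\otimes f^2\otimes\cdots\otimes f^{n+1})$ and $s_{\mathrm b}\psi(f^0\otimes\cdots\otimes f^{n+1})=\psi(f^0\otimes\cdots\otimes f^{n-1}\otimes f^nf^{n+1})$. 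A direct computation from the definition of $B_0$ gives $B_0 s_{\mathrm f}\psi=\psi-(-1)^{n+1}R\psi$ and $B_0 s_{\mathrm b}\psi=R'\psi-(-1)^{n+1}\psi$, where $R\psi$ and $R'\psi$ are degenerate cochains evaluating $\psi$ on tuples into which an identity morphism $id_{X_0}$ has been inserted. Applying $A$ and using $A\psi=(n+1)\psi$ then yields $Bs_{\mathrm f}\psi=(n+1)\psi-(-1)^{n+1}AR\psi$ and $Bs_{\mathrm b}\psi=AR'\psi-(-1)^{n+1}(n+1)\psi$.

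The key point is that the unwanted correction terms $AR\psi$ and $AR'\psi$ are controlled by the cyclic invariance $\lambda\psi=\psi$: rotating the inserted identity into the last slot and using $\psi=(-1)^n\tau_n\psi$ shows that these corrections satisfy linear relations among themselves, so that a fixed $\mathbb Q$-linear combination of the operators $s_i$ applied to $\psi$ has all its degenerate corrections cancel. In the model case $n=1$ one checks that $AR\psi=-AR'\psi$, whence $\phi=\tfrac14\big(s_{\mathrm f}-s_{\mathrm b}\big)\psi$ satisfies $B\phi=\psi$, already proving $C^1_\lambda(\mathcal C)\subseteq Im(B)$. I expect the main obstacle to be precisely the general-$n$ bookkeeping: assembling the correct alternating combination $\phi=\sum_i c_i s_i\psi$ over all merge positions and verifying, through the cocyclic identities relating the $s_i$ to $\lambda$ and to $B_0$, that every term carrying an inserted identity cancels by cyclic symmetry, leaving exactly $(n+1)$ copies of $\psi$ after applying $A$. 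An alternative, more structural route for this last step is to reduce the claim to $Im(B_0)+Im(1-\lambda)=CN^n(\mathcal C)$ — which, via the projection $P$, is equivalent to surjectivity of $B$ onto $C^n_\lambda(\mathcal C)$ — and to establish it using the contracting homotopy of the $b'$-complex (available because the morphism spaces of $\mathcal C$ contain units) together with the relation \eqref{5.1ln}; this is the categorical analog of Connes' original computation.
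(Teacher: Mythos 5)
Your argument is correct, but for the surjectivity direction it follows a genuinely different route from the paper. The paper fixes a linear functional $\eta$ on $\bigoplus_{X,Y}Hom_{\mathcal C}(X,Y)$ with $\eta(id_X)=1$ and $\eta(f)=0$ for $f$ between distinct objects, and from it writes down an explicit $\psi\in CN^{n+1}(\mathcal{C})$ with $B_0\psi=2\phi$, hence $B\psi=2(n+1)\phi$; you instead build the preimage out of the cosimplicial structure alone, and your ``fixed alternating combination'' is exactly $b'=\sum_{i=0}^{n}(-1)^i\delta_i$ (your merge maps $s_i$ are the cofaces $\delta_i$). The general-$n$ bookkeeping you defer does close: setting $U_i\psi(f^0\otimes\cdots\otimes f^n)=\psi(f^0\otimes\cdots\otimes f^if^{i+1}\otimes\cdots\otimes f^n\otimes id_{X_0})$ and $V_i\psi(f^0\otimes\cdots\otimes f^n)=\psi(id_{X_0}\otimes f^0\otimes\cdots\otimes f^{i-1}f^i\otimes\cdots\otimes f^n)$, the invariance $\psi=(-1)^n\tau_n\psi$ gives $U_i\psi=(-1)^nV_{i+1}\psi$ for $0\leq i\leq n-1$, whence $B_0\delta_0\psi=\psi+V_1\psi$, $B_0\delta_i\psi=V_i\psi+V_{i+1}\psi$ for $1\leq i\leq n-1$, and $B_0\delta_n\psi=V_n\psi-(-1)^{n+1}\psi$; the alternating sum telescopes to $B_0(b'\psi)=2\psi$, so $B(b'\psi)=2(n+1)\psi$ and $\psi=B\left(\tfrac{1}{2(n+1)}b'\psi\right)$, which for $n=1$ recovers your $\tfrac14(s_{\mathrm f}-s_{\mathrm b})\psi$. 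What each approach buys: yours is canonical (no choice of $\eta$) and stays entirely inside the cocyclic formalism, while the paper's is the direct categorification of Connes' unital-algebra argument and produces a preimage concentrated on tuples containing identity morphisms. Your analysis of $A$ (that $\tfrac{1}{n+1}A$ is the projection onto $Ker(1-\lambda)$ along $Im(1-\lambda)$) and the inclusion $Im(B)\subseteq C^n_\lambda(\mathcal{C})$ coincide with the paper's. The alternative ``structural'' route you sketch at the end is equivalent to the above but, as stated, is vaguer than either explicit construction; I would carry out the telescoping rather than appeal to the contracting homotopy of the $b'$-complex.
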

\begin{proof}
Let $\phi \in C^n_\lambda(\mathcal{C})$ and let $R:=\bigoplus\limits_{X,Y \in Ob(\mathcal{C})}Hom(X,Y)$. Then $A$ is an algebra with mutiplication given by composition wherever possible and $0$ otherwise. We choose a linear map $\eta: A \longrightarrow \mathbb{C}$ such that 
\begin{equation*}
\begin{array}{ll}
\eta(f)=0 &\qquad \text{for}~ f \in Hom_\mathcal{C}(X,Y), ~X \neq Y\\
\eta(id_X)=1 & \qquad \forall X \in Ob(\mathcal{C})
\end{array}
\end{equation*}

We now define $\psi \in CN^{n+1}(\mathcal{C})$ by setting
\begin{equation*}
\begin{array}{ll}
\psi(f^0\otimes \ldots \otimes f^{n+1}):=&\eta(f^0)\phi(f^1\otimes \ldots \otimes f^{n+1})+\\
&\quad (-1)^n \left(\phi\left(f^0 \otimes f^1 \otimes \ldots \otimes f^{n}\right)\eta(f^{n+1}) -\eta(f^0)\phi\left(id_{X_1} \otimes f^1 \otimes \ldots \otimes f^{n}\right)\eta(f^{n+1})\right)
\end{array}
\end{equation*}
for any $f^0 \otimes f^1 \otimes \ldots \otimes f^{n+1} \in Hom_{\mathcal{C}}(X_1,X_0) \otimes Hom_{\mathcal{C}}(X_2,X_1) \otimes \ldots \otimes Hom_{\mathcal{C}}(X_0,X_{n+1})$. 
We observe that if the tuple $(f^1, \ldots, f^{n+1})$ is not cyclically composable, i.e., $X_0 \neq X_1$, then the first term vanishes as $\eta(f^0)=0$. Similarly, if the tuple $(f^0, \ldots, f^{n})$ is not cyclically composable, i.e., $X_{n+1} \neq X_0$, then the second term vanishes. For the last term, $\eta(f^0)$ and $\eta(f^{n+1})$ will be non zero only if $X_1=X_0$ and $X_0=X_{n+1}$ which means that $X_{n+1}=X_1$ and the tuple $(id_{X_1},f^1, \ldots, f^{n})$ is cyclically composable.

Then, for   any $g^0 \otimes g^1 \otimes \ldots \otimes g^{n} \in Hom_{\mathcal{C}}(Y_1,Y_0) \otimes Hom_{\mathcal{C}}(Y_2,Y_1) \otimes \ldots \otimes Hom_{\mathcal{C}}(Y_0,Y_{n})$,
we have
\begin{equation*}
\begin{array}{ll}
\psi(id_{Y_0} \otimes g^0 \otimes \ldots \otimes g^{n})&=\eta(id_{Y_0})\phi(g^0\otimes \ldots \otimes g^{n})+(-1)^n \big(\phi\left(id_{Y_0} \otimes g^0 \otimes \ldots \otimes g^{n-1}\right)\eta(g^{n}) \\
& \quad -\phi\left(\eta(id_{Y_0})id_{Y_0} \otimes g^0 \otimes \ldots \otimes g^{n-1}\right)\eta(g^{n})\big)\\
&=\phi(g^0\otimes \ldots \otimes g^{n})
\end{array}
\end{equation*}
Also
\begin{equation*}
\begin{array}{ll}
\psi(g^0 \otimes \ldots \otimes g^{n} \otimes id_{Y_0})&=\eta(g^0)\phi(g^1\otimes \ldots \otimes g^{n} \otimes id_{Y_0})+(-1)^n \big(\phi\left(g^0 \otimes \ldots \otimes g^{n}\right)\eta(id_{Y_0})\\
&\quad -\phi\left(\eta(g^0)id_{Y_1} \otimes g^1 \otimes \ldots \otimes g^{n}\right)\eta(id_{Y_0})\big)\\
& =(-1)^n\phi\left(g^0 \otimes \ldots \otimes g^{n}\right) \\
\end{array}
\end{equation*} where the second equality follows from the fact that $\phi\in C_\lambda^n(\mathcal C)$ and that $\eta(g^0)=0$ whenever $Y_1\ne Y_0$. 
Thus,
\begin{equation*}
\begin{array}{ll}
(B_0\psi)(g^0 \otimes \ldots \otimes g^n)&=\psi(id_{Y_0} \otimes g^0 \otimes \ldots \otimes g^n)-(-1)^{n+1}\psi(g^0 \otimes \ldots \otimes g^n \otimes id_{Y_0})\\
&=2\phi(g^0 \otimes \ldots \otimes g^n)
\end{array}
\end{equation*}
Since $\phi \in Ker(1-\lambda)$, we now have $B\psi=2A\phi=2(n+1)\phi$. Thus, $\phi \in Im(B)$. Conversely, let $\phi \in Im(B)$. Then, $\phi=B\psi$ for some $\psi \in CN^{n+1}(\mathcal{C})$. Using the fact that $(1-\lambda)A=0$, we have
\begin{equation*}
\begin{array}{c}
(1-\lambda)(\phi)=(1-\lambda)(B\psi)=((1-\lambda)AB_0)\psi=0\\
\end{array} 
\end{equation*}
This proves the result.
\end{proof}

\begin{prop}\label{prop5.5h}
Let $\psi \in CN^n(\mathcal{C})$ be such that $b\psi \in C^{n+1}_\lambda(\mathcal{C})$. Then,

\smallskip
(1) $B\psi \in Z^{n-1}_\lambda(\mathcal{C})$ i.e., $b(B\psi)=0$ and $(1-\lambda)(B\psi)=0$.

\smallskip
(2) $S(B\psi)=n(n+1)b\psi$ in $H^{n+1}_\lambda(\mathcal{C})$.
\end{prop}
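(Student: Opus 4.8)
The plan is to prove the two assertions separately: (1) is essentially formal and follows from the operator calculus already assembled in Lemma \ref{5.3g} together with a vanishing property of $B_0$, while (2) combines the explicit formula for $S$ with the description of $S(\phi)$ as a coboundary in Proposition \ref{Sbound}.

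For (1), the cyclic invariance $(1-\lambda)(B\psi)=0$ is immediate: since $B=AB_0$ we have $B\psi=A(B_0\psi)\in\mathrm{Im}(A)$, and on $CN^{n-1}(\mathcal C)$ the operator identity $(1-\lambda)A=1-\lambda^{n}=0$ holds because $\lambda^{n}=\mathrm{id}$ there. For the Hochschild cocycle condition $b(B\psi)=0$, I would first record that $B_0$ annihilates $\mathrm{Ker}(1-\lambda)$. Indeed, if $\chi\in C^{n+1}_\lambda(\mathcal C)$ then $\tau_{n+1}\chi=(-1)^{n+1}\chi$, so evaluating at $(f^0,\dots,f^n,\mathrm{id}_{X_0})$ yields $\chi(\mathrm{id}\otimes f^0\otimes\cdots\otimes f^n)=(-1)^{n+1}\chi(f^0\otimes\cdots\otimes f^n\otimes\mathrm{id})$, and hence $B_0\chi=0$ directly from the definition of $B_0$. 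Now Lemma \ref{5.3g}(2) gives $b(B\psi)=-B(b\psi)=-A\,B_0(b\psi)$, and since $b\psi\in C^{n+1}_\lambda(\mathcal C)$ by hypothesis, $B_0(b\psi)=0$, so $b(B\psi)=0$. Combined with the first point, this gives $B\psi\in Z^{n-1}_\lambda(\mathcal C)$.

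For (2), using part (1) I would represent $B\psi$ as the character of a cycle $(\mathcal S,\hat\partial,\hat T,\rho)$ over $\mathcal C$ (Theorem \ref{charcycl}), for instance the universal one, so that $(B\psi)(g^0\otimes\cdots\otimes g^{n-1})=\hat T(g^0\hat\partial g^1\cdots\hat\partial g^{n-1})$. Proposition \ref{Sbound} then presents $S(B\psi)=b\Psi$ with the explicit cochain $\Psi\in CN^{n}(\mathcal C)$ given by $\Psi(f^0\otimes\cdots\otimes f^n)=\sum_{j=1}^{n}(-1)^{j-1}\hat T(f^0\hat\partial f^1\cdots\hat\partial f^{j-1}f^j\hat\partial f^{j+1}\cdots\hat\partial f^n)$; equivalently one may feed $B\psi$ into the formula of Lemma \ref{Sform}. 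The core of the argument is then to expand each $\hat T(f^0\hat\partial f^1\cdots f^j\cdots\hat\partial f^n)$ into standard monomials $g^0\hat\partial g^1\cdots\hat\partial g^{n-1}$ using the Leibniz rule $(\hat\partial f)f'=\hat\partial(ff')-f\,\hat\partial f'$ of $\Omega\mathcal C$, rewrite everything through $B=AB_0$, and exploit the cyclicity of $B\psi$. The counting should produce the scalar $n(n+1)$, the factor $n$ coming from $A$ acting as multiplication by $n$ on cyclic cochains in degree $n-1$ and the factor $(n+1)$ from the cyclic symmetrization of the merged terms, while the remaining contributions reorganize, with the help of the hypothesis $(1-\lambda)(b\psi)=0$, into $b\xi$ for a genuinely cyclic cochain $\xi\in C^{n}_\lambda(\mathcal C)$. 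This yields $b\Psi-n(n+1)b\psi=b\xi\in B^{n+1}_\lambda(\mathcal C)$ and hence $S(B\psi)=n(n+1)\,b\psi$ in $H^{n+1}_\lambda(\mathcal C)$.

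Since the inputs for (1) are formal, the only real work lies in (2), and the main obstacle there is the sign- and index-bookkeeping in the telescoping expansion: both pinning down the exact constant $n(n+1)$ and verifying that the leftover terms form the coboundary of a cochain lying in $C^{n}_\lambda(\mathcal C)$, not merely a Hochschild coboundary in $CN^{\bullet}(\mathcal C)$. It is precisely this cyclicity of the primitive $\xi$ that makes the identity hold in the $\lambda$-complex computing $H^{n+1}_\lambda(\mathcal C)$ rather than only up to a non-cyclic correction.
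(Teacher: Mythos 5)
Part (1) of your proposal is correct and is essentially the paper's argument: $(1-\lambda)A=0$ gives cyclicity of $B\psi$, the observation that $B_0$ kills $\mathrm{Ker}(1-\lambda)$ (which you prove correctly) combined with $bB=-Bb$ gives $b(B\psi)=-AB_0(b\psi)=0$.

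Part (2), however, is only a plan, and the step you defer --- ``the remaining contributions reorganize \ldots into $b\xi$ for a genuinely cyclic cochain $\xi$'' --- is precisely the content of the proposition; nothing in your outline actually produces such a $\xi$, and the Leibniz-rule telescoping you propose does not by itself explain why the discrepancy $\Psi-n(n+1)\psi$ can be corrected to something cyclic. The paper's mechanism is quite specific and you would need some version of it. First, one cannot work with $\psi$ itself: one replaces it by $\psi''=\psi-b\psi_1$, where $\psi_1$ is produced as follows. Set $\phi=B\psi$ and $\theta''=B_0\psi-\tfrac1n\phi$; since $\phi\in Z^{n-1}_\lambda(\mathcal C)$, the operator $A$ acts on it as multiplication by $n$, so $A\theta''=B\psi-\tfrac1n AB\psi=0$, and the inclusion $\mathrm{Ker}(A)\subseteq\mathrm{Im}(1-\lambda)$ (which follows from $(1-\lambda)(1+2\lambda+\cdots+(n+1)\lambda^{n})=A-(n+1)$) yields $\theta''=(1-\lambda)\psi_1$. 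This correction changes nothing on the level of $b$ (so $b\psi''=b\psi$), but it is exactly what makes the final cochain cyclic. Second, one must verify $(1-\lambda)\bigl(\psi'-n(n+1)\psi''\bigr)=0$ by two explicit computations: on one side, using the closedness and trace property of $\hat T$ (via $\hat T(\hat\partial\omega_j)=0$ for suitable monomials $\omega_j$), one finds $(1-\lambda)(\psi')(f^0\otimes\cdots\otimes f^n)=(-1)^{n+1}(n+1)\,\phi(f^nf^0\otimes f^1\otimes\cdots\otimes f^{n-1})$; on the other side, the identity $(1-\lambda)\psi=(B_0b+b'B_0)\psi=b'B_0\psi$ (using the hypothesis that $b\psi$ is cyclic) together with $b\phi=0$ gives $(1-\lambda)(\psi'')=\tfrac1n b'\phi=\tfrac1n(-1)^{n-1}\phi(f^nf^0\otimes\cdots\otimes f^{n-1})$. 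Matching the two produces the constant $n(n+1)$ honestly, rather than by the heuristic counting you suggest. Then $\zeta=\psi'-n(n+1)\psi''$ lies in $C^n_\lambda(\mathcal C)$ and $b\zeta=S(B\psi)-n(n+1)b\psi$, which is the assertion. Without the correction by $b\psi_1$ and without these two $(1-\lambda)$-computations, your argument for (2) does not close.
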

\begin{proof}
{\it(1)} We know that $(1-\lambda)(B \psi)=(1-\lambda)(AB_0)(\psi)=0$. Further, for any $\phi \in Ker(1-\lambda)$, we have $B_0\phi=0$. Therefore, it follows that $bB\psi=-Bb\psi=-AB_0b\psi=0$.

\smallskip
{\it(2)} We have to show that $SB\psi-n(n+1)b\psi=b\zeta$ for some $\zeta \in C^{n}_\lambda(\mathcal{C})$. We set $\phi=B\psi$. Then, $\phi$ is the character of an $(n-1)$-dimensional cycle $(\mathcal{S},\hat{\partial},\hat{T},\rho)$ over $\mathcal{C}$.  By Proposition \ref{Sbound}, we have
$S\phi=b\psi'$, where $\psi' \in CN^n(\mathcal{C})$ is given by
\begin{equation*}
\psi'(f^0 \otimes \ldots \otimes f^{n})=\sum\limits_{j=1}^n (-1)^{j-1}~ \hat{T}\left(f^0\hat{\partial}f^1 \ldots  \hat{\partial}f^{j-1} f^{j} \hat{\partial}f^{j+1} \ldots \hat{\partial}f^n\right)
\end{equation*}
Suppose we have $\psi'' \in CN^n(\mathcal{C})$ such that $\psi''-\psi \in B^n(\mathcal{C})$ and   $\zeta=\psi'-n(n+1)\psi'' \in C^{n}_\lambda (\mathcal{C})$. This would give
\begin{equation*}
b\zeta=b\psi'-n(n+1)b \psi''=SB\psi-n(n+1)b\psi
\end{equation*}
We set $\theta:=B_0\psi$, $\theta':=\frac{1}{n}\phi$ and $\theta'':=\theta-\theta'\in CN^{n-1}(\mathcal C)$. Since $B\psi \in Z^{n-1}_\lambda(\mathcal{C})$, we have
\begin{equation*}
A\theta''=AB_0\psi-\frac{1}{n}A\phi=B\psi-\frac{1}{n}AB\psi=B\psi-\frac{1}{n}nB\psi=0
\end{equation*}
Since $Ker(A) \subseteq Im(1-\lambda)$, we have $\theta''=(1-\lambda)(\psi_1)$ for some $\psi_1 \in CN^{n-1}(\mathcal{C})$. We take $\psi''=\psi-b\psi_1$. We now show that $(1-\lambda)(\zeta)=0$, i.e, $(1-\lambda)(\psi')=n(n+1)(1-\lambda)(\psi'')$ where $\zeta=\psi'-n(n+1)\psi''$.
We see that
\begin{equation*}
(\tau_n\psi')(f^0 \otimes \ldots \otimes f^{n})=\psi'(f^n \otimes f^0 \otimes \ldots \otimes f^{n-1})=\sum\limits_{j=0}^{n-1} (-1)^{j}~ \hat{T}\left(\hat{\partial}f^0\hat{\partial}f^1 \ldots  \hat{\partial}f^{j-1} f^{j} \hat{\partial}f^{j+1} \ldots \hat{\partial}f^{n-1}f^n\right)
\end{equation*} where we have used the fact that $\hat{T}$ is a graded trace. 
For $1\leq j\leq n-1$, we now set
\begin{equation*}
\omega_j:=f^0(\hat{\partial}f^1 \ldots  \hat{\partial}f^{j-1}) f^{j} (\hat{\partial}f^{j+1} \ldots \hat{\partial}f^{n-1})f^n
\end{equation*}
Then,
\begin{equation*}
\begin{array}{ll}
\hat{\partial}\omega_j&=(\hat{\partial}f^0\hat{\partial}f^1 \ldots  \hat{\partial}f^{j-1}) f^{j} (\hat{\partial}f^{j+1} \ldots \hat{\partial}f^{n-1})f^n+(-1)^{j-1}f^0(\hat{\partial}f^1 \ldots  \hat{\partial}f^{j-1} \hat{\partial}f^{j} \hat{\partial}f^{j+1} \ldots \hat{\partial}f^{n-1})f^n +\\
&\quad (-1)^n f^0(\hat{\partial}f^1 \ldots  \hat{\partial}f^{j-1}) f^{j} (\hat{\partial}f^{j+1} \ldots \hat{\partial}f^{n-1} \hat{\partial}f^n)
\end{array}
\end{equation*}
Thus,
\begin{equation*}
\begin{array}{ll}
0=\hat{T}(\hat{\partial}\omega_j)&=\hat{T}\left(\hat{\partial}f^0\hat{\partial}f^1 \ldots  \hat{\partial}f^{j-1} f^{j} \hat{\partial}f^{j+1} \ldots \hat{\partial}f^{n-1}f^n\right)+(-1)^{j-1}\hat{T}\left(f^0\hat{\partial}f^1 \ldots  \hat{\partial}f^{j-1} \hat{\partial}f^{j} \hat{\partial}f^{j+1} \ldots \hat{\partial}f^{n-1}f^n\right) +\\
&\quad (-1)^n \hat{T}\left(f^0\hat{\partial}f^1 \ldots  \hat{\partial}f^{j-1} f^{j} \hat{\partial}f^{j+1} \ldots \hat{\partial}f^{n-1} \hat{\partial}f^n\right)
\end{array}
\end{equation*}
Therefore,
\begin{equation*}
\begin{array}{ll}
&(1-\lambda)(\psi')(f^0 \otimes \ldots \otimes f^{n})\\
&=-\sum\limits_{j=1}^n (-1)^{j}~ \hat{T}\left(f^0\hat{\partial}f^1 \ldots  \hat{\partial}f^{j-1} f^{j} \hat{\partial}f^{j+1} \ldots \hat{\partial}f^n\right)-
(-1)^n \sum\limits_{j=0}^{n-1} (-1)^{j}~ \hat{T}\left(\hat{\partial}f^0\hat{\partial}f^1 \ldots  \hat{\partial}f^{j-1} f^{j} \hat{\partial}f^{j+1} \ldots \hat{\partial}f^{n-1}f^n\right)\\
&=-\big((-1)^n\hat{T}\left(f^0\hat{\partial}f^1 \ldots   \hat{\partial}f^{n-1}f^n\right)+(-1)^n \hat{T}\left(f^0\hat{\partial}f^1 \ldots  \hat{\partial}f^{n-1}f^n\right) +\\
&\quad \sum\limits_{j=1}^{n-1} (-1)^{j}~\left(\hat{T}\left(f^0\hat{\partial}f^1 \ldots  \hat{\partial}f^{j-1} f^{j} \hat{\partial}f^{j+1} \ldots \hat{\partial}f^n\right) +(-1)^n \hat{T}\left(\hat{\partial}f^0\hat{\partial}f^1 \ldots  \hat{\partial}f^{j-1} f^{j} \hat{\partial}f^{j+1} \ldots \hat{\partial}f^{n-1}f^n\right)\right)\big)\\
&=(-1)^{n+1}~(n+1)\hat{T}(f^nf^0\hat{\partial}f^1 \ldots   \hat{\partial}f^{n-1})=(-1)^{n+1} ~(n+1)\phi(f^nf^0\otimes f^1 \otimes \ldots  \otimes f^{n-1})
\end{array}
\end{equation*}
Hence,
\begin{equation}\label{1c}
(1-\lambda)(\psi')(f^0 \otimes \ldots \otimes f^{n})=(-1)^{n+1} ~(n+1)\phi(f^nf^0\otimes f^1 \otimes \ldots  \otimes f^{n-1})
\end{equation}
On the other hand, using  the definition of $\psi''$ and the fact that $(1-\lambda)b=b'(1-\lambda)$, we have
\begin{equation*}
(1-\lambda)(\psi'')=(1-\lambda)(\psi)-(1-\lambda)(b\psi_1)=(1-\lambda)(\psi)-b'(1-\lambda)(\psi_1)=(1-\lambda)(\psi)-b'\theta''
\end{equation*}
Since $b\psi \in C^{n+1}_\lambda(\mathcal{C})$, we have from \eqref{5.1ln} that $(1-\lambda)(\psi)=(B_0b+b'B_0)(\psi)=b'B_0\psi=b'\theta=b'\theta'+b'\theta''$. Hence, 
\begin{equation*}
(1-\lambda)(\psi'')=b'\theta'=\frac{1}{n}~b'\phi
\end{equation*}
Since $\phi=B\psi \in Z^{n-1}_\lambda(\mathcal{C})$, $b\phi=0$ and therefore
\begin{equation}\label{1d}
(1-\lambda)(\psi'')(f^0 \otimes \ldots \otimes f^{n})=\frac{1}{n}~(b'\phi)(f^0 \otimes \ldots \otimes f^{n})= \frac{1}{n} ~(-1)^{n-1}\phi(f^nf^0 \otimes f^1 \otimes \ldots \otimes f^{n-1})
\end{equation}
The result now follows by comparing \eqref{1c} and \eqref{1d}.
\end{proof}

\begin{prop}\label{Conlemma}
Let $\mathcal C$ be a small $\mathbb{C}$-category  and $\{\mathscr{H}_t:\mathcal{C} \longrightarrow SHilb_2\}_{t \in [0,1]}$ be a family of functors such that
for each $X\in Ob(\mathcal C)$, we have $\mathscr H_t(X)=\mathscr H_{t'}(X)$ for all $t$, $t'\in [0,1]$ and $\mathscr H_t(f)$ is of degree zero
for each $f \in Hom_\mathcal{C}(X,Y)$ and $t\in [0,1]$. We put $\mathscr H(X):=\mathscr H_t(X)$ for all $t \in [0,1]$.

\smallskip
Let $\mathcal F$ be the family of operators 
\begin{equation}
\mathcal F=\left\{(F(\mathscr H(X))=\begin{pmatrix} 0 & 1 \\ 1 & 0 \\
\end{pmatrix}\right\}_{X\in Ob(\mathcal C)}
\end{equation}

Let $p=2m$ be an even integer. We assume that 

(1)  for each $f \in Hom_\mathcal{C}(X,Y)$, the association  $ t \mapsto [\mathcal{F},{\mathscr H}_t(f)]$ is a continuous map
\begin{equation*}
\zeta_f:[0,1] \longrightarrow \mathcal{B}^p(\mathscr H(X), \mathscr H(Y)) \qquad t \mapsto [\mathcal{F},{\mathscr H}_t(f)]
\end{equation*}

\smallskip
(2) for each $f \in Hom_\mathcal{C}(X,Y)$, the association 
\begin{equation*}
p_f:[0,1] \longrightarrow SHilb_{\mathbb{Z}_2}({\mathscr H}_t(X),{\mathscr H}_t(Y))\qquad t \mapsto {\mathscr H}_t(f)
\end{equation*}
is piecewise strongly $C^1$. 

\smallskip
Let $({\mathscr H}_t,\mathcal{F})$ be the corresponding $p$-summable Fredholm modules over $\mathcal{C}$. Then, the class in $H^{p+2}_\lambda(\mathcal{C})$ of the $(p+2)$-dimensional character of the Fredholm module $({\mathscr H}_t,\mathcal{F})$ is independent of $t$.
\end{prop}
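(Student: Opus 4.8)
The plan is to fix $p=2m$ and to show that on each subinterval of $[0,1]$ on which the family is strongly $C^1$ the class $[\phi_t^{2m+2}]\in H^{2m+2}_\lambda(\mathcal C)$ is locally constant; patching the finitely many pieces and integrating then gives the statement. Writing $\delta_t(f):=p_f'(t)$, I would first record the derivative of the $(2m+2)$-character. Since $\mathcal F$ does not depend on $t$, neither does the operation $\partial'=[\mathcal F,-]$ nor the functional $Tr_s$, so differentiating under the trace gives
\begin{align*}
\frac{d}{dt}\phi_t^{2m+2}(f^0\otimes\cdots\otimes f^{2m+2})
&=Tr_s\big(\delta_t(f^0)[\mathcal F,f^1]\cdots[\mathcal F,f^{2m+2}]\big)\\
&\quad+\sum_{j=1}^{2m+2}Tr_s\big(\mathscr H_t(f^0)[\mathcal F,f^1]\cdots[\mathcal F,\delta_t(f^j)]\cdots[\mathcal F,f^{2m+2}]\big).
\end{align*}
This differentiation is legitimate because each summand is trace class: expanding $Tr_s(T)=\tfrac12 Tr(\epsilon\mathcal F[\mathcal F,T])$, every term still contains at least $2m+1$ genuine commutators $[\mathcal F,f^i]\in\mathcal B^{2m}$, whose product lies in $\mathcal B^{2m/(2m+1)}\subseteq\mathcal B^1$ by H\"older's inequality (Lemma \ref{srLe1}(c)); this is precisely the point at which working in dimension $p+2$ rather than $p$ is used. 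The Leibniz rule of Lemma \ref{5.2x} guarantees that $\frac{d}{dt}\phi_t^{2m+2}$ is again a cyclic cocycle.

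The heart of the argument is to show that $\frac{d}{dt}\phi_t^{2m+2}$ is a coboundary, and I would deduce this from the periodicity relation together with the interaction of $S$ and $B$. From the proof of Theorem \ref{Periodthmo} one has, at the level of cochains, $S(\phi_t^{2m})+(m+1)\phi_t^{2m+2}=b\omega_t$ for an explicit $\omega_t\in CN^{2m+1}(\mathcal C)$ built from $\mathscr H_t$ and $\mathcal F$. Since $S$ is a fixed $\mathbb C$-linear operation independent of $t$ and $\omega_t$ is strongly $C^1$ in $t$, differentiating gives
\begin{equation*}
(m+1)\frac{d}{dt}\phi_t^{2m+2}=-S(\Theta_t)+b\Big(\frac{d}{dt}\omega_t\Big),\qquad \Theta_t:=\frac{d}{dt}\phi_t^{2m}.
\end{equation*}
The same H\"older bookkeeping shows $\Theta_t$ exists, and as the $t$-derivative of the cocycles $\phi_t^{2m}$ it is itself a cyclic cocycle, so $\Theta_t\in Z^{2m}_\lambda(\mathcal C)\subseteq C^{2m}_\lambda(\mathcal C)$. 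Thus it suffices to prove that $S(\Theta_t)$ is a coboundary.

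For this I would invoke the two structural results proved just above. By Proposition \ref{Prop4.3gf} the image of $B:CN^{2m+1}(\mathcal C)\to CN^{2m}(\mathcal C)$ is exactly $C^{2m}_\lambda(\mathcal C)$, so there is a primitive $\psi_t\in CN^{2m+1}(\mathcal C)$ with $B\psi_t=\Theta_t$, depending continuously on $t$ because the construction of Proposition \ref{Prop4.3gf} is linear in $\Theta_t$. Proposition \ref{prop5.5h}(2) then yields $S(B\psi_t)=(2m+1)(2m+2)\,b\psi_t$ in $H^{2m+2}_\lambda(\mathcal C)$, and the right-hand side is a coboundary; that is, $S$ annihilates the image of $B$ in cohomology, so $S(\Theta_t)=b\rho_t$ for some $\rho_t$ continuous in $t$. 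Combining with the displayed identity, $\frac{d}{dt}\phi_t^{2m+2}=b\big(\tfrac{1}{m+1}(\frac{d}{dt}\omega_t-\rho_t)\big)$ is a coboundary with an integrable witness. Integrating over each $C^1$-piece gives $\phi_{t_1}^{2m+2}-\phi_{t_0}^{2m+2}=b\int_{t_0}^{t_1}\tfrac{1}{m+1}(\frac{d}{dt}\omega_t-\rho_t)\,dt\in B^{2m+2}_\lambda(\mathcal C)$, and summing over the finitely many pieces yields $[\phi_1^{2m+2}]=[\phi_0^{2m+2}]$, i.e. $ch^{p+2}(\mathscr H_t,\mathcal F)$ is independent of $t$.

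The main obstacle is the verification hidden in the last step: Proposition \ref{prop5.5h}(2) requires its input $\psi_t$ to satisfy $b\psi_t\in C^{2m+2}_\lambda(\mathcal C)$, and although the cocycle condition $b\Theta_t=0$ forces $b\psi_t\in\ker B$ (via Lemma \ref{5.3g}), one must check --- possibly after adjusting the primitive by an element of $\ker B$ --- that $b\psi_t$ can be taken cyclic; this is where I expect the real work to lie. The secondary difficulties are analytic: justifying differentiation under the trace and the existence of $\delta_t$ only on the pieces where $p_f$ is strongly $C^1$, and checking that the witnesses $\omega_t,\rho_t$ are continuous (hence integrable) in $t$ so that the integration step is valid. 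As an alternative to the $S$--$B$ route one could build an explicit transgression cochain $\psi_t\in CN^{2m+1}(\mathcal C)$ of the form $\sum_j(-1)^{j-1}Tr\big(\epsilon\mathcal F\,\delta_t(f^j)[\mathcal F,f^{j+1}]\cdots[\mathcal F,f^{j-1}]\big)$, directly mirroring the cochain in the proof of Theorem \ref{Periodthmo}, and verify $b\psi_t=\frac{d}{dt}\phi_t^{2m+2}$ by the same Leibniz-and-cyclicity manipulation with $\alpha^j,\beta^j$ replaced by their $\delta_t$-analogues; there the bookkeeping of signs would be the corresponding obstacle.
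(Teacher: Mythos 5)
Your main route breaks down at the step where you conclude that $S(\Theta_t)$ is a coboundary. From Proposition \ref{Prop4.3gf} you obtain \emph{some} primitive $\psi_t$ with $B\psi_t=\Theta_t$, and Proposition \ref{prop5.5h}(2) (granting its hypothesis $b\psi_t\in C^{2m+2}_\lambda(\mathcal C)$, which is itself not automatic for that primitive) gives $S(B\psi_t)=(2m+1)(2m+2)\,b\psi_t$ in $H^{2m+2}_\lambda(\mathcal C)$. But $b\psi_t$ is the Hochschild coboundary of a cochain that is not cyclic, so it is in general a nontrivial element of $Z^{2m+2}_\lambda(\mathcal C)$ and \emph{not} an element of $B^{2m+2}_\lambda(\mathcal C)=b\bigl(C^{2m+1}_\lambda(\mathcal C)\bigr)$. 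The slogan that ``$S$ annihilates the image of $B$'' is precisely what has to be proved for the specific cocycle $\Theta_t$; Proposition \ref{prop5.5h}(2) only trades $S(\Theta_t)$ for $(2m+1)(2m+2)\,b\psi_t$, which vanishes in cohomology exactly when one can choose the primitive with $b\psi_t=0$. So the ``main obstacle'' you flag is not merely to make $b\psi_t$ cyclic: even a cyclic $b\psi_t$ need not be a cyclic coboundary. This is a genuine gap, not a technicality.

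The paper closes exactly this gap by producing the primitive explicitly, and it transgresses the $p$-dimensional (not the $(p+2)$-dimensional) character: for each $t$ it sets $\psi_t^j(f^0\otimes\cdots\otimes f^{p+1})=Tr\bigl(\epsilon\,\mathscr H_t(f^0)[\mathcal F,\mathscr H_t(f^1)]\cdots\delta_t(f^j)\cdots[\mathcal F,\mathscr H_t(f^{p+1})]\bigr)$, where each term retains exactly $p$ commutators so that hypothesis (1) alone makes it trace class; it verifies $b\psi_t^j=0$ directly, and integrates in $t$ (piecewise, over the $C^1$ subintervals) to get $\psi\in CN^{p+1}(\mathcal C)$ with $b\psi=0$ and $B_0\psi=\phi_{t_2}-\phi_{t_1}$, where $\phi_t$ is the $p$-dimensional character. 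Then Proposition \ref{prop5.5h}(2) gives $0=S(B\psi)=(p+1)\,S(\phi_{t_2}-\phi_{t_1})$ because $b\psi=0$ on the nose, and Theorem \ref{Periodthmo} converts this into the statement about the $(p+2)$-dimensional character. Your closing ``alternative'' --- an explicit transgression cochain verified by hand --- is essentially this argument and is the route to take. Note also that differentiating the $(p+2)$-character and $\omega_t$ directly, as in your first display, needs more care than the hypotheses obviously supply (only continuity, not differentiability, of $t\mapsto[\mathcal F,\mathscr H_t(f)]$ in $\mathcal B^p$ is assumed), whereas the paper's transgression differentiates one tensor slot at a time and controls the remaining $p$ slots by hypothesis (1).
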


\begin{proof}
For any $t \in [0,1]$, let $\phi_{t}$ be the $p$-dimensional character of the Fredholm module $({\mathscr H}_{t},\mathcal{F})$. We will show that $S(\phi_{t_1})=S(\phi_{t_2})$ for any $t_1, t_2 \in [0,1]$.

\smallskip
By assumption, we know that there exists a finite set $R=\{0= r_0\leq r_1 < ... <r_k\leq r_{k+1}=1\}\subseteq [0,1]$ such that $p_f:[0,1] \longrightarrow SHilb_{\mathbb{Z}_2}({\mathscr H}_t(X),{\mathscr H}_t(Y))$ is continuously differentiable in each $[r_i,r_{i+1}]$. By abuse of notation,
we set for each $f \in Hom_\mathcal{C}(X,Y)$:
\begin{equation}
\delta_t(f):=p_f'(t)\in SHilb_{\mathbb{Z}_2}({\mathscr H}_t(X),{\mathscr H}_t(Y))
\end{equation} Here, it is understood that if $t=r_i$ for some $1\leq i\leq k$, we use the right hand derivative when $r_i$ is treated as a point
of $[r_i,r_{i+1}]$ and the left hand derivative when $r_i$ is treated as a point of $[r_{i-1},r_i]$. 

\smallskip
Using Lemma \ref{5.2x},  we know that 
\begin{equation}\label{nv2}
\delta_t(fg)={\mathscr H}_t(f) \circ \delta_t(g) + \delta_t(f) \circ {\mathscr H}_t
(g)
\end{equation}
for any $t \in [0,1]$ and for any pair of composable morphisms $f$ and $g$ in $\mathcal{C}$.

\smallskip
For any $t \in [0,1]$ and $1 \leq j \leq p+1$, we set
\begin{equation*}
\psi_t^j(f^0 \otimes \ldots \otimes f^{p+1}):=Tr\left(\epsilon {\mathscr H}_t(f^0)[\mathcal{F}, {\mathscr H}_t(f^1)]\ldots [\mathcal{F}, {\mathscr H}_t(f^{j-1})]\delta_t(f^j)[\mathcal{F}, {\mathscr H}_t(f^{j+1})] \ldots [\mathcal{F}, {\mathscr H}_t(f^{p+1})]\right)
\end{equation*}
Using the expression in \eqref{nv2} and the fact that $\epsilon {\mathscr H}(f)={\mathscr H}(f)\epsilon$ for any morphism $f \in \mathcal{C}$,  it may be easily verified that $b\psi_t^j=0$. For example, when $j=1$,  we have (suppressing the  functor ${\mathscr H}$ )
\begin{equation*}
\begin{array}{ll}
&(b\psi_t^1)(f^0 \otimes \ldots \otimes f^{p+2})\\
& \quad =\sum\limits_{i=0}^{p+1} \psi_t^1(f^0 \otimes \ldots f^if^{i+1} \otimes \ldots \otimes f^{p+2}) + \psi_t^j(f^{p+2}f^0 \otimes f^1 \otimes \ldots \otimes f^{p+2})\\
& \quad = Tr\left(\epsilon f^0f^1\delta_t(f^2)[\mathcal{F},f^3]\ldots[\mathcal{F},f^{p+2}]\right)-Tr\left(\epsilon f^0 \delta_t(f^1f^2)[\mathcal{F},f^3]\ldots[\mathcal{F},f^{p+2}]\right)\\
& \qquad + Tr\left(\epsilon f^0 \delta_t(f^1)[\mathcal{F},f^2f^3]\ldots[\mathcal{F},f^{p+2}]\right)-Tr\left(\epsilon f^0 \delta_t(f^1)[\mathcal{F},f^2][\mathcal{F},f^3f^4]\ldots[\mathcal{F},f^{p+2}]\right) + \ldots\\
& \qquad \ldots - Tr\left(\epsilon f^0 \delta_t(f^1)[\mathcal{F},f^2]\ldots[\mathcal{F},f^{p+1}f^{p+2}]\right) + Tr\left(\epsilon f^{p+2}f^0 \delta_t(f^1)[\mathcal{F},f^2][\mathcal{F},f^3f^4]\ldots[\mathcal{F},f^{p+1}]\right) \\
&\quad =0
\end{array}
\end{equation*}
 We then define
\begin{equation*}
\begin{array}{ll}
\psi_t:=\sum\limits_{j=0}^{p+1}  (-1)^{j-1} \psi_t^j
\end{array}
\end{equation*}
We have  $b\psi_t=0$.

For fixed $f$, it follows from the compactness of $[0,1]$ and the assumptions (1) and (2) that the families $\{ {\mathscr H}_t(f) \}_{t \in [0,1]}$, 
$\{p_f(t)\}_{t\in [0,1]}$ and 
$\{\delta_t(f)\}_{t \in [0,1]}$ are uniformly bounded. For the sake of simplicity, we assume that there is only a single point $r\in R$ such that 
$t_1\leq r\leq t_2$. Then, we form  $\psi \in CN^{p+1}(\mathcal{C})$ by setting
\begin{equation*}
\psi(f^0 \otimes \ldots \otimes f^{p+1}):=\int_{t_1}^{r}\psi_t(f^0 \otimes \ldots \otimes f^{p+1})dt+\int_{r}^{t_2}\psi_t(f^0 \otimes \ldots \otimes f^{p+1})dt
\end{equation*}
We now have
\begin{equation*}
\begin{array}{ll}
&\psi(id_{X_0} \otimes f^0 \otimes \ldots \otimes f^{p})\\
&=\int_{t_1}^{r}\psi_t(id_{X_0} \otimes f^0 \otimes \ldots \otimes f^{p})dt+\int_{r}^{t_2}\psi_t(id_{X_0} \otimes f^0 \otimes \ldots \otimes f^{p})dt\\
&=\int_{t_1}^{r} \big(\sum\limits_{j=0}^{p}(-1)^{j}Tr\left(\epsilon[\mathcal{F}, {\mathscr H}_t(f^0)]\ldots [\mathcal{F}, {\mathscr H}_t(f^{j-1})]\delta_t(f^j)[\mathcal{F}, {\mathscr H}_t(f^{j+1})] \ldots [\mathcal{F}, {\mathscr H}_t(f^{p})]\right)\big)dt\\&\textrm{ }+\int_{r}^{t_2} \big(\sum\limits_{j=0}^{p}(-1)^{j}Tr\left(\epsilon[\mathcal{F}, {\mathscr H}_t(f^0)]\ldots [\mathcal{F}, {\mathscr H}_t(f^{j-1})]\delta_t(f^j)[\mathcal{F}, {\mathscr H}_t(f^{j+1})] \ldots [\mathcal{F}, {\mathscr H}_t(f^{p})]\right)\big)dt\\
\end{array}
\end{equation*}
Let $\phi:[0,1] \longrightarrow Z^p_\lambda(\mathcal{C})$ be the map given by $t \mapsto \phi_t$. We now claim that 
\begin{equation*}
\psi(id_{X_0} \otimes f^0 \otimes \ldots \otimes f^{p})=\int_{t_1}^{r}\phi'(t)(f^0 \otimes \ldots \otimes f^p) ~dt+\int_{r}^{t_2}\phi'(t)(f^0 \otimes \ldots \otimes f^p) ~dt
\end{equation*}
 Indeed, we have
\begin{equation*}
\begin{array}{ll}
\phi'(t)(f^0 \otimes \ldots \otimes f^p)&=\lim\limits_{s \to 0}\frac{1}{s}(\phi_{t+s}-\phi_t)(f^0 \otimes \ldots \otimes f^p)\\
&=\lim\limits_{s \to 0}\big(Tr\big(\epsilon \frac{1}{s}\left( {\mathscr H}_{t+s}(f^0)-{\mathscr H}_t(f^0)\right)  [\mathcal{F}, {\mathscr H}_{t+s}(f^{1})] \ldots  [\mathcal{F}, {\mathscr H}_{t+s}(f^{p})]\big)\\
& \quad +Tr\big(\epsilon {\mathscr H}_{t}(f^0) [\mathcal{F},\frac{1}{s}\left({\mathscr H}_{t+s}(f^1)-{\mathscr H}_t(f^1)\right)]  [\mathcal{F}, {\mathscr H}_{t+s}(f^{2})] \ldots  [\mathcal{F}, {\mathscr H}_{t+s}(f^{p})]\big)+ \ldots\\
& \quad +Tr\big(\epsilon {\mathscr H}_{t}(f^0)[\mathcal{F}, {\mathscr H}_{t}(f^1) ] \ldots [\mathcal{F},\frac{1}{s}\left({\mathscr H}_{t+s}(f^p)-{\mathscr H}_t(f^p)\right)]\big)\big)
\end{array}
\end{equation*}
By {\it(1)}, we know that the association $t \mapsto [\mathcal{F},{\mathscr H}_{t}(f)]$ is a continuous map for each morphism $f \in \mathcal{C}$. Therefore, we have
\begin{equation*}
\begin{array}{ll}
\lim\limits_{s \to 0}\big(Tr\big(\epsilon {\mathscr H}_{t}(f^0)[\mathcal{F}, {\mathscr H}_{t}(f^1)]\ldots [\mathcal{F}, {\mathscr H}_{t}(f^{j-1})] [\mathcal{F},\frac{1}{s}\left({\mathscr H}_{t+s}(f^{j})-{\mathscr H}_t(f^j)\right)] \ldots  [\mathcal{F}, {\mathscr H}_{t+s}(f^{p})]\big)\big)=\\
=\lim\limits_{s \to 0} (-1)^j \big(Tr\big(\epsilon [\mathcal F,{\mathscr H}_{t}(f^0)]\ldots [\mathcal{F}, {\mathscr H}_{t}(f^{j-1})] \frac{1}{s}\left({\mathscr H}_{t+s}(f^{j})-{\mathscr H}_t(f^j)\right)[\mathcal{F}, {\mathscr H}_{t+s}(f^{j+1})]  \ldots  [\mathcal{F}, {\mathscr H}_{t+s}(f^{p})]\big)\big)\\
= (-1)^j Tr\big(\epsilon [\mathcal F,{\mathscr H}_{t}(f^0)][\mathcal{F}, {\mathscr H}_{t}(f^1)]\ldots [\mathcal{F}, {\mathscr H}_{t}(f^{j-1})] \delta_t(f^j) [\mathcal{F}, {\mathscr H}_{t}(f^{j+1})] \ldots  [\mathcal{F}, {\mathscr H}_{t}(f^{p})]\big)
\end{array}
\end{equation*}
From this, we obtain
\begin{equation*}
\begin{array}{ll}
&\int_{t_1}^{r}\phi'(t) (f^0 \otimes \ldots \otimes f^p) ~dt+\int_{r}^{t_2}\phi'(t)  (f^0 \otimes \ldots \otimes f^p)~dt\\
&=\int_{t_1}^{r} \sum\limits_{j=0}^p (-1)^j Tr\big(\epsilon [\mathcal F,{\mathscr H}_{t}(f^0)][\mathcal{F}, {\mathscr H}_{t}(f^1)]\ldots [\mathcal{F}, {\mathscr H}_{t}(f^{j-1})] \delta_t(f^j)[\mathcal{F}, {\mathscr H}_{t}(f^{j+1})]  \ldots  [\mathcal{F}, {\mathscr H}_{t}(f^{p})]\big)~ dt\\
&\textrm{ }+\int_{r}^{t_2} \sum\limits_{j=0}^p (-1)^j Tr\big(\epsilon [\mathcal F,{\mathscr H}_{t}(f^0)][\mathcal{F}, {\mathscr H}_{t}(f^1)]\ldots [\mathcal{F}, {\mathscr H}_{t}(f^{j-1})] \delta_t(f^j) [\mathcal{F}, {\mathscr H}_{t}(f^{j+1})] \ldots  [\mathcal{F}, {\mathscr H}_{t}(f^{p})]\big)~ dt\\
&=\psi(id_{X_0} \otimes f^0 \otimes \ldots \otimes f^{p})
\end{array}
\end{equation*}
Hence
\begin{equation*}
\begin{array}{ll}
\psi(id_{X_0} \otimes f^0 \otimes \ldots \otimes f^{p})&=\phi_{t_2}(f^0 \otimes \ldots \otimes f^{p})-\phi_{r}(f^0 \otimes \ldots \otimes f^{p})+\phi_{r}(f^0 \otimes \ldots \otimes f^{p})-\phi_{t_1}(f^0 \otimes \ldots \otimes f^{p})\\
&=\phi_{t_2}(f^0 \otimes \ldots \otimes f^{p})-\phi_{t_1}(f^0 \otimes \ldots \otimes f^{p})\\
\end{array}
\end{equation*}
Since $\psi(f^0 \otimes \ldots \otimes f^{p} \otimes id_{X_0} )=0$, we now have
\begin{equation*}
\begin{array}{ll}
(B_0\psi) (f^0 \otimes \ldots \otimes f^{p})&=\psi(id_{X_0} \otimes f^0 \otimes \ldots \otimes f^{p})-\psi(f^0 \otimes \ldots \otimes f^{p} \otimes id_{X_0} )\\
&=(\phi_{t_2}-\phi_{t_1})(f^0 \otimes \ldots \otimes f^{p})
\end{array}
\end{equation*}
Since $b\psi=0$, using Proposition \ref{prop5.5h} and the fact that $\phi_{t_2}-\phi_{t_1} \in Ker(1-\lambda)$, we have
\begin{equation*}
0=S(B\psi)=S(AB_0\psi)=(p+1)S(\phi_{t_2}-\phi_{t_1})
\end{equation*}
This proves the result.
\end{proof}

\begin{theorem}\label{Thm5.6t}
Let $\mathcal C$ be a small $\mathbb{C}$-category  and $\{\rho_t:\mathcal{C} \longrightarrow SHilb_2\}_{t \in [0,1]}$ be a family of functors such that
for each $X\in Ob(\mathcal C)$, we have $\rho_t(X)=\rho_{t'}(X)$ for all $t$, $t'\in [0,1]$. We put $\rho(X):=\rho_t(X)$ for all $t \in [0,1]$.
 Further, for each $t \in [0,1]$,
let 
\begin{equation}
\mathcal{F}_t:=\left\lbrace \mathcal F_t(X):=\begin{pmatrix} 0 & \mathcal{Q}_t(X) \\ \mathcal{P}_t(X) & 0 \\
\end{pmatrix}:\rho(X)\longrightarrow  \rho(X) \right\rbrace_{X\in Ob(\mathcal C)}
\end{equation}
with $\mathcal{P}_t(X)=\mathcal{Q}_t^{-1}(X)$ be such that $({\rho}_t,\mathcal{F}_t)$ is a $p$-summable Fredholm module over the category $\mathcal{C}$.
We set $\rho(X)=\rho'(X)\oplus \rho'(X)\in SHilb_2$. We further assume that for some even integer $p$ and for any $f \in Hom_\mathcal{C}(X,Y)$, we have

\smallskip
(1) $t \mapsto {\rho}_t^+(f)-\mathcal{Q}_t{\rho}_t^-(f)\mathcal{P}_t$ is a continuous map from $[0,1]$ to $\mathcal{B}^p({\rho}'(X),{\rho}'(Y))$, where $\rho_t^\pm$ are the two components of the morphism $\rho_t$ of degree zero.

\smallskip
(2) $t \mapsto {\rho}_t^+(f)$ and $t \mapsto \mathcal{Q}_t{\rho}_t^-(f)\mathcal{P}_t$ are piecewise strongly $C^1$ maps from $[0,1]$ to $SHilb(\rho'(X),\rho'(Y))$.

\smallskip
Then,  the $(p+2)$-dimensional character  $\text{ch}^{p+2}({\rho}_t,\mathcal{F}_t) \in H^{p+2}_\lambda(\mathcal{C})$ is independent of $t \in [0,1]$.
\end{theorem}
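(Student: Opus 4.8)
The plan is to reduce the statement to Proposition \ref{Conlemma}, in which the operator is the fixed standard swap $F$ and only the functor varies with $t$. The key observation is that although both $\rho_t$ and $\mathcal F_t$ vary, the family $\mathcal F_t$ can be conjugated to the constant operator $F$ by an intertwiner that is itself $t$-dependent but even and invertible, and that this conjugation preserves the $(p+2)$-dimensional character exactly (not merely up to coboundary). Concretely, for each $X\in Ob(\mathcal C)$ and $t\in[0,1]$ I would introduce the even invertible operator $U_t(X)=\mathrm{diag}\big(id,\mathcal Q_t(X)\big)\in\mathcal B(\rho(X),\rho(X))$, acting as the identity on the first copy of $\rho'(X)$ and as $\mathcal Q_t(X)$ on the second; its inverse is $\mathrm{diag}\big(id,\mathcal P_t(X)\big)$ since $\mathcal P_t=\mathcal Q_t^{-1}$. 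A direct matrix computation gives $U_t(X)\,\mathcal F_t(X)\,U_t(X)^{-1}=F(\rho(X))$.

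Next I would define the conjugated functor $\tilde\rho_t:=\mathrm{Ad}(U_t)\circ\rho_t$, that is $\tilde\rho_t(f):=U_t(Y)\circ\rho_t(f)\circ U_t(X)^{-1}$ for $f\in Hom_{\mathcal C}(X,Y)$. Since each $\rho_t$ is a functor and the $U_t$ are invertible, $\tilde\rho_t$ is again a functor $\mathcal C\longrightarrow SHilb_2$; because $U_t(X)$ is block-diagonal, $\tilde\rho_t(f)$ has degree $0$ with diagonal blocks $\rho_t^+(f)$ and $\mathcal Q_t\rho_t^-(f)\mathcal P_t$, and $\tilde\rho_t(X)=\rho(X)$ is independent of $t$. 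The reduction then rests on the claim that $(\tilde\rho_t,F)$ is a $p$-summable Fredholm module isomorphic to $(\rho_t,\mathcal F_t)$ with the same character. Indeed $[F,\tilde\rho_t(f)]=U_t(Y)\,[\mathcal F_t,f]\,U_t(X)^{-1}$ lies in $\mathcal B^p$ by the ideal property, so $p$-summability transfers. Since $U_t$ is even it commutes with the grading $\epsilon$, and cyclicity of the trace yields $Tr_s^{F}\big(U_t(X)\,S\,U_t(X)^{-1}\big)=Tr_s^{\mathcal F_t}(S)$, where the superscripts record which operator is used to form the graded trace. Telescoping the conjugations in
\[
\tilde\rho_t(f^0)\,[F,\tilde\rho_t(f^1)]\cdots[F,\tilde\rho_t(f^{p+2})]=U_t(X_0)\big(\rho_t(f^0)[\mathcal F_t,\rho_t(f^1)]\cdots[\mathcal F_t,\rho_t(f^{p+2})]\big)U_t(X_0)^{-1}
\]
then shows that the characters of $(\tilde\rho_t,F)$ and $(\rho_t,\mathcal F_t)$ coincide term by term, whence $\mathrm{ch}^{p+2}(\rho_t,\mathcal F_t)=\mathrm{ch}^{p+2}(\tilde\rho_t,F)$ in $H^{p+2}_\lambda(\mathcal C)$ for each fixed $t$.

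It then remains to check that the family $\{(\tilde\rho_t,F)\}_{t\in[0,1]}$ meets the hypotheses of Proposition \ref{Conlemma}. Computing the graded commutator gives off-diagonal entries $\pm\big(\rho_t^+(f)-\mathcal Q_t\rho_t^-(f)\mathcal P_t\big)$ for $[F,\tilde\rho_t(f)]$, so hypothesis (1) of the theorem makes $t\mapsto[F,\tilde\rho_t(f)]$ a continuous map into $\mathcal B^p$, which is precisely hypothesis (1) of Proposition \ref{Conlemma}; and since $\tilde\rho_t(f)$ is block-diagonal with blocks $\rho_t^+(f)$ and $\mathcal Q_t\rho_t^-(f)\mathcal P_t$, hypothesis (2) of the theorem makes $t\mapsto\tilde\rho_t(f)$ piecewise strongly $C^1$, matching hypothesis (2) of the proposition. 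Proposition \ref{Conlemma} then gives that $\mathrm{ch}^{p+2}(\tilde\rho_t,F)$ is independent of $t$, and by the previous paragraph so is $\mathrm{ch}^{p+2}(\rho_t,\mathcal F_t)$.

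The one step demanding genuine care is the conjugation invariance of the character: one must track the $\mathbb Z_2$-grading throughout, use that $U_t$ is even so it commutes with $\epsilon$, and confirm that replacing $\mathcal F_t$ by $F$ alters $Tr_s$ by exactly the same conjugation that standardizes the module, so the two characters agree on the nose. Everything else is a routine verification: that $\tilde\rho_t$ is a functor, that $U_t$ is bounded with bounded inverse (hence preserves Schatten classes), and that the block-diagonal structure identifies the transformed operators with the data appearing in assumptions (1) and (2). These assumptions are, in effect, designed precisely so that the standardized family $(\tilde\rho_t,F)$ falls within the scope of Proposition \ref{Conlemma}.
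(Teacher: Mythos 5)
Your proposal is correct and follows essentially the same route as the paper: the paper also conjugates by $\mathcal T_t=\mathrm{diag}(1,\mathcal Q_t)$ (your $U_t$) to replace $\mathcal F_t$ by the constant swap $F$, identifies the conjugated functor's diagonal blocks and the commutator's off-diagonal entries with the data in hypotheses (1) and (2), and invokes Proposition \ref{Conlemma} together with similarity-invariance of the trace. Your additional care about the grading and the telescoping of conjugations just makes explicit what the paper compresses into ``since trace is invariant under similarity.''
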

\begin{proof}
For each $t \in [0,1]$, we set $\mathcal{T}_t:=\begin{pmatrix} 1 & 0 \\ 0 & \mathcal{Q}_t \\
\end{pmatrix}$. Then, $\mathcal{T}_t^{-1}=\begin{pmatrix} 1 & 0 \\ 0 & \mathcal{P}_t \\
\end{pmatrix}$ and $\mathcal{F}'_t:=\mathcal{T}_t\mathcal{F}_t\mathcal{T}_t^{-1}=\begin{pmatrix} 0 & 1\\ 1& 0 \\
\end{pmatrix}.$

\smallskip
For each $t \in [0,1]$, we also define a functor ${\mathscr H}_t:\mathcal{C} \longrightarrow SHilb_{\mathbb{Z}_2}$ given by
\begin{equation*}
{\mathscr H}_t(X):={\rho}(X) \qquad {\mathscr H}_t(f):=\mathcal{T}_t{\rho}_t(f)\mathcal{T}_t^{-1}
\end{equation*}
Then, we have
\begin{equation*}
[\mathcal{F}'_t,{\mathscr H}_t(f)]=\begin{pmatrix} 0 & \mathcal{Q}_t{\rho_t}^-(f)\mathcal{P}_t -{\rho}_t^+(f)\\ {\rho}_t^+(f)- \mathcal{Q}_t{\rho_t}^-(f)\mathcal{P}_t  & 0 
\end{pmatrix}
\end{equation*}
Therefore, using assumption {\it(1)}, we see that the map $t \mapsto [\mathcal{F}',{\mathscr H}_t(f)]$ from $[0,1]$ to $\mathcal{B}^p({\mathscr H}_t(X),{\mathscr H}_t(Y))$ is continuous for each $f \in Hom_\mathcal{C}(X,Y)$. Further,
\begin{equation*}
{\mathscr H}_t(f)=\mathcal{T}_t{\rho}_t(f)\mathcal{T}_t^{-1}=\begin{pmatrix} {\rho}_t^+(f) & 0\\ 0  &  \mathcal{Q}_t{\rho_t}^-(f)\mathcal{P}_t
\end{pmatrix}
\end{equation*}
Therefore, by applying assumption {\it(2)}, we see that the map $t \mapsto {\mathscr H}_t(f)$ is piecewise strongly $C^1$. Since trace is invariant under similarity, the result now follows using Proposition \ref{Conlemma}.
\end{proof}

\begin{theorem}\label{TfinalT}
Let $\mathcal C$ be a small $\mathbb{C}$-category  and $\{\rho_t:\mathcal{C} \longrightarrow SHilb_2\}_{t \in [0,1]}$ be a family of functors such that
for each $X\in Ob(\mathcal C)$, we have $\rho_t(X)=\rho_{t'}(X)$ for all $t$, $t'\in [0,1]$. We put $\rho(X):=\rho_t(X)$ for all $t \in [0,1]$.
 Further, for each $t \in [0,1]$ and $X\in Ob(\mathcal C)$, 
let 
\begin{equation*}
\mathcal F_t(X):=\begin{pmatrix} 0 & \mathcal{Q}_t(X) \\ \mathcal{P}_t(X) & 0 \\
\end{pmatrix}:\rho(X)\longrightarrow  \rho(X)
\end{equation*}
with $\mathcal{Q}_t^{-1}=\mathcal{P}_t$ be such that $({\rho}_t,\mathcal{F}_t)$ is a $p$-summable Fredholm module over the category $\mathcal{C}$.
We further assume that for some even integer $p$, we have

\smallskip
(1) For any $f \in Hom_\mathcal{C}(X,Y)$, $t \mapsto {\rho}_t(f)$ is a strongly $C^1$-map from $[0,1]$ to $SHilb_{\mathbb Z_2}({\rho}(X),{\rho}(Y))$.

\smallskip
(2) For any $X\in \mathcal C$, $t \mapsto \mathcal F_t(X)$   is a strongly $C^1$-map from $[0,1]$ to $SHilb_{\mathbb Z_2}({\rho}(X),{\rho}(X))$.

\smallskip
Then, the $(p+2)$-dimensional character $\text{ch}^{p+2}({\rho}_t,\mathcal{F}_t) \in H^{p+2}_\lambda(\mathcal{C})$ is independent of $t \in [0,1]$.
\end{theorem}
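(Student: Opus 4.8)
The plan is to deduce Theorem \ref{TfinalT} from Theorem \ref{Thm5.6t} by checking that the two strong $C^1$ hypotheses here imply the two hypotheses of that theorem. Since each $\rho(X)=\rho'(X)\oplus\rho'(X)$ lies in $SHilb_2$ and each $\rho_t(f)$ has degree zero (Definition \ref{dD3.1}(1)), I would first write $\rho_t(f)=\mathrm{diag}(\rho_t^+(f),\rho_t^-(f))$ and read off $\mathcal{Q}_t(X)$ and $\mathcal{P}_t(X)=\mathcal{Q}_t(X)^{-1}$ as the off-diagonal blocks of $\mathcal{F}_t(X)$. Because the block projections $SHilb_{\mathbb{Z}_2}\to SHilb$ are bounded and linear, hypothesis (1) gives that $t\mapsto\rho_t^{\pm}(f)$ is strongly $C^1$ and hypothesis (2) gives that $t\mapsto\mathcal{Q}_t(X)$ and $t\mapsto\mathcal{P}_t(X)$ are strongly $C^1$.

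Next I would verify hypothesis (2) of Theorem \ref{Thm5.6t}. The family $t\mapsto\rho_t^+(f)$ is already strongly $C^1$. For $t\mapsto\mathcal{Q}_t\rho_t^-(f)\mathcal{P}_t$, note that on the compact interval $[0,1]$ each of the strongly continuous families $\{\mathcal{Q}_t\}$, $\{\rho_t^-(f)\}$, $\{\mathcal{P}_t\}$ is uniformly bounded in operator norm, by the uniform boundedness principle applied at each fixed vector. The Leibniz rule for strong derivatives (the computation underlying Lemma \ref{5.2x}) then shows the triple product is strongly $C^1$, hence both families are piecewise strongly $C^1$ and hypothesis (2) of Theorem \ref{Thm5.6t} holds.

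The delicate point is hypothesis (1) of Theorem \ref{Thm5.6t}: continuity of $t\mapsto N_t:=\rho_t^+(f)-\mathcal{Q}_t\rho_t^-(f)\mathcal{P}_t$ as a map into the Schatten ideal $\mathcal{B}^p$. Using $\mathcal{Q}_t(Y)\mathcal{P}_t(Y)=\mathrm{id}$ one has the identity $N_t=\mathcal{Q}_t(Y)L_t$, where $L_t=\mathcal{P}_t(Y)\rho_t^+(f)-\rho_t^-(f)\mathcal{P}_t(X)$ is the lower-left block of the graded commutator $[\mathcal{F}_t,\rho_t(f)]$. Since $(\rho_t,\mathcal{F}_t)$ is a $p$-summable Fredholm module, $[\mathcal{F}_t,\rho_t(f)]\in\mathcal{B}^p$ for each $t$, and because $\mathcal{B}^p(-,-)$ is a subfunctor of $\mathcal{B}(-,-)$ (the ideal property recorded before Definition \ref{dD3.1}), left multiplication by the bounded operator $\mathcal{Q}_t(Y)$ keeps us inside $\mathcal{B}^p$, so each $N_t\in\mathcal{B}^p$. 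To upgrade this to $\mathcal{B}^p$-norm continuity I would split $N_t-N_{t'}=\mathcal{Q}_t(Y)(L_t-L_{t'})+\big(\mathcal{Q}_t(Y)-\mathcal{Q}_{t'}(Y)\big)L_{t'}$; the second summand tends to $0$ in $\mathcal{B}^p$-norm because $L_{t'}\in\mathcal{B}^p$ is fixed, $\mathcal{Q}_t(Y)\to\mathcal{Q}_{t'}(Y)$ strongly and uniformly boundedly, and a fixed Schatten operator is approximable in $\mathcal{B}^p$-norm by finite-rank operators, on which strong convergence of uniformly bounded multipliers forces $\mathcal{B}^p$-norm convergence.

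The hard part will be the remaining piece, namely the $\mathcal{B}^p$-continuity of $t\mapsto L_t$, equivalently of $t\mapsto[\mathcal{F}_t,\rho_t(f)]$; this is the genuine obstacle, since strong $C^1$-dependence does not by itself yield Schatten-norm continuity. Here I would exploit the strong integral representations $\rho_t(f)-\rho_{t'}(f)=\int_{t'}^{t}\delta_s(f)\,ds$ and $\mathcal{F}_t-\mathcal{F}_{t'}=\int_{t'}^{t}\mathcal{F}_s'\,ds$, combining them with the $p$-summability of each $(\rho_s,\mathcal{F}_s)$ and Hölder's inequality for Schatten norms (as in Lemma \ref{srLe1}(c)) to bound the commutator increment in $\mathcal{B}^p$. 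Once continuity into $\mathcal{B}^p$ is secured, both hypotheses of Theorem \ref{Thm5.6t} are in force, and that theorem yields that $\mathrm{ch}^{p+2}(\rho_t,\mathcal{F}_t)\in H^{p+2}_\lambda(\mathcal{C})$ is independent of $t\in[0,1]$, completing the proof.
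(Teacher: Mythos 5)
Your overall strategy coincides with the paper's: the entire published proof of Theorem \ref{TfinalT} consists of writing $\rho_t(f)$ and $\mathcal F_t(X)$ in block form and asserting that the hypotheses of Theorem \ref{Thm5.6t} are ``clearly'' satisfied. Your verification of hypothesis (2) of Theorem \ref{Thm5.6t} (the Leibniz rule for strong derivatives, using uniform boundedness on the compact interval) is correct and in fact more careful than anything in the paper, as is your reduction of hypothesis (1) to the $\mathcal B^p$-norm continuity of $t\mapsto L_t$, the lower-left block of $[\mathcal F_t,\rho_t(f)]$, together with the finite-rank approximation argument handling the factor $\bigl(\mathcal Q_t(Y)-\mathcal Q_{t'}(Y)\bigr)L_{t'}$.

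The gap is in the final step, and you have correctly located it but not closed it. The integral representations $\rho_t(f)-\rho_{t'}(f)=\int_{t'}^{t}\delta_s(f)\,ds$ and $\mathcal F_t-\mathcal F_{t'}=\int_{t'}^{t}\mathcal F_s'\,ds$ control the increments only in operator norm (indeed only strongly), because the integrands $\delta_s(f)$ and $\mathcal F_s'$ are merely bounded operators with no Schatten-class membership; H\"older's inequality for Schatten norms therefore gives nothing when applied to the decomposition $[\mathcal F_t,\rho_t(f)]-[\mathcal F_{t'},\rho_{t'}(f)]=[\mathcal F_t-\mathcal F_{t'},\rho_t(f)]+[\mathcal F_{t'},\rho_t(f)-\rho_{t'}(f)]$, since neither summand is individually known to lie in $\mathcal B^p$, let alone to be small there. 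Strong $C^1$-dependence plus pointwise $p$-summability of each $(\rho_t,\mathcal F_t)$ simply does not yield $\mathcal B^p$-norm continuity of the commutator by soft arguments of this kind. To be fair to you, the paper supplies no argument at this point either --- it declares the implication obvious --- so you have isolated the one genuinely nontrivial assertion in the deduction; but as written your sketch of how to prove it would fail, and an honest proof would either add a Schatten-continuity hypothesis (as Theorem \ref{Thm5.6t} and Proposition \ref{Conlemma} explicitly do) or supply a substantially different estimate.
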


\begin{proof} By definition, ${\rho}_t(f)=\begin{pmatrix} \rho^+(f) & 0 \\ 
0 & \rho^-(f)\\ \end{pmatrix}$ and $\mathcal F_t(X)=\begin{pmatrix} 0 & \mathcal{Q}_t(X) \\ \mathcal{P}_t(X) & 0 \\
\end{pmatrix}$. As such, it is clear that a system satisfying the assumptions (1) and (2) above also satisfies the assumptions in Theorem \ref{Thm5.6t}. This proves the result.
\end{proof}

\begin{bibdiv}
\begin{biblist}

\bib{AK}{article}{
   author={Akbarpour, R.},
   author={Khalkhali, M.},
   title={Hopf algebra equivariant cyclic homology and cyclic homology of
   crossed product algebras},
   journal={J. Reine Angew. Math.},
   volume={559},
   date={2003},
   pages={137--152},
 
}

\bib{Baez}{article}{
   author={Baez, J. C.},
   title={Higher-dimensional algebra. II. $2$-Hilbert spaces},
   journal={Adv. Math.},
   volume={127},
   date={1997},
   number={2},
   pages={125--189},
  
}

\bib{MB}{article}{
   author={Balodi, M.},
   title={Morita invariance of equivariant and Hopf-cyclic cohomology of module algebras over Hopf algebroids},
   journal={	arXiv:1804.10898 [math.QA]},
}

\bib{BBR1}{article}{
author={Balodi, M.},
   author={Banerjee, A.},
   author={Ray, S.},
   title={Cohomology of modules over $H$-categories and co-$H$-categories},
   journal={Canad. J. Math.},
   volume={72},
   date={2020},
   number={5}
   pages={1352-1385},
}

\bib{BBR2}{article}{
   author={Balodi, M.},
   author={Banerjee, A.},
   author={Ray, S.},
   title={On entwined modules over linear categories and Galois extensions},
   journal={Isarel J. Math.},
   volume={241},
   date={2021},
   %number={},
   pages={623-692},
}

\bib{AMf}{article}{
author={Balodi, M.},
   author={Banerjee, A.},
   title={Odd Fredholm modules over linear categories and cyclic cohomology},
   journal={(in preparation)},
}

\bib{AB}{article}{
  author={Banerjee, A.},
   title={On differential torsion theories and rings with several objects},
   journal={Canad. Math. Bull.},
   volume={62},
   date={2019},
   number={4},
   pages={703--714},
}

\bib{BoLS}{article}{
   author={G.~B\"{o}hm,},
   author={S.~Lack,},
   author={Street, R.},
   title={Idempotent splittings, colimit completion, and weak aspects of the
   theory of monads},
   journal={J. Pure Appl. Algebra},
   volume={216},
   date={2012},
   number={2},
   pages={385--403},
}

\bib{Caen}{book}{
   author={Caenepeel, S.},
   title={Brauer groups, Hopf algebras and Galois theory},
   series={$K$-Monographs in Mathematics},
   volume={4},
   publisher={Kluwer Academic Publishers, Dordrecht},
   date={1998},
   pages={xvi+488},
}

\bib{CiSo}{article}{
   author={C.~Cibils,},
   author={A.~Solotar,},
   title={Galois coverings, Morita equivalence and smash extensions of
   categories over a field},
   journal={Doc. Math.},
   volume={11},
   date={2006},
   pages={143--159},
}

\bib{C1}{article}{
   author={Connes, A.},
   title={Cohomologie cyclique et foncteurs ${\rm Ext}^n$},
   journal={C. R. Acad. Sci. Paris S\'er. I Math.},
   volume={296},
   date={1983},
   number={23},
   pages={953--958},
}

\bib{C2}{article}{
   author={Connes, A.},
   title={Noncommutative differential geometry},
   journal={Inst. Hautes \'{E}tudes Sci. Publ. Math.},
   volume={62},
   date={1985},
   pages={257--360},
}

\bib{CM0}{article}{
   author={Connes, A.},
   author={Moscovici, H.},
   title={Hopf algebras, cyclic cohomology and the transverse index theorem},
   journal={Comm. Math. Phys.},
   volume={198},
   date={1998},
   number={1},
   pages={199--246},
}

\bib{CM1}{article}{
   author={Connes, A.},
   author={Moscovici, H.},
   title={Cyclic cohomology and Hopf algebras},
   note={Mosh\'{e} Flato (1937--1998)},
   journal={Lett. Math. Phys.},
   volume={48},
   date={1999},
   number={1},
   pages={97--108},

}

\bib{CM2}{article}{
   author={Connes, A.},
   author={Moscovici, H.},
   title={Cyclic Cohomology and Hopf Algebra Symmetry},
   journal={Lett. Math. Phys.},
   volume={52},
   date={2000},
   number={1},
   pages={1--28},
}

\bib{Del}{article}{
   author={Deligne, P.},
   title={Cat\'{e}gories tannakiennes},
   conference={
      title={The Grothendieck Festschrift, Vol. II},
   },
   book={
      series={Progr. Math.},
      volume={87},
      publisher={Birkh\"{a}user Boston, Boston, MA},
   },
   date={1990},
   pages={111--195},
}

\bib{EV}{article}{
   author={Estrada, S.},
   author={Virili, S.},
   title={Cartesian modules over representations of small categories},
   journal={Adv. Math.},
   volume={310},
   date={2017},
   pages={557--609},
}

\bib{hkrs}{article}{
   author={Hajac, P. M.},
   author={Khalkhali, M.},
   author={Rangipour, B.},
   author={Sommerh\"auser, Y.},
   title={Hopf-cyclic homology and cohomology with coefficients},
   journal={C. R. Math. Acad. Sci. Paris},
   volume={338},
   date={2004},
   number={9},
   pages={667--672},
}

\bib{hkrs2}{article}{
   author={Hajac, P. M.},
   author={Khalkhali, M.},
   author={Rangipour, B.},
   author={Sommerh\"auser, Y.},
   title={Stable anti-Yetter-Drinfeld modules},
   journal={C. R. Math. Acad. Sci. Paris},
   volume={338},
   date={2004},
   number={8},
   pages={587--590},
}

\bib{Hask}{article}{
   author={Hassanzadeh, M.},
   author={Kucerovsky, D.},
   author={Rangipour, B.},
   title={Generalized coefficients for Hopf cyclic cohomology},
   journal={SIGMA Symmetry Integrability Geom. Methods Appl.},
   volume={10},
   date={2014},
   pages={Paper 093, 16},
}

\bib{Hask1}{article}{
   author={Hassanzadeh, M.},
   author={Khalkhali, M.},
   author={Shapiro, I.},
   title={Monoidal categories, 2-traces, and cyclic cohomology},
   journal={Canad. Math. Bull.},
   volume={62},
   date={2019},
   number={2},
   pages={293--312},
}

\bib{h}{article}{
   author={Henriques, A. G.},
   title={What Chern-Simons theory assigns to a point},
   journal={Proc. Natl. Acad. Sci. USA},
   volume={114},
   date={2017},
   number={51},
   pages={13418--13423},
 
}

\bib{hp}{article}{
   author={Henriques, A.},
   author={Penneys, D.},
   title={Bicommutant categories from fusion categories},
   journal={Selecta Math. (N.S.)},
   volume={23},
   date={2017},
   number={3},
   pages={1669--1708},

}

\bib{HS}{article}{
   author={Herscovich, E.},
   author={Solotar, A.},
   title={Hochschild-Mitchell cohomology and Galois extensions},
   journal={J. Pure Appl. Algebra},
   volume={209},
   date={2007},
   number={1},
   pages={37--55},
}

\bib{kv}{article}{
   author={Karoubi, M.},
   author={Villamayor, O.},
   title={$K$-th\'{e}orie alg\'{e}brique et $K$-th\'{e}orie topologique. I},
   journal={Math. Scand.},
   volume={28},
   date={1971},
   pages={265--307},
}

\bib{kyg}{article}{
   author={Kaygun, A.},
   title={Bialgebra cyclic homology with coefficients},
   journal={$K$-Theory},
   volume={34},
   date={2005},
   number={2},
   pages={151--194},
 
}

\bib{kyg1}{article}{
   author={Kaygun, A.},
   title={The universal Hopf-cyclic theory},
   journal={J. Noncommut. Geom.},
   volume={2},
   date={2008},
   number={3},
   pages={333--351},
}

\bib{kayx}{article}{
   author={Kaygun, A.},
   title={Products in Hopf-cyclic cohomology},
   journal={Homology Homotopy Appl.},
   volume={10},
   date={2008},
   number={2},
   pages={115--133},
}

\bib{kk}{article}{
   author={Kaygun, A.},
   author={Khalkhali, M.},
   title={Bivariant Hopf cyclic cohomology},
   journal={Comm. Algebra},
   volume={38},
   date={2010},
   number={7},
   pages={2513--2537},
}

\bib{Ke1}{article}{
   author={Keller, B.},
   title={Deriving DG categories},
   journal={Ann. Sci. \'{E}cole Norm. Sup. (4)},
   volume={27},
   date={1994},
   number={1},
   pages={63--102},
}

\bib{Ke2}{article}{
   author={Keller, B.},
   title={On differential graded categories},
   conference={
      title={International Congress of Mathematicians. Vol. II},
   },
   book={
      publisher={Eur. Math. Soc., Z\"{u}rich},
   },
   date={2006},
   pages={151--190},
}

\bib{kr}{article}{
   author={Khalkhali, M.},
   author={Rangipour, B.},
   title={Cup products in Hopf-cyclic cohomology},
   journal={C. R. Math. Acad. Sci. Paris},
   volume={340},
   date={2005},
   pages={9--14},
}

\bib{KoSh}{article}{
   author={Kobyzev, I.},
   author={Shapiro, I.},
   title={A Categorical Approach to Cyclic Cohomology of Quasi-Hopf Algebras
   and Hopf Algebroids},
   journal={Appl. Categ. Structures},
   volume={27},
   date={2019},
   number={1},
   pages={85--109},
}

\bib{Loday}{book}{
   author={Loday, J.-L},
   title={Cyclic homology},
   series={Grundlehren der Mathematischen Wissenschaften [Fundamental
   Principles of Mathematical Sciences]},
   volume={301},
   %note={Appendix E by Mar\'{i}a O. Ronco},
   publisher={Springer-Verlag, Berlin},
   date={1992},
   pages={xviii+454},
}

\bib{LV2}{article}{
   author={Lowen, W.},
   author={Van den Bergh, M.},
   title={Deformation theory of abelian categories},
   journal={Trans. Amer. Math. Soc.},
   volume={358},
   date={2006},
   number={12},
   pages={5441--5483},
}

\bib{Low2}{article}{
   author={Lowen, W.},
   title={Hochschild cohomology with support},
   journal={Int. Math. Res. Not. IMRN},
   date={2015},
   number={13},
   pages={4741--4812},
 
}

\bib{carthy}{article}{
   author={McCarthy, R.},
   title={The cyclic homology of an exact category},
   journal={J. Pure Appl. Algebra},
   volume={93},
   date={1994},
   number={3},
   pages={251--296},
}

\bib{Mit}{article}{
   author={Mitchell, B.},
   title={The dominion of Isbell},
   journal={Trans. Amer. Math. Soc.},
   volume={167},
   date={1972},
   pages={319--331},
}

\bib{Mit1}{article}{
   author={Mitchell, B.},
   title={Rings with several objects},
   journal={Adv. Math.},
   volume={8},
   date={1972},
   pages={1--161},
}

\bib{Mit2}{article}{
   author={Mitchell, B.},
   title={Some applications of module theory to functor categories},
   journal={Bull. Amer. Math. Soc.},
   volume={84},
   date={1978},
   number={5},
   pages={867--885},
}

\bib{Rangipu}{article}{
   author={Rangipour, B.},
   title={Cup products in Hopf cyclic cohomology via cyclic modules},
   journal={Homology Homotopy Appl.},
   volume={10},
   date={2008},
   number={2},
   pages={273--286},
}

\bib{Sch}{book}{
   author={Schubert, H.},
   title={Categories},
   publisher={Springer-Verlag, New York-Heidelberg},
   date={1972},
   pages={xi+385},
}

\bib{TV}{article}{
   author={To\"{e}n, B.},
   author={Vaqui\'{e}, M.},
   title={Au-dessous de ${\rm Spec}\,\Bbb Z$},
   journal={J. K-Theory},
   volume={3},
   date={2009},
   number={3},
   pages={437--500},
}

\bib{Xu1}{article}{
   author={Xu, F.},
   title={On the cohomology rings of small categories},
   journal={J. Pure Appl. Algebra},
   volume={212},
   date={2008},
   number={11},
   pages={2555--2569},
}

\bib{Xu2}{article}{
   author={Xu, F.},
   title={Hochschild and ordinary cohomology rings of small categories},
   journal={Adv. Math.},
   volume={219},
   date={2008},
   number={6},
   pages={1872--1893},
}
\end{biblist}
\end{bibdiv}
\end{document}